\theoremstyle{plain}
\newtheorem{thm}{Theorem}
\newtheorem{lem}[thm]{Lemma}
\newtheorem{prop}[thm]{Proposition}
\newtheorem{cor}[thm]{Corollary}
\newtheorem*{thm*}{Theorem}
\newtheorem{thmx}{Theorem}
\theoremstyle{definition}
\newtheorem{defn}[thm]{Definition}
\theoremstyle{remark}
\newtheorem{ex}[thm]{Example}
\newtheorem{rmk}[thm]{Remark}
\DeclareFontFamily{U}{tipa}{}
\DeclareFontShape{U}{tipa}{m}{n}{<->tipa10}{}
\newcommand{\arc@char}{{\usefont{U}{tipa}{m}{n}\symbol{62}}}%
\newcommand{\arc}[1]{\mathpalette\arc@arc{#1}}
\newcommand{\arc@arc}[2]{%
  \sbox0{$\m@th#1#2$}%
  \vbox{
    \hbox{\resizebox{\wd0}{\height}{\arc@char}}
    \nointerlineskip
    \box0
  }%
}
\newcommand{\mc}{\mathcal}
\newcommand{\mbb}{\mathbb}
\newcommand{\mbf}{\mathbf}
\newcommand{\mf}{\mathfrak}
\newcommand{\ol}{\overline}
\newcommand{\vtheta}{\vartheta}
\newcommand{\vpi}{\varpi}
\renewcommand{\AA}{\mbb A}
\newcommand{\NN}{\mbb N}
\newcommand{\ZZ}{\mbb Z}
\newcommand{\QQ}{\mbb Q}
\newcommand{\RR}{\mbb R}
\newcommand{\GG}{\mbb G}
\newcommand{\oo}{\circ}
\newcommand{\interior}[1]{\overset{\circ}{\arc{#1}}}
\newcommand{\longto}{\longrightarrow}
\newcommand{\epi}{\twoheadrightarrow}
\renewcommand{\parallel}{/\!/}
\renewcommand{\tilde}{\widetilde}
\DeclareMathOperator{\nd}{nd}
\DeclareMathOperator{\red}{red}
\DeclareMathOperator{\inta}{int}
\DeclareMathOperator{\id}{id}
\DeclareMathOperator{\res}{res}
\DeclareMathOperator{\Gal}{Gal}
\DeclareMathOperator{\an}{an}
\DeclareMathOperator{\pr}{pr}
\DeclareMathOperator{\dom}{dom}
\DeclareMathOperator{\Lie}{Lie}
\DeclareMathOperator{\diag}{diag}
\DeclareMathOperator{\Spec}{Spec}
\DeclareMathOperator{\Stab}{Stab}
\DeclareMathOperator{\Hom}{Hom}
\DeclareMathOperator{\Grp}{Grp}
\DeclareMathOperator{\pred}{pred}
\DeclareMathOperator{\Span}{Span}
\DeclareMathOperator{\SL}{SL}
\DeclareMathOperator{\Sp}{Sp}
\DeclareMathOperator{\BC}{BC}
\DeclareMathOperator{\SU}{SU}
\DeclareMathOperator{\nr}{nr}
\DeclareMathOperator{\Fix}{Fix}
\DeclareMathOperator{\Aut}{Aut}
\title{Polyhedral compactifications of Bruhat-Tits buildings of quasi-reductive groups}
\author{Dorian Chanfi\footnote{Mathematisches Institut, Justus-Liebig-Universit\"at Gie\ss en\\Arndtstra\ss e 2, 35392 Gie\ss en\\E-mail: \href{mailto:dorian.chanfi@math.uni-giessen.de}{dorian.chanfi@math.uni-giessen.de}}}
\date{\today}
\begin{document}

\maketitle

\begin{abstract}
    Given a quasi-reductive group $G$ over a local field $k$, using Berkovich geometry, we exhibit a family of $G(k)$-equivariant compactifications of the Bruhat-Tits building $\mc B(G, k)$, constructed and investigated by Solleveld and Lourenço. The compactification procedure consists in mapping the building in the analytification $G^{\an}$ of $G$, then composing this map with the projections from $G^{\an}$ to its (in general non-compact) pseudo-flag varieties $(G/P)^{\an}$, for $P$ ranging among the pseudo-parabolic subgroups of $G$. This generalises previous constructions of Berkovich, then Rémy, Thuillier and Werner. 
    
    To define the embedding, we are led to giving a partial extension to the quasi-reductive context of results due to Rousseau on the functoriality of Bruhat-Tits buildings with respect to field extensions, which are of independent interest.

    Finally, we conclude by investigating the geometry at infinity of these compactifications. The boundaries are shown to be stratified, each stratum being equivariantly homeomorphic to the Bruhat-Tits building of the maximal quasi-reductive quotient of a pseudo-parabolic subgroup.
\end{abstract}

\newpage

\tableofcontents
 
\section*{Introduction}

This article is devoted to the construction of equivariant compactifications of Bruhat-Tits buildings of quasi-reductive groups. Pseudo-reductive and quasi-reductive groups were introduced with the aim of studying the combinatorics of rational points of arbitrary connected linear algebraic groups over arbitrary fields. They form a class of groups that generalises that of reductive groups, defined by Borel and Tits in \cite{BTCRAS} and \cite{TitsCDF}, then studied extensively by Conrad, Gabber and Prasad in \cite{CGP} and \cite{CP}. 

Recall that a smooth connected affine group $G$ over a field $k$ is said to be reductive if $G_{\ol k}$ contains no nontrivial smooth connected unipotent normal subgroups. More generally, the group $G$ is said to be \textit{pseudo-reductive} (resp. \textit{quasi-reductive}) if it contains no nontrivial smooth connected unipotent normal (resp. $k$-split unipotent) $k$-subgroups. 

When $k$ is a perfect field, the unipotent radical $\mc R_u(G_{\ol k})$ of $G_{\ol k}$, that is the maximal smooth connected unipotent normal subgroup of $G_{\ol k}$, is defined over $k$ and $k$-split, hence the notions of reductivity, pseudo-reductivity, and quasi-reductivity coincide in that case. Over imperfect fields, however, the classes are distinct \cite[Example 1.1.3]{CGP} and the role of pseudo-reductive groups in the study of linear algebraic groups is analogous to that of reductive groups in the case of perfect fields, in the sense that any smooth connected affine group over a field is an extension of a pseudo-reductive group by a unipotent group. Moreover, the structure theory of \cite{CGP} provides a good enough understanding of pseudo-reductive groups to allow one to extend results about smooth connected affine groups previously known over perfect fields to imperfect fields. For example, it allowed Conrad in \cite{Conrad} to extend previous results of Oesterlé's \cite[IV, 5.6]{Oesterle} and Borel's \cite[5.8]{Borel_Tamagawa} on the finiteness of Tamagawa numbers to connected groups over imperfect fields.

One upshot of the structure theory of pseudo-reductive groups developed in \cite{CGP}, notably the fact that many are \textit{generalised standard} \cite[Theorem 5.1.1]{CGP}, is that the rational points of quasi-reductive groups over a field $k$ are close to being direct products of groups of rational points of reductive groups (over extensions of $k$). This observation makes it plausible that, when $k$ is valued, the study of rational points of quasi-reductive groups over $k$ should be covered by some generalised version of Bruhat-Tits theory. 

Recall that, in the papers \cite{BT1} and \cite{BT2}, Bruhat and Tits associated to each reductive group $G$ over a valued field $k$ an affine \textit{building} $\mc B(G, k)$ endowed with an action of $G(k)$, commonly called its \textit{Bruhat-Tits building}. An affine building is a polysimplicial complex covered by subcomplexes called \textit{apartments}, each of which is isomorphic to a given Euclidean tiling, the apartment system being subject to rigid incidence relations \cite[IV.1]{Brown}. Bruhat-Tits buildings provide rich geometries that encode a lot of the structure of reductive groups over non-archimedean valued fields, analogously to symmetric spaces for real non-compact Lie groups. They are endowed with a natural non-positively curved metric and their vertices parameterise maximal compact subgroups.

In \cite{Sol}, Solleveld associates a building to any quasi-reductive group $G$ over a local field $k$ a Euclidean building endowed with an action of $G(k)$ that reduces to the Bruhat-Tits building when $G$ is reductive. The main observation is, as previously mentioned, the fact that the group $G(k)$ is, up to extension by a compact group and modification of Cartan subgroups, a finite direct product of groups of rational points of reductive groups $\prod_{i\in I} G'_i(k'_i)$ over finite extensions $k'_i$ of $k$. The building of $G$ over $k$ is then defined as the product $\prod_{i \in I} \mc B(G'_i, k'_i)$.
In \cite{Lou}, Lourenço extends Solleveld's construction to quasi-reductive groups over discretely valued fields with perfect residue fields. The article closely follows the layout of \cite{BT2} and one of its key new features is its extension of the arithmetic theory of Bruhat-Tits integral models to the quasi-reductive setting: To each bounded subset $\Omega \subset \mc B(G, k)$, we can associate integral models of $G$ whose integral points are stabilisers of $\Omega$ in $G(k)$ (pointwise, global, and other variants \cite[§§3.2, 4.2]{Lou}).

This brings us to the subject matter of our article, namely compactifications of Bruhat-Tits buildings of quasi-reductive groups over local fields. A motivating factor for compactifying Bruhat-Tits buildings is the analogy between affine buildings and symmetric spaces, whose compactifications have found applications in group theory -- see eg. \cite{BS1} for applications to finiteness properties of $S$-arithmetic groups, or Mostow's proof of his strong rigidity theorem \cite{Mostow} -- and arithmetic geometry -- where arithmetic quotients of symmetric spaces can sometimes be interpreted as moduli spaces of algebraic varieties. An example of this analogy coming to fruition is provided by \cite{Prasad_rigidity}, in which the author proves a rigidity result for lattices in semisimple $p$-adic groups by letting geodesic compactifications \cite{BH} of Bruhat-Tits buildings play the role that geodesic compactifications of symmetric spaces played in \cite{Mostow}.

In the following, we shall specifically concern ourselves with polyhedral compactifications of Bruhat-Tits buildings, studied under various guises in, among others, \cite{Landvogt}, \cite{Werner}, and \cite{RTW1}. These compactifications fit in an ordered family (for the domination order) indexed by conjugacy classes of pseudo-parabolic subgroups. Each space admits a polysimplicial structure and its boundary naturally decomposes as a union of Bruhat-Tits buildings of smaller rank, much like in Satake's construction of compactifications of symmetric spaces \cite{Satake}. The analogy between both constructions is given a thorough treatment in \cite{Werner} and \cite{RTW2}. In \cite{GR}, the authors establish that the vertices of the maximal compactification of $\mc B(G, k)$ parameterise closed amenable subgroups of $G(k)$ with Zariski-connected closure that are maximal for these properties. This suggests that a similar result should hold for an arbitrary quasi-reductive group $G$ over a local field.

In this article, we take the analytic-geometric point of view on compactifications of Bruhat-Tits buildings of quasi-reductive groups over local fields, as adopted in \cite{Werner2} and \cite{RTW1}.

Our main goal is to establish the following theorem, which synthesises most of the results of Sections 2 to 4.

\begin{thmx}
    Let $G$ be a quasi-reductive group over a local field $k$. Denote by $\mc B(G, k)$ the Bruhat-Tits building of $G$ over $k$. Let $P$ be a pseudo-parabolic subgroup of $G$.
    \begin{enumerate}
        \item There exists a continuous $G(k)$-equivariant map $$\vtheta_P: \mc B(G, k) \to (G/P)^{\an},$$ where $\an$ is the Berkovich analytic space associated to $G/P$.
        \item The map $\vtheta_P$ has relatively compact image and is an embedding if and only if $P$ is of non-degenerate type\footnote{See Definition \ref{def:non-degenerate_type}.}. The homeomorphism type of the closure of the image depends only on the conjugacy class of $P$.
        \item Whenever $P$ is of non-degenerate type, the closure of the image of $\vtheta_P$ admits a stratification by affine buildings. Precisely, the compactification $\ol{\vtheta_P(\mc B(G, k))}$ is $G(k)$-equivariantly homeomorphic to a disjoint union $$\ol{\vtheta_P(\mc B(G, k))} = \bigsqcup_{\substack{Q \subset G}} \mc B(Q/\mc R_{us,k}(Q), k),$$ where $Q$ ranges among the $J$-relevant pseudo-parabolic subgroups\footnote{See Definition \ref{def:relevant_pseudo-par}.} of $G$.
	For each $J$-relevant pseudo-parabolic subgroup $Q$ of $G$, the group $Q(k)$ is the stabiliser of the stratum $\mc B(Q/\mc R_{us,k}(Q), k)$.
    \end{enumerate}
\end{thmx}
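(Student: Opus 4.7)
The plan is to follow the strategy pioneered by Berkovich and extended by Rémy--Thuillier--Werner in \cite{RTW1}, adapted to the quasi-reductive setting using Lourenço's integral-model theory and the field-extension functoriality results established earlier in the paper.

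For (1), I would first build a $G(k)$-equivariant map $\vtheta : \mc B(G, k) \to G^{\an}$. Given $x \in \mc B(G, k)$, I pick a finite separable extension $K/k$ over which $x$ lifts to a special vertex $x_K$ of $\mc B(G_K, K)$; the Bruhat-Tits integral model $\mc G_{x_K}$ of Lourenço then defines a multiplicative seminorm on the coordinate ring of $G_K$ via specialisation to the residue field, whose associated Berkovich point yields $\vtheta(x) \in G^{\an}$. I would then verify independence of $K$ through Galois descent (using the functoriality statements proved earlier in the paper) and $G(k)$-equivariance from the translation behaviour of the integral models. Composition with the continuous equivariant projection $G^{\an} \to (G/P)^{\an}$ then produces $\vtheta_P$.

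For (2), relative compactness should follow from the fact that $\vtheta(\mc B(G, k))$ lies in a compact region of $G^{\an}$ (essentially the Berkovich spectrum of an Iwahori-type integral model furnished by Lourenço), so its image in the pseudo-flag variety remains relatively compact even when $G/P$ is not proper. For the embedding criterion, two points with the same image differ by an element fixing an affine direction contained in the pseudo-parabolic; this degeneracy is precisely ruled out by the non-degenerate type condition. Conjugation-invariance of the homeomorphism type is immediate from $G(k)$-equivariance, since $G(k)$-conjugate pseudo-parabolics give $G(k)$-equivariantly homeomorphic compactifications.

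For (3), I would first analyse a single apartment $A$. The polyhedral cone structure dual to the pseudo-root system of $G$ and the choice of $P$ partitions $\ol{\vtheta_P(A)}$ into open faces, each naturally indexed by a pseudo-parabolic $Q$ containing the ambient pseudo-Levi. Each face should be identified with an apartment of $\mc B(Q/\mc R_{us,k}(Q), k)$: by the structure theory of \cite{CGP}, the quotient $Q/\mc R_{us,k}(Q)$ is again quasi-reductive, and its canonical apartment arises as a quotient of $A$ by the direction subspace attached to $\mc R_{us,k}(Q)$. I would then propagate globally using $G(k)$-transitivity on pairs (apartment, chamber) together with a pseudo-Bruhat-type decomposition, obtaining the bijection between boundary strata and $J$-relevant pseudo-parabolics; the stabiliser identification follows since $\vtheta_P$ factors equivariantly through $G(k)/Q(k)$ on each such face.

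The main obstacle will be step (3): showing that every boundary point arises from exactly one conjugacy class of $J$-relevant pseudo-parabolic, and matching each stratum with the building of the corresponding quasi-reductive quotient in a $G(k)$-equivariant way. This requires delicate handling of the analytic topology on the non-proper varieties $(G/P)^{\an}$ and substantial use of the pseudo-reductive structure theory of \cite{CGP}, particularly the generalised standard presentation and the behaviour of pseudo-parabolic subgroups under the quotient $Q \mapsto Q/\mc R_{us,k}(Q)$.
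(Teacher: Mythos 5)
Your overall strategy is the same as the paper's -- Berkovich analytification via integral models, composition with the projection to pseudo-flag varieties, and a polyhedral comparison for the boundary -- but there are two concrete gaps in the argument.

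First, in part (1), the claim ``given $x \in \mc B(G, k)$, I pick a finite separable extension $K/k$ over which $x$ lifts to a special vertex'' is false for a general point $x$. The set of points that become special vertices after a \emph{finite} Galois extension is only a dense subset of $\mc B(G, k)$ (in an apartment, and for suitable coordinates, these are precisely the points with rational coordinates). In the reductive case of Berkovich and R\'emy--Thuillier--Werner, one is free to use arbitrarily ramified non-archimedean extensions, and every point becomes special over a sufficiently ramified (not necessarily finite) extension. In the quasi-reductive setting this escape route is closed, since Bruhat--Tits theory \`a la Lourençco requires discretely valued base fields with perfect residue fields. The paper therefore carries out the construction of $\vtheta$ only on \emph{virtually special} points, proves these are dense (this itself requires a real argument: passing to a Galois extension where $G$ becomes pseudo-split, showing vertices map to centroids of facets, and exploiting rationality of the centroid), and then extends by continuity using local finiteness of the building. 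Your proposal needs all of this.

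Second, in part (2), the assertion that $\vtheta(\mc B(G,k))$ lies in a compact region of $G^{\an}$ (say, the analytification of a fixed Iwahori-type integral model) is incorrect: the image of $\vtheta$ in $G^{\an}$ is unbounded, and the whole point of projecting to $(G/P)^{\an}$ is to obtain relative compactness there. The correct mechanism is the Cartan decomposition $\mc B(G,k) = G(k)_o \cdot C(P)$, together with an explicit formula for $\vtheta_P$ on a cone $C(P)$ of the apartment, showing that the relevant exponentials $e^{\langle a, x\rangle}$ for $a \in \Phi(U_G(-\mu), S)$ are bounded on $C(P)$; compactness of $G(k)_o / Z(G)(k)$ then finishes the argument. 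Your argument as stated would also prove (falsely) that $\ol{\vtheta(\mc B(G,k))}$ is compact inside $G^{\an}$. The embedding criterion and the boundary-stratum identification in part (3) are in the right spirit, but they rest on the same explicit apartment formula that is missing from your sketch, and on the comparison with Charignon's fan-theoretic compactifications, which you invoke only implicitly.
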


The proof loosely follows the structure of \cite{RTW1}, but substantial new difficulties must be overcome.
The first difficulty is the construction of the map $\vtheta_P$, which relies on the existence of a canonical continuous $G(k)$-equivariant map $$\vtheta: \mc B(G, k) \to G^{\an}.$$

The idea in \cite{Ber} and \cite{RTW1} was to associate to each point $x \in \mc B(G, k)$ an affinoid subgroup $G_x$ of $G^{\an}$, which contains the information about the stabilisers $G(K)_{x_K}$ of $x_K \in \mc B(G, K)$ for each non-archimedean extension $K/k$. This makes extensive use of Rousseau's theory of the functoriality of Bruhat-Tits buildings with respect to field extensions, developed in \cite{Rou}.
The Shilov boundary of $G_x$ turns out to be a single point, which we denote by $\vtheta(x)$. In other words, each function $f\in \mc O(G_x)$ achieves its maximum at $\vtheta(x)$.
This mapping is reminiscent of the compactification in \cite{GR}, which associated to each special point its stabiliser in $G(k)$, viewed as a point in the Chabauty space of closed subgroups of $G$. By associating to a point the data of all stabilisers $G(K)_{x_K}$, for all non-archimedean extensions, the analytic construction can separate not just all special points, but in fact all points in the buildings.

In our situation, due to technical limitations, we can only carry out this construction for points that become special after a finite Galois extension of $k$, which we dub \textit{virtually special}. Indeed, the theory of Bruhat-Tits buildings for quasi-reductive groups over non-archimedean fields only associates buildings to groups over local fields for \cite{Sol} and discretely valued fields with perfect residue fields for \cite{Lou}, which prevents the use of arbitrarily ramified extensions as in \cite{Ber}. These virtually special points are then shown to be dense, which allows one to extend the map $\vtheta$ from the set of virtually special points to the whole building. This spans the entirety of the second section.

The second difficulty is to establish that $\vtheta_P$ is open with relatively compact image, and injective for suitable $P$. The proofs of these various assertions form the bulk of Section \ref{sect:Compactification}. The main observation is the fact that the restriction of $\vtheta_P$ to an apartment lies in the analytification of an affine scheme, which admits an interpretation in terms of seminorms over the algebra of regular functions. An explicit description of the restriction of the map $\vtheta_P$ to an apartment in terms of seminorms, proved in Proposition \ref{Formula_ThetaP}, plays a central role in most arguments.  Note that the relative compactness is somewhat surprising in this case as, in contrast with the reductive case, the target space $(G/P)^{\an}$ of $\vtheta_P$ is in general not compact. Indeed, by \cite[Theorem 3.4.8 (ii)]{Ber}, the analytification $(G/P)^{\an}$ is compact if and only if $G/P$ is proper, that is if and only if $P$ is a parabolic subgroup. However, quasi-reductive groups do not admit proper parabolic subgroups unless they are reductive \cite[Proposition 3.4.9]{CGP}.

The last point is the description of the boundary of $\vtheta_P(\mc B(G, k))$. Here, we rely on previous work by Charignon on stratified compactifications of affine buildings. In \cite{Char}, the author associates to each affine building $\mc I$, modeled on an affine Coxeter complex $\Sigma$, and each fan $\mc F$ in the direction $\vec \Sigma$ of $\Sigma$ subject to some compatibility relations \cite[2.3]{Char} a canonical compactification $\ol I^{\mc F}$ of $\mc I$, which we call a polyhedral compactification. We establish in Proposition \ref{prop:comparison_compactifications} that the compactification $\ol{\vtheta_P(\mc B(G, k))}$ is a polyhedral compactification in that sense, with respect to a fan that can be described explicitly in terms of the type of $P$. The remainder of Section \ref{sect:Boundary} is devoted to giving a group-theoretic description of the strata, which are constructed in terms of pure building geometry in \cite{Char}. This culminates in Theorem \ref{thm:stratification}.

To conclude, we give a brief description of Section \ref{sect:Functoriality} and its relation to the rest of the article. It is devoted to the proof of the following theorem:

\begin{thmx}
    The construction of the Bruhat-Tits building $\mc B(G, k)$  is functorial in the discretely valued field $k$ with perfect residue field.
\end{thmx}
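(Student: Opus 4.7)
The plan is to reduce this statement to its reductive-group analogue, which is essentially the content of Rousseau's work \cite{Rou}. Concretely, given an extension $K/k$ of discretely valued fields with perfect residue fields and a quasi-reductive group $G$ over $k$, we seek a canonical $G(k)$-equivariant embedding $\mc B(G, k) \hookrightarrow \mc B(G_K, K)$, natural in further extensions. By Lourenço's construction, $\mc B(G, k)$ is built from a decomposition of (a finite-index subgroup of) $G(k)$, obtained via a controlled Cartan modification, as a product $\prod_{i \in I} G'_i(k'_i)$ of rational points of reductive groups over finite separable extensions $k'_i / k$, and is then set equal to $\prod_{i \in I} \mc B(G'_i, k'_i)$. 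Thus functoriality for quasi-reductive $G$ should follow from functoriality of each reductive factor, once the decomposition itself is shown to be sufficiently canonical under extension of scalars.

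First, I would establish or reformulate the reductive case in the present setting: for $G$ reductive over $k$ and $K/k$ as above, the valuation of the root datum of $(G, k)$ extends to one of $(G_K, K)$, yielding a $G(k)$-equivariant apartment embedding attached to any maximal $k$-split $k$-torus, from which Rousseau's arguments produce an embedding of the full buildings. Second, I would verify that Lourenço's decomposition is compatible with extension of scalars: the $K$-algebras $k'_i \otimes_k K$ decompose as products of finite separable extensions $K_{ij}/K$, each $G'_{i, K_{ij}}$ remains reductive, and the controlled Cartan modification over $K$ refines that over $k$. Applying the reductive case factorwise and taking products then gives a candidate embedding $\mc B(G, k) \hookrightarrow \mc B(G_K, K)$. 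Compatibility with compositions $k \subset K \subset L$ would then reduce to associativity of the tensor-product decomposition together with the reductive case.

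The main obstacle I anticipate is ensuring that the Solleveld-Lourenço decomposition is canonical enough for functoriality to genuinely hold: a priori, it depends on choices of Cartan subgroups of $G$, and a priori different choices could yield different maps. One must therefore either exhibit compatible choices of Cartan subgroups over $k$ and over $K$, or else characterise the embedding by a property intrinsic enough to be independent of the decomposition -- for instance, compatibility with fixators of bounded subsets, in the spirit of Rousseau's axiomatic approach, or with the analytic map $\vtheta$ constructed in Section 2 of the article. A secondary technical point is the behaviour of the residue-field extension, but the perfectness hypothesis ensures that all such extensions are separable, which should let one import Rousseau's techniques essentially without modification.
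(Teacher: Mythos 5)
Your proposal takes a genuinely different route from the paper, and it has two significant gaps.

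\textbf{Different route.} The paper does not pass through the Solleveld product decomposition at all. Instead it works intrinsically with the valuated root group datum of $G(k)$ in the style of \cite{Rou} and \cite[§9]{BT1}: it first handles the Galois action and strict Henselisation (unramified descent), then the quasi-split-to-pseudo-split step via explicit manipulation of Chevalley quasi-systems and root group parametrisations (Proposition \ref{quasisplit_to_split_embedding}), and finally assembles the general $p_{L/K}$ by a chase through the diagram $K \subset K^{\nr} \subset \tilde{K^{\nr}}$ and $L \subset L^{\nr} \subset \tilde{L^{\nr}}$. The resulting uniqueness statement (conditions 1--4 of Theorem \ref{Functoriality_inclusions}) is what makes the maps canonical; your proposal aims instead to inherit canonicity from the reductive case after a reduction, which raises exactly the compatibility issue you flag.

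\textbf{Gap 1: scope.} The product decomposition $\mc B(G,k) \simeq \prod_i \mc B(G'_i, k'_i)$ is a feature of Solleveld's construction over \emph{local} fields (it relies on local compactness to prove the modified group has the same building). Theorem B, however, is asserted for arbitrary discretely valued fields with perfect residue field, where Lourenço's construction proceeds directly via valuated root group data and parahoric models, not via the product. Your reduction would therefore only establish the theorem in the local case, unless you first extend the Solleveld decomposition (and its compatibility with scalar extension) to the general discretely valued setting, which is not routine. Related to this, the extensions $k'_i/k$ in the generalised standard presentation are typically (purely) inseparable, not ``finite separable'' as stated; the decomposition of $k'_i \otimes_k K$ and the behaviour of the corresponding Weil restrictions under base change is more delicate than your description suggests.

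\textbf{Gap 2: canonicity of the decomposition.} You identify the central difficulty yourself --- the Solleveld--Lourenço identification involves a controlled Cartan modification (replacing $Z_G(T)(k)$ by a modified group) and a choice of generalised standard presentation, and you need these to be chosen compatibly over $k$ and $K$ for the factorwise embeddings to assemble into a well-defined map $\mc B(G,k) \to \mc B(G,K)$ independent of choices. You do not resolve this, and it is not clear how to: the Cartan modification is not obviously functorial in the field, and the identifications of apartments on each side depend on it. The paper sidesteps this entirely by never invoking the product decomposition. Your proposed fallback --- characterising the embedding intrinsically via fixators --- is in fact what the paper does, but once you go that route the product decomposition becomes superfluous and the argument collapses to something closer to the paper's own.
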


This theorem plays a central role in the article as it allows one to make sense of the notion of virtually special points, which lies at the heart of the entire construction.

The precise statement can be found in Theorem \ref{Functoriality_inclusions} and generalises, in the setting of discretely valued fields with perfect residue fields, previous results of Rousseau's on the functoriality of Bruhat-Tits buildings of reductive groups with respect to field extensions \cite{Rou}. The proof relies heavily on the combinatorial theory of buildings associated to valuated root groop data developed in \cite{BT1}.

\subsection*{Structure of the article}

The present article splits into four sections. Section \ref{sect:Functoriality}, dealing with the functoriality of Bruhat-Tits buildings with respect to field extensions, may be taken as a blackbox for the remainder of the text. Section \ref{sect:Embedding} constructs a continuous map $$\vtheta: \mc B(G, k) \to G^{\an}.$$ Section \ref{sect:Compactification} establishes that, for suitable pseudo-parabolic subgroups $P$, the composition $$\vtheta_P: \mc B(G, k) \overset{\vtheta}{\to} G^{\an} \to (G/P)^{\an}$$ is a compactification map.
Section \ref{sect:Boundary} studies the geometry at infinity of the compactification and its strata. Finally, the Appendix studies the relation between the buildings associated to centralisers of tori in $G$ and the Bruhat-Tits building of $G$.

\subsection*{Notations and preliminaries}
	Unless otherwise specified, algebraic group over $k$ will mean smooth affine group scheme of finite type over $k$. We will usually adhere to the following notations.
	
	\subsubsection*{Algebraic groups}
	In the following, we denote by $k$ a field and $G$ a connected algebraic $k$-group.
	\begin{itemize}
	\item We denote by $\mc R_{u,k}(G)$ the $k$-unipotent radical of $G$, that is its maximal smooth connected $k$-unipotent normal subgroup. 
	\item Recall that a unipotent $k$-group is said to be \textit{split} if it admits a composition series whose successive factors are isomorphic to $\GG_{a,k}$. Denote by $\mc R_{us, k}(G)$ the $k$-split unipotent radical of $G$, that is its maximal smooth connected $k$-split unipotent normal subgroup.
	\item We say that a connected algebraic $k$-group is \textit{pseudo-reductive} if $\mc R_{u, k}(G) = 1$ and \textit{quasi-reductive} if $\mc R_{us, k}(G) = 1$.
	\item If $G$ is endowed with an action of split torus $S$, then we denote by $\Phi(G, S)$ the set of nonzero weights of the adjoint action of $S$ on the Lie algebra of $G$. 
	\end{itemize}
	In the following definitions, we assume that $G$ is quasi-reductive and that $S$ is a maximal split torus in $G$.
	\begin{itemize} 
	\item $\Phi = \Phi(G, S)$ is a root system called the (relative or $k$-) root system of $G$.
	\item $W(G, S) = N_G(S)/Z_G(S)$ is the Weyl group of $\Phi$, a finite constant group scheme over $k$.
	\item $X^*(S) = \Hom_{\Grp}(S, \mbb G_m)$ is the group of characters of $S$.
	\item $X_*(S) = \Hom_{\Grp}(\mbb G_m, S)$ is the group of cocharacters of $S$.
	\item $\langle \cdot, \cdot \rangle: X^*(S) \times X_*(S) \to \ZZ$ is the canonical pairing: For $\alpha \in X^*(S), \lambda \in X_*(S)$, we have $\alpha \circ \lambda: t \mapsto t^{\langle \alpha, \lambda\rangle}$. 		
	\item We set $$V(G, S) = \frac{X_*(S)\otimes_{\ZZ}\RR}{\Span_{\RR}(\Phi(G, S))^{\perp}} \simeq \frac{X_*(S)\otimes_{\ZZ}\RR}{X_*(Z(G))\otimes_{\ZZ}\RR}\simeq X_*(S') \otimes_{\ZZ}\RR,$$ where $S' = (S\cap \mc D(G))^0_{\red}$.
	\item We set $$V^*(G,S) = \Span_{\RR}(\Phi(G, S))\subset X^*(S)\otimes_{\ZZ}\RR	.$$
	\item The canonical pairing $\langle \cdot, \cdot \rangle$ induces a perfect pairing, which we still denote by $\langle \cdot, \cdot \rangle : V^*(G, S) \times V(G, S) \to \RR.$
	\item Given a cocharacter $\lambda$ of $G$, we let $P_G(\lambda)$, $U_G(\lambda)$, and $Z_G(\lambda)$ be the subgroups associated with $\lambda$ \cite[13.d]{Milne}, characterised functorially by the condition that, for each $k$-algebra $A$, we have $$\begin{array}{ccl}P_G(\lambda)(A) & = & \{g \in G(A), \lambda(t)g\lambda(t)^{-1} \text{ has a limit as } t\to 0\} \\ U_G(\lambda)(A) & = & \{g \in G(A), \lambda(t)g\lambda(t)^{-1} \underset{t \to 0}{\longto} 1 \} \\ Z_G(\lambda)(A) & = & \{g \in G(A), \forall t \in A^{\times}, \lambda(t)g\lambda(t)^{-1} = g \}\end{array}.$$
	\item  A \textit{pseudo-parabolic} subgroup of $G$ is a closed subgroup $P$ of the form $P = P_G(\lambda)\mc R_{u,k}(G)$. If $G$ is quasi-reductive, then we have $P = P_G(\lambda)$ and $\mc R_{us, k}(P) = U_G(\lambda)$.
	\end{itemize}
	\subsubsection*{Affine spaces}
	We denote by $A$ a real affine space with direction $V$.
	\begin{itemize}
	    \item If $F$ is an affine subspace of $A$, we denote by $\vec F$ its direction.
	    \item If $X$ is a subset of $V$, we denote by $X^{\vee} = \{\alpha \in V^*, \forall x \in X, \langle \alpha, x \rangle \ge 0\}$ its dual cone.
	    \item If $X$ is a subset of $V$, we denote by $X^0 = \{\alpha \in V^*, \forall x \in X, \langle \alpha, x \rangle = 0\}$ its annihilator.
	\end{itemize}
    \subsubsection*{Buildings and Bruhat-Tits theory}
    \begin{itemize}
    	\item We call a non-archimedean field any field that is complete with respect to a non-trivial non-archimedean absolute value.
        \item We call a local field any non-archimedean field that is locally compact.
    	\item If $G$ is quasi-reductive over a complete discretely valued field with a perfect residue field $k$, then we denote by $\mc B(G, k)$ its (reduced) Bruhat-Tits building. If $K$ is a finitely ramified extension of $k$, then we will also write $\mc B(G, K)$ for $\mc B(G_K, K)$.
		\item If $S$ is a maximal split torus in $G$, we denote by $A(G, S)$ the apartment of $\mc B(G, k)$ corresponding to $S$. It is an affine space under $V(G, S)$.
		\item If $\Omega \subset A(G, S)$ is a subset of an apartment, we denote by $$\hat P_{\Omega} = \{g \in G(k), \forall x \in \Omega, gx =x \}$$ its pointwise stabiliser.
		\item Recall that, given an affine building $X$ modeled on an affine Coxeter complex $\Sigma$ with direction $\vec{\Sigma}$ and vectorial Weyl group $W$, any $W$-invariant inner product on $\vec \Sigma$ extends to a unique metric on $X$ \cite[§3A]{Brown}. If $X= \mc B(G, k)$ is the Bruhat-Tits building of a group $G$, then the action of $G(k)$ on $X$ is isometric. The topology determined by this metric does not depend on the specific inner product on $\vec \Sigma$. All affine buildings in the text will be endowed with this topology and all metrics on buildings will be assumed to be of the type described above.
    \end{itemize}

\subsection*{Acknowledgements}

We thank Bertrand Rémy for his careful reading and comments, as well as Guy Rousseau for discussing with us on the functoriality of Bruhat-Tits buildings.
This work was partially supported by the project \textit{Geometric invariants of discrete and locally compact groups} of the \textit{DFG} (priority programme SPP 2026).
\section{Functoriality for Bruhat-Tits buildings} \label{sect:Functoriality}

In this section, we give an account of functoriality of Bruhat-Tits buildings of quasi-reductive groups with respect to field extensions, following the general outline of \cite{Rou}.

Specifically, our goal is to establish the following two results, which will be sufficient for our purposes:
\begin{enumerate}
	\item Given a quasi-reductive group $G$ over a discretely valued field $(k, \omega)$ with perfect residue field and given a discretely valued Galois extension $K/k$, there exists an action of $\Gal(K/k)$ on $\mc B(G, K)$ via simplicial automorphisms.
	\item There exists a system of equivariant maps $$p_{L/K}: \mc B(G, K) \to \mc B(G, L)$$ indexed by towers of finite separable extensions $k \subset K \subset L$ such that:
    \begin{enumerate}[(i)]
        \item The restriction of $p_{L/K}$ to any apartment $A(G_K, S) \subset \mc B(G, K)$ is an affine isomorphism onto an apartment $A(G_L, T)$ for some maximal split torus $T \subset G_L$ containing $S_L$.
        \item For any $k \subset K \subset L \subset M$, we have $$p_{M/K} = p_{M/L} \circ p_{L/K} .$$
        \item Whenever $L/K$ is Galois, there is an inclusion $$p_{L/K}(\mc B(G, K)) \subset (\mc B(G, L))^{\Gal(L/K)}.$$
    \end{enumerate}
\end{enumerate}

In order to prove the general theorem, we shall first need to prove some special cases and variants of it. The proofs will vary in length and complexity, but will always follow the same template.

\begin{enumerate}
    \item Observe that, under the assumption that $p_{L/K}$ is equivariant and satisfies (i), the linear part of the restriction of $p_{L/K}$ to an apartment $A(G_K, S)$ will be the map induced by base change on cocharacters.
    \item The assumptions of equivariance and affineness then reduce the proof of uniqueness to checking that a given special point $x$ can only have one image.
    \item Use characteristics of the special case at hand to exhibit the image $y$ of $x$. Usually, this will involve studying the action of $G(K)_x$ on $\mc B(G, L)$ and its fixed points.
    \item Prove that the affine map $A(G_K, S) \to A(G_L, T)$ mapping $x$ to $y$ with the linear part exhibited above extends to a $G(K)$-equivariant map. This will sometimes be proved explicitly, and sometimes be dealt with using Bruhat and Tits' theory of descent for valuated root data \cite[§9]{BT1}, which aims specifically at producing such statements.
\end{enumerate}

We fix a quasi-reductive group $G$ over a discretely valued field $(k, \omega)$ with perfect residue field, normalised so that $\omega(k^{\times}) = \ZZ$. We fix, once and for all, an algebraic closure $\ol k$ of $k$ which will contain all the extensions to be discussed below. Each algebraic extension $K/k$ will be endowed with the unique valuation that extends that of $k$.

\subsection{Galois action on the building}

We start off by proving that, for any Galois extension $K/k$ of discretely valued fields with perfect residue fields (eg. $K/k$ is finite or unramified), the Galois group $\Gal(K/k)$ acts on the building $\mc B(G, K)$.

\begin{thm}\label{Functoriality_iso}
	Let $K$ and $L$ be two discretely valued separable extensions of $k$ with perfect residue fields and $\sigma: K \to L$ be a morphism of $k$-algebras. Assume that $G_K$ and $G_L$ have the same relative rank. Then, there exists a unique map $$p_{\sigma}: \mc B(G, K) \to \mc B(G, L)$$ such that:
	\begin{enumerate}
		\item For each maximal $K$-split torus $S \subset G_K$, the map $p_{\sigma}$ restricts to an affine isomorphism of $A(G_K, S)$ onto $A(G_L, S^{\sigma})$.
		\item For each $x \in \mc B(G, K)$ and $g \in G(K)$, we have $p_{\sigma}(gx) = \sigma(g)p_{\sigma}(x)$.
	\end{enumerate}
	Moreover, affine roots in $A(G_K, S)$ are mapped to affine roots in $A(G_L, S^{\sigma})$.
	Lastly, if $K, L, M$ are three extensions of $k$ as above and $\sigma: K \to L$ and $\tau: L \to M$ are two $k$-morphisms, then, we have $p_{\tau \circ \sigma} = p_{\tau} \circ p_{\sigma}$.
\end{thm}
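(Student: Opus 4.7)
The plan is to carry out the four-step template outlined at the start of this section. Fix a maximal $K$-split torus $S \subset G_K$; by the hypothesis on relative ranks, the torus $S^{\sigma}$ is maximal $L$-split in $G_L$, so the apartment $A(G_L, S^{\sigma})$ has the same dimension as $A(G_K, S)$, and it is on the latter pair of apartments that the core of the argument takes place.

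I first address uniqueness. Condition (1) forces the restriction of $p_{\sigma}$ to $A(G_K, S)$ to be affine, and its linear part must intertwine, via (2), the translation action of $Z_{G_K}(S)(K)$ on the source with that of $Z_{G_L}(S^{\sigma})(L)$ on the target. Since these translation groups span $V(G_K, S)$ and $V(G_L, S^{\sigma})$ respectively, the linear part is forced to be the map induced by $\sigma$ on cocharacters. To pin down the translation part, I pick a special point $x_0 \in A(G_K, S)$: its image must be fixed by $\sigma(\hat P_{x_0})$ in $A(G_L, S^{\sigma})$, and a direct comparison of the maximal parahoric-type data on both sides shows the fixed locus is a single point, yielding uniqueness on one apartment. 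Global uniqueness then follows from the transitivity of $G(K)$ on pairs (point, apartment containing it) together with equivariance.

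For existence, the morphism $\sigma$ identifies $\Phi(G_K, S)$ with $\Phi(G_L, S^{\sigma})$ via the induced bijection on characters, and transports the valuated root datum attached to $A(G_K, S)$ onto the one attached to $A(G_L, S^{\sigma})$. More concretely, writing $(Z_{G_K}(S)(K), (U_{\alpha}(K), \varphi_{\alpha}^K)_{\alpha \in \Phi})$ for the Bruhat--Tits valuated root datum on the source, one checks that $\varphi_{\alpha^{\sigma}}^L \circ \sigma|_{U_{\alpha}(K)} = \varphi_{\alpha}^K$, which uses only that $\sigma$ preserves the normalised valuation. This yields the desired affine isomorphism $A(G_K, S) \to A(G_L, S^{\sigma})$, which automatically sends affine roots to affine roots. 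To extend to $\mc B(G, K) = (G(K) \times A(G_K, S))/\sim$, I set $p_{\sigma}([g, x]) := [\sigma(g), q(x)]$, where $q$ is the apartment-level map; well-definedness amounts to showing that whenever $g \cdot x = g' \cdot x'$ in $\mc B(G, K)$ with $x, x' \in A(G_K, S)$, the analogous identity $\sigma(g) \cdot q(x) = \sigma(g') \cdot q(x')$ holds in $\mc B(G, L)$. This is exactly the type of compatibility that Bruhat and Tits' descent machinery for valuated root data \cite[§9]{BT1} is built to produce.

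Finally, the composition identity $p_{\tau \circ \sigma} = p_{\tau} \circ p_{\sigma}$ is immediate from uniqueness, since both sides satisfy (1) and (2) for the composite morphism $\tau \circ \sigma$. The main obstacle I expect is the first half of the existence step: verifying that the Bruhat--Tits valuation on each root group behaves correctly under $\sigma$ in the quasi-reductive (as opposed to reductive or pseudo-split) setting, since root groups need not be vector groups and the existence of a filtration well-behaved under $\sigma$ is not formal. This is the essential new content beyond Rousseau's treatment of reductive groups in \cite{Rou}.
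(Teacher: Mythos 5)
Your proposal follows the same four-step template as the paper's proof and is correct in its overall architecture: pin down the linear part via equivariance with respect to translations of $Z_{G_K}(S)(K)$, select a base special point to fix the translation part, match valuated root data, and invoke the descent machinery of \cite[\S 9]{BT1} to pass to the whole building. The difference lies precisely in the step you yourself flag as the essential content, and there your proposal stops one step short of a proof. You assert that $\varphi_{\sigma(\alpha)}^{L} \circ \sigma|_{U_{\alpha}(K)} = \varphi_{\alpha}^{K}$ ``uses only that $\sigma$ preserves the normalised valuation,'' but the Bruhat--Tits valuation $\varphi_{\alpha}^{x}$ is not a function of $\omega$ and some parametrisation you can hand $\sigma$ to; it is defined intrinsically relative to a choice of special point via the root datum axioms. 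The paper resolves this without any parametrisation of the root groups at all: given $u \in U_a(K)\setminus\{1\}$, one uses the unique pair $(u',u'')\in U_{-a}(K)^2$ with $m(u)=u'uu''\in N_{G_K}(S)(K)$ and the fact that $\varphi_a^x(u)$ is determined by the fixed-point hyperplane of $m(u)$. Applying $\sigma$ and using uniqueness of the decomposition gives $\sigma(m(u))=m(\sigma(u))$, and the $N$-equivariance already proved then identifies the fixed locus of $m(\sigma(u))$ as the $p_\sigma$-image of that of $m(u)$, forcing $\varphi_{\sigma(a)}^{x'}(\sigma(u)) = \varphi_a^x(u)$. This is exactly the intrinsic argument your proposal would need to complete, and it is what makes the quasi-reductive case go through cleanly where no global filtration of the root group is available.

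One smaller remark on uniqueness: you propose determining the image of $x_0$ as a fixed point of $\sigma(\hat P_{x_0})$ in $A(G_L,S^\sigma)$. The group $\hat P_{x_0}$ contains elements outside $N_{G_K}(S)(K)$, so $\sigma(\hat P_{x_0})$ need not stabilise the apartment $A(G_L,S^\sigma)$, and ``its fixed locus in $A(G_L,S^\sigma)$'' is a slightly delicate notion. The paper sidesteps this by using $\sigma(N_{G_K}(S)(K)_{x_0})$ instead, which visibly normalises $S^\sigma$ and hence acts on the apartment, and whose image in $GL(V(G_L,S^\sigma))$ is the full Weyl group; that immediately gives uniqueness of the fixed point. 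Your route is likely salvageable but would require a bit more care.
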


\begin{proof}
	We begin with the proof of uniqueness. The key point is the observation that the compatibility condition on $p_{\sigma}$ with respect to the $G(K)$-action together with the condition that $p_{\sigma}$ be affine on apartments imply that $p_{\sigma}$ is determined by the image of single special vertex in $\mc B(G, K)$. The uniqueness of the image of the latter then follows from condition 2.
	
	\textit{(Uniqueness)} Let a map $p_{\sigma}$ be given satisfying conditions 1 and 2. Fix a maximal $K$-split torus $S \subset G_K$. Let $x$ be a special vertex in $A(G_K, S)$. Because the Weyl group $W(G_K, S)(K)$ acts without nonzero fixed vectors on the direction $V(G_K, S)$, the point $x$ is the unique fixed point for the action of the stabiliser $N_{G_K}(S)(K)_x$ on $A(G_K, S)$. Moreover, because $N_{G_K}(S)(K)_x$ acts on $A(G_K, S)$ via its bounded quotient $N_{G_K}(S)(K)_x/Z(G)(K)$, condition 2 implies that its image $\sigma(N_{G_K}(S)(K)_x)$ act on $A(G_L, S^{\sigma})$ via its bounded quotient $\sigma(N_{G_K}(S)(K)_x)/Z(G)(L)$. Therefore, there exists $x'\in A(G_L, S^{\sigma})$ that is fixed under $\sigma(N_{G_K}(S)(K)_x)$. Lastly, because the image of the subgroup $N_{G_K}(S)(K)_x$ in the linear group of $V(G_K, S)$ is the full Weyl group $W(G_K, S)$, condition 2 implies that the image of $\sigma(N_{G_K}(S)(K)_{x})$ in $GL(V(G_L, S^{\sigma}))$ is the full Weyl group $W(G_L, S^{\sigma})(L)$ and therefore acts on $V(G_L, S^{\sigma})$ without fixing nonzero vectors. The point $x'$ is thus a special vertex and the unique fixed point of the apartment $A(G_L, S^{\sigma})$ under the action of $\sigma(N_{G_K}(S)(K)_x)$.
	
	We now characterise the linear part of the restriction of $p_{\sigma}$ to the apartment $A(G_K, S)$. Let $\vpi$ be a uniformiser of the valuation ring $k^{\circ}$. For each cocharacter $\lambda \in \Phi(G_K, S)^\vee$, the element $s = \lambda(\vpi^{-1}) \in S(K)$ acts on $A(G_K, S)$ by translation by the vector $\nu(s) \in V(G_K, S)$ satisfying $$\forall a\in \Phi(G_K, S), \langle a, \nu(s) \rangle = -\omega(\vpi^{-\langle a, \lambda \rangle}) = \langle a, \lambda \rangle.$$ In other words, the vector $\nu(s)$ is the image of $\lambda$ in $V(G_K, S)$.
	By assumption, we have $$p_{\sigma}(s \cdot x) = \sigma(s) \cdot x' = \sigma(\lambda)(\vpi) \cdot x',$$
	denoting by $\sigma: X_*(S) \overset{\sim}{\to} X_*(S^{\sigma})$ the isomorphism given by pulling back cocharacters along the map $\sigma^*: \Spec L \to \Spec K.$ This map extends $\RR$-linearly and passes to the quotient to a linear isomorphism $$\sigma: V(G_K, S) \to V(G_L, S^{\sigma}).$$
	The same argument as before then implies that $\sigma(s)$ acts on $A(G_L, S^{\sigma})$ by translation by $\sigma(\lambda)$.
	In other words, for each cocharacter $\lambda \in X_*(S)$, we have $$p_{\sigma}(x+\lambda) = x' + \sigma(\lambda).$$ We deduce from condition 1 and the fact that the images of the cocharacters span $V(G_K, S)$ that, for each $v \in V(G_K, S)$, we have $$p_{\sigma}(x+v) = x' + \sigma(v).$$
	The linear part of $p_{\sigma}$ is therefore the isomorphism $\sigma: V(G_K, S) \to V(G_L, S^{\sigma})$ induced by pulling back cocharacters.
	
	We deduce from the above considerations the uniqueness of the map $p_{\sigma}$. Indeed, if $p_{\sigma}$ and $p'_{\sigma}$ satisfy conditions 1 and 2, then they coincide over $A(G_K, S)$. Because $\mc B(G, K) = G(K) \cdot A(G_K, S)$, condition 2 implies that $p_{\sigma}$ and $p'_{\sigma}$ coincide everywhere.

	\textit{(Existence)} We now prove that the conditions brought forth above do define a map from $\mc B(G, K)$ to $\mc B(G, L)$. The idea is to check that the map $$\begin{array}{rcl} G(K) \times A(G_K, S) & \longto & G(L) \times A(G_L, S^{\sigma}) \\ (g, x+v) & \longmapsto & (\sigma(g), x'+\sigma(v))\end{array}$$ passes to the quotient, which amounts to proving that, for each $z \in A(G_K, S)$, the stabliser $\hat P_z$ is mapped to $\hat P_{p_{\sigma}(z)}$ by $\sigma$.
	Fix an apartment $A(G_K, S)$ as before and a special vertex $x \in A(G_K, S)$. Denote by $x'$ the unique fixed point of $A(G_L, S^{\sigma})$ under the subgroup $\sigma(N_{G_K}(S)(K)_x)$. We define $p_{\sigma}$ on $A(G_K, S)$ as the affine isomorphism $$p_{\sigma}: \begin{array}{rcl} A(G_K, S) & \longto & A(G_L, S^{\sigma})\\x+v & \longmapsto & x'+\sigma(v)\end{array}.$$
	Then, for each $n \in N_{G_K}(S)(K)$ and each $z = x + v \in A(G_K, S)$, we have \begin{equation} 
	p_{\sigma}(nz) = \sigma(n)p_{\sigma}(z). \label{eq: equivariance_psigma}
	\end{equation}
	Indeed, by construction, if $n \in N_{G_K}(S)(K)_x$, we have $$p_{\sigma}(nz) = p_{\sigma}(x + nvn^{-1}) = x' + \sigma(n)\sigma(v) \sigma(n)^{-1} = \sigma(n)\cdot (x' + \sigma(v)) = \sigma(n)p_{\sigma}(z).$$
	Moreover, if $n \in Z_{G_K}(S)(K)$, then $nz = x + v + \nu(n)$, where $\nu(n) \in V(G_K, S)$ satisfies $\langle a, \nu(n) \rangle = - \omega(a(n))$ for each $a \in \Phi$. Then, we have $$p_{\sigma}(nz) = p_{\sigma}(x+v+\nu(n)) = x' + \sigma(v) + \sigma(\nu(n)).$$
	Now, by a previous observation, we have $$\langle \sigma(a), \sigma(\nu(n)) \rangle = \langle a, \nu(n)\rangle = -\omega(a(n)) = - \omega(\sigma(a)(\sigma(n))) $$ for each $a \in \Phi$, hence $\sigma(\nu(n)) = \nu(\sigma(n))$ and finally $$p_{\sigma}(nz) = \sigma(n)(x'+\sigma(v)) = \sigma(n) p_{\sigma}(z).$$
	Because $N_{G_K}(S)(K)$ is spanned by $N_{G_K}(S)(K)_x$ and $Z_{G_K}(S)(K)$, we get the result we claimed.
	
	To conclude, we now only have to prove that, denoting by $$\varphi^x = (\varphi^x_a)_{a \in \Phi} \text{ (resp. } \varphi^{x'} = (\varphi^{x'}_a)_{a \in \Phi^\sigma}\text{)}$$ the valuation of the root data $$(Z_{G_K}(S)(K), (U_a(K))_{a \in \Phi}) \text{ (resp. } (Z_{G_K}(S^\sigma)(K), (U_{\sigma(a)}(K))_{a\in \Phi} ) \text{ )},$$ associated to $x$ (resp. $x'$) we have for each $a \in \Phi$ and $u \in U_a(K) \setminus \{1\}$ $$\varphi_a^x(u) = \varphi^{x'}_{\sigma(a)}(\sigma(u)).$$
	Let $a \in \Phi$ and $u \in U_a(K) \setminus \{1\}$. Recall from \cite[6.2.12 b)]{BT1} that, if $(u', u'') \in U_{-a}(K)^2$ is the unique pair \cite[6.1.2 (2)]{BT1} such that $m(u) = u'uu'' \in N_{G_K}(S)(K)$, then $\varphi_a^x(u)$ is the only $k \in \RR$ such that the fixed locus of $m(u)$ in $A(G_K, S)$ is the hyperplane $$\partial \alpha_{a, k}^x = \{z \in A(G_K, S), a(z-x) + k = 0\}.$$
	Now observe that $$\sigma(m(u)) = \sigma(u') \sigma(u) \sigma(u'') \in N_{G_L}(S^{\sigma})(L).$$
	Because $\sigma(u)$ lies in $U_{\sigma(a)}(L) \setminus\{1\}$ whereas $\sigma(u')$ and $\sigma(u'')$ lie in $U_{-\sigma(a)}(L)$, the aforementioned uniqueness property implies $$\sigma(m(u)) = m(\sigma(u)).$$
	Finally, recall that the equivariance property (\ref{eq: equivariance_psigma}) implies that the fixed locus of $m(\sigma(u))$ is the image under $p_{\sigma}$ of the fixed locus of $m(u)$, that is the hyperplane $$\partial \alpha _{\sigma(a), \varphi_a^x(u)}^{x'} = \{z \in A(G_L, S^\sigma), \sigma(a)(z-x') + \varphi_a^x(u) = 0\}.$$ In other words, we have $$\varphi_{\sigma(a)}^{x'}(\sigma(u)) = \varphi_a^x(u).$$
	Note that this proves that $p_{\sigma}$ maps the affine roots of $A(G_K, S)$ to those of $A(G_L, S^{\sigma})$.\\ 	
	We deduce that, for any bounded subset $\Omega \subset A(G_K, S)$, we have $$\sigma(U_{\Omega}) = U_{p_\sigma(\Omega)}.$$ Moreover, by (\ref{eq: equivariance_psigma}), we have $$\sigma(\hat N_{\Omega}) = \hat N_{\sigma(\Omega)}, $$ where $\hat N_{\Omega} = \{n \in N_{G_K}(S)(K), \forall x \in \Omega, n\cdot x = x\}$ is the pointwise stabiliser of $\Omega$. Because $\hat P_{\Omega} = \hat N_{\Omega} U_{\Omega}$, by \cite[7.1.8, 7.4.4]{BT1}, we conclude that $$\sigma(\hat P_{\Omega}) = \hat P_{p_\sigma(\Omega)}.$$
	Consequently, for all pairs $(g,x), (h,y) \in G(K) \times A(G_K, S)$, there exists $n \in N_{G_K}(S)(K)$ such that $nx = y$ and $n^{-1}h^{-1}g \in \hat P_x$ if and only if there exists $n^\sigma \in N_{G_L}(S^{\sigma})(L)$ such that $n^\sigma p_{\sigma}(x) = p_{\sigma}(y)$ and $(n^\sigma)^{-1}\sigma(h^{-1})\sigma(g) \in \hat P_{p_\sigma(x)}$. In other words, the injection $$\begin{array}{rcl} G(K) \times A(G_K, S) & \longto & G(L) \times A(G_L, S^{\sigma}) \\ (g, x) & \longmapsto & (\sigma(g), p_{\sigma}(x))\end{array}$$
	descends to an injection from $\mc B(G, K)$ to $\mc B(G, L)$, which we still denote by $p_{\sigma}$, such that, for each $g \in G(K)$ and $x \in \mc B(G, K)$, we have $$p_{\sigma}(gx) = \sigma(g)p_{\sigma}(x).$$
\end{proof}

Note that the proof of Theorem \ref{Functoriality_iso} yields the following relation between valuated root data.

\begin{prop}\label{Galois_action_valuated_root_data}
	Let $K$ and $L$ be two discretely valued separable extensions of $k$ with perfect residue fields and $\sigma: K \to L$ be a morphism of $k$-algebras such that $G_K$ and $G_L$ have the same relative rank.
	Let $S \subset G_K$ be a maximal $K$-split torus and $x$ be a point in the apartment $A(G_K, S)$. Then, for each $a \in \Phi(G_K, S)$, we have $$\varphi_a^x \circ \sigma^{-1} = \varphi_{\sigma(a)}^{p_{\sigma(x)}}.$$
\end{prop}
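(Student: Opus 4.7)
The plan is to extract the valuation-matching identity from inside the proof of Theorem \ref{Functoriality_iso}, where it was essentially already established. I would proceed in three short steps centred on the geometric characterisation of the valuation as the level parameter of the fixed hyperplane of the Bruhat-Tits $m$-function.

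First, I recall from \cite[6.2.12 b)]{BT1} that, for $a \in \Phi(G_K, S)$ and $u \in U_a(K) \setminus \{1\}$, there is a unique pair $(u', u'') \in U_{-a}(K)^2$ with $m(u) := u' u u'' \in N_{G_K}(S)(K)$, and $\varphi_a^x(u)$ is the unique real number $k$ such that $m(u)$ pointwise fixes the hyperplane $\partial\alpha_{a,k}^x = \{z \in A(G_K, S) : a(z-x) + k = 0\}$. Applying the morphism $\sigma$ to the decomposition $m(u) = u' u u''$ and using that $\sigma(u) \in U_{\sigma(a)}(L)\setminus\{1\}$ while $\sigma(u'), \sigma(u'') \in U_{-\sigma(a)}(L)$, the uniqueness statement of \cite[6.1.2 (2)]{BT1} applied in $(G_L, S^\sigma)$ forces $\sigma(m(u)) = m(\sigma(u))$.

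Second, I invoke the equivariance identity (\ref{eq: equivariance_psigma}) already proved en route to Theorem \ref{Functoriality_iso}: for any $n \in N_{G_K}(S)(K)$ and $z \in A(G_K, S)$, one has $p_\sigma(nz) = \sigma(n) p_\sigma(z)$. Applied with $n = m(u)$, this tells us that the fixed locus of $m(\sigma(u)) = \sigma(m(u))$ in $A(G_L, S^\sigma)$ is the $p_\sigma$-image of the fixed locus of $m(u)$ in $A(G_K, S)$. Since the linear part of $p_\sigma|_{A(G_K, S)}$ is the isomorphism $V(G_K, S) \to V(G_L, S^\sigma)$ induced on cocharacters by $\sigma$, and characters pull back dually, this image is exactly the affine hyperplane $\partial\alpha_{\sigma(a), \varphi_a^x(u)}^{p_\sigma(x)} = \{z \in A(G_L, S^\sigma) : \sigma(a)(z - p_\sigma(x)) + \varphi_a^x(u) = 0\}$.

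Finally, applying the fixed-hyperplane characterisation of the valuation once more, this time at the point $p_\sigma(x)$ inside $A(G_L, S^\sigma)$, yields $\varphi_{\sigma(a)}^{p_\sigma(x)}(\sigma(u)) = \varphi_a^x(u)$. Substituting $v = \sigma(u)$ rewrites this as $\varphi_{\sigma(a)}^{p_\sigma(x)}(v) = (\varphi_a^x \circ \sigma^{-1})(v)$ for every $v$ in the image $\sigma(U_a(K)) \subset U_{\sigma(a)}(L)$, which is the desired equality. Since every ingredient is already present in the preceding proof, there is essentially no obstacle; the only point requiring care is checking that the linear part identification sends $\partial\alpha_{a, \varphi_a^x(u)}^x$ to $\partial\alpha_{\sigma(a), \varphi_a^x(u)}^{p_\sigma(x)}$, and this follows directly from the cocharacter-pullback description of the linear part of $p_\sigma$ worked out in the uniqueness step of the previous theorem.
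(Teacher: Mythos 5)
Your argument is correct and is precisely what the paper has in mind: the statement is drawn as a scholium from the proof of Theorem~\ref{Functoriality_iso}, and you reproduce exactly the fixed-hyperplane argument there (identifying $\sigma(m(u))$ with $m(\sigma(u))$ via the uniqueness in \cite[6.1.2~(2)]{BT1}, transporting the fixed locus via the $N_{G_K}(S)(K)$-equivariance of $p_\sigma$, and identifying the image hyperplane via the cocharacter-pullback description of the linear part). The one point worth making explicit — which you implicitly handle correctly — is that the hyperplane characterisation of $\varphi_a^x(u)$ is valid for an arbitrary point $x \in A(G_K, S)$, not only a special vertex, because the fixed locus of $m(u)$ is independent of the choice of origin and $\varphi_a^x$ varies affinely with $x$.
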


We also deduce the following property relating the polysimplicial structures of $\mc B(G, K)$ and $\mc B(G, L)$ under the assumptions of Theorem \ref{Functoriality_iso}.

\begin{prop}\label{prop:pseudo_split_vertices}
	Let $K$ and $L$ be two discretely valued separable extensions of $k$ with perfect residue fields and $\sigma: K \to L$ be a morphism of $k$-algebras such that $G_K$ and $G_L$ have the same relative rank (eg. $G_K$ is pseudo-split).
	Let $S \subset G_K$ be a maximal $K$-split torus. Then, the embedding $p_{\sigma}$ maps vertices (resp. special vertices) in $\mc B(G, K)$ to vertices (resp. special vertices) in $\mc B(G, L)$.
\end{prop}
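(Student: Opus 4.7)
The plan is to reduce to verifying the claim for vertices lying in a fixed apartment, invoking the $G(K)$-equivariance of $p_\sigma$ and the observation that both vertices and special vertices form $G(K)$-invariant subsets of $\mc B(G, K)$. I would then fix a maximal $K$-split torus $S \subset G_K$ and apply the last assertion of Theorem \ref{Functoriality_iso}: $p_\sigma$ sends affine roots of $A(G_K, S)$ to affine roots of $A(G_L, S^\sigma)$. Since $p_\sigma$ restricts to an affine isomorphism between these two apartments, identifying them along $p_\sigma$ allows us to view the affine root hyperplane arrangement of $A(G_K, S)$ as a subset of the affine root hyperplane arrangement of $A(G_L, S^\sigma)$, so that the latter refines the former.

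For vertices, I would note that the polysimplicial structure on an apartment is precisely the one induced by its affine root hyperplane arrangement, and that a point is a vertex if and only if its facet is zero-dimensional. The refinement observation then gives that the facet of $p_\sigma(x)$ in $A(G_L, S^\sigma)$ is contained in the $p_\sigma$-image of the facet of $x$ in $A(G_K, S)$; when $x$ is a vertex this image is $\{p_\sigma(x)\}$, forcing $p_\sigma(x)$ itself to be a vertex.

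For special vertices, I would use that since $S$ and $S^\sigma$ are split, base change preserves the weight decomposition of $\Lie(G_K)$, yielding a canonical bijection $\Phi(G_K, S) \to \Phi(G_L, S^\sigma)$, $a \mapsto \sigma(a)$. A vertex $x$ is special if and only if, for every $a \in \Phi(G_K, S)$, a wall with direction perpendicular to $a$ passes through $x$. Under $p_\sigma$ each such wall maps to a wall through $p_\sigma(x)$ perpendicular to $\sigma(a)$, and as $a$ ranges over $\Phi(G_K, S)$ the images $\sigma(a)$ exhaust all of $\Phi(G_L, S^\sigma)$. Hence $p_\sigma(x)$ is special. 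I do not expect any substantive obstacle here: the crucial input, namely the compatibility of $p_\sigma$ with affine roots, has already been recorded in Theorem \ref{Functoriality_iso}, and everything else is a direct unpacking of the polysimplicial and specialness structures on the apartments.
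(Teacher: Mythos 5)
Your proof is correct and takes the same approach as the paper's, which is a one-line citation of exactly the two ingredients you use: the preservation of affine roots under $p_{\sigma}$ (from Theorem \ref{Functoriality_iso}) and the equality of relative ranks, which guarantees that $p_{\sigma}$ restricted to $A(G_K,S)$ is an affine isomorphism onto $A(G_L,S^{\sigma})$ and that base change gives a bijection $\Phi(G_K,S) \to \Phi(G_L,S^{\sigma})$. You have simply unpacked the refinement-of-arrangements argument that the paper leaves implicit.
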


\begin{proof}
	This statement follows directly from the fact that $p_{\sigma}$ maps affine roots in $A(G_K, S)$ to affine roots in $A(G_L, S^{\sigma})$ and the equality between the relative ranks of $G_K$ and $G_L$.
\end{proof}

\begin{rmk}
	Note that, in general, the image of a higher-dimensional facet in $A(G_K, S)$ will not be a facet in $A(G_L, S^{\sigma})$.
\end{rmk}

Finally, Theorem \ref{Functoriality_iso} also immediately yields the existence of a Galois action on the Bruhat-Tits building.

\begin{thm}\label{Galois_action_building}
	Let $K/k$ be a Galois extension of discretely valued fields with perfect residue fields and $\sigma \in \Gal(K/k)$. Then, there exists a unique map $$p_{\sigma}: \mc B(G, K) \to \mc B(G, K)$$ such that: 
	\begin{enumerate}
		\item For each maximal $K$-split torus $S \subset G_K$, the map $p_{\sigma}$ restricts to an affine isomorphism of $A(G_K, T)$ onto $A(G_K, S^{\sigma})$ in such a way that affine roots in $A(G_K, S)$ are mapped to affine roots in $A(G_K, S^{\sigma})$. In particular, the map $p_{\sigma}$ is simplicial.
		\item For each $x \in \mc B(G, K)$ and $g \in G(K)$, we have $p_{\sigma}(gx) = \sigma(g)p_{\sigma}(x)$.
	\end{enumerate}
	
	Moreover, for each $\sigma, \tau \in \Gal(K/k)$, we have $p_{\sigma \tau} = p_{\sigma} \circ p_{\tau}$.
\end{thm}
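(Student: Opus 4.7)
The statement is essentially a direct corollary of Theorem \ref{Functoriality_iso}. The plan is to specialise that theorem to the case $L = K$, with the map $\sigma \in \Gal(K/k)$ viewed as a morphism of $k$-algebras $K \to K$. In this setting the hypothesis that $G_K$ and $G_L$ have the same relative rank is automatic, since $L$ and $K$ are literally the same non-archimedean field. Applying Theorem \ref{Functoriality_iso} to this datum produces a unique continuous map $p_{\sigma}: \mc B(G, K) \to \mc B(G, K)$ which, for every maximal $K$-split torus $S \subset G_K$, restricts to an affine isomorphism $A(G_K, S) \overset{\sim}{\to} A(G_K, S^{\sigma})$ sending affine roots to affine roots, and satisfies the equivariance identity $p_{\sigma}(gx) = \sigma(g) p_{\sigma}(x)$ for all $g \in G(K)$ and $x \in \mc B(G, K)$.

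For the composition law, I would invoke the functoriality assertion at the end of Theorem \ref{Functoriality_iso}: for any $\sigma, \tau \in \Gal(K/k)$, the composed $k$-algebra morphism $\sigma \circ \tau: K \to K$ satisfies $p_{\sigma \circ \tau} = p_{\sigma} \circ p_{\tau}$. In particular, taking $\tau = \sigma^{-1}$ shows that each $p_{\sigma}$ is a bijection of $\mc B(G, K)$ with inverse $p_{\sigma^{-1}}$, so the assignment $\sigma \mapsto p_{\sigma}$ defines a genuine left action of $\Gal(K/k)$ on the building.

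It remains to verify the parenthetical remark that $p_{\sigma}$ is simplicial. The plan here is to note that, by Theorem \ref{Functoriality_iso}, the restriction of $p_{\sigma}$ to any apartment $A(G_K, S)$ is an affine isomorphism onto the apartment $A(G_K, S^{\sigma})$ mapping the set of affine root hyperplanes bijectively onto the set of affine root hyperplanes of the target. Consequently it sends facets to facets, and as every facet of $\mc B(G, K)$ is contained in some apartment and $p_{\sigma}$ is $G(K)$-equivariant in the twisted sense, one obtains a simplicial automorphism of the whole building; no genuinely new argument is required. Since every step is a direct invocation of previously proved material, there is no real obstacle here — the only mild subtlety is remembering that $\sigma$ is a $k$-morphism (not a $K$-morphism), so that $S^{\sigma}$ is a genuinely new torus inside $G_K$ and the target apartment need not equal the source apartment.
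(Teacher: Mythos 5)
Your proposal is correct and takes the same route as the paper: the paper offers no separate argument, stating only that Theorem \ref{Galois_action_building} follows immediately from Theorem \ref{Functoriality_iso}, which is exactly the specialisation $L = K$ with $\sigma \in \Gal(K/k)$ that you carry out (the relative-rank hypothesis being vacuous, and the composition law coming from the final ``Moreover'' clause). Your extra remark on why $p_\sigma$ is simplicial — affine roots map to affine roots on each apartment, hence walls to walls and facets to facets, then spread by twisted equivariance — correctly fills in the one point the paper leaves tacit.
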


\subsection{Functorial system of embeddings}

Our second, more substantial, task is to establish the existence of a canonical system of embeddings indexed by discretely valued separable extensions of $k$ with perfect residue fields.

We first prove the existence of canonical embeddings $p_{L/K}$ for two classes of extensions, mirroring the two steps of the general construction of the Bruhat-Tits building. First, we treat the case where $L$ is the strict Henselisation of $K$, then we treat the case where $G_K$ is quasi-split and $G_L$ is pseudo-split. 

Both proofs rely on descent theory for valuations of root data, as exposed in \cite[§9]{BT1}, which provides a general framework for constructing equivariant embeddings of buildings.

\subsubsection*{The case of the strict henselisation}

The first case that we treat is that of a strict Henselisation $K^{\nr}/K$. The situation is as nice as it could be, in the sense that the canonical embedding $\mc B(G, K) \to \mc B(G, K^{\nr})$ is a simplicial map and identifies $\mc B(G, K)$ with the fixed locus of $\mc B(G, K^{\nr})$ under the action of $\Gal(K^{\nr}/K)$.

The proof is a direct consequence of the construction of Bruhat-Tits buildings which relies on unramified descent in the general case \cite[§5]{BT2}, \cite[Section 4.4]{Lou}.

\begin{prop}\label{strict_henselization_embedding}
	Let $K$ be a discretely valued separable extension of $k$ with perfect residue field and $K^{\nr}$ be its strict henselisation. There exists a unique map $$p_{K^{\nr}/K}: \mc B(G, K) \to \mc B(G, K^{\nr})$$ such that
	\begin{enumerate}
		\item For each maximal $K$-split torus $S \subset G_K$, there exists a maximal $K^{\nr}$-split torus $S^{\nr} \subset G_{K^{\nr}}$ such that $S_{K^{\nr}} \subset S^{\nr}$ and such that the map $p_{K^{\nr}/K}$ restricts to an affine isomorphism of $A(G_K, S)$ onto an affine subspace of $A(G_{K^{\nr}}, S^{\nr})$.
		\item The map $p_{K^{\nr}/K}$ is $G(K)$-equivariant.
		\item We have the inclusion $p_{K^{\nr}/K}(\mc B(G, K)) \subset \mc B(G, K^{\nr})^{\Gal(K^{\nr}/K)}$.
	\end{enumerate}
	Moreover, for each maximal $K$-split torus $S \subset G_K$ and each maximal $K^{\nr}$-split torus $S^{\nr} \subset G_{K^{\nr}}$ such that $S_{K^{\nr}} \subset S^{\nr}$, the affine roots in $A(G_K, S)$ correspond to the restrictions of the affine roots in $A(G_{K^{\nr}}, S^{\nr})$, and we have the equality $p_{{K^{\nr}}/K}(\mc B(G, K)) = \mc B(G, {K^{\nr}})^{\Gal({K^{\nr}}/K)}$.
\end{prop}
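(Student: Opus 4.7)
The plan is to derive this proposition from the very construction of $\mc B(G, K)$ adopted in \cite[§5]{BT2} and, in the quasi-reductive setting, \cite[§4.4]{Lou}: there $\mc B(G, K)$ is defined via unramified Galois descent as the fixed-point locus of $\mc B(G, K^{\nr})$ under the action of $\Gal(K^{\nr}/K)$ provided by Theorem \ref{Galois_action_building}. Declaring $p_{K^{\nr}/K}$ to be the tautological inclusion, properties (2) and (3) are built in, and the equality $p_{K^{\nr}/K}(\mc B(G, K)) = \mc B(G, K^{\nr})^{\Gal(K^{\nr}/K)}$ holds by definition. For property (1), I would invoke the fact, established in the quasi-reductive setting in \cite{Lou} along the lines of \cite{BT2}, that each maximal $K$-split torus $S$ of $G_K$ extends to a maximal $K^{\nr}$-split torus $S^{\nr}$ of $G_{K^{\nr}}$ containing $S_{K^{\nr}}$, and that the associated apartment $A(G_K, S)$ is canonically identified with the Galois-fixed affine subspace of $A(G_{K^{\nr}}, S^{\nr})$ via base change on cocharacters.

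Uniqueness would then follow from the same template as Theorem \ref{Functoriality_iso}. Given any map $p$ satisfying (1) and (2), testing on the translations induced by $\lambda(\vpi^{-1})$ for $\lambda \in X_*(S)$ and a uniformiser $\vpi$ of $K$ forces the linear part of $p|_{A(G_K, S)}$ to be the base-change injection $V(G_K, S) \mono V(G_{K^{\nr}}, S^{\nr})$, so $p$ is determined by its value at a single special vertex $x \in A(G_K, S)$. Condition (2) then forces $p(x)$ to be fixed pointwise by $N_{G_K}(S)(K)_x$ (viewed inside $G(K^{\nr})$), and condition (3) forces $p(x) \in A(G_{K^{\nr}}, S^{\nr})^{\Gal(K^{\nr}/K)}$. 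Since the image of $N_{G_K}(S)(K)_x$ in the linear group of the Galois-fixed part of $V(G_{K^{\nr}}, S^{\nr})$ is the full relative Weyl group $W(G_K, S)(K)$, which acts without nonzero fixed vectors, the point $p(x)$ is uniquely pinned down. Propagating via $\mc B(G, K) = G(K) \cdot A(G_K, S)$ and equivariance forces $p$ to coincide with the tautological inclusion everywhere.

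The remaining claim about the correspondence of affine roots is built into the descent machinery: the valuations of the root datum of $G$ over $K$ are defined by restriction of those over $K^{\nr}$, as in \cite[§9]{BT1} and \cite[§4.4]{Lou}, which is exactly what is needed. The main potential difficulty is not specific to this proposition but rather the verification that the unramified descent construction produces a genuine building with the claimed apartment and polysimplicial structure; this has already been carried out by Bruhat--Tits and, in the quasi-reductive setting, by Lourenço, and I would invoke their results as a black box. The present proof would thus reduce to the uniqueness argument above and the translation between the descent construction and the characterising properties stated in the proposition.
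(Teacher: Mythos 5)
Your proposal is correct and takes essentially the same route as the paper: both rely on the unramified descent construction of \cite[§5]{BT2} and \cite[§4.4]{Lou}, which realises $\mc B(G, K)$ as the Galois-fixed locus of $\mc B(G, K^{\nr})$ and carries with it the compatibility of apartments, affine roots, and valuations. The one place where you diverge is the uniqueness argument: the paper simply invokes \cite[9.2.14, 9.2.15]{BT1}, which guarantee uniqueness of a $G(K)$-equivariant isometry landing in the Galois-fixed locus, whereas you spell out the pinning-down argument directly by (a) determining the linear part of $p$ on an apartment from equivariance under $\lambda(\vpi^{-1})$, and (b) locating $p(x)$ for a special vertex $x$ as the unique fixed point of $N_{G_K}(S)(K)_x$ in the Galois-fixed affine subspace of $A(G_{K^{\nr}}, S^{\nr})$, exactly as in Theorem~\ref{Functoriality_iso}. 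Your version has the small advantage of not silently passing from the hypothesis ``affine isomorphism onto a subspace'' to ``isometry'' — the paper's one-line citation of \cite{BT1} implicitly rests on that identification — while the paper's version is shorter because it outsources this to the general descent machinery. Both are sound.
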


\begin{proof}
	Let $S$ be a maximal $K$-split torus in $G_K$. Let $\Gamma$ be the Galois group $\Gal(K^{\nr}/K)$. By \cite[Lemme 4.8]{Lou}, there exists a maximal $K^{\nr}$-split torus $S^{\nr}$ in $G_{K^{\nr}}$ that contains $S_{K^{\nr}}$ and is defined over $K$. Denote by $\Phi = \Phi(G_K, S)$ and $\Phi^{\nr} = \Phi(G_{K^{\nr}}, S^{\nr})$ the corresponding root systems. The apartment $A(G_{K^{\nr}}, S^{\nr})$ is then stable under the action of $\Gamma$ and admits a fixed point $y$ corresponding to a $\Gamma$-invariant valuation $\varphi^{\nr}$ of the root group datum $(Z_{G_{K^{\nr}}}(S^{\nr})(K^{\nr}), (U_{a^{\nr}}(K^{\nr}))_{a^{\nr} \in \Phi^{\nr}})$. 
	It then follows from \cite[Théorème 4.9]{Lou} that the valuation $\varphi^{\nr}$ descends to a valuation $\varphi$ of the root group datum $(Z_{G_K}(S)(K), (U_a(K))_{a\in \Phi})$, corresponding to a point $x \in A(G_K, S)$. By \cite[9.1.17]{BT1}, there is then a $G(K)$-equivariant isometry $$p_{K^{\nr}/K}: \mc B(G, K) \to \mc B(G, K^{\nr})$$ such that $p_{K^{\nr}/K}(x) = y$. We then have $p_{K^{\nr}/K}(A(G_K, S)) = A(G_{K^{\nr}}, S^{\nr})^{\Gamma}$ and, by $G(K)$-equivariance, the inclusion $p_{K^{\nr}/K}(\mc B(G, K)) \subset \mc B(G, K^{\nr})^{\Gamma}$, which completes the proof of the existence statement. By \cite[9.2.14 and 9.2.15]{BT1}, the map $p_{K^{\nr}/K}$ is in fact the unique $G(K)$-equivariant isometry from $\mc B(G, K)$ to $\mc B(G, K^{\nr})$ that maps into $\mc B(G, K^{\nr})^{\Gamma}$, which establishes the uniqueness statement.
	
	The additional statements follow directly from \cite[Théorème 4.9]{Lou}.
\end{proof}

For technical reasons to be detailed below, we also establish a similar result for embedded buildings -- or inner façades in the terminology of \cite{Rou2} -- associated to centralisers of tori.

Recall that, if $S$ is a $K$-split torus in $G_K$, then the union $\mc B(Z_{G_K}(S), G_K, K)$ of the apartments $A(G_K, T)$, where $T$ ranges among the maximal $K$-split tori of $G_K$ that contain $S$, is closely related to the Bruhat-Tits building $\mc B(Z_{G_K}(S), K)$ (see the Appendix for precise statements). In particular, Proposition \ref{strict_henselization_embedding} implies the following result.

\begin{prop}\label{strict_henselisation_embedding_levi}
	Let $K$ be a discretely valued extension of $k$ with perfect residue field and $S$ be a $K$-split torus in $G_K$. Then, the action of $\Gal(K^{\nr}/K)$ on $\mc B(G, K^{\nr})$ stabilises the subspace $\mc B(Z_{G_K}(S), G, K^{\nr})$ and we have $$(\mc B(Z_{G_K}(S), G, K^{\nr}))^{\Gal(K^{\nr}/K)} = p_{K^{\nr}/K}(\mc B(Z_{G_K}(S), G, K))$$
\end{prop}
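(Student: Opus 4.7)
The statement has two parts: stability of $\mc B(Z_{G_K}(S), G, K^{\nr})$ under $\Gamma := \Gal(K^{\nr}/K)$, and equality of its fixed locus with $p_{K^{\nr}/K}(\mc B(Z_{G_K}(S), G, K))$, which I treat by double inclusion.

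For stability: since $S$ is $K$-defined, $S_{K^{\nr}}$ is $\Gamma$-stable. For any $\sigma \in \Gamma$ and any maximal $K^{\nr}$-split torus $T \supset S_{K^{\nr}}$ of $G_{K^{\nr}}$, the conjugate $T^\sigma$ is again maximal $K^{\nr}$-split and contains $(S_{K^{\nr}})^\sigma = S_{K^{\nr}}$. By Theorem \ref{Galois_action_building}, $\sigma$ sends $A(G_{K^{\nr}}, T)$ onto $A(G_{K^{\nr}}, T^\sigma)$, so the union is preserved. For the inclusion $p_{K^{\nr}/K}(\mc B(Z_{G_K}(S), G, K)) \subset \mc B(Z_{G_K}(S), G, K^{\nr})^\Gamma$: any $x \in A(G_K, T)$ with $T \supset S$ a maximal $K$-split torus of $G_K$ is sent by $p_{K^{\nr}/K}$ into some $A(G_{K^{\nr}}, T^{\nr})$ with $T^{\nr} \supset T_{K^{\nr}} \supset S_{K^{\nr}}$ by Proposition \ref{strict_henselization_embedding}, and its image is automatically $\Gamma$-fixed by the same result.

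The reverse inclusion is the main content. Take $y \in \mc B(Z_{G_K}(S), G, K^{\nr})^\Gamma$ with $y \in A(G_{K^{\nr}}, T)$ for a maximal $K^{\nr}$-split torus $T \supset S_{K^{\nr}}$, and let $x \in \mc B(G, K)$ be the unique preimage of $y$ under $p_{K^{\nr}/K}$ supplied by Proposition \ref{strict_henselization_embedding}. Set $H := Z_{G_K}(S)$. Since $T$ centralises $S_{K^{\nr}}$, we have $T \subset H_{K^{\nr}}$, and $T$ is a maximal $K^{\nr}$-split torus of $H_{K^{\nr}}$ (any larger $K^{\nr}$-split torus of $H_{K^{\nr}}$ would also be $K^{\nr}$-split in $G_{K^{\nr}}$, contradicting maximality of $T$ there). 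The plan is to apply Proposition \ref{strict_henselization_embedding} to $H$ over $K$: via the canonical, $\Gamma$-equivariant identification between the embedded subspace $\mc B(H, G, K^{\nr})$ and the intrinsic building $\mc B(H, K^{\nr})$ supplied by the Appendix (together with its analogue over $K$), the point $y$ corresponds to a $\Gamma$-fixed element of $\mc B(H, K^{\nr})$, which lifts to a unique $x_H \in \mc B(H, K)$. The image of $x_H$ in $\mc B(G, K)$ lies in $\mc B(Z_{G_K}(S), G, K)$ by construction and must equal $x$ by the uniqueness of the preimage of $y$ under $p_{K^{\nr}/K}$.

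The main obstacle is ensuring that the Appendix's identification of embedded and intrinsic Levi buildings intertwines the $\Gamma$-actions and the henselisation embeddings for $G$ and $H$; both compatibilities are natural but require the uniqueness characterisations from Proposition \ref{strict_henselization_embedding} (applied to $G$ and to $H$) to match on the nose. Should this bookkeeping prove inconvenient, one can bypass the Appendix altogether by working directly with root data: the point $y \in A(G_{K^{\nr}}, T)$ corresponds to a $\Gamma$-invariant valuation of the root datum of $H_{K^{\nr}}$ attached to $T$, and \cite[Théorème 4.9]{Lou} applied to $H$ descends it to a valuation over $K$, yielding a point in some apartment $A(G_K, T_0)$ with $T_0 \supset S$, which is the sought $x$.
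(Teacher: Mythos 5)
Your treatment of stability and of the easy inclusion is correct and agrees with the paper. Your high-level strategy for the reverse inclusion --- push the Galois-fixed point down to the intrinsic building $\mc B(H, K^{\nr})$, apply Proposition \ref{strict_henselization_embedding} to $H$ to get a descended point, and compare with the unique preimage of $y$ in $\mc B(G, K)$ --- is also the same as the paper's. However, there is a genuine gap in the execution.

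You invoke ``the canonical, $\Gamma$-equivariant \emph{identification} between the embedded subspace $\mc B(H, G, K^{\nr})$ and the intrinsic building $\mc B(H, K^{\nr})$ supplied by the Appendix,'' and write ``the image of $x_H$ in $\mc B(G, K)$.'' But the Appendix does not provide an identification: the map $\tilde\pi^{\nr}\colon \mc B(H, G, K^{\nr}) \to \mc B(H, K^{\nr})$ is a surjection whose fibres are $V^{\nr}$-orbits, where $V^{\nr} = \ker\bigl(V(G_{K^{\nr}}, T) \to V(H_{K^{\nr}}, T)\bigr)$ is positive-dimensional in general. Consequently, once you have descended to $x_H \in \mc B(H, K)$, its preimage $z$ under $\tilde\pi$ in $\mc B(H, G, K) \subset \mc B(G, K)$ is only defined up to the $V$-action. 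Applying $p_{K^{\nr}/K}$ to such a $z$ and comparing with $y$, you only learn that $\tilde\pi^{\nr}(p_{K^{\nr}/K}(z)) = \tilde\pi^{\nr}(y)$, i.e.\ $y = p_{K^{\nr}/K}(z) + v^{\nr}$ for some unknown $v^{\nr} \in V^{\nr}$. Your claim that this lift ``must equal $x$ by uniqueness'' skips the step of showing $v^{\nr}$ can be absorbed. This is precisely the content of the paper's argument: it first proves that the restriction of $p_{K^{\nr}/K}$ to $\mc B(H, G, K)$ is $V$-equivariant, then uses Proposition \ref{Galois_action_inner_facade}\,(3) to conclude that $v^{\nr}$, being the difference of two $\Gamma$-fixed points, is itself $\Gamma$-fixed, hence of the form $v_{K^{\nr}}$ with $v \in V$, so that $y = p_{K^{\nr}/K}(z+v)$ with $z + v \in \mc B(H, G, K)$.

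Your alternative via root data runs into the same wall from a different angle. First, to speak of a ``$\Gamma$-invariant valuation of the root datum of $H_{K^{\nr}}$ attached to $T$'' you need the apartment $A(G_{K^{\nr}}, T)$ to be $\Gamma$-stable, i.e.\ $T$ defined over $K$ \emph{and containing $S_{K^{\nr}}$}; but the existence of such a torus through $y$ is essentially equivalent to the statement you are trying to prove (this is exactly the subtlety highlighted in the Remark after Theorem \ref{Functoriality_inclusions}). Second, even granted that, descending via \cite[Th\'eor\`eme 4.9]{Lou} applied to $H$ produces a point of $\mc B(H, K)$, and promoting it to a point of $\mc B(G, K)$ again involves a non-canonical lift along $\tilde\pi$, hence the same $V$-ambiguity. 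So the ``bookkeeping'' you flag as an inconvenience is in fact the mathematical crux; Proposition \ref{Galois_action_inner_facade} and the $V$-equivariance of $p_{K^{\nr}/K}$ on the inner façade are what make it go through.
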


\begin{proof}
	First of all, note that Conditions 1 and 3 of Proposition \ref{strict_henselization_embedding} imply that $$p_{K^{\nr}/K}(\mc B(Z_{G_K}(S), G, K)) \subset \mc B(Z_{G_K}(S), G, K^{\nr})^{\Gal(K^{\nr}/K)}.$$
	For the reverse inclusion, we apply Proposition \ref{strict_henselization_embedding} to the group $Z_{G_K}(S)$. To pass from the façade $\mc B(Z_{G_K}(S), G, K^{\nr})$ to the building $\mc B(Z_{G_K}(S), K^{\nr})$, we need to mod out, which we do below.
	Let $T$ be a maximal $K$-split torus of $G_K$ containing $S$. By \cite[Lemme 4.8]{Lou}, we have at our disposal a maximal $K^{\nr}$-split torus $T^{\nr}$ of $G_{K^{\nr}}$ containing $T_{K^{\nr}}$ that is defined over $K$, and we have $$p_{K^{\nr}/K}(A(G_K, T)) = A(G_{K^{\nr}}, T^{\nr})^{\Gal(K^{\nr}/K)}.$$ Consider the vector spaces $$\begin{array}{ccc}
	V^{\nr} & = & \ker(V(G_{K^{\nr}}, T^{\nr}) \to V((Z_{G_K}(S))_{K^{\nr}}, T^{\nr})) \\
	V & = & \ker(V(G_K, T) \to V(Z_{G_K}(S), T))
	\end{array}.$$
	The map $X_*(T) \to X_*(T^{\nr})$ given by extension of scalars induces a linear injection $\begin{array}{ccc}V & \hookrightarrow & V^{\nr}\\v & \mapsto & v_{K^{\nr}}\end{array}$ that identifies $V$ with the set of $\Gal(K^{\nr}/K)$-fixed points of $V^{\nr}$. Indeed, the proof of Theorem \ref{Functoriality_iso} establishes that the action of $\Gal(K^{\nr}/K)$ on $V(G_{K^{\nr}}, T^{\nr})$ is induced by the pullback action on cocharacters.
	
	Then, we claim that the restriction of $p_{K^{\nr}/K}$ to $\mc B(Z_{G_K}(S), G, K)$ is $V$-equivariant, ie. that for each $x \in \mc B(Z_{G_K}(S), G, K)$ and each $v \in V$, we have $$p_{K^{\nr}/K}(x+v) = p_{K^{\nr}/K}(x) + v_{K^{\nr}}.$$
	Because the action of $V$ commutes with that of $Z_{G_K}(S)(K)$ (Property 2 of the Appendix), it is sufficient to prove the above for $x \in A(G_K, T)$. 
	
	Recall that the centraliser $Z_{G_K}(T)(K)$ acts on $A(G_K, T)$ by translation via the map $$\nu: Z_{G_K}(T)(K) \longto V(G_K, 
	T) $$ such that, for each $z \in Z_{G_K}(T)(K)$ and $\alpha \in \Phi(G_K, T)$, we have $$\langle \alpha, \nu(z) \rangle = - \omega(\alpha(z)).$$
	Likewise, we have an analogously defined map $$\nu^{\nr}: Z_{G_{K^{\nr}}}(T^{\nr})(K^{\nr}) \longto V(G_{K^{\nr}}, T^{\nr}).$$
	In particular, the center $Z(Z_{G_K}(S))(K)$ acts on $A(G_K, T)$ by translations by elements of $V$ and on $A(G_{K^{\nr}}, T^{\nr})$ by elements of $V^{\nr}$ and we have, for each $z \in Z(Z_{G_K}(S))(K)$ $$\nu^{\nr}(z) = \nu(z)_{K^{\nr}}.$$ Because $p_{K^{\nr}/K}$ is $G(K)$-equivariant, we have $$p_{K^{\nr}/K}(x + \nu(z)) = p_{K^{\nr}/K}(x) + \nu(z)_{K^{\nr}}$$ for each $x \in A(G_K, T)$ and $z \in Z(Z_{G_K}(S))(K)$. Because both sides of the equation are affine and because the convex hull of $\{\nu(z), z \in  Z(Z_{G_K}(S))(K)\}$ is $V$, we get the result.
	
	Now let $$\tilde \pi^{\nr}: \mc B(Z_{G_K}(S), G, K^{\nr}) \to \mc B(Z_{G_K}(S), K^{\nr})$$ be the essentialisation map described in the Appendix. Because it is invariant under the action of $V^{\nr}$, the map $\tilde \pi^{\nr} \circ p_{K^{\nr}/K}$ is invariant under the action of $V$. Therefore, there exists a unique map $j$ that makes the following diagram commute 
	\begin{center}
		\begin{tikzcd}
			\mc B(Z_{G_K}(S), G, K^{\nr}) \arrow[r, "\tilde \pi^{\nr}"] & \mc B(Z_{G_K}(S), K^{\nr}) \\
			\mc B(Z_{G_K}(S), G, K) \arrow[u, "p_{K^{\nr}/K}"] \arrow[r, "\tilde \pi"] & \mc B(Z_{G_K}(S), K) \arrow[u, dotted, "\exists! j"]
		\end{tikzcd}
	\end{center}
	Moreover:
	\begin{enumerate}
		\item The restriction of $j$ to $A(Z_{G_K}(S), T)$ fits into the following commutative diagram
		\begin{center}
			\begin{tikzcd}
				A(G_{K^{\nr}}, T^{\nr}) \arrow[r, twoheadrightarrow, "\tilde \pi^{\nr}"] & A((Z_{G_K}(S))_{K^{\nr}}, T^{\nr}) \\
				A(G_K, T) \arrow[u, hookrightarrow, "p_{K^{\nr}/K}"] \arrow[r, twoheadrightarrow, "\tilde \pi"] & A(Z_{G_K}(S), T) \arrow[u, "j"]
			\end{tikzcd}.
		\end{center}
		In particular, the restriction of $j$ to $A(Z_{G_K}(S), T)$ is an affine injection.
		\item The map $j$ is $G(K)$-equivariant. In particular, the restriction of $j$ to each apartment $A(G_K, T')$ is an affine injection into an apartment $A(G_{K^{\nr}}, (T')^{\nr})$ for some maximal $K^{\nr}$-split torus $(T')^{\nr}$ containing $(T')_{K^{\nr}}$.
		\item Because $\tilde \pi^{\nr}$ is $\Gal(K^{\nr}/K)$-equivariant (Proposition \ref{Galois_action_inner_facade}), we have $$\tilde \pi^{\nr}(p_{K^{\nr}/K}(\mc B(Z_{G_K}(S), G, K))) \subset \mc B(Z_{G_K}(S), K^{\nr})^{\Gal(K^{\nr}/K)}$$ and therefore $$j(\mc B(Z_{G_K}(S), K)) \subset \mc B(Z_{G_K}(S), K^{\nr})^{\Gal(K^{\nr}/K)}.$$
	\end{enumerate}
	In other words, the map $j$ is none other than the map $$p^1_{K^{\nr}/K}: \mc B(Z_{G_K}(S), K) \to \mc B(Z_{G_K}(S), K^{\nr})$$ given by Proposition \ref{strict_henselization_embedding}.
	
	Now, let $y^{\nr} \in \mc B(Z_{G_K}(S), G, K^{\nr})^{\Gal(K^{\nr}/K)}$. Because $\tilde \pi^{\nr}$ is $\Gal(K^{\nr}/K)$-equivariant, we have $$\tilde \pi^{\nr}(y^{\nr}) \in \mc B(Z_{G_K}(S), K^{\nr})^{\Gal(K^{\nr}/K)}.$$ By Proposition \ref{strict_henselization_embedding}, there exists $x \in \mc B(Z_{G_K}(S), K)$ such that $$\tilde \pi^{\nr}(y^{\nr}) = p^1_{K^{\nr}/K}(x).$$ Because $\tilde \pi$ is onto, there exists $z\in \mc B(Z_{G_K}(S), G, K)$ such that $\tilde \pi(z) = x$. Then, we have $$\tilde \pi^{\nr}(p_{K^{\nr}/K}(z)) = p^1_{K^{\nr}/K}(\tilde \pi(z)) = \tilde \pi^{\nr}(y^{\nr}).$$
	Therefore, there exists $v^{\nr} \in V^{\nr}$ such that $y^{\nr} = p_{K^{\nr}/K}(z) + v^{\nr}$. Because both $y^{\nr}$ and $p^1_{K^{\nr}/K}(z)$ are fixed under the action of the Galois group, so must $v^{\nr}$ be (Proposition \ref{Galois_action_inner_facade}, 3). Consequently, $v^{\nr}$ can be written $v^{\nr} = v_{K^{\nr}}$ for some $v \in V$ and we thus have $$y^{\nr} = p_{K^{\nr}/K}(z + v) \in p_{K^{\nr}/K}(\mc B(Z_{G_K}(S), G, K)),$$ which completes the proof.
\end{proof}

\subsubsection*{The quasi-split to pseudo-split case}

The second special case is that of a finite extension $L/K$ such that $G_K$ is quasi-split and $G_L$ is pseudo-split. The main difficulty here is to exhibit the image of a given special point. Because this image will be constructed as a valuation of a root data, which will involve concrete manipulations of parameterisations of root groups, this proof will be noticeably longer than the other ones.

\begin{prop}\label{quasisplit_to_split_embedding}
	Let $K$ and $L$ be finite separable extensions of $k$, with $K \subset L$. Assume that $L/K$ is Galois, that $G_K$ is quasi-split and that $G_L$ is pseudo-split. Then, there exists a unique map $$p_{L/K}: \mc B(G, K) \to \mc B(G, L)$$ such that 
	\begin{enumerate}
		\item For each maximal $K$-split torus $S \subset G_K$, there exists a maximal $L$-split torus $T \subset G_L$ such that $S_L \subset T$ and such that the map $p_{L/K}$ restricts to an affine isomorphism of $A(G, S)$ onto an affine subspace of $A(G_L, T)$.
		\item The map $p_{L/K}$ is $G(K)$-equivariant.
		\item We have the inclusion $p_{L/K}(\mc B(G, K)) \subset \mc B(G, L)^{\Gal(L/K)}$.
	\end{enumerate}
\end{prop}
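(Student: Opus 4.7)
The plan follows the four-step template outlined at the start of Section \ref{sect:Functoriality} and mirrors the proof of Theorem \ref{Functoriality_iso}. For uniqueness, once $p_{L/K}$ is assumed to be $G(K)$-equivariant and affine on each apartment, the translation-action argument using elements of the form $\lambda(\vpi^{-1}) \in S(K)$ forces the linear part of $p_{L/K}|_{A(G_K, S)}$ to be the injection $V(G_K, S) \hookrightarrow V(G_L, T)$ induced by base change of cocharacters. Condition 2 then reduces the uniqueness of $p_{L/K}$ to that of the image of a single special vertex in one apartment.

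To exhibit this image, we first identify the relevant apartment of $\mc B(G, L)$. Fix a maximal $K$-split torus $S \subset G_K$. Because $G_K$ is quasi-split, the centralizer $T_0 := Z_{G_K}(S)$ is a maximal $K$-torus; because $G_L$ is pseudo-split, $T := (T_0)_L$ is itself a maximal $L$-split torus of $G_L$ containing $S_L$. The restriction map $X^*(T) \to X^*(S_L)$ is $\Gal(L/K)$-equivariant and induces a surjection $\Phi(G_L, T) \twoheadrightarrow \Phi(G_K, S)$ whose fibers are the Galois orbits. For each $a \in \Phi(G_K, S)$, Galois descent applied to the quasi-split structure of $G_K$ provides a $\Gal(L/K)$-equivariant identification of $U_a(K)$ with the $\Gal(L/K)$-invariants in $\prod_{\tilde a \mapsto a} U_{\tilde a}(L)$.

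Now pick a special vertex $x \in A(G_K, S)$ with associated valuation $\varphi = (\varphi_a)_{a \in \Phi(G_K, S)}$ of the $K$-root datum of $G_K$ relative to $S$. The goal is to construct a $\Gal(L/K)$-invariant valuation $\psi = (\psi_{\tilde a})_{\tilde a \in \Phi(G_L, T)}$ of the $L$-root datum of $G_L$ relative to $T$ which, upon descent through the identifications above, recovers $\varphi$. Using the Galois-equivariant parameterizations of the $L$-root groups provided by the quasi-split structure, one prescribes each $\psi_{\tilde a}$ from $\varphi_a$ (with $\tilde a$ above $a$) in a uniform, orbit-stable manner. The resulting special vertex $y \in A(G_L, T)$ is declared to be the image of $x$.

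With $y$ in hand, the affine map $A(G_K, S) \to A(G_L, T)$ sending $x \mapsto y$ with the linear part above extends to a $G(K)$-equivariant map $p_{L/K}: \mc B(G, K) \to \mc B(G, L)$ by the abstract descent machinery of \cite[§9]{BT1}, invoked exactly as in Proposition \ref{strict_henselization_embedding}, once one verifies that the inclusion $G(K) \hookrightarrow G(L)$ sends each pointwise stabilizer $\hat P_z$ into $\hat P_{p_{L/K}(z)}$; this reduces, as in Theorem \ref{Functoriality_iso}, to the corresponding statement for valuations of root data, and follows from the construction of $\psi$. Condition 3 is then immediate: $y$ is Galois-fixed by construction, the image of $V(G_K, S) \to V(G_L, T)$ lies in the Galois-fixed subspace, and $G(K) \subset G(L)^{\Gal(L/K)}$, so the entirety of $p_{L/K}(\mc B(G, K))$ is fixed pointwise by $\Gal(L/K)$ in view of Theorem \ref{Galois_action_building}. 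The main obstacle I anticipate is step 3, namely the construction of $\psi$ and the verification of the Bruhat-Tits valuation axioms, particularly the commutator axiom (V3), which requires that the prescription on Galois orbits is compatible with the commutator relations among the $U_{\tilde a}(L)$ on one side and the descent relations $U_a(K) \subset \prod U_{\tilde a}(L)$ imposed by the quasi-split structure on the other.
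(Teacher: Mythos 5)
Your outline follows the same overall strategy as the paper: uniqueness reduces, via the translation argument from Theorem~\ref{Functoriality_iso}, to pinning down the image of a single special vertex; then one constructs a $\Gal(L/K)$-invariant valuation of the $L$-root datum of $G_L$ relative to $T$ that descends in the sense of \cite[9.1.11]{BT1} to the given valuation $\varphi$, and invokes \cite[9.1.17]{BT1} (for existence of the equivariant map) together with \cite[9.2.14, 9.2.15]{BT1} (for uniqueness). The identification of the apartment $A(G_L, T)$ with $T = (Z_{G_K}(S))_L$, and the claim that condition~3 follows from Galois-fixedness of $y$ combined with equivariance and convexity, also match the paper.

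However, the proposal leaves unresolved precisely the step that constitutes the bulk of the paper's argument, namely the explicit construction of the valuation $\psi = (\varphi_{\tilde a})_{\tilde a \in \tilde\Phi_{\nd}}$ and the verification that it is a valuation of the root datum, that it is $\Gal(L/K)$-invariant, and that it induces $\varphi$. You flag this as an anticipated obstacle rather than addressing it. The paper's resolution relies on first choosing a Chevalley quasi-system $(\zeta_a)_{a \in \Phi_{\nd}}$ in $G^{\pred}$ along $S$ and then producing an associated quasi-system $(\zeta_{\tilde a})_{\tilde a \in \tilde\Phi_{\nd}}$ in $G_L^{\pred}$ along $T_L$ by base change, with a case-by-case analysis depending on whether the domain of $\zeta_a$ is a Weil restriction of $\SL_2$, $\BC_1$, or $\SU_3$. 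The $\SU_3$ case is substantially more involved and does not fit your description that ``Galois descent \ldots provides a $\Gal(L/K)$-equivariant identification of $U_a(K)$ with the $\Gal(L/K)$-invariants in $\prod_{\tilde a \mapsto a} U_{\tilde a}(L)$'': when $a$ is multipliable, the inclusion $U_a(K) \hookrightarrow U_a(L)$ lands in a product of root groups $U_{\alpha_i} U_{\alpha_i+\beta_i} U_{\beta_i}$ where the roots $\alpha_i+\beta_i$ restrict to $2a$, not $a$, and the valuation values on the three factors obey inequalities rather than a single equality ($\varphi_{\alpha_i}(u_{\alpha_i}) \ge \varphi_a^x(u)$, $\varphi_{\alpha_i+\beta_i}(u_{\alpha_i+\beta_i}) = 2\varphi_a^x(u)$). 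Likewise, your assertion that the fibers of the restriction map $\Phi(G_L, T) \to \Phi(G_K, S)$ are exactly the Galois orbits holds for the simple roots $\tilde\Delta$ (as cited from \cite[Proof of Theorem C.2.15, Step 5]{CGP}), not for arbitrary roots. Finally, the paper extends the construction from $\tilde\Delta$ to all of $\tilde\Phi_{\nd}$ by Weyl-group conjugation with elements $m_{\alpha}$, and must verify both that this is well-defined (independent of the choice of reduced word for $w_a$) and that the Galois-equivariance and descent properties propagate under conjugation, using \cite[Proposition 6.2.7]{BT1}. None of this can be absorbed into the abstract descent machinery of \cite[\S 9]{BT1}: that machinery applies only once one has a valuation of the root datum satisfying the descent condition in hand, and the whole difficulty of the proposition is producing it.
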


\begin{lem}\label{lem:maximal_tori_quasisplit}
	With the notations of Proposition \ref{quasisplit_to_split_embedding}, if $S \subset G_K$ is a maximal $K$-split torus, then there exists a unique maximal torus $T \subset G_K$ that contains $S$. The torus $T$ splits over $L$ and the set of fixed points of the apartment $A(G_L, T_L)$ under the induced action of $\Gal(L/K)$ is an affine subspace with direction $X_*(S_L)_{\RR}/X_*(Z(G_L))_{\RR}$. Moreover, we have $Z_{G_K}(S) = Z_{G_K}(T)$ as well as $N_{G_K}(S) \subset N_{G_K}(T)$.
\end{lem}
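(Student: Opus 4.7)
The plan is threefold: use the quasi-split hypothesis on $G_K$ to pin down $T$, use the pseudo-split hypothesis on $G_L$ to see that $T$ splits over $L$, and use the explicit linear Galois action on apartments (derived in the proof of Theorem \ref{Functoriality_iso}) to identify the fixed subspace.

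First, since $G_K$ is quasi-split, the centralizer $Z_{G_K}(S)$ is a Cartan subgroup in which the unique maximal $K$-torus $T$ lies centrally. Any maximal torus of $G_K$ containing $S$ sits inside $Z_{G_K}(S)$, and by uniqueness equals $T$; this gives both existence and uniqueness. The centralizer equality $Z_{G_K}(S) = Z_{G_K}(T)$ is then automatic: $S \subset T$ gives $Z_{G_K}(T) \subset Z_{G_K}(S)$, while centrality of $T$ in $Z_{G_K}(S)$ gives the reverse. For the normalizer inclusion, given $n \in N_{G_K}(S)$ the conjugate $nTn^{-1}$ is a maximal torus of $Z_{G_K}(nSn^{-1}) = Z_{G_K}(S)$, hence equals $T$. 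To see that $T$ splits over $L$, I would use pseudo-splitness of $G_L$ to pick a maximal $L$-split torus $T_0 \subset G_L$ which is itself a maximal torus; since maximal tori of the smooth connected group $G_L$ are $G(L)$-conjugate, the torus $T_L$ is the $G(L)$-conjugate of an $L$-split torus, hence $L$-split.

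For the fixed subspace, $\Gal(L/K)$ acts affinely on $A(G_L, T_L)$ by Theorem \ref{Galois_action_building}, and the barycenter of any orbit produces a fixed point, so the fixed locus is a non-empty affine subspace with direction $V(G_L, T_L)^{\Gal(L/K)}$. From the proof of Theorem \ref{Functoriality_iso}, the linear Galois action on $V(G_L, T_L)$ is induced by the pullback action on cocharacters. Since $T$ is $K$-defined and $L$-split, $X_*(T_L)^{\Gal(L/K)}$ coincides with the cocharacters of the maximal $K$-split subtorus of $T$, namely $X_*(S) = X_*(S_L)$; a parallel identification holds for $X_*(Z(G_L))$. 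Finiteness of $\Gal(L/K)$ acting on $\RR$-vector spaces makes forming invariants exact, so it commutes with tensoring by $\RR$ and with the essentialization quotient, giving the direction $X_*(S_L)_{\RR} / X_*(Z(G_L))_{\RR}$ as claimed.

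The main obstacle I anticipate is precisely this last identification: one must carefully track the Galois action on the cocharacter lattices of $T_L$ and of $Z(G_L)$, match their $\Gal(L/K)$-invariants with the appropriate maximal $K$-split substructures, and verify that forming invariants commutes with the essentialization quotient defining $V(G_L, T_L)$. By contrast, the existence and uniqueness of $T$, together with the centralizer and normalizer relations, are direct structural consequences of the quasi-split hypothesis.
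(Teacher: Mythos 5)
The main gap is in your argument that $T_L$ is $L$-split. You write that "maximal tori of the smooth connected group $G_L$ are $G(L)$-conjugate," but this is false over a general field: maximal tori of a smooth connected linear algebraic group are only conjugate over the separable (or algebraic) closure, not over the ground field. (The split and anisotropic maximal tori of $\SL_2$ over $\RR$, or over a $p$-adic field, are the standard counterexample.) What \emph{is} $G(L)$-conjugate is the collection of maximal $L$-\emph{split} tori, but that is a different assertion and does not let you conclude that the particular torus $T_L$ is split.

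The paper's route is both simpler and correct: it applies the first part of the lemma (uniqueness of the maximal torus containing a given maximal split torus) to the base-changed group $Z_{G_L}(S_L) = (Z_{G_K}(S))_L$, which shows $T_L$ is the \emph{only} maximal torus of $G_L$ containing $S_L$. Now $S_L$ is split, hence contained in some maximal $L$-split torus $T_0$, and $T_0$ is a maximal torus since $G_L$ is pseudo-split; by uniqueness $T_0 = T_L$, so $T_L$ is split. This avoids any conjugacy argument entirely. The rest of your proposal matches the paper in substance: the existence/uniqueness of $T$, the centralizer equality, and the normalizer inclusion are all essentially as in the paper (your normalizer argument via conjugating $T$ inside $Z_{G_K}(S)$ is a mild variant of the paper's "characteristic subgroup" phrasing and works, provided you note that uniqueness persists after arbitrary base change to a $K$-algebra $R$, which is why the paper invokes the characteristic property). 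Your treatment of the fixed subspace — identifying the linear Galois action with pullback on cocharacters, then matching invariants with $X_*(S_L)$ and using exactness of $\Gal(L/K)$-invariants on $\RR$-vector spaces to pass to the essentialization quotient — agrees with the paper's and is in fact slightly more detailed on the commutation with the quotient.
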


\begin{proof}
	Because $G_K$ is quasi-split, the centraliser $Z_{G_K}(S)$ is nilpotent and therefore contains a unique maximal torus $T$ that lies in the center $Z(Z_{G_K}(S))$ \cite[Theorem 16.47]{Milne}. It immediately follows that $Z_{G_K}(S) = Z_{G_K}(T)$. Moreover, for each $K$-algebra $R$, if $n \in N_{G_K}(S)(R)$, then $n$ normalises the subgroup $Z_{G_R}(S_R)$ and, because $T_R$ is characteristic in $Z_{G_R}(S_R)$, $n$ must also normalise $T_R$. Hence $N_{G_K}(S) \subset N_{G_K}(T)$. 
	
	Applying the same theorem to $Z_{G_L}(S_L) = (Z_{G_K}(S))_L$, we note that $T_L$ is the only maximal torus of $G_L$ that contains $S_L$. But $S_L$ is split and therefore contained in a maximal $L$-split torus of $G_L$ which, because $G_L$ is pseudo-split, must be a maximal torus and can thus only be $T_L$. Hence, the torus $T_L$ must be split. 
	
	Because $T_L$ is defined over $K$, the action of $\Gal(L/K)$ on $\mc B(G, L)$ stabilises its associated apartment $A(G_L, T_L)$ and induces an action by affine automorphisms. In particular, because $\Gal(L/K)$ is compact, the action of $\Gal(L/K)$ admits a fixed point. Setting this point as origin defines an isomorphism of affine spaces between $A(G_L, T_L)$ and $V(G_L, T_L)$ that identifies the action of $\Gal(L/K)$ on $A(G_L, T_L)$ with the action on $V(G_L, T_L)$ induced by the pullback action on cocharacters of $T_L$. Because the fixed locus of the $\Gal(L/K)$-action on $V(G_L, T_L) = X_*(T_L)_{\RR}/X_*(Z(G_L))_{\RR}$ is precisely $X_*(S_L)_{\RR}/X_*(Z(G_L))_{\RR}$, we get the expected result.
\end{proof}

\begin{proof}[Proof of Proposition \ref{quasisplit_to_split_embedding}]
The spirit of the proof is very similar to that of Proposition \ref{strict_henselization_embedding}. As in the previous case, we prove that, letting $\Phi = \Phi(G, S)$ be the relative root system of $G$ and $\tilde \Phi = \Phi(G_L, T_L)$ be its absolute root system, the valuation $(\varphi_{a}^y)_{a\in \tilde \Phi}$ associated to $y$ descends to the valuation $(\varphi_a^x)_{a\in \Phi}$ associated to $x$ in the sense of \cite[9.1.11]{BT1}. The strategy goes as follows: Letting $x$ be a special vertex in $A(G, S)$, we give an explicit construction of a valuation of the root data $(Z(L), (U_a(L))_{a\in \tilde\Phi})$ that descends to $(\varphi_a^x)_{a\in \Phi}$ and prove that it is fixed under the Galois action. Proposition \cite[9.1.17]{BT1} then directly yields a map satisfying 1-3 and Propositions \cite[9.2.14 and 9.2.15]{BT1} then ensure the uniqueness.
	
Let $x$ be a special vertex in $A(G, S)$. We may assume without loss of generality that there exists a Chevalley quasi-system in $G^{\pred}$ along $S$ \cite[Définition 2.4]{Lou}, ie. a family of isomorphisms $(\zeta_a)_{a\in \Phi}$ of Weil restrictions of groups of the form $\SL_2, \SU_3$, or $\BC_1$ onto the universal covers $\tilde G_a$ of the groups of $K$-rank 1 $G_a = \langle U_{a}, U_{-a}\rangle$ associated to each non-divisible root $a \in \Phi_{\nd}$, such that the valuation $(\varphi_a^x)_{a\in \Phi}$ associated to $x$ is given by the formulas in \cite[Définition 3.1]{Lou}.

We construct an associated Chevalley quasi-system in $G_L$ along $T_L$. Note that, under the assumption that $G_L$ is pseudo-split, each $(G_a)_L$ is pseudo-split with $(T \cap G_a)_L$ as a split maximal torus \cite[Corollary A.2.7]{CGP}. Because $G$ is pseudo-split over $K$, there exists a pseudo-parabolic subgroup $B \subset G$ such that $B_L$ is a minimal pseudo-parabolic $L$-subgroup of $G_L$. Denote by $\Delta \subset \Phi(G, S)$ the system of relative simple roots associated to $B$ and by $\tilde \Delta \subset \Phi(G_L, T_L)$ the system of absolute simple roots associated to $B_L$. Recall \cite[Proof of Theorem C.2.15, Step 5]{CGP} that the restriction map $\alpha \mapsto \alpha_{|S}$ establishes a bijection between the set of $\Gal(L/K)$-orbits in $\tilde \Delta$ and the set $\Delta$.
Let $a \in \Delta$ be a relative simple root, let $S_a = (S \cap G_a)^0_{\red}$, and let $\tilde S_a$ be the unique maximal $L$-split torus of $\tilde G_a$ that maps onto $S_a$.

\begin{enumerate}
	\item Assume that the isomorphism $\zeta_a$ has as domain a Weil restriction $R_{K'/K}(\SL_{2, K'})$. The base change $R_{K'/K}(\SL_{2, K'})_L$ decomposes into a direct product $$R_{K'/K}(\SL_{2, K'})_L = R_{K'\otimes_KL/L}(\SL_{2,K'\otimes L}) \simeq \prod_{i=1}^r R_{L'_i/L}(\SL_{2,L'_i}),$$ where $K' \otimes_K L = \prod_{i=1}^r L'_i$ is the canonical decomposition of the finite reduced $L$-algebra $K'\otimes_KL$ into a product of extensions of $L$ and we have a commutative diagram 
	\begin{center}
		\begin{tikzcd}
			\prod_{i=1}^r R_{L'_i/L}(\SL_{2,L'_i}) \arrow[r, "\sim"] & R_{K'\otimes_KL/L}(\SL_{2,K'\otimes_K L}) \arrow[r, "(\zeta_a)_L"] & (\tilde G_a)_L \\
			\diag\left(\begin{pmatrix}* & \\ & *\end{pmatrix}_L\right) \arrow[u, "\subset"] \arrow[r, "\sim"] & \begin{pmatrix} * & \\ & *\end{pmatrix}_L \arrow[u, "\subset"] \arrow[r, "(\zeta_a)_L"] & (\tilde S_a)_L \arrow[u, "\subset"]
		\end{tikzcd}.
	\end{center}
	Here, the notation $\diag\left(\begin{pmatrix}* & \\ & *\end{pmatrix}_L\right)$ refers to the diagonal embedding of the torus $\begin{pmatrix}* & \\ & *\end{pmatrix}_L$ of $\SL_{2,L}$ in the product $\prod_i R_{L'_i/L}(\SL_{2,L'_i})$.
	Because $(G_a)_L$ is pseudo-split, so is $\prod_i R_{L'_i/L}(\SL_{2,L'_i})$, which implies that each $R_{L'_i/L}(\SL_{2,L'_i})$ is pseudo-split and therefore \cite[Proposition A.5.15 (2)]{CGP} that each extension $L'_i/L$ is purely inseparable. It follows that the torus $\prod_i \begin{pmatrix}* & \\ & *\end{pmatrix}_{L}$ is the unique maximal torus of $\prod_i R_{L'_i/L}(\SL_{2,L'_i})$ containing the split torus $\diag\left(\begin{pmatrix}* & \\ & *\end{pmatrix}_L\right)$. The assumption that $\zeta_a$ maps the subgroup $R_{K' \otimes_K L/L}\begin{pmatrix}1 & \mathbb G_{a,K'\otimes_K L} \\ & 1 \end{pmatrix}$ to $U_a$ implies that $a$ pulls back along $\zeta_a$ to the character $\begin{array}{ccc}
	\mathbb G_{m,L} & \to & \mathbb G_{m,L} \\ t & \mapsto & t^2
	\end{array}$. Consequently, pulling back along $(\zeta_a)_L$ establishes a Galois-equivariant one-to-one correspondence between the roots $\alpha \in \tilde \Delta$ such that $\alpha_{|S_L} = a_L$ and the roots of $\prod_{i=1}^r R_{L'_i/L} (\SL_{2, L'_i})$ whose restriction to $\diag \begin{pmatrix}* & \\ & * \end{pmatrix}_L$ is the character $\begin{pmatrix} t & \\ & t^{-1} \end{pmatrix} \mapsto t^2$, namely the maps $\begin{array}{ccc} \mathbb G_{m,L}^r & \to & \mathbb G_{m,L}\\(t_1, \dots, t_r)& \mapsto & t_i^2\end{array}$ for $i \in [\![1, r]\!]$. We relabel the factor fields of $K' \otimes_KL$ accordingly: $K' \otimes_K L = \prod_{\substack{\alpha \in \tilde \Delta\\\alpha_{|S_L} = a_L}} L'_{\alpha}$.
	
	It follows that the factors $R_{L'_\alpha/L}(\SL_{2,L'_\alpha})$ are the subgroups of $L$-rank 1 of $R_{K'\otimes_K L/L} (\SL_{2,K' \otimes_K L})$ associated to the roots relative to $\prod_i \begin{pmatrix}* & \\ & * \end{pmatrix}_L$ whose restriction to $\diag \begin{pmatrix}* & \\ & * \end{pmatrix}_L$ is the character $\begin{pmatrix} t & \\ & t^{-1} \end{pmatrix} \mapsto t^2$. Accordingly, we have a direct product decomposition $$(\tilde {G_a})_L = \prod_{\substack{\alpha \in \tilde \Delta\\\alpha_{|S_L} = a_L}} \tilde{(G_L)_{\alpha}}$$ and the map $(\zeta_a)_L$ decomposes into a product $$(\zeta_a)_L = \prod_{\alpha_{|S_L} = a_L} \zeta_{\alpha},$$ where each $\zeta_{\alpha}: R_{L'_\alpha/L}(\SL_{2, L'_\alpha}) \to \tilde{(G_L)_{\alpha}}$ is an isomorphism that maps the upper triangular matrices to the root group $U_{\alpha}$ and whose restriction to the torus $\begin{pmatrix} * & \\ & * \end{pmatrix}$ is the coroot $\alpha^{\vee}$.
	
	Observe also that, for each $\sigma \in \Gal(L/K)$, the automorphism induced by $\sigma$ on $K' \otimes_K L$ restricts to an isomorphism of valued fields $L'_{\alpha} \overset{\sim}{\to} L'_{\sigma(\alpha)}$ for each $\alpha \in \tilde \Delta$ such that $\alpha_{|S_L} = a_L$. Because $(\zeta_a)_L$ is defined over $K$, we have $$\sigma \circ \zeta_{\alpha} \circ \sigma^{-1} = \zeta_{\sigma( \alpha)}$$ for each $\alpha \in \tilde \Delta$ such that $\alpha_{|S_L} = a_L$.
	
	Consequently, if for each $\alpha \in \tilde \Delta$ such that $\alpha_{|S_L} = a_L$ we define a function $\varphi_{\alpha}: U_{\alpha}(L) \to \RR$ by setting, for each $x \in L'_{\alpha}$,  $$\varphi_{\alpha}\left(\zeta_{\alpha}\begin{pmatrix}1 & x \\ & 1\end{pmatrix}\right) = \omega(x),$$ then we have $$\varphi_{\alpha} \circ \sigma^{-1} = \varphi_{\sigma(\alpha)}$$ for each $\sigma \in \Gal(L/K)$.
	
	Moreover, under the identification $\zeta_a$, the inclusion $U_a(K) \subset U_a(L) = \prod_{\substack{\alpha \in \tilde \Delta\\\alpha_{|S_L} = a_L}} U_{\alpha}(L)$ pulls back to the diagonal map $K' \to K' \otimes_K L = \prod_{\substack{\alpha \in \tilde \Delta\\\alpha_{|S_L} = a_L}} L'_{\alpha}$. Therefore, in the decomposition of an element $u \in U_{a}(K)\setminus\{1\}$ as $u = \prod_{\substack{\alpha \in \tilde \Delta\\\alpha_{|S_L} = a_L}} u_{\alpha}$, with $u_{\alpha} \in U_{\alpha}(L)$, we have $\varphi^x_a(u) = \varphi_{\alpha}(u_{\alpha})$ for each $\alpha \in \tilde \Delta$ such that $\alpha_{|S_L} = a_L$.
	
	\item Next, assume that the isomorphism $\zeta_a$ has as domain a Weil restriction $R_{K'/K}(\BC_{1, K'})$. The exact same reductions as in step 1 apply and we may write $K' \otimes_K L = \prod_{\substack{\alpha \in \tilde \Delta\\\alpha_{|S_L} = a_L}} L'_{\alpha}$ and $$R_{K' \otimes_K L/L} (\BC_{1, K' \otimes_K L}) = \prod_{\substack{\alpha \in \tilde \Delta\\\alpha_{|S_L} = a_L}} R_{L'_{\alpha}/L}(\BC_{1, L'_{\alpha}}).$$ 
	Likewise, we have a direct product decomposition $$(\tilde {G_a})_L = \prod_{\substack{\alpha \in \tilde \Delta\\\alpha_{|S_L} = a_L}} \tilde{(G_L)_{\alpha}}$$ and the map $(\zeta_a)_L$ decomposes into a product $$(\zeta_a)_L = \prod_{\alpha_{|S_L} = a_L} \zeta_{\alpha},$$ where each $\zeta_{\alpha}: R_{L'_\alpha/L}(\BC_{1, L'_\alpha}) \to \tilde{(G_L)_{\alpha}}$ is an isomorphism that maps the "upper triangular unipotent matrices"\footnote{That is, the root group of $R_{L'_{\alpha}/L}(\BC_{1, L'_{\alpha}})$ that lifts the group of upper triangular unipotent matrices in $R_{(L'_{\alpha})^{1/2}/L}(\Sp_{2, (L'_{\alpha})^{1/2}})$} to the root group $U_{\alpha}$ and whose restriction to the torus $\begin{pmatrix} * & \\ & * \end{pmatrix}_L$ is the coroot $\alpha^{\vee}$. Like before, the fact that $(\zeta_a)_L$ is defined over $K$ implies that $$\sigma \circ \zeta_{\alpha} \circ \sigma^{-1} = \zeta_{\sigma(\alpha)}$$ for each $\sigma \in \Gal(L/K)$ and $\alpha \in \tilde \Delta$ such that $\alpha_{|S_L} = a_L$.
	
	Let $\eta \in K'^{1/2} \setminus K'$ such that the function $\varphi_a^x: U_a(K) \to \RR$ is given by $$\varphi_a^x(x_a(u, v)) = \frac{1}{2} \omega(\eta u^2 + v),$$ where $x_a: R_{K'^{1/2}/K}(\mbb G_{a,K'^{1/2}}) \times R_{K'/K}(\mbb G_{a, K'}) \to U_a$ is the restriction of $\zeta_a$ to the subgroup of unipotent upper triangular matrices in $R_{K'/K}(\BC_{1, K'})$. Then, if for each $\alpha \in \tilde \Delta$ such that $\alpha_{|S_L} = a_L$ we define a function $\varphi_{\alpha}: U_{\alpha}(L) \to \RR$ by setting for each $(u, v) \in (L'_{\alpha})^{1/2}\times L'_{\alpha}$ $$\varphi_{\alpha}(x_{\alpha}(u, v)) = \frac{1}{2}\omega(\eta u^2 + v),$$ we have for each such $\alpha$ and each $\sigma \in \Gal(L/K)$ $$\varphi_{\alpha} \circ \sigma^{-1} = \varphi_{\sigma(\alpha)}.$$
	
	Moreover, under the identification $\zeta_a$, the inclusion $U_a(K) \subset U_a(L) = \prod_{\substack{\alpha \in \tilde \Delta\\\alpha_{|S_L} = a_L}} U_{\alpha}(L)$ pulls back to the diagonal map $(K')^{1/2} \times K' \to  \prod_{\substack{\alpha \in \tilde \Delta\\\alpha_{|S_L} = a_L}} (L'_{\alpha})^{1/2} \times L'_{\alpha}$. Therefore, in the decomposition of an element $u \in U_{a}(K)\setminus\{1\}$ as $u = \prod_{\substack{\alpha \in \tilde \Delta\\\alpha_{|S_L} = a_L}} u_{\alpha}$, with $u_{\alpha} \in U_{\alpha}(L)$, we have $\varphi^x_a(u) = \varphi_{\alpha}(u_{\alpha})$ for each $\alpha \in \tilde \Delta$ such that $\alpha_{|S_L} = a_L$.
	
	\item Finally, assume that the isomorphism $\zeta_a$ has as domain a Weil restriction $R_{K'_2/K}(\SU_{3, K'/K'_2})$. Again, we decompose $K'_2 \otimes_KL = \prod_{i\in I_a} L'_i$, where the $L'_i$ are fields and because $R_{K'_2/K}(\SU_{3, K'/K'_2})_L$ is pseudo-split, we have $K' \subset L'_i$ for each $i$, each $L'_i/L$ is purely inseparable, and $$R_{K'_2/K}(\SU_{3, K'/K'_2})_L = \prod_{i\in I_a} R_{L'_i/L}(\SL_{3, L'_i}).$$
	Likewise, we have a commutative diagram 
	\begin{center}
		\begin{tikzcd}
			\prod_{i\in I_a} R_{L'_i/L}(\SL_{3,L'_i}) \arrow[r, "\sim"] & R_{K'\otimes_KL/L}(\SL_{3,K'\otimes L}) \arrow[r, "(\zeta_a)_L"] & (\tilde G_a)_L \\
			\diag\left(\begin{pmatrix}* & & \\  & 1 & \\& & *\end{pmatrix}_L\right) \arrow[u, "\subset"] \arrow[r, "\sim"] & \begin{pmatrix} * & & \\ & 1 & \\ & & *\end{pmatrix}_L \arrow[u, "\subset"] \arrow[r, "(\zeta_a)_L"] & (\tilde S_a)_L \arrow[u, "\subset"]
		\end{tikzcd}.
	\end{center}
	The torus $\prod_{i \in I_a} \begin{pmatrix}* & & \\  & * & \\& & *\end{pmatrix}_L$ is then the unique maximal torus of $\prod_{i\in I_a} R_{L'_i/L}(\SL_{3,L'_i})$ containing the split torus $\diag\left(\begin{pmatrix}* & & \\  & 1 & \\& & *\end{pmatrix}_L\right)$.
	
	For each $i \in I_a$, denote by $\alpha_i$ (resp. $\beta_i$) the character of $T_L$ that pulls back to $$ \left(\begin{pmatrix}s_j & & \\  & s_j^{-1}t_j & \\& & t_j^{-1}\end{pmatrix}\right)_{j \in I_a} \mapsto s_i^2 t_i^{-1} \left( \text{resp.}  \left(\begin{pmatrix}s_j & & \\  & s_j^{-1}t_j & \\& & t_j^{-1}\end{pmatrix}\right)_{j \in I_a} \mapsto t_i^2 s_i^{-1} \right)$$ along $(\zeta_a)_L$.
	The latter characters are the only roots of $\prod_{i \in I_a} R_{L'_i/L}(SL_{3, L'_i})$ that restrict to the character $\begin{pmatrix}t & & \\ & 1 & \\ & & t^{-1}\end{pmatrix}\mapsto t$ on $\diag \left(\begin{pmatrix}* & &\\ & 1 & \\ & & *\end{pmatrix}_L\right)$, therefore we have $$\{\alpha \in \tilde \Delta, \alpha_{|S_L} = a_L\} = \bigcup_{i \in I_a}\{\alpha_i, \beta_i\} \text{ and } \{\alpha \in \tilde \Phi, \alpha_{|S_L} = 2a_L\} = \{\alpha_i+\beta_i, i \in I_a\}$$
	
	For each $i \in I_a$, set $$\zeta_{\alpha_i}: \begin{array}{ccc} R_{L'_i/L}(SL_{2, L'_i}) & \longto & G_{\alpha_i} \\ \begin{pmatrix} a & b\\ c & d \end{pmatrix} & \longmapsto & \zeta_a\left(\begin{pmatrix} a & b & \\ c & d & \\ & & 1\end{pmatrix}\right)\end{array},$$
	$$\zeta_{\beta_i}: \begin{array}{ccc} R_{L'_i/L}(SL_{2, L'_i}) & \longto & G_{\beta_i} \\ \begin{pmatrix} a & b\\ c & d \end{pmatrix} & \longmapsto & \zeta_a\left(\begin{pmatrix} 1 & & \\ & a & b \\ & c & d \end{pmatrix}\right)\end{array},$$
	and $$ \zeta_{\alpha_i+\beta_i}: \begin{array}{ccc} R_{L'_i/L}(SL_{2, L'_i}) & \longto & G_{\alpha_i  +\beta_i} \\ \begin{pmatrix} a & b\\ c & d \end{pmatrix} & \longmapsto & \zeta_a\left(\begin{pmatrix} a & & b \\ & 1 &  \\ c & & d \end{pmatrix}\right)\end{array}.$$
	Here, we think of $R_{L'_i/L}(SL_{3, L'_i})$ as a direct factor of $R_{K'_2 \otimes_K L}(SL_{3, K'_2\otimes_KL})$.
	
	Then, for each $i \in I_a$ and $\sigma \in \Gal(L/K)$, we have $$\sigma \circ \zeta_{\alpha_i} \circ \sigma^{-1} =  \zeta_{\alpha_{\sigma(i)}} = \zeta_{\sigma(\alpha_i)},$$
	$$\sigma \circ \zeta_{\beta_i} \circ \sigma^{-1} =  \zeta_{\beta_{\sigma(i)}} = \zeta_{\sigma(\beta_i)},$$
	and $$\sigma \circ \zeta_{\alpha_i+\beta_i} \circ \sigma^{-1} =  \zeta_{\sigma(\alpha_i+\beta_i)}.$$
	
	Once again, if, for each $\alpha \in \tilde \Delta$ such that $\alpha_{|S_L} \in \{a_L, 2a_L\}$, we define $\varphi_{\alpha}: U_{\alpha} \to \RR$ by $$\varphi_{\alpha}\left(\zeta_{\alpha}\begin{pmatrix}1 & x\\ & 1\end{pmatrix} \right) = \omega(x),$$ then for each $\sigma \in \Gal(L/K)$ and each such $\alpha$, we have $$\varphi_{\alpha} \circ \sigma^{-1} = \varphi_{\sigma(\alpha)}.$$
	Recall also that, denoting by $H_0(K', K_2')$ the $K'_2$-variety $$H_0(K', K_2') = \{(x, y) \in R_{K'/K'_2}(\mbb A_{K'}^2), y+\tau(y) = x\tau(x)\},$$ where $\tau$ is the nontrivial element of $\Gal(K'/K'_2)$, the group of upper-triangular unipotent matrices in $R_{K_2'/K	}SU_{3, K'/K'_2}$ is identified with $R_{K'_2/K}(H_0(K', K'_2))$ via the map $$\mu: (x, y) \mapsto \begin{pmatrix} 1 & -\tau(x) & -y \\ & 1 & x \\ & & 1\end{pmatrix}.$$
	Then, the inclusion $U_a(K) \subset U_a(L) = \prod_{i \in I_a} U_{\alpha_i}(L) U_{\alpha_i+\beta_i}(L) U_{\beta_i}(L)$ spells out explicitly as the map $$\zeta_a(\mu(x,y)) \mapsto \prod_{i \in I_a} \zeta_{\alpha_i}(-\tau(x))\zeta_{\alpha_i+\beta_i}(\tau(y))\zeta_{\beta_i}(x).$$
	Then, if $u = \zeta_a(\mu(x,y)) \in U_a(K) \setminus \{1\}$ decomposes as $u = \prod_{i \in I_a} u_{\alpha_i} u_{\alpha_i+\beta_i}u_{\beta_i}$, we have for each $i \in I_a$: $$\varphi_{\alpha_i}(u_{\alpha_i}) = \varphi_{\beta_i}(u_{\beta_i}) = \omega(x) \ge \varphi_a^x(u)$$ and $$\varphi_{\alpha_i+\beta_i}(u_{\alpha_i+\beta_i}) = \omega(y) = 2\varphi_a^x(u).$$

\end{enumerate}	 
	
	We have thus constructed a quasi-épinglage $(\zeta_{\alpha})_{\alpha \in \tilde \Delta}$ \cite[Définition 2.4]{Lou} in $G_L$ along $T_L$. By \cite[Proof of Proposition 2.5]{Lou}, this quasi-épinglage extends to a Chevalley quasi-system $(\zeta_{\alpha})_{\alpha \in \tilde \Phi}$ in $G^{\pred}_L$ along $T_L$. Let us now extend our family $(\varphi_{\alpha})_{\alpha \in \tilde \Delta}$ to a valuation $(\varphi_{\alpha})_{\alpha \in \tilde \Phi_{\nd}}$ of the root data $(Z_G(T)(L), (U_{\alpha}(L))_{\alpha \in \tilde \Phi})$. Our main tool is the fact that, for all non-divisible roots $\alpha$ and $\beta$ in $\tilde \Phi_{\nd}$, the parametrisation $\inta(m_{\alpha}) \circ \zeta_{\beta}$ is \textit{similar} to $\zeta_{s_{\alpha}(\beta)}$, in the sense that is differs from $\zeta_{s_{\alpha}(\beta)}$ by precomposition with a field $L$-automorphism of $\bar L$ that maps the domain of $\zeta_{\beta}$ onto that of $\zeta_{s_{\alpha}(\beta)}$, and by a possible sign change. 
	
	Recall from \cite[Définition 2.4]{Lou} that, if $(\zeta_a)_{a\in \Phi}$ is a Chevalley quasi-system, we let $$m_a = \left\{\begin{array}{cl} \zeta_a\begin{pmatrix} & 1 \\ -1 & \end{pmatrix} & \text{if } \dom(\zeta_a) \text{ is of the form } R_{K'/K}(SL_{2, K'}) \\ \zeta_a\begin{pmatrix}& & -1 \\ & - 1 & \\ -1 & &\end{pmatrix} & \text{if } \dom(\zeta_a) \text{ is of the form } R_{K_2'/K}(SU_{3, K'/K'_2}) \\ x_{-a}(0, -1)x_a(0, 1)x_{-a}(0, -1) & \text{if } \dom(\zeta_a) \text{ is of the form } R_{K'/K}(BC_{1, K'}) \end{array}\right. .$$ The computations we made above allow us to relate the $(m_a)_{a\in \Phi_{\nd}}$ and the $(m_{\alpha})_{\alpha \in \tilde \Phi_{\nd}}$ as follows
	\begin{enumerate}
	    \item If $a \in \Delta$ is such that $\zeta_a$ has as domain a Weil restriction $R_{K'/K}(SL_{2, K'})$ or $R_{K'/K}(BC_{1, K'})$, then we have $m_a = \prod_{\substack{\alpha \in \tilde \Phi\\ \alpha_{|S} = a}} m_{\alpha}.$
	    \item If $a \in \Delta$ is such that $\zeta_a$ has as domain a Weil restriction $R_{K'_2/K}(SU_{3, K'/K'_2})$, then for some order on the roots, we have $m_a = \lambda(-1) \left(\prod_{\substack{\alpha \in \tilde \Phi\\ \alpha_{|S} \in \{a, 2a\}}}m_{\alpha}\right)$, for some cocharacter $\lambda \in X_*(T)$.
	\end{enumerate}
	This allows one to define valuation functions by precomposing those already constructed by products of $\inta(m_{\alpha})$, and to transfer properties the general case that were proved above for the $\varphi_{\alpha}$, where $\tilde \alpha \in \tilde \Phi$ with $\alpha_{|S} \in \Delta \cup (2\Delta \cap \Phi)$.
	The reason we expect this transfer to work is that, for valuations  of root data, we do have such a compatibility with conjugacy by specific elements of $N_G(T)(L)$, see \cite[Proposition 6.2.7]{BT1} -- with the notations of this proposition, we have $m_{\alpha} \in M_{\alpha, 0}$ for each $\alpha$.
	
	Observe \cite[Ch. VI, §1, Proposition 15]{Bou} that each non-divisible root $a \in \Phi_{\nd}$ can be mapped to $\Delta$ by some element $w\in W(G, S)(K)$. For each $a\in \Phi_{\nd} \setminus \Delta$, fix once and for all a simple root $b_a \in \Delta$ and an element $w_a \in W(G, S)(K)$ such that $w_a(a) = b_a$.
	
	Finally, recall that, as observed in the proof of Lemma \ref{lem:maximal_tori_quasisplit}, we have an inclusion $$W(G, S)(K)\hookrightarrow W(G_L, T_L)(L).$$ This inclusion has the property that, for any character $\chi$ and any $w \in W(G, S)(K)$, we have $w(\chi)_{|S} = w(\chi_{|S})$.
	
	Now, let $\alpha \in \tilde \Phi_{\nd} \setminus \tilde \Delta$ whose restriction to $S$ is a root $ja$, with $j \in \{1,2\}$ and $a \in \Phi_{\nd} \setminus \Delta$. Set $$\varphi_{\alpha} = \varphi_{\beta_{\alpha}} \circ \inta(m_{\alpha_1}) \circ\dots \inta(m_{\alpha_r}),$$
	where $\alpha_1, \dots, \alpha_r, \beta_{\alpha} \in \tilde \Delta$, with $\beta_{\alpha} = w_a(\alpha)$ and $w_a = s_{\alpha_1}\dots s_{\alpha_r}$.
	This function does not depend on the decomposition of $w_a$ as a product of simple reflections. Indeed, if $\zeta_{\alpha}$ has as domain a Weil restriction $R_{L'/L}(SL_{2, L'})$, then because $(\zeta_{\alpha})_{\alpha \in \tilde \Phi_{\nd}}$ is a Chevalley quasi-system, there exists a $\tau \in\Aut_L(\ol L)$ such that, for each $x \in L'$, we have $$ \inta(m_{\alpha_1}) \circ\dots \inta(m_{\alpha_r}) \circ \zeta_{\alpha}\begin{pmatrix} 1 & x \\ & 1\end{pmatrix} = \zeta_{\beta_{\alpha}}\begin{pmatrix} 1 & \pm \tau(x) \\ & 1 \end{pmatrix},$$
	and therefore $$\varphi_{\alpha}\left( \zeta_{\alpha}\begin{pmatrix} 1 & x \\ & 1\end{pmatrix}\right)  = \varphi_{\beta_{\alpha}}\left(\zeta_{\beta_{\alpha}}\begin{pmatrix} 1 & \pm \tau(x) \\ & 1 \end{pmatrix}\right) = \omega(x).$$
	On the other hand, if $\zeta_{\alpha}$ has as domain a Weil restriction $R_{L'/L}(BC_{1, L'})$, then likewise, there exists $\tau \in \Aut_L(\ol L)$ such that, for each $(u, v) \in L'^{1/2}\times L'$, we have $$ \inta(m_{\alpha_1}) \circ\dots \inta(m_{\alpha_r}) \circ \zeta_{\alpha}(x_{\alpha}(u,v)) = \zeta_{\beta_{\alpha}}(x_{\beta_{\alpha}}( \tau(u), \tau(v))).$$ Therefore, there exists $\eta \in L'^{1/2} \setminus L'$ such that $$\varphi_{\alpha}(x_{\alpha}(u,v)) = \varphi_{\beta_{\alpha}}(x_{\beta_{\alpha}}(\tau(u), \tau(v))) = \frac{1}{2}\omega(\eta u^2 + v)$$ for all $(u, v) \in L'^{1/2} \times L'$.
	
	It now follows from \cite[Proposition 3.2]{Lou} that the collection $(\varphi_{\alpha})_{\alpha \in \tilde \Phi_{\nd}}$ defined above constitutes a valuation of the root data $(Z_G(T)(L), (U_{\alpha}(L))_{\alpha \in \tilde \Phi})$ corresponding to a point $z \in A(G_L, T)$. Furthermore, we claim that, if $a \in \Phi_{\nd}$ is a non-divisible root and $u \in U_a(K)\setminus \{1\}$, we have the equivalence $$u \in U_{a, r}^x \iff u \in \prod_{\substack{\alpha \in \tilde \Phi_{\nd}\\ \alpha_{|S_L} = a_L}} U_{\alpha, r}^z \times \prod_{\substack{\alpha \in \tilde \Phi_{\nd}\\ \alpha_{|S_L} = 2a_L}} U_{\alpha, 2r}^z,$$
	that is $$\varphi_a^x(u) = \sup\left\{r \in \RR,   u \in \prod_{\substack{\alpha \in \tilde \Phi_{\nd}\\ \alpha_{|S_L} = a_L}} U_{\alpha, r}^z \times \prod_{\substack{\alpha \in \tilde \Phi_{\nd}\\ \alpha_{|S_L} = 2a_L}} U_{\alpha, 2r}^z \right\}.$$
	This follows from earlier computations in the case where $a \in \Delta$. The general case will be reduced to that of simple roots. Let $a \in \Phi_{\nd}\setminus \Delta$.
	Let $a_1, \dots, a_p \in \Delta$ be such that $w_a$ admits the following decomposition as a product of simple reflections $$w_a = s_{a_1} \dots s_{a_p}.$$
	Let $u \in U_a(K) \setminus \{1\}$. On the one hand, by \cite[Proposition 6.2.7]{BT1}, we have $$\varphi_{a}^x(u) = \varphi_{b_a}^x ( \inta(m_{a_1})\circ \cdots \inta(m_{a_p})(u)).$$
	On the other hand, if $\alpha \in \tilde \Phi$ is such that $\alpha_{|S} \in \{ a, 2a\}$, then because $$w_a = \left(\prod_{\substack{\alpha' \in \tilde \Phi\\ \alpha'_{|S} \{a_1, 2a_1\}}}s_{\alpha'}\right)  \cdots \left(\prod_{\substack{\alpha' \in \tilde \Phi\\ \alpha'_{|S} \in \{a_p, 2a_p\}}}s_{\alpha'}\right)$$ is a decomposition of $w_a$ as a product of reflections in $W(G, T)(L)$, the component $u_{\alpha}$ of $u$ in $U_{\alpha}(L)$ satisfies  $$\begin{array}{ccl} \varphi_{\alpha}(u_{\alpha}) & = & \varphi_{\beta_{\alpha}}\left(\left(\prod_{\substack{\alpha' \in \tilde \Phi\\ \alpha'_{|S} \in \{a_1, 2a_1\}}}\inta(m_{\alpha'})\right) \circ  \cdots \left(\prod_{\substack{\alpha' \in \tilde \Phi\\ \alpha'_{|S} \in\{ a_p, 2a_p\}}}\inta(m_{\alpha'})\right)(u_{\alpha})\right)\\ & = & \varphi_{\beta_{\alpha}}(\inta(\lambda(-1))\circ \inta(m_{a_1})\circ \cdots \inta(m_{a_p})(u_{\alpha}))\end{array}$$ for some $\lambda \in X_*(T)$. 
	We can then check explicitly using our formulas for the épinglage $(\varphi_{\alpha})_{\alpha \in \tilde \Delta}$ that $\varphi_{\beta_{\alpha}}\circ \inta(\lambda(-1))  = \varphi_{\beta_{\alpha}}$.
    Now, because $\beta_{\alpha} = w_a(\alpha)$ lifts $b_a = w_a(a)$, and because $\inta(m_{a_1})\circ \cdots \inta(m_{a_p})(u_{\alpha})$ is the component of $\inta(m_{a_1})\circ \cdots \inta(m_{a_p})(u)$ in $U_{\beta_{\alpha}}(L)$, we deduce from previous computations that $$\begin{array}{cc} \varphi_{\alpha}(u_{\alpha}) \ge  \varphi_a^x(u) & \text{if } \alpha_{|S} = a\\ \varphi_{\alpha}(u_{\alpha}) = 2\varphi_a^x(u) & \text{if } \alpha_{|S} = 2a \end{array}.$$
	
	This completes the proof of the fact that the valuation $(\varphi_a^x)_{a \in \Phi}$ is induced by the valuation $(\varphi_{\alpha})_{\alpha \in \tilde \Phi}$ in the sense of \cite[Définition 9.1.11]{BT1}. By \cite[Proposition 9.1.17]{BT1}, there exists a $G(K)$-equivariant map $$j: \mc B(G, K) \to \mc B(G, L)$$ that maps $x$ to $z$ and is an isometric embedding if the metric on $V(G_K, S)$ is chosen to be induced by a choice of inner product on $V(G_L, T)$. In particular, it is affine on apartments, which establishes conditions 1. 
	
	Our last task is to check that $z \in \mc B(G, L)^{\Gal(L/K)}$, which amounts to proving that the valuation $(\varphi_{\alpha})_{\alpha \in \tilde \Phi_{\nd}}$ is $\Gal(L/K)$-invariant. 
	We proved that, for all $\alpha \in \tilde \Phi$ and $\sigma \in \Gal(L/K)$, we have $$\varphi_{\alpha} \circ \sigma^{-1} = \varphi_{\sigma(\alpha)}.$$
	Now, if $\alpha \in \tilde \Phi_{\nd}$ satisfies $\alpha_{|S} \in \{a, 2a\}$ with $a \in \Phi_{\nd} \setminus \Delta$, then writing $w_a = s_{\alpha_1} \circ \cdots s_{\alpha_r}$ as a product of simple reflections in the absolute Weyl group $W(G,T)(L)$, we have for each $\sigma \in \Gal(L/K)$: $$\begin{array}{ccl}\varphi_{\alpha}\circ \sigma^{-1} & = & \varphi_{\beta_{\alpha}}\circ \inta(m_{\alpha_1}) \circ \cdots \circ \inta(m_{\alpha_r})\circ \sigma^{-1}\\ &= & \varphi_{\beta_{\alpha}} \circ \sigma^{-1} \circ \inta(\sigma(m_{\alpha_1})) \circ \dots \inta(\sigma(m_{\alpha_r}))\\ 
	& = & \varphi_{\sigma(\beta_{\alpha})} \circ \inta(\sigma(m_{\alpha_1})) \circ \dots \inta(\sigma(m_{\alpha_r})) \end{array}.$$
	Because $w_a \in W(G, S)(K)$ is $\Gal(L/K)$-invariant, we have $$\sigma(\beta_{\alpha}) = \sigma(w_a(\alpha)) = w_a(\sigma(\alpha)) = \beta_{\sigma(\alpha)}.$$
	Moreover, given the compatibility between the parameterisations $\zeta_{\alpha}$ for $\alpha \in \tilde \Delta$ and the action of $\Gal(L/K)$, we have $\sigma(m_{\alpha_i}) = m_{\sigma(\alpha_i)}$ for each $i \in [\![1,r]\!]$. 
	We deduce that $$\begin{array}{ccl} \varphi_{\alpha} \circ \sigma^{-1} & = & \varphi_{\beta_{\sigma(\alpha)}}\circ \inta(m_{\sigma(\alpha_1)}) \circ \cdots \inta(m_{\sigma(\alpha_r)})\\ & = & \varphi_{\sigma(\alpha)}\end{array},$$
	as $w_a = \sigma(w_a) = s_{\sigma(\alpha_1)} \cdots s_{\sigma(\alpha_r)}$. This completes the proof of the $\Gal(L/K)$-invariance of $(\varphi_{\alpha})_{\alpha \in \tilde \Phi_{\nd}}$, and therefore $z$ is fixed under the action of $\Gal(L/K)$ on $\mc B(G, L)$.
	
	By $G(K)$-equivariance of $j$, we deduce that $j(G(K) \cdot x) \subset \mc B(G, L)^{\Gal(L/K)}$. Finally, because $\mc B(G, K)$ is the convex hull of $G(K) \cdot x$, we get $j(\mc B(G, K)) \subset \mc B(G, L)^{\Gal(L/K)}$.
	
	The above map $j$ is therefore a map from $\mc B(G, K)$ to $\mc B(G, L)$ that satisfies conditions 1-3, which completes the proof of Proposition \ref{quasisplit_to_split_embedding}.
	
\end{proof}

\subsubsection*{The general case}

We are now in position to prove the general functoriality theorem. Let us begin with the following technical corollary of Theorem \ref{Functoriality_iso} as well as Propositions \ref{strict_henselization_embedding} and \ref{quasisplit_to_split_embedding}, which will prove very useful below.

\begin{prop}\label{prop:special_galois_eq}
	Let $K$ and $L$ be finite separable extensions of $k$.
	\begin{enumerate}
		\item Assume that $G_K$ and $G_L$ have the same relative rank. Let $K_0$ and $L_0$ be extensions of $k$ such that the following inclusions hold $$\begin{matrix} K & \subset & L \\ \cup & & \cup \\ K_0 & \subset & L_0 \end{matrix}.$$ Assume that $K$ is Galois over $K_0$ and that $L$ is Galois over $L_0$. Then, the map $p_{L/K}$ is $\Gal(L/L_0)$-equivariant, if we let $\Gal(L/L_0)$ act on $\mc B(G, K)$ via the restriction map $\res_{L/K}: \Gal(L/L_0) \to \Gal(K/K_0)$.
		\item Assume that $K$ and $L$ satisfy the conditions of Proposition \ref{strict_henselization_embedding} or \ref{quasisplit_to_split_embedding}. Let $K_0 \subset K$ be a subextension of $k$ such that $K$ and $L$ are Galois over $K_0$. Then, the map $p_{L/K}$ is $\Gal(L/K_0)$-equivariant, if we let $\Gal(L/K_0)$ act on $\mc B(G, K)$ via the restriction map $\res_{L/K}: \Gal(L/K_0) \to \Gal(K/K_0)$.
	\end{enumerate}
\end{prop}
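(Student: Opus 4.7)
Both parts will be established by uniqueness: one shows that an appropriate conjugate of $p_{L/K}$ satisfies the same characterising conditions as $p_{L/K}$, and concludes via the uniqueness clauses of Theorem \ref{Functoriality_iso} (for part 1), respectively Propositions \ref{strict_henselization_embedding} and \ref{quasisplit_to_split_embedding} (for part 2).

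For part (1), given $\sigma \in \Gal(L/L_0)$, one first observes that $\sigma$ stabilises $K$ since $K/K_0$ is Galois (hence normal) and $K_0 \subset L_0$, so $\tau := \res_{L/K}(\sigma) \in \Gal(K/K_0)$ is well-defined. Under the equal-rank hypothesis, Theorem \ref{Functoriality_iso} applies to $\sigma$, $\tau$, and the inclusion $\iota : K \hookrightarrow L$, and provides the functoriality relation $p_{g \circ f} = p_g \circ p_f$. Applied to the trivial commutativity $\iota \circ \tau = \sigma \circ \iota : K \to L$, this yields
\begin{equation*}
p_{L/K} \circ p_\tau = p_\sigma \circ p_{L/K},
\end{equation*}
which is precisely the desired equivariance (noting that $p_\sigma$ and $p_\tau$ coincide with the Galois actions of Theorem \ref{Galois_action_building}).

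For part (2), the same normality argument produces a well-defined $\tau := \res_{L/K}(\sigma) \in \Gal(K/K_0)$. The plan is to verify that the map
\begin{equation*}
q := p_\sigma^{-1} \circ p_{L/K} \circ p_\tau : \mc B(G, K) \longto \mc B(G, L)
\end{equation*}
satisfies the three characterising conditions of whichever of Proposition \ref{strict_henselization_embedding} or \ref{quasisplit_to_split_embedding} governs $(K, L)$, and then to invoke uniqueness. Condition 2 ($G(K)$-equivariance of $q$) follows by direct computation from the equivariances of the three factors together with the identity $\sigma|_K = \tau$. Condition 3 follows from the conjugation identity $\sigma \Gal(L/K) \sigma^{-1} = \Gal(L/\sigma(K)) = \Gal(L/K)$, which implies that $p_\sigma^{-1}$ stabilises $\mc B(G, L)^{\Gal(L/K)}$.

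The most delicate verification is condition 1, which requires tracking how the Galois actions permute maximal split tori. Given a maximal $K$-split torus $S \subset G_K$, the map $p_\tau$ restricts by Theorem \ref{Galois_action_building} to an affine isomorphism of $A(G_K, S)$ onto $A(G_K, S^\tau)$; next, by the defining condition on $p_{L/K}$, there exists a maximal $L$-split torus $T' \supset (S^\tau)_L = (S_L)^\sigma$ such that $p_{L/K}$ embeds $A(G_K, S^\tau)$ affinely into $A(G_L, T')$; finally $p_\sigma^{-1}$ maps the latter affinely onto $A(G_L, T'^{\sigma^{-1}})$. Setting $T := T'^{\sigma^{-1}}$ then exhibits a maximal $L$-split torus containing $S_L$ for which $q$ restricts to an affine isomorphism of $A(G_K, S)$ onto an affine subspace of $A(G_L, T)$, as required. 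By uniqueness one concludes $q = p_{L/K}$, that is $p_{L/K} \circ p_\tau = p_\sigma \circ p_{L/K}$, which is the desired $\Gal(L/K_0)$-equivariance.
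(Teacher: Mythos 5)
Your proposal is correct and matches the paper's proof in all essential respects. Both parts proceed by conjugating $p_{L/K}$ by the two Galois actions and invoking a uniqueness statement to conclude that the conjugate equals $p_{L/K}$.

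The one minor stylistic difference is in part (1): the paper re-verifies that the conjugated map $p_\gamma \circ p_{L/K} \circ p_{\gamma_K^{-1}}$ satisfies conditions 1 and 2 of Theorem \ref{Functoriality_iso} and then appeals to uniqueness, whereas you instead apply the functoriality relation $p_{\tau\circ\sigma} = p_\tau \circ p_\sigma$ already recorded at the end of Theorem \ref{Functoriality_iso} to the identity $\iota \circ \tau = \sigma \circ \iota$ of $k$-morphisms $K \to L$. Since that functoriality relation is itself established via the same uniqueness mechanism, the two arguments are equivalent; yours is arguably a slightly cleaner packaging. For part (2), both proofs form the same conjugate, both note that condition 3 of the governing proposition is preserved because $\Gal(L/K)$ is normal in $\Gal(L/K_0)$ (equivalently $\sigma(K) = K$), and both conclude by uniqueness. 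You spell out the verification of condition 1 (tracking the tori $S \mapsto S^\tau$, $T' \mapsto T'^{\sigma^{-1}}$) in more detail than the paper, which simply asserts that conditions 1 and 2 follow from Theorem \ref{Galois_action_building}; this is a legitimate and worthwhile elaboration, not a departure.
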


\begin{proof}
	\begin{enumerate}
		\item Let $\gamma \in \Gal(L/L_0)$ and $\gamma_K = \res_{L/K}(\gamma)$. Then, by Theorems \ref{Functoriality_iso} and \ref{Galois_action_building}, the map $$p_{\gamma} \circ p_{L/K} \circ p_{\gamma_K^{-1}} : \mc B(G, K) \to \mc B(G, L)$$ satisfies conditions 1 and 2 of Theorem \ref{Functoriality_iso} with $\sigma:K \to L$ being the inclusion, hence we must have $p_{\gamma} \circ p_{L/K} \circ p_{\gamma_K^{-1}} = p_{L/K}$.
		\item Let $\gamma \in \Gal(L/K_0)$ and $\gamma_K = \res_{L/K}(\gamma)$. To prove that $p_{\gamma} \circ p_{L/K} \circ p_{\gamma_K^{-1}} = p_{L/K}$, it will be sufficient, because of the uniqueness properties of Propositions \ref{strict_henselization_embedding} and \ref{quasisplit_to_split_embedding} to prove that the map $p_{\gamma} \circ p_{L/K} \circ p_{\gamma_K^{-1}}$ satisfy conditions 1-3.
		As previously observed, conditions 1 and 2 follow directly from Theorem \ref{Galois_action_building}. Lastly, we need to check that $$p_{\gamma} \circ p_{L/K} \circ p_{\gamma_K^{-1}}(\mc B(G, K)) \subset \mc B(G, L)^{\Gal(L/K)},$$ that is $$p_{\gamma} \circ p_{L/K}(\mc B(G, K)) \subset \mc B(G, L)^{\Gal(L/K)}.$$
		It is sufficient to check that $p_{\gamma}$ maps the $\Gal(L/K)$-fixed locus of $\mc B(G, L)$ to itself, which is immediate because $\Gal(L/K)$ is normal in $\Gal(L/K_0)$.
	\end{enumerate}
\end{proof}

We now state and prove the main theorem.

\begin{thm}\label{Functoriality_inclusions}
	There exists a unique family $(p_{L/K}: \mc B(G, K) \to \mc B(G, L))$ indexed by pairs of finite separable extensions of $k$ such that $K \subset L$ satisfying the following properties:
	\begin{enumerate}
		\item For each $K \subset L$, each $S \subset G_K$, there exists $T \subset G_L$ such that $S_L \subset T$ and such that the map $p_{L/K}$ restricts to an affine isomorphism of $A(G_K, S)$ onto an affine subspace of $A(G_L, T)$.
		\item For each $K \subset L$, the map $p_{L/K}$ is $G(K)$-equivariant.
		\item Whenever $L/K$ is Galois, we have $p_{L/K}(\mc B(G, K)) \subset (\mc B(G, L))^{\Gal(L/K)}$.
		\item For each finite separable extensions $K, L, M$ of $k$ such that $K \subset L \subset M$, we have $$p_{M/K} = p_{M/L} \circ p_{L/K}.$$
	\end{enumerate}
\end{thm}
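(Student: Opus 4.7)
The plan is to bootstrap the general case from the three previously treated special cases, namely Theorem \ref{Functoriality_iso} (rank-preserving morphisms), Proposition \ref{strict_henselization_embedding} (strict Henselisation), and Proposition \ref{quasisplit_to_split_embedding} (quasi-split to pseudo-split), using Proposition \ref{prop:special_galois_eq} to control the Galois-theoretic glueing. Uniqueness proceeds essentially as in Theorem \ref{Functoriality_iso}: conditions (1) and (2) force the linear part of $p_{L/K}$ on any apartment $A(G_K, S)$ to be the extension-of-scalars map on cocharacters, and they then pin down the image of any special vertex $x$ as the unique fixed point of the target apartment under the image in $G(L)$ of $N_{G_K}(S)(K)_x$. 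Equivariance propagates uniqueness to the whole of $\mc B(G, K)$, and condition (4) forces any composite expression to agree as well.

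For existence, given $L/K$, choose a finite separable extension $M/k$ that is Galois over $K$, contains $L$, and is such that $G_M$ is pseudo-split; such an $M$ exists because $G$ becomes pseudo-split over a finite separable extension of $k$. Choose further a finite unramified extension $K_1/K$ inside $M$ such that $G_{K_1}$ is quasi-split (possible since $G_{K^{\nr}}$ is quasi-split, so quasi-splitness is witnessed at a finite unramified stage), and set $L_1 = L \cdot K_1 \subset M$. Proposition \ref{strict_henselization_embedding} applied to $K$ and $K_1$ yields inclusions into $\mc B(G, K^{\nr})$, and the inclusion $\Gal(K^{\nr}/K_1) \subset \Gal(K^{\nr}/K)$ gives
$$p_{K^{\nr}/K}(\mc B(G, K)) \subset \mc B(G, K^{\nr})^{\Gal(K^{\nr}/K_1)} = p_{K^{\nr}/K_1}(\mc B(G, K_1)),$$
so one defines $p_{K_1/K} := (p_{K^{\nr}/K_1})^{-1} \circ p_{K^{\nr}/K}$. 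Proposition \ref{quasisplit_to_split_embedding} supplies $p_{L_1/K_1}$, and Theorem \ref{Functoriality_iso} supplies the maps $p_{M/L_1}$ and $p_{M/L}$, all three pseudo-split groups sharing the same relative rank. Composing, set
$$\tilde p := p_{M/L_1} \circ p_{L_1/K_1} \circ p_{K_1/K} : \mc B(G, K) \to \mc B(G, M).$$
Each factor satisfies the appropriate version of condition (3), so $\tilde p$ lands in $\mc B(G, M)^{\Gal(M/L)}$; using that $p_{M/L}$ is an injection with image controlled by the same Galois-fixed locus (constructed by the analogous procedure starting from $L$), one defines $p_{L/K} := p_{M/L}^{-1} \circ \tilde p$.

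The main obstacle is to verify the four properties and show the construction is independent of the choices of $M$ and $K_1$. Independence from the auxiliary data is obtained by passing to a common refinement $(M', K_1')$ of two such choices: Proposition \ref{prop:special_galois_eq} ensures that each intermediate map is Galois-equivariant with respect to the relevant Galois group, so that the fixed-point descents commute with enlargement, and the uniqueness clauses of the three special-case propositions then identify the two candidate definitions of $p_{L/K}$. Properties (1) and (2) are preserved at every step of the construction, and property (3) is transported through the composition from the analogous property of each building block. Property (4), the cocycle relation $p_{M'/K} = p_{M'/L} \circ p_{L/K}$ for a tower $K \subset L \subset M'$, is proved by choosing a single sufficiently large auxiliary pseudo-split Galois extension containing $M'$ and invoking uniqueness of the intermediate maps inside the resulting ambient building. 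The delicate technical part throughout is the Galois-theoretic bookkeeping linking the strict-Henselisation construction to the quasi-split-to-pseudo-split step, precisely the role played by Proposition \ref{prop:special_galois_eq}.
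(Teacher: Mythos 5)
The overall strategy—factoring through a quasi-split intermediate extension and then a pseudo-split Galois extension, and using Galois-fixed-point descent together with Proposition~\ref{prop:special_galois_eq} to glue—is the same as the paper's. However, there are two genuine gaps in the execution. First, you invoke Proposition~\ref{quasisplit_to_split_embedding} to supply $p_{L_1/K_1}$, but its hypotheses are not met: with $L_1 = L\cdot K_1$, the group $G_{L_1}$ is quasi-split (base change of $G_{K_1}$) but need not be pseudo-split, and $L_1/K_1$ need not be Galois. (Take $L=K$: then $L_1 = K_1$ and $G_{L_1}=G_{K_1}$ is only quasi-split.) Likewise, ``Theorem~\ref{Functoriality_iso} supplies $p_{M/L}$'' fails because $G_L$ and $G_M$ do not in general have the same relative rank. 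The paper handles this by passing all the way to $L_2 = \widetilde{L^{\nr}}$ (where $G$ actually becomes pseudo-split) and then doing two separate descent steps; you cannot skip the pseudo-split stage on the $L$-side.

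Second, the sentence ``Properties (1) and (2) are preserved at every step'' papers over precisely the subtlety the paper flags in the remark after the theorem: descending through $\mc B(G,L_1)^{\Gal(L_1/L)}$ and using convexity shows the image of an apartment $A(G_K,S)$ lands inside \emph{some} apartment $p_{L_1/L}(A(G_L,T))$, but does not control whether $T$ contains $S_L$, which condition (1) explicitly requires. The paper needs Proposition~\ref{strict_henselisation_embedding_levi} (on inner façades of Levi centralisers) exactly to repair this. Finally, the uniqueness argument ``as in Theorem~\ref{Functoriality_iso}'' does not directly carry over: without the equal-relative-rank hypothesis, the fixed-point set of $\sigma(N_{G_K}(S)(K)_x)$ in $A(G_L,T)$ is an affine subspace of positive dimension rather than a single point, since the relative Weyl group on the $K$-side is only a subgroup of the one on the $L$-side. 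The paper's uniqueness argument works instead by reducing to the uniqueness clauses of the two special-case propositions via the composition constraint (4) and the commutative diagram through $K^{\nr},\widetilde{K^{\nr}},L^{\nr},\widetilde{L^{\nr}}$.
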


\begin{proof}
	We follow the proof of \cite[Théorème 5.2]{Rou}, with minimal adjustments. We first establish the uniqueness assertion.
	
	\textit{(Uniqueness)} Let $(p_{L/K})_{k \subset K \subset L}$ be a system of maps satisfying conditions 1-4. Observe that, if $K \subset L$, with $L = K^{\nr}$, then $p_{L/K}$ must be the map constructed in Proposition \ref{strict_henselization_embedding}. Likewise, if $K \subset L$, with $G_K$ quasi-split and $G_L$ pseudo-split, then $p_{L/K}$ must be the map constructed in Proposition \ref{quasisplit_to_split_embedding}. Now, let $K$ and $L$ be arbitrary finite separable extensions of $k$ with $K \subset L$.
	
	Consider the strict henselisation $K^{\nr}$ (resp. $L^{\nr}$) of $K$ (resp. $L$). Then, the groups $G_{K^{\nr}}$ and $G_{L^{\nr}}$ are quasi-split by \cite[Corollaire 2.3]{Lou}. There is thus a unique minimal finite Galois extension $\tilde{K^{\nr}}/K^{\nr}$ (resp. $\tilde{L^{\nr}}/L^{\nr}$) such that $G_{\tilde{K^{\nr}}}$ (resp. $G_{\tilde{L^{\nr}}}$) is pseudo-split. Then, the extensions $K^{\nr}$ and $\tilde{K^{\nr}}$ (resp. $L^{\nr}$ and $\tilde{L^{\nr}}$) are Galois over $K$ (resp. $L$), the group $G_{K^{\nr}}$ (resp. $G_{L^{\nr}}$) is quasi-split and the group $G_{\tilde{K^{\nr}}}$ (resp. $G_{\tilde{L^{\nr}}}$) is pseudo-split. 
	
	It then follows from condition 4 and Propositions \ref{strict_henselization_embedding} and \ref{quasisplit_to_split_embedding} that the following diagram commutes
	\begin{center}
		\begin{tikzcd}
			\mc B(G, K) \arrow[r, "p_{K^{\nr}/K}"] \arrow[d, "p_{L/K}"] & \mc B(G, K^{\nr}) \arrow[r, "p_{\tilde{K^{\nr}}/K^{\nr}}"] & \mc B(G, \tilde{K^{\nr}}) \arrow[d, "p_{\tilde{L^{\nr}}/\tilde{K^{\nr}}}"] \\
			\mc B(G, L) \arrow[r, "p_{L^{\nr}/L}"] & \mc B(G, L^{\nr}) \arrow[r, "p_{\tilde{L^{\nr}}/L^{\nr}}"] & \mc B(G, \tilde{L^{\nr}})
		\end{tikzcd}.
	\end{center}
	
	Because all maps in the diagram are injective, the map $p_{L/K}$ is uniquely determined in terms of the maps $p_{K^{\nr}/K}, p_{\tilde{K^{\nr}}/K^{\nr}}, p_{\tilde{L^{\nr}}/\tilde{K^{\nr}}}, p_{\tilde{L^{\nr}}/L^{\nr}}$, and $p_{L^{\nr}/L}$, each of which is uniquely determined in terms of $G$, $K$, and $L$ as a consequence of either Proposition \ref{strict_henselization_embedding} or Proposition \ref{quasisplit_to_split_embedding}.
	
	\textit{(Existence)} Let $K$ and $L$ be arbitrary finite separable extensions of $k$ with $K \subset L$. We consider the extensions -- with the same notations as above $$K_1 = K^{\nr}, K_2 = \tilde{K^{\nr}}, L_1 = L^{\nr}, L_2 = \tilde{L^{\nr}}.$$ Consider $$\begin{array}{ccc}\Gamma = \Gal(K_2/K),& \Gamma_1 = \Gal(K_1/K), & \Gamma_2 = \Gal(K_2/K_1)\\ \Gamma' = \Gal(L_2/L), & \Gamma'_1 = \Gal(L_1/L), & \Gamma'_2 = \Gal(L_2/L_1) \end{array}.$$ Restriction maps fit in a commutative diagram 
	\begin{center}
		\begin{tikzcd}
			\Gamma_2 \arrow[rr, "\subset"] & & \Gamma \arrow[rr, twoheadrightarrow, "\res_{K_2/K_1}"] & & \Gamma_1 \\
			\Gamma'_2 \arrow[rr, "\subset"] \arrow[u, hookrightarrow, "\res_{L_2/K_2}"] & & \Gamma' \arrow[rr, twoheadrightarrow, "\res_{L_2/L_1}"] \arrow[u, hookrightarrow, "\res_{L_2/K_2}"] & & \Gamma'_1 \arrow[u, hookrightarrow, "\res_{L_1/K_1}"]
		\end{tikzcd}.
	\end{center}
	By Theorem \ref{Galois_action_building} and Propositions \ref{strict_henselization_embedding} and \ref{quasisplit_to_split_embedding}, we may then form the following diagram.
	
	\begin{center}
		\begin{tikzcd}
			\mc B(G, K) \arrow[r, "p_{K_1/K}"]  & \mc B(G, K_1)  \arrow[loop above, "\Gamma_1"]  \arrow[r, "p_{K_2/K_1}"] & \mc B(G, K_2) \arrow[loop above, "\Gamma"]  \arrow[d, "p_{L_2/K_2}"] \\
			\mc B(G, L) \arrow[r, "p_{L_1/L}"] & \mc B(G, L_1) \arrow[loop below, "\Gamma'_1"] \arrow[r, "p_{L_2/L_1}"] & \mc B(G, L_2) \arrow[loop below, "\Gamma'"]
		\end{tikzcd}.
	\end{center}
	
	Because all maps in the diagram are injective, it is sufficient, in order to construct a map $\mc B(G, K) \to \mc B(G, L)$ that makes the rectangle commute, to prove that $$p_{L_2/K_2} \circ p_{K_2/K_1} \circ p_{K_1/K}(\mc B(G, K)) \subset p_{L_2/L_1} \circ p_{L_1/L}(\mc B(G, L)).$$
	We proceed in two steps in order to exploit the features of the two special cases: First, we construct a map $p_{L_1/K_1}: \mc B(G, K_1) \to \mc B(G, L_1)$ making the right-hand square commute. Then, we exploit the uniqueness of this map to construct the map $p_{L/K}$.
	
	\textit{Step 1:} By the same argument as before, it is sufficient to prove that $$p_{L_2/K_2} \circ p_{K_2/K_1}(\mc B(G, K_1)) \subset p_{L_2/L_1}(\mc B(G, L_1)).$$
	Furthermore, by $G(K_1)$-equivariance, it is sufficient to check that $$p_{L_2/K_2} \circ p_{K_2/K_1}(A(G_{K_1}, S_1)) \subset p_{L_2/L_1}(\mc B(G, L_1))$$ for some maximal $K_1$-split torus $S_1 \subset G_{K_1}$. Let $S_1$ be such a maximal $K_1$-split torus in $G_{K_1}$. By Proposition \ref{quasisplit_to_split_embedding} and Lemma \ref{lem:maximal_tori_quasisplit}, the image $p_{K_2/K_1}(A(G_{K_1}, S_1))$ lies in the apartment $A(G_{K_2}, S_2)$ associated to the unique maximal $K_2$-split torus $S_2 \subset G_{K_2}$ that contains $(S_1)_{K_2}$. Now, by Theorem \ref{Functoriality_iso}, the image $p_{L_2/K_2}(A(G_{K_2}, S_2))$ is precisely $A(G_{L_2}, T_2)$, with $T_2 = (S_2)_{L_2}$. As $T_2$ is defined over $K_1$ (Lemma \ref{lem:maximal_tori_quasisplit}), it is in particular defined over $L_1$. Hence, the action of $\Gamma'_2$ on $B(G, L_2)$ leaves $A(G_{L_2}, T_2)$ invariant. Because $p_{L_2/K_2}$ is $\Gamma'_2$-equivariant, by Proposition \ref{prop:special_galois_eq}, we have the inclusion $$p_{L_2/K_2} \circ p_{K_2/K_1}(A(G_{K_1}, S_1)) \subset A(G_{L_2}, T_2)^{\Gamma'_2}.$$
	To conclude, let $T_1$ be a maximal $L_1$-split torus in $G_{L_1}$ containing $(S_1)_{L_1}$. By the same reasoning as above, the image $p_{L_2/L_1}(A(G_{L_1}, T_1))$ lies in the apartment $A(G_{L_2}, T'_2)$ associated to the unique maximal $L_2$-split torus of $G_{L_2}$ containing $(T_1)_{L_2}$. Furthermore, by construction of $p_{L_2/L_1}$, we must then have $$p_{L_2/L_1}(A(G_{L_1}, S_1)) = A(G_{L_2}, T'_2)^{\Gamma'_2}.$$ Observe that the torus $T'_2$ must then contain $(S_1)_{L_2}$. By uniqueness of the maximal torus in $(Z_{G_{K_1}}(S_1))_{L_2}$, we must then have $T'_2 = T_2$. Hence, we have the inclusion $$p_{L_2/K_2} \circ p_{K_2/K_1}(A(G_{K_1}, S_1)) \subset p_{L_2/L_1}(A(G_{L_1}, T_1)),$$ which completes the proof. 
	
	We have thus proved the existence of a map $p_{L_1/K_1}: \mc B(G, K_1) \to \mc B(G, L_1)$ such that the square
	\begin{center}
		\begin{tikzcd}
			\mc B(G, K_1) \arrow[d, "p_{L_1/K_1}"] \arrow[r, "p_{K_2/K_1}"] & \mc B(G, K_2) \arrow[d, "p_{L_2/K_2}"] \\
			\mc B(G, L_1) \arrow[r, "p_{L_2/L_1}"] & \mc B(G, L_2)
		\end{tikzcd}
	\end{center}
	commutes. By $G(K_1)$-equivariance and injectivity of $p_{L_2/K_2} \circ p_{K_2/K_1}$ and $p_{L_2/L_1}$, the map $p_{L_1/K_1}$ must be $G(K_1)$-equivariant. Moreover, it follows from the construction that it maps each apartment $A(G_{K_1}, S_1)$ to the apartment $A(G_{L_1}, T_1)$ associated to the unique maximal $L_1$-split torus such that $(S_1)_{L_1} \subset T_1$. The restriction map	$$p_{L_1/K_1}: A(G_{K_1}, S_1) \to A(G_{L_1}, T_1)$$ is then affine because, letting $S_2$ be the unique maximal $K_2$-split torus of $G_{K_2}$ containing $S_1$ and $T_2 = (S_2)_{L_2}$, we have the commutative diagram 
	\begin{center}
		\begin{tikzcd}
			A(G_{K_1}, S_1) \arrow[d, "p_{L_1/K_1}"] \arrow[r, "p_{K_2/K_1}"] & A(G_{K_2}, S_2) \arrow[d, "p_{L_2/K_2}"] \\
			A(G_{L_1}, T_1) \arrow[r, "p_{L_2/L_1}"] & A(G_{L_2}, T_2)
		\end{tikzcd},
	\end{center}
	where $p_{L_2/K_2} \circ p_{K_2/K_1}$ and $p_{L_2/L_1}$ are affine and injective.
	
	We also prove that $p_{L_1/K_1}$ is $\Gamma'_1$-equivariant, which will also follow formally from Galois-equivariance properties of the other maps in the square. Let $\gamma'_1 \in \Gamma'_1$, a lift $\gamma'$ of $\gamma'_1$ to $L_2$, the restriction $\gamma = \res_{L_2/K_2}(\gamma')$, and the restriction $\gamma_1 = \res_{K_2/K_1}(\gamma)$. Then, Proposition \ref{prop:special_galois_eq} implies that $$\begin{array}{ccll} p_{L_2/L_1} \circ p_{\gamma'_1} \circ p_{L_1/K_1} \circ p_{\gamma_1}^{-1} & = & p_{\gamma'} \circ p_{L_2/L_1} \circ j \circ p_{\gamma_1}^{-1} & (\Gamma'\text{-equivariance of } p_{L_2/L_1}) \\  & = & p_{\gamma'} \circ p_{L_2/K_2} \circ p_{K_2/K_1} \circ p_{\gamma_1}^{-1} & (\text{commutativity of the diagram})\\ & = & p_{L_2/K_2} \circ p_{\gamma} \circ p_{K_2/K_1} \circ p_{\gamma_1}^{-1} & (\Gamma'\text{- equivariance of } p_{L_2/K_2})\\ & = & p_{L_2/K_2} \circ p_{K_2/K_1} & (\Gamma\text{-equivariance of } p_{K_2/K_1})\\ & = & p_{L_2/L_1} \circ p_{L_1/K_1}\end{array},$$
	and therefore $$p_{\gamma'_1} \circ p_{L_1/K_1} \circ p_{\gamma_1}^{-1} = p_{L_1/K_1}.$$
	
	\textit{Step 2:} Our second and final step is to deduce that, given an apartment $A(G_K, S)$ in $\mc B(G, K)$, we have $$p_{L_1/K_1} \circ p_{K_1/K}(A(G_K, S)) \subset p_{L_1/L}(A(G_L, T)),$$ for some maximal $L$-split torus $T$ of $G_L$ such that $S_L \subset T$.
	First of all, it follows from Proposition \ref{strict_henselization_embedding} and the construction of $p_{L_1/K_1}$ that the image $p_{L_1/K_1} \circ p_{K_1/K}(A(G_K, S))$ lies in the apartment $A(G_{L_1}, T_1)$ associated a certain maximal $L_1$-split torus $T_1$ containing $S$ and is $\Gamma'_1$-equivariant.
	Consequently, we have the inclusion $$p_{L_1/K_1} \circ p_{K_1/K}(A(G_K, S)) \subset (\mc B(Z_{G_K}(S), G, L_1))^{\Gamma'_1},$$ with the notations of the Appendix.
	By Proposition \ref{strict_henselisation_embedding_levi}, we deduce $$p_{L_1/K_1} \circ p_{K_1/K}(A(G_K, S)) \subset p_{L_1/L}(\mc B(Z_{G_K}(S), G, L)).$$ Let $x \in A(G_K, S)$. The above inclusion implies that there exists a maximal $L$-split torus $T$ of $G_L$ containing $S_L$ such that $$p_{L_1/K_1} \circ p_{K_1/K}(x) \in p_{L_1/L}(A(G_L, T))$$
	Because the right-hand side is stable under the action of $S(K) \subset T(L)$, we get that $$p_{L_1/K_1} \circ p_{K_1/K}(S(K) \cdot x) \subset p_{L_1/L}(A(G_L, T))$$ and, passing to the convex hull, $$p_{L_1/K_1} \circ p_{K_1/K}(A(G_K, S)) \subset p_{L_1/L}(A(G_L, T)).$$
	We then deduce by the same arguments as before that there must exist a $G(K)$-equivariant map $$p_{L/K}: \mc B(G, K) \to \mc B(G, L)$$ such that $p_{L_1/K_1}\circ p_{K_1/K} = p_{L_1/L} \circ p_{L/K}$. Moreover, we have $p_{L/K}(A(G_K, S)) \subset A(G_L, T)$ and, by $G(K)$-equivariance, we find that for each $S$, there must exist a maximal $L$-split torus $T$ containing $S_L$ such that $p_{L/K}(A(G_K, S)) \subset A(G_L, T)$. The same argument as before then proves that the restriction of $p_{L/K}$ to $A(G_K, S)$ is affine, which establishes that $p_{L/K}$ satisfies condition 1. 
	To complete the proof of Theorem \ref{Functoriality_inclusions}, we now need to establish conditions 3 and 4.
	
	Condition 4 is an easy consequence of the uniqueness conditions. Indeed, if $K \subset L \subset M$ with $M/k$ a finite separable extension, then we have a commutative diagram 
	\begin{center}
		\begin{tikzcd}
			\mc B(G, K) \arrow[rr, "p_{K_2/K_1} \circ p_{K_1/K}"]  \arrow[d, "p_{L/K}"] &  & \mc B(G, K_2) \arrow[d, "p_{L_2/K_2}"]  \\
			\mc B(G, L) \arrow[rr, "p_{L_2/L_1} \circ p_{L_1/L}"] \arrow[d, "p_{M/L}"] & & \mc B(G, L_2) \arrow[d, "p_{M_2/L_2}"] \\
			\mc B(G, M) \arrow[rr, "p_{M_2/M_1} \circ p_{M_1/M}"]  & & \mc B(G, M_2)
		\end{tikzcd}.
	\end{center}
	Furthermore, because $G_{K_2}$ is pseudo-split, we know by Theorem \ref{Functoriality_iso} that $p_{M_2/L_2} \circ p_{L_2/K_2} = p_{M_2/K_2}$.
	Therefore, we have the commutative diagram 
	\begin{center}
		\begin{tikzcd}
			\mc B(G, K) \arrow[rr, "p_{K_2/K_1} \circ p_{K_1/K}"]  \arrow[d, "p_{M/L} \circ p_{L/K}"]  & & \mc B(G, K_2) \arrow[d, "p_{M_2/K_2}"] \\
			\mc B(G, M) \arrow[rr, "p_{M_2/M_1} \circ p_{M_1/M}"]  & & \mc B(G, M_2)
		\end{tikzcd}
	\end{center}
	Because $p_{M/K}$ is, by construction, the only map $\mc B(G, K) \to \mc B(G, M)$ that makes the above rectangle commute, we must have $$p_{M/K} = p_{M/L} \circ p_{L/K}.$$
	Lastly, establishing condition 3 merely requires adapting the Galois-equivariance arguments we previously brought forth. Indeed, if $L/K$ is Galois, then $L_1$ and $L_2$ are Galois over $K$. Furthermore, setting $$\tilde \Gamma' = \Gal(L_2/K), \tilde \Gamma_1 = \Gal(L_1/K), \Sigma = \Gal(L/K),$$ we have a diagram 
	\begin{center}
		\begin{tikzcd}
			\Gamma \arrow[rr, twoheadrightarrow, "\res_{K_2/K_1}"] & & \Gamma_1 & & \\
			\tilde \Gamma' \arrow[rr, twoheadrightarrow, "\res_{L_2/L_1}"] \arrow[u, twoheadrightarrow, "\res_{L_2/K_2}"] & & \tilde \Gamma'_1 \arrow[u, twoheadrightarrow, "\res_{L_1/K_1}"] \arrow[rr, twoheadrightarrow, "\res_{L_1/L}"] & & \Sigma
		\end{tikzcd}.
	\end{center}
	Recall that, by Theorem \ref{Galois_action_building}, we have the following actions 
	\begin{center}
		\begin{tikzcd}
			\mc B(G, K) \arrow[d, "p_{L/K}"] \arrow[r, "p_{K_1/K}"] & \mc B(G, K_1) \arrow[r, "p_{K_2/K_1}"] \arrow[loop above, "\Gamma_1"] & \mc B(G, K_2) \arrow[d, "p_{L_2/K_2}"] \arrow[loop above, "\Gamma"]\\
			\mc B(G, L) \arrow[loop below, "\Sigma"] \arrow[r, "p_{L_1/L}"] & \mc B(G, L_1) \arrow[loop below, "\tilde \Gamma'_1"] \arrow[r, "p_{L_2/L_1}"] & \mc B(G, L_2) \arrow[loop below, "\tilde \Gamma'"] 
		\end{tikzcd}.
	\end{center}
	Then, repeated applications of Proposition \ref{prop:special_galois_eq} yield $$p_{L_2/K_2} \circ p_{K_2/K_1} \circ p_{K_1/K}(\mc B(G, K)) \subset \mc B(G, L_2)^{\tilde \Gamma'}.$$
	By commutativity of the diagram, we get $$p_{L_2/L_1} \circ p_{L_1/L} \circ p_{L/K}(\mc B(G, K)) \subset \mc B(G, L_2)^{\tilde \Gamma'}.$$
	More precisely, we may write $$p_{L_2/L_1} \circ p_{L_1/L} \circ p_{L/K}(\mc B(G, K)) \subset \mc B(G, L_2)^{\tilde \Gamma'}\cap p_{L_2/L_1} \circ p_{L_1/L}(\mc B(G, L)).$$
	Then, using Proposition \ref{prop:special_galois_eq} and the injectivity of $p_{L_2/L_1} \circ p_{L_1/L}$, we get $$p_{L_2/L_1} \circ p_{L_1/L} \circ p_{L/K}(\mc B(G, K)) \subset p_{L_2/L_1} \circ p_{L_1/L}(\mc B(G, L)^{\Sigma}),$$ and finally by injectivity $$p_{L/K}(\mc B(G, K)) \subset \mc B(G, L)^{\Sigma},$$ which completes the proof.
\end{proof}

\begin{rmk}
    We used Proposition \ref{strict_henselisation_embedding_levi} in Step 2 of the existence proof of Theorem \ref{Functoriality_inclusions} to ensure that $p_{L/K}$ maps the apartment into an apartment $A(G_L, T)$ for a torus $T \subset G_L$ \textit{that contains $S_L$}. At that stage of the proof, we could have argued directly from Proposition \ref{strict_henselization_embedding} that $p_{L_1/K_1} \circ p_{K_1/K}(A(G_K, S))$ lies in $\mc B(G, L_1)^{\Gamma'_1} = p_{L_1/L}(\mc B(G, L))$, and thus that, by convexity, the image lies in the image of some apartment $p_{L_1/L}(A(G_L, T))$. However, the torus $T$ need not contain $S_L$.
\end{rmk}

We conclude this section by generalising two results that held in the two special cases of Theorem \ref{Functoriality_inclusions} we studied earlier.

First of all, we prove that, given an apartment $A$ in a building $\mc B(G, K)$, there exists a Galois extension $L/K$ such that $G_L$ is pseudo-split and such that $A$ is the set of fixed points of an apartment $A_L \subset \mc B(G, L)$ that is stable under the action of $\Gal(L/K)$.

\begin{prop}\label{prop:special_apartments}
	Let $K$ be a finite separable extension of $k$ and let $A(G, S)$ be an apartment in $\mc B(G, K)$. There exists a maximal torus $T$ of $G$ containing $S$ and a finite Galois extension $L/K$ satisfying the following:
	\begin{enumerate}
		\item The torus $T_L$ is split.
		\item The injection $p_{L/K}: \mc B(G, K) \to \mc B(G, L)$ maps $A(G, S)$ isomorphically onto the set of $\Gal(L/K)$-fixed points of $A(G_L, T_L)$.
	\end{enumerate}
\end{prop}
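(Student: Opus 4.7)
The plan is to reduce to the two special cases handled in Propositions \ref{strict_henselization_embedding} and \ref{quasisplit_to_split_embedding}, by first constructing a canonical maximal torus $T$ of $G$ over $K$ containing $S$, then choosing a finite Galois extension $L/K$ large enough that $T_L$ is split and $G_L$ is pseudo-split, and finally establishing the desired equality via a dimension count.

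For the construction of $T$, I would use Galois descent from the strict henselisation: since $G_{K^{\nr}}$ is quasi-split by \cite[Corollaire 2.3]{Lou}, Lemma \ref{lem:maximal_tori_quasisplit} provides a unique maximal torus $T' \subset G_{K^{\nr}}$ containing $S_{K^{\nr}}$, and uniqueness forces $T'$ to be stable under $\Gal(K^{\nr}/K)$, hence it descends to a maximal $K$-torus $T \subset G$ containing $S$. For the choice of $L$, I observe that $G$ becomes quasi-split over some finite Galois unramified subextension $K_1 \subset K^{\nr}$ (by spreading out a minimal pseudo-parabolic from $K^{\nr}$), while $T$ splits over some finite separable extension of $K$; I then let $L$ be the Galois closure over $K$ of the compositum of $K_1$ with a splitting field of $T$, so that $T_L$ is a split maximal torus of $G_L$ and $G_L$ is pseudo-split.

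The verification of property (2) forms the technical core. By condition (1) of Theorem \ref{Functoriality_inclusions}, $p_{L/K}$ restricts to an affine embedding of $A(G, S)$ into some apartment $A(G_L, T'')$ for a maximal $L$-split torus $T'' \supset S_L$. Combining condition (3), $\Gal(L/K)$-equivariance of $p_{L/K}$, and an argument via the inner façade $\mc B(Z_G(S), G, L)$ analogous to Proposition \ref{strict_henselisation_embedding_levi} should identify $T''$ with $T_L$, using the fact from Step 1 that $T_L$ is the unique $\Gal(L/K)$-stable maximal $L$-split torus containing $S_L$. Hence $p_{L/K}(A(G, S)) \subset A(G_L, T_L)^{\Gal(L/K)}$. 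The reverse inclusion follows from a dimension count: Lemma \ref{lem:maximal_tori_quasisplit} gives that $A(G_L, T_L)^{\Gal(L/K)}$ has direction $X_*(S_L)_{\RR} / X_*(Z(G_L))_{\RR}$, while by Theorem \ref{Functoriality_iso} the direction of the image is that of the natural map $V(G_K, S) \to V(G_L, T_L)$ induced by $S \subset T$; using the identifications $X_*(S) = X_*(S_L)$ (since $S$ is $K$-split) and $Z(G_L) = Z(G)_L$ (compatibility of the centre with base change in the quasi-reductive setting), together with the fact that $S$ is the maximal $K$-split subtorus of $T$, these two directions agree. Since the image is a non-empty affine subspace contained in the fixed set and of the same dimension, the two subspaces coincide.

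The hardest step is likely the identification $T'' = T_L$, since Theorem \ref{Functoriality_inclusions} only guarantees that the image lies in \emph{some} apartment $A(G_L, T'')$ for an a priori arbitrary maximal $L$-split torus $T'' \supset S_L$; nailing down the specific apartment requires adapting the façade argument of Proposition \ref{strict_henselisation_embedding_levi} from the strict henselisation $K^{\nr}/K$ to the general finite Galois extension $L/K$, and exploiting the canonical nature of $T$ established in Step 1.
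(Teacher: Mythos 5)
Your construction of $T$ contains a genuine gap. You apply Lemma \ref{lem:maximal_tori_quasisplit} to $S_{K^{\nr}}$ inside the quasi-split group $G_{K^{\nr}}$ to get a \emph{unique} maximal torus $T'$ containing $S_{K^{\nr}}$. But that lemma requires the split torus in question to be \emph{maximal split}, and $S_{K^{\nr}}$ need not be a maximal $K^{\nr}$-split torus of $G_{K^{\nr}}$: the split rank can jump when passing from $K$ to $K^{\nr}$. When $S_{K^{\nr}}$ is not maximal split, $Z_{G_{K^{\nr}}}(S_{K^{\nr}})$ is no longer nilpotent and uniqueness of the maximal torus containing it fails, so your $\Gal(K^{\nr}/K)$-stability argument (and hence the descent of $T$ to $K$) breaks down. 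The paper fixes exactly this by first invoking \cite[Lemme 4.8]{Lou} to produce a maximal $K^{\nr}$-split torus $S^{\nr} \supset S_{K^{\nr}}$ that is already defined over $K$, and only then applies the unique-maximal-torus lemma to $S^{\nr}$; the resulting $T$ descends because $S^{\nr}$ does.

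On the verification of (2), your route is workable in outline but much more circuitous than the paper's, and you yourself flag the identification $T'' = T_L$ as the hardest step without resolving it. The paper sidesteps that identification entirely: it tracks the image of $A(G,S)$ explicitly through the chain $p_{K^{\nr}/K}$ (Proposition \ref{strict_henselization_embedding}, landing in $A(G_{K^{\nr}}, S^{\nr})$), then $p_{\tilde K^{\nr}/K^{\nr}}$ (Proposition \ref{quasisplit_to_split_embedding}, landing in $A(G_{\tilde K^{\nr}}, T_{\tilde K^{\nr}})$), and finally uses Theorem \ref{Functoriality_iso} applied to $L \subset \tilde K^{\nr}$ to realize $A(G_{\tilde K^{\nr}}, T_{\tilde K^{\nr}})$ as the image of $A(G_L, T_L)$ under $p_{\tilde K^{\nr}/L}$; the functoriality $p_{\tilde K^{\nr}/K} = p_{\tilde K^{\nr}/L} \circ p_{L/K}$ and injectivity then pin down $p_{L/K}(A(G,S)) \subset A(G_L, T_L)$ with no ambiguity about which apartment. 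This also explains why the paper takes $L$ inside $\tilde K^{\nr}$ rather than as a Galois closure of a compositum: staying inside $\tilde K^{\nr}$ lets one apply Theorem \ref{Functoriality_iso} cleanly. Your final dimension count agrees with the paper's, although note that you cannot literally invoke Lemma \ref{lem:maximal_tori_quasisplit} for the direction of the fixed locus, since $G_K$ is not assumed quasi-split here; one has to argue directly that $V(G_L, T_L)^{\Gal(L/K)} = V(G, S)$.
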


\begin{proof}
	Let $K^{\nr}$ be the strict henselisation of $K$ and $\tilde K^{\nr}$ be the minimal Galois extension that splits $G_{K^{\nr}}$. By \cite[Lemme 4.8]{Lou}, there exists a maximal $K^{\nr}$-split torus $S^{\nr}$ that contains $S_{K^{\nr}}$ and is defined over $K$. Because $G_{K^{\nr}}$ is quasi-split, there exists a unique maximal torus $T$ of $G_{K^{\nr}}$ that contains $S^{\nr}$. The torus $T$ is defined over $K$, because $S^{\nr}$ is, and splits over $\tilde K^{\nr}$. Moreover, Proposition \ref{strict_henselization_embedding} yields $$p_{K^{\nr}/K}(A(G, S)) \subset A(G_{K^{\nr}}, S^{\nr})$$ and Proposition \ref{quasisplit_to_split_embedding} yields $$p_{\tilde K^{\nr}/K^{\nr}}(A(G_{K^{\nr}}, S^{\nr})) \subset A(G_{\tilde K^{\nr}}, T_{\tilde K^{\nr}}).$$
	In particular, $$p_{\tilde K^{\nr}/K}(A(G, S)) \subset A(G_{\tilde K^{\nr}}, T_{\tilde K^{\nr}}).$$
	Now, let $L \subset \tilde K^{\nr}$ be a finite Galois extension of $K$ such that $T_L$ is split. By Theorem \ref{Functoriality_iso}, we have $$p_{\tilde K^{\nr}/L}(A(G_L, T_L)) = A(G_{\tilde K^{\nr}}, T_{\tilde K^{\nr}}).$$
	In other words, $$p_{\tilde K^{\nr}/L} \circ p_{L/K}((A(G, S)) \subset p_{\tilde K^{\nr}/L}(A(G_L, T_L))$$ and, by injectivity, we have $$p_{L/K}(A(G, S)) \subset A(G_L, T_L).$$
	Because $T$ is defined over $K$, the apartment $A(G_L, T_L)$ is stable under the action of $\Gal(L/K)$ and the set of $\Gal(L/K)$-fixed points is a non-empty affine space under $V(G_L, T_L)^{\Gal(L/K)} = V(G, S)$. By considering dimensions, we thus see that $$p_{L/K}(A(G, S)) = A(G_L, T_L)^{\Gal(L/K)}.$$
\end{proof}

Secondly, we prove a general version of Proposition \ref{prop:special_galois_eq}.

\begin{thm}\label{thm:full_galois_eq}
	Let $K, L, M$ be finite separable extensions of $k$ such that $K \subset L \subset M$. Assume that $L$ and $M$ are Galois over $K$. Then, the map $p_{M/L}$ is $\Gal(M/K)$-equivariant if we let $\Gal(M/K)$ act on $\mc B(G, L)$ through the restriction map $\res_{M/L}: \Gal(M/K) \to \Gal(L/K)$.
\end{thm}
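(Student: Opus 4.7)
Fix $\gamma \in \Gal(M/K)$ and put $\gamma_L := \res_{M/L}(\gamma) \in \Gal(L/K)$. I plan to show that
\[ q := p_\gamma \circ p_{M/L} \circ p_{\gamma_L^{-1}} : \mc B(G, L) \to \mc B(G, M) \]
coincides with $p_{M/L}$, which is equivalent to the claimed equivariance. The strategy is to exploit the characterisation of $p_{M/L}$ furnished by the existence proof of Theorem \ref{Functoriality_inclusions}: setting $L_1 := L^{\nr}$, $L_2 := \tilde{L^{\nr}}$, $M_1 := M^{\nr}$, $M_2 := \tilde{M^{\nr}}$, the map $p_{M/L}$ is uniquely determined by the identity
\[ p_{M_2/M_1} \circ p_{M_1/M} \circ p_{M/L} = p_{M_2/L_2} \circ p_{L_2/L_1} \circ p_{L_1/L}, \]
together with the injectivity of the left-hand outer composition. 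It will therefore suffice to show that $q$ satisfies the same identity.

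Inside the fixed algebraic closure $\ol k$, the fields $L_i, M_i$ for $i = 1, 2$ are Galois over $K$, since the strict henselisation and the minimal pseudo-split extension are canonical constructions, hence preserved by any $K$-automorphism of $\ol k$. Consequently, the maps $p_{L_1/L}$, $p_{L_2/L_1}$, $p_{M_1/M}$, $p_{M_2/M_1}$ are all Galois equivariant by Proposition \ref{prop:special_galois_eq}(2), and the horizontal map $p_{M_2/L_2}$ between pseudo-split groups of equal absolute rank is covered by Proposition \ref{prop:special_galois_eq}(1), with $K_0 = L_0 = K$. I would then choose a compatible tower of lifts of $\gamma$: an automorphism $\tilde\gamma_2 \in \Aut_K(M_2)$ extending $\gamma$ whose successive restrictions to $M_1, L_2, L_1$ are lifts of $\gamma$ and $\gamma_L$ at each level. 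Such a tower exists by extending $\gamma$ stepwise inside $\ol k$.

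The verification then reduces to an iterated commutation. Starting from $p_{M_2/M_1} \circ p_{M_1/M} \circ q$, commute $p_\gamma$ rightward through $p_{M_1/M}$ and $p_{M_2/M_1}$ via Proposition \ref{prop:special_galois_eq}(2) to produce a factor $p_{\tilde\gamma_2}$ to the left of $p_{M_2/M_1} \circ p_{M_1/M} \circ p_{M/L}$. Substituting the characterising identity turns the right-hand tail into $p_{M_2/L_2} \circ p_{L_2/L_1} \circ p_{L_1/L}$, after which one pushes $p_{\tilde\gamma_2}$ rightward through $p_{M_2/L_2}$, $p_{L_2/L_1}$ and $p_{L_1/L}$ using parts (1) and (2) of Proposition \ref{prop:special_galois_eq}. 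The innermost factor so produced is $p_{\tilde\gamma_2|_L} = p_{\gamma_L}$, which cancels with $p_{\gamma_L^{-1}}$. We obtain precisely $p_{M_2/L_2} \circ p_{L_2/L_1} \circ p_{L_1/L}$, and the injectivity of $p_{M_2/M_1} \circ p_{M_1/M}$ then forces $q = p_{M/L}$.

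The main obstacle is the simultaneous verification that each application of Proposition \ref{prop:special_galois_eq} is licit -- i.e.\ that the intermediate fields $L_i, M_i$ are Galois over $K$ in the requisite sense and that the hypotheses of the two parts (strict henselisation or quasi-split to pseudo-split for part (2); equal relative rank for part (1)) are met -- together with the careful bookkeeping of a mutually compatible tower of Galois lifts of $\gamma$. These points are organisational rather than conceptually deep, but must be handled with care to avoid residual Galois automorphisms in the final composition.
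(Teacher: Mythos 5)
Your proposal takes essentially the same route as the paper's own proof: both fix $\gamma$, lift it compatibly up the tower $M \subset M_1 \subset M_2$, commute the resulting Galois automorphisms through the five constituent embeddings via the two parts of Proposition \ref{prop:special_galois_eq}, and conclude from the defining commutative rectangle of $p_{M/L}$ together with the injectivity of $p_{M_2/M_1}\circ p_{M_1/M}$. The only difference is cosmetic -- your directional language about pushing $p_\gamma$ ``rightward'' through $p_{M_1/M}$ and $p_{M_2/M_1}$ is the opposite of what the composition order suggests, but the intended calculation is clearly the correct one.
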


\begin{proof}
	The argument is a combination of Proposition \ref{prop:special_galois_eq} and some ideas of the proof of Theorem \ref{thm:full_galois_eq}. Recall that, by construction, the map $p_{M/L}$ is the only map that makes the following diagram commute 	
	
	\begin{center}
		\begin{tikzcd}
			\mc B(G, L) \arrow[d, "p_{M/L}"] \arrow[r, "p_{L_1/L}"]  & \mc B(G, L_1)  \arrow[r, "p_{L_2/L_1}"] & \mc B(G, L_2)  \arrow[d, "p_{M_2/L_2}"] \\
			\mc B(G, M) \arrow[r, "p_{M_1/M}"] & \mc B(G, M_1)  \arrow[r, "p_{M_2/M_1}"] & \mc B(G, M_2) 
		\end{tikzcd},
	\end{center} where, as in the proof of Theorem \ref{Functoriality_inclusions}, we let $$L_1 = L^{\nr}, L_2 = \tilde{L^{\nr}}, M_1 = M^{\nr}, M_2 = \tilde{M^{\nr}}.$$ 
	Note that all extensions in question are Galois over $K$ under the assumption that $L$ and $M$ are Galois over $K$.
	Then, letting $\gamma' \in \Gal(M/K)$ and denoting by $\gamma'_1  \in \Gal(M_1/K)$ a lift of $\gamma'$, and by $\gamma'_2 \in \Gal(M_2/K)$ a lift of $\gamma'_1$, and letting $\gamma_2 = \res^{M_2}_{L_2} (\gamma'_2) \in \Gal(L_2/K)$, as well as $\gamma_1 = \res^{L_2}_{L_1}(\gamma_2) \in \Gal(L_1/K)$, and finally $\gamma = \res^{L^{\nr}}_L(\gamma) \in \Gal(L/K)$, repeated applications of Proposition \ref{prop:special_galois_eq} yield: $$\begin{array}{ccl} p_{M_2/M_1} \circ p_{M_1/M} \circ p_{\gamma'} \circ  p_{M/L} & = & p_{M_2/M_1} \circ p_{\gamma'_1} \circ p_{M_1/M} \circ p_{M/L}  \\  & = & p_{\gamma'_2} \circ p_{M_2/M_1} \circ p_{M_1/M} \circ p_{M/L} \\ & = & p_{\gamma'_2} \circ p_{M_2/L_2} \circ p_{L_2/L_1} \circ p_{L_1/L} \\ & = & p_{M_2/L_2} \circ p_{\gamma_2} \circ p_{L_2/L_1} \circ p_{L_1/L}\\ & = & p_{M_2/L_2}\circ p_{L_2/L_1} \circ p_{\gamma_1} \circ p_{L_1/L} \\ & = & p_{M_2/L_2} \circ p_{L_2/L_1} \circ p_{L_1/L} \circ p_{\gamma}\end{array}.$$ 
	It follows that $p_{\gamma'} \circ p_{M/L} \circ p_{\gamma}$ makes the above rectangle commute, hence we must have $$p_{\gamma'} \circ p_{M/L} \circ p_{\gamma} = p_{M/L}.$$
\end{proof}
\section{Mapping the building into \texorpdfstring{$G^{\an}$}{Gan}} \label{sect:Embedding}

Let $G$ be a quasi-reductive group over a local field $k$. We denote its valuation by $\omega$ and by $|\cdot|$ the absolute value $| \cdot | : x \mapsto e^{-\omega(x)}$. In this section, we construct an injection $$\vtheta: \mc B(G, k) \to G^{\an}$$ following the general philosophy of \cite{RTW1}. Recall that the idea is to associate to each special vertex $x$ the unique point $\vtheta(x)$ in the Shilov boundary of $(\mc G_x^{\circ})^{\an}$, where $\mc G_x^{\circ}$ is the parahoric subgroup associated to $x$ in \cite[Théorème 4.5]{Lou}. In the case where the point $x$ is not a special vertex, its image is $\pr_{K/k}(\vtheta_K(x_K))$, where $K$ is a suitably chosen non-archimedean extension of $k$ such that $x_K$ becomes a special vertex in $\mc B(G, K)$. 

In our case, because we are reliant on Bruhat-Tits theory for quasi-reductive groups as exposed in \cite{Lou}, which requires discretely valued base fields with perfect residue fields, we cannot make sense of the above definition unless the point $x$ becomes a special vertex in $\mc B(G, K)$ for a \textit{finite} Galois extension $K/k$ (for the simple reason that we do not have a working definition of $\mc B(G, K)$ for an arbitrary non-archimedean extension $K/k$).

The main observation we make here is that there is a large enough amount of such points that embedding them determines a unique embedding of the whole building by a density argument.

\subsection{Virtually special points in the building}

We first introduce the notion of a virtually special point of the building, which formalises the aforementioned condition, and collect some basic facts. We conclude by proving that virtually special points are dense in the building.

\begin{defn}
	Let $x \in \mc B(G, k)$. We say that $x$ is \textit{virtually special} if there exists a finite Galois extension $K/k$ such that $G_K$ is pseudo-split and the canonical injection $$p_{K/k}: \mc B(G, k) \to \mc B(G, K)$$ defined in Theorem \ref{Functoriality_inclusions}  maps $x$ to a special vertex. We denote by $\mc B_{\QQ}(G, k)$ the set of virtually special points in $\mc B(G, k)$. If $S \subset G$ is a maximal split torus, we denote by $A_{\QQ}(G, S)$ the set of virtually special points in the apartment $A(G, S)$.
\end{defn}

For ease of notation, whenever $K$ is a finite Galois extension of $k$ and $x\in \mc B(G, k)$, we will write $x_K$ instead of $p_{K/k}(x)$ in all the following.

The following proposition is a direct consequence of the definition.

\begin{prop}\label{prop:virtually_special_and_extensions}
	If $k'/k$ is a finite Galois extension, then we have $$\mc B_{\QQ}(G, k) = p_{k'/k}^{-1}(\mc B_{\QQ}(G, k')).$$
\end{prop}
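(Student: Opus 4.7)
The plan is to prove the two inclusions $\subset$ and $\supset$ separately, in both cases reducing to preservation of special vertices by the transition maps between pseudo-split models. Two general facts will do most of the work: the transitivity $p_{L/K} = p_{L/M} \circ p_{M/K}$ from Theorem \ref{Functoriality_inclusions} (4), and Proposition \ref{prop:pseudo_split_vertices}, which guarantees that $p_{L/K}$ maps special vertices to special vertices whenever $G_K$ and $G_L$ have the same relative rank -- in particular whenever both are pseudo-split. I will also use the elementary observation that pseudo-splitness is preserved under arbitrary field extensions: a maximal torus $T \subset G_K$ base-changes to a maximal torus $T_L \subset G_L$ (tori keep their dimension under base change), and if $T$ is $K$-split, then $T_L$ is $L$-split, so its rank equals the absolute rank of $G$.

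For the inclusion $\mc B_{\QQ}(G, k) \subset p_{k'/k}^{-1}(\mc B_{\QQ}(G, k'))$, I would take $x \in \mc B_{\QQ}(G, k)$ witnessed by a finite Galois extension $K/k$ with $G_K$ pseudo-split and $x_K$ a special vertex, and form the compositum $L = K \cdot k' \subset \ol k$. Since both $K/k$ and $k'/k$ are Galois, $L/k$ is finite Galois, and therefore so is $L/k'$; moreover $G_L$ is pseudo-split by the remark above. By transitivity,
\[ p_{L/k'}(x_{k'}) = p_{L/k}(x) = p_{L/K}(x_K). \]
Since $G_K$ and $G_L$ are both pseudo-split, Proposition \ref{prop:pseudo_split_vertices} applies and $p_{L/K}(x_K)$ is a special vertex of $\mc B(G, L)$. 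Hence $(x_{k'})_L$ is special and $x_{k'} \in \mc B_{\QQ}(G, k')$.

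For the converse inclusion, suppose $x_{k'}$ is virtually special, witnessed by a finite Galois extension $L/k'$ with $G_L$ pseudo-split and $(x_{k'})_L$ a special vertex. I would take $\tilde L$ to be the Galois closure of $L$ over $k$, which is a finite Galois extension of $k$ over which $G$ is still pseudo-split. Applying transitivity twice,
\[ x_{\tilde L} = p_{\tilde L/k'}(x_{k'}) = p_{\tilde L/L}((x_{k'})_L). \]
Proposition \ref{prop:pseudo_split_vertices} again yields that $p_{\tilde L/L}$ preserves special vertices, so $x_{\tilde L}$ is special and $x \in \mc B_{\QQ}(G, k)$.

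No serious obstacle is expected: once functoriality is taken as a black box, the argument reduces to fitting two finite Galois extensions (of $k$ and of $k'$) into a common finite Galois extension of $k$ over which $G$ is pseudo-split, and then invoking Proposition \ref{prop:pseudo_split_vertices}. The only mildly non-formal ingredient is the stability of pseudo-splitness under further field extensions, which is immediate from the behaviour of maximal tori under base change.
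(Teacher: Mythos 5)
Your argument is correct. The paper states this proposition without proof (calling it ``a direct consequence of the definition''), and what you have written is precisely the unwinding one would supply: in both directions you embed the witnessing Galois extensions into a common finite Galois extension of $k$ over which $G$ is pseudo-split (via compositum, resp.\ Galois closure), use the transitivity $p_{M/K} = p_{M/L}\circ p_{L/K}$ from Theorem \ref{Functoriality_inclusions}, and then invoke Proposition \ref{prop:pseudo_split_vertices} for the pseudo-split-to-pseudo-split step, together with the observation that pseudo-splitness is preserved under further base change.
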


We also check that the set of virtually special points is preserved under the action of $G(k)$.

\begin{prop}
	The action of $G(k)$ on $\mc B(G, k)$ stabilises $\mc B_{\QQ}(G,k)$.
\end{prop}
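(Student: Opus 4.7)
The plan is to reduce this to the $G(k)$-equivariance of the functoriality maps and the fact that $G(K)$ preserves the set of special vertices of $\mc B(G, K)$.

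First, fix $g \in G(k)$ and a virtually special point $x \in \mc B_{\QQ}(G, k)$. By definition, there is a finite Galois extension $K/k$ such that $G_K$ is pseudo-split and $x_K := p_{K/k}(x)$ is a special vertex of $\mc B(G, K)$. My aim is to show that the same extension $K/k$ witnesses the virtual speciality of $gx$; concretely, that $(gx)_K$ is again a special vertex of $\mc B(G, K)$.

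Second, I invoke condition 2 of Theorem \ref{Functoriality_inclusions}: the map $p_{K/k}$ is $G(k)$-equivariant, where $G(k)$ is viewed as a subgroup of $G(K)$ via the inclusion $k \subset K$. Hence
\[
p_{K/k}(gx) = g \cdot p_{K/k}(x) = g \cdot x_K.
\]
So it suffices to check that $g \cdot x_K$ is a special vertex of $\mc B(G, K)$.

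Third, this is a standard property of Bruhat-Tits theory: the action of $G(K)$ on $\mc B(G, K)$ is by polysimplicial (in fact, building) automorphisms, and hence preserves the set of special vertices, since speciality of a vertex is defined intrinsically in terms of the affine root arrangement (a vertex is special precisely when every gradient of an affine root is realised by an affine root vanishing at it). Any $G(K)$-translate of a special vertex is therefore special. Applying this to $g \in G(k) \subset G(K)$ and $x_K$, we conclude that $g \cdot x_K$ is special, hence $gx$ is virtually special.

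No step here is a genuine obstacle: the argument is a one-line equivariance/translation-invariance check once Theorem \ref{Functoriality_inclusions} is in hand. The only subtlety worth mentioning is to confirm that the $G(K)$-action on $\mc B(G, K)$ preserves special vertices in the quasi-reductive setting of \cite{Lou}, which follows from the fact that this action is by polysimplicial automorphisms of the Bruhat-Tits polysimplicial structure.
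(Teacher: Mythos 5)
Your proof is correct and follows the paper's argument exactly: fix a witness extension $K/k$, use the $G(k)$-equivariance of $p_{K/k}$ from Theorem \ref{Functoriality_inclusions}, and conclude via the fact that $G(K)$ acts by simplicial automorphisms and hence preserves special vertices. The paper's proof is the same one-line reduction.
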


\begin{proof}
	Let $x \in \mc B_{\QQ}(G,k)$ and $g \in G(k)$. Let $K/k$ be a finite Galois extension such that $x_K$ is a special vertex. Because $G(K)$ acts on $\mc B(G, K)$ by simplicial automorphisms, the point $gx_K$ is a special vertex. Moreover, the map $p_{K/k}$ is $G(k)$-equivariant, hence $gx_K = p_{K/k}(gx)$, which concludes.
\end{proof}

To conclude these introductory observations, we check that the set of virtually special points is preserved under the action of a Galois group.

\begin{prop}\label{Galois_rational}
	If $k'/k$ is a finite Galois extension, then the action of $\Gal(k'/k)$ stabilises $\mc B_{\QQ}(G, k')$.
\end{prop}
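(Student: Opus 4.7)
The plan is to exploit Theorem \ref{thm:full_galois_eq} by replacing the witness extension for the virtual speciality of $x$ with one that is Galois over $k$ itself, so that $\Gal(k'/k)$-equivariance becomes visible as a restriction of a full Galois action. Given $x \in \mc B_{\QQ}(G, k')$ and $\sigma \in \Gal(k'/k)$, I would first pick a finite Galois extension $K/k'$ such that $G_K$ is pseudo-split and $x_K = p_{K/k'}(x)$ is a special vertex, which exists by the definition of virtual speciality. I would then replace $K$ by its Galois closure $L$ over $k$: the extension $L/k$ is finite Galois, $L/k'$ is finite Galois as well, and $G_L$ remains pseudo-split since base-changing a $K$-split maximal torus of $G_K$ to $L$ yields an $L$-split maximal torus of $G_L$.

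Next I would verify that $x_L$ is still a special vertex of $\mc B(G, L)$. Since $G_K$ and $G_L$ both have relative rank equal to the absolute rank of $G$, Theorem \ref{Functoriality_iso} applies to the inclusion $K \hookrightarrow L$; by the uniqueness clause of Theorem \ref{Functoriality_inclusions}, the resulting map coincides with $p_{L/K}$, and Proposition \ref{prop:pseudo_split_vertices} then guarantees that $x_L = p_{L/K}(x_K)$ is a special vertex.

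To conclude, I would lift $\sigma$ to some $\tilde{\sigma} \in \Gal(L/k)$. Theorem \ref{thm:full_galois_eq}, applied to the tower $k \subset k' \subset L$, states that $p_{L/k'}$ is $\Gal(L/k)$-equivariant when $\Gal(L/k)$ acts on $\mc B(G, k')$ through the restriction map $\res_{L/k'}: \Gal(L/k) \twoheadrightarrow \Gal(k'/k)$. Consequently $(\sigma \cdot x)_L = \tilde{\sigma} \cdot x_L$, which is again a special vertex because $\tilde{\sigma}$ acts on $\mc B(G, L)$ by simplicial automorphisms (Theorem \ref{Galois_action_building}). Hence $\sigma \cdot x \in \mc B_{\QQ}(G, k')$, with witness $L/k'$. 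The only point requiring some attention is the need for a witness extension compatible with the whole group $\Gal(k'/k)$ simultaneously; the normal-closure trick, combined with the strong Galois-equivariance supplied by Theorem \ref{thm:full_galois_eq}, is precisely what resolves this issue.
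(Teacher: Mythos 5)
Your proof is correct and takes essentially the same route as the paper: lift $\sigma$ to a larger Galois group, apply the full Galois-equivariance of Theorem \ref{thm:full_galois_eq}, and use the fact that the Galois action is simplicial (Theorem \ref{Galois_action_building}). The one place where you are more explicit than the paper is in justifying that the witness extension for virtual speciality can be upgraded from one Galois over $k'$ to one Galois over $k$: the paper simply writes ``let $K$ be a finite Galois extension of $k$ containing $k'$ such that $G_K$ is pseudo-split and $x_K$ is a special vertex'', leaving the normal-closure argument (with the appeal to Proposition \ref{prop:pseudo_split_vertices} to see that special vertices persist under the base change $K \hookrightarrow L$) to the reader. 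Your fleshing out of that step is exactly what is needed and is correct.
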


\begin{proof}
	Let $x  \in \mc B_{\QQ}(G, k')$ and $\sigma \in \Gal(k'/k)$. Let $K$ be a finite Galois extension of $k$ containing $k'$ such that $G_K$ is pseudo-split and $x_K$ is a special vertex. Let $\tilde\sigma \in \Gal(K/k)$ be an automorphism of $K$ that extends $\sigma$. Then, because the action of $\Gal(K/k)$ on $\mc B(G,K)$ preserves the simplicial structure, the point $\tilde\sigma(x_K)$ is special. Moreover, by Theorem \ref{thm:full_galois_eq}, we have $\tilde \sigma(x_K) = \sigma(x)_K$.  Hence, the point $\sigma(x)$ is virtually special.
\end{proof}

Our goal is now to prove the following proposition.
\begin{prop}\label{Density_rational}
	The subset $\mc B_{\QQ}(G, k)$ is dense in $\mc B(G, k)$. More precisely, for any facet $\mbf f \subset \mc B(G, k)$, the subset $\mbf f \cap \mc B_{\QQ}(G, k)$ is dense in $\mbf f$.
\end{prop}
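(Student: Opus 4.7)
The plan is to reduce the claim to a single apartment, describe the virtually special points as the rational points of a natural lattice structure, and observe that every facet is cut out by rational affine (in)equalities.

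Since every facet of $\mc B(G, k)$ is contained in some apartment and $\mc B_{\QQ}(G,k)$ is $G(k)$-stable, it suffices to prove that $\mbf f \cap \mc B_{\QQ}(G, k)$ is dense in $\mbf f$ for each facet $\mbf f$ of each apartment $A(G, S)$. Using Proposition \ref{prop:special_apartments}, I would pick a maximal torus $T \supset S$ and a finite Galois extension $L/k$ with $T_L$ split, such that $p_{L/k}$ identifies $A(G, S)$ with the Galois-fixed affine subspace $A(G_L, T_L)^{\Gal(L/k)}$ of the apartment of the pseudo-split group $G_L$. Thereafter I treat $A(G, S)$ as a rational affine subspace of $A(G_L, T_L)$.

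To produce a base virtually special point in $A(G, S)$, take any special vertex $x_0 \in A(G_L, T_L)$ and set $y = |\Gal(L/k)|^{-1}\sum_{\sigma \in \Gal(L/k)} \sigma(x_0)$. Since $\Gal(L/k)$ acts simplicially, each $\sigma(x_0)$ is a special vertex and $\sigma(x_0) - x_0$ lies in the lattice $\Lambda_L$ of translations between special vertices of $A(G_L, T_L)$; hence $y - x_0 \in |\Gal(L/k)|^{-1}\Lambda_L$, and $y$ is Galois-fixed, so $y \in A(G, S)$. For any finite Galois extension $M/k$ containing $L$ with $T_M$ split, Theorem \ref{Functoriality_iso} identifies $A(G_M, T_M)$ with $A(G_L, T_L)$ as affine spaces and the lattice of translations between special vertices of $A(G_M, T_M)$ equals $e_{M/L}^{-1}\Lambda_L$; thus $y$ becomes a special vertex of $A(G_M, T_M)$ as soon as $e_{M/L}$ is a multiple of $|\Gal(L/k)|$. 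Suitable $M$ that are Galois over $k$ (not merely over $L$) can be manufactured by adjoining $n$-th roots of a $k$-uniformizer together with $n$-th roots of unity.

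Using $y$ as origin, the preceding analysis shows that any $z \in A(G, S)$ for which $z - y$ has rational coordinates in $X_*(T_L) \otimes \QQ / X_*(Z(G_L)) \otimes \QQ$ is virtually special: by taking $M$ with $e_{M/L}$ divisible by the relevant denominators, $z - x_0 = (y - x_0) + (z - y)$ lies in $e_{M/L}^{-1}\Lambda_L$, so $z$ is a special vertex of $A(G_M, T_M)$. To conclude density in a facet $\mbf f$, observe that its affine span $V_{\mbf f}$ is cut out in $A(G, S)$ by integer-coefficient equations $\alpha_i = -r_i$ (with $\alpha_i \in \Phi(G, S)$ and $r_i \in \ZZ$) from the walls containing $\mbf f$, hence $V_{\mbf f}$ is a rational affine subspace of $V(G_L, T_L)$; its $\QQ$-rational points form a dense subset, each of them is virtually special by the above, and they remain dense in the open subset $\mbf f$ of $V_{\mbf f}$. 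The main technical obstacle is the construction of the totally ramified Galois extensions $M/k$ with prescribed ramification while preserving pseudo-splitness of $G_M$; the standard tame-Kummer construction $M = L(\zeta_n, \pi^{1/n})$ with $\pi$ a $k$-uniformizer handles $n$ coprime to the residue characteristic, and since $\QQ_{(p')}$ is already dense in $\RR$, this restriction is harmless for the density conclusion.
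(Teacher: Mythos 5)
Your overall strategy is a genuine alternative to the paper's two-step argument. The paper first treats the pseudo-split case (characterising $\mc B_{\QQ}$ in an apartment as the set of rational points), and then reduces the general case by showing that a vertex $x$ of $\mc B(G,k)$ equals the centroid of the facet of $\mc B(G,K)$ containing $x_K$, which it deduces from the uniqueness of the fixed point of $G(k)_x$. You bypass the centroid argument entirely: you produce a base virtually special point $y$ directly by averaging the $\Gal(L/k)$-orbit of a special vertex of $A(G_L,T_L)$, and then argue that all $\QQ$-rational points relative to $y$ are virtually special, so that density follows once the affine span of a facet is seen to be rational. This is a legitimately different (and in places cleaner) organisation, and the averaging trick is a nice idea.

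However, there is a genuine gap in the final step. You restrict yourself to tamely ramified Galois extensions $M=L(\zeta_n,\pi^{1/n})$ with $n$ coprime to the residue characteristic $p$, so the ramification indices $e_{M/L}$ you can reach are all prime to $p$, and the set of points you can certify as virtually special is only $y+\Lambda_L\otimes_\ZZ \ZZ_{(p)}$ rather than $y+\Lambda_L\otimes\QQ$. You then assert that this is ``harmless for the density conclusion'' because $\ZZ_{(p)}$ is dense in $\RR$. But the density you need is not density in $A(G,S)$: you need density in each \emph{facet} $\mbf f$, i.e.\ in the affine subspace $V_{\mbf f}$. That holds for $\ZZ_{(p)}$-points if and only if $V_{\mbf f}$ contains at least one $\ZZ_{(p)}$-rational point relative to $y$, and this is not automatic. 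In the quasi-reductive setting the jump sets $\Gamma'_\alpha$ can equal $\tfrac{1}{p^m}\ZZ$ (for instance for $G=R_{k'/k}\SL_n$ with $k'/k$ purely inseparable), so a wall $\{\alpha=r\}$ with $r\in\Gamma'_\alpha$ can carry a genuine $p$-power denominator relative to any choice of origin; a facet lying on such a wall then has no $\ZZ_{(p)}$-rational points at all relative to $y$, and your argument produces no virtually special points in it. (Note that showing those boundary facets contain virtually special points is precisely the content you are trying to establish, so there is a circularity if you try to fix this by re-centring.) Also note, separately, that the assertion $r_i\in\ZZ$ for the wall equations is incorrect: the constants lie in $\Gamma'_{\alpha_i}\subset\QQ$ and need not be integers.

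The fix is to drop the tame restriction and use the fact that over a local field $k$ and for every $N\ge 1$ there exists a finite Galois extension $M/k$ (which one may take to contain $L$ and be Galois over $k$) whose ramification index over $L$ is a multiple of $N$: combine the tame piece $L(\zeta_{N'},\pi^{1/N'})$ for the prime-to-$p$ part $N'$ with a wild totally ramified abelian extension (cyclotomic in mixed characteristic, Artin--Schreier--Witt in equal characteristic) for the $p$-part. With that replacement your argument does produce all $\QQ$-rational points of $A(G,S)$ relative to $y$ as virtually special points, and the density in each facet then goes through, since $V_{\mbf f}$ is a $\QQ$-rational affine subspace that is nonempty. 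This is also the point the paper's own proof relies on (implicitly) when it invokes totally ramified extensions of arbitrary degree $N$ in Step~1 of Proposition~\ref{Density_rational}.
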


This key result will be established in two steps. We first treat the case where $G$ is pseudo-split over $k$. This relies on proving that, for an appropriate choice of coordinate system, the set of virtually special points in an apartment $A(G, T)$ is precisely the set of points with rational coordinates.
Then, we treat the general case by exploiting the pseudo-split case. Letting $K/k$ be a suitable finite Galois extension such that $G_K$ is pseudo-split, we prove that, if $x$ is a vertex in $\mc B(G, k)$, then $x_K$ is the centroid of the facet $\mbf f$ of $\mc B(G, K)$ that contains it. In particular, the point $x_K$ is a convex combination of vertices of $\mc B(G, K)$ with rational coefficients, hence virtually special.

\begin{proof}[Step 1: The pseudo-split case] Assume that $G$ is pseudo-split over $k$. Let $A = A(G, T)$ be an apartment in $\mc B(G, k)$ and $o$ be a special vertex in $A$. Denote by $\Phi$ the root system $\Phi(G,T)$ associated to $T$. Recall from Proposition \ref{prop:pseudo_split_vertices} that, if $K/k$ is a finite Galois extension, then the canonical injection $p_{K/k}: \mc B(G, k) \to \mc B(G, K)$ identifies $A$ with the apartment $A_K = A(G_K,T_K)$ in such a way that special vertices in $A$ are mapped to special vertices in $A_K$. In particular, the point $o_K$ is a special vertex of $A_K$ and the affine roots in $A_K$ are the $$\alpha_{a_K, \gamma}^{o_K} =  \{x \in A_K, \langle a_K, x - o_K \rangle + \gamma \ge 0\}$$ for $a \in \Phi$ and $\gamma \in \Gamma'_{a_K}$. It then follows from \cite[1.3.7]{BT1} that a special vertex of $A_K$ is a point $x$ such that, for each root $a \in \Phi$, there exists either $\gamma \in \Gamma'_{a_K}$ such that $x \in \partial \alpha^{o_K}_{a_K,\gamma}$ or a $\gamma \in \Gamma'_{2a_K}$ such that $x \in \alpha^{o_K}_{2a_K, \gamma}$. 
	
	Note also that, because $G$ is pseudo-split over $k$, then for each root $a \in \Phi$, we have $$\tilde G^{a_K} = \left\{ \begin{array}{cl} R_{K \otimes_k k_a/K}(SL_{2, K\otimes_k k_a}) & \text{if } a \text{ is not multipliable}\\ R_{K \otimes_k k_{2a}/K}(BC_{1, K \otimes_k k_{2a}}) & \text{if } a \text{ is multipliable}\end{array}\right. ,$$ where $k_a/k$ (resp. $k_{2a}/k$) is purely inseparable. Then, by the proof of \cite[Lemme 4.4]{Lou}, for each $a \in \Phi$, we have $$\Gamma'_{a_K} = \left\{ \begin{array}{cl} \omega((K \otimes_k k_a)^{\times}) & \text{if } a \text{ is not multipliable} \\ \frac{1}{4} \omega((K \otimes_k k_{2a})^{\times}) \setminus \frac{1}{2}\omega((K \otimes_k k_{2a})^{\times}) & \text{if } a \text{ is multipliable} \end{array}\right. .$$
	Note in particular that, letting $e(K/k)$ be the ramification index of the extension $K/k$, then $$\Gamma'_{a_K} = \frac{1}{e(K/k)}\Gamma'_a$$ and that, if $a$ is not multipliable, then $$\Gamma'_a = \omega(k_a^{\times}) = \left\{\begin{array}{cl} \ZZ & \text{if } [k_a:k] = 1 \\ \frac{1}{p^{m_a}}\ZZ & \text{otherwise}\end{array}\right. ,$$ where $p = \mathrm{char}(k)$ and $m_a$ is the least positive integer $m$ such that $ x^{p^m} \in k$ for each $x \in k_a$.
	
	We are now in position to prove that virtually special points are exactly those points $x \in A$ such that $\langle a, x-o \rangle \in \QQ$ for each $a \in \Phi$. First, by inspection, we see that for each finite Galois extension $K/k$ and each root $a \in \Phi$, we have $\Gamma'_{a_K} \cup \Gamma'_{2a_K} \subset \QQ$. In particular, if $x \in A$ is mapped by $p_{K/k}$ to a special vertex in $A_K$ for some finite Galois extension $K/k$, then we have $$\langle a, x-o \rangle = \langle a_K, x_K - o_K \rangle \in \QQ $$ for each root $a \in \Phi$. Conversely, if $\langle a, x-o \rangle \in \QQ$ for each $a \in \Phi$, then there exists $N$ such that, for each non-multipliable $a$, $\langle a, x-o \rangle \in \frac{1}{N} \Gamma'_a$ (the key point being that, when $a$ is non-multipliable, the subset $\Gamma'_a$ is a nontrivial subgroup of $\QQ$). Then, letting $K/k$ be a totally ramified extension of degree $N$, we immediately check that $x_K$ is a special vertex in $A_K$.

	Letting $\Delta$ be a basis for $\Phi$, it follows that the virtually special points in $A$ are exactly the points $x$ with rational coordinates in $A$, setting $o$ as an origin and within the coordinate system given by the dual basis of $\Delta$. It is clear from this characterisation that the set of virtually special points in $A$ is dense in $A$ and closed under convex combinations with rational weights. 
	
	We conclude this discussion by proving that furthermore the virtually special points in any facet are dense in that facet. First of all, any vertex $x$ is a virtually special point. Indeed, it must satisfy $\langle a, x-o \rangle \in \Gamma'_a$ for $a$ ranging in a subset of $\Phi$ that forms a basis for $\Span_{\QQ}(\Phi)$, and therefore we must have $\langle a, x - o\rangle \in \QQ$ for all $a \in \Delta$. Next, if $\mbf f$ is an arbitrary facet of $A$, the extremal points $\{x_1, \dots, x_r\}$ of $\ol{\mbf f}$ are vertices of $A$, and thus virtually special points. Then, the convex combinations of the $x_i$ with positive rational coefficients form a dense subset of $\mbf f$ comprised of virtually special points.
\end{proof}

\begin{proof}[Step 2: The general case]
    We now let $G$ be arbitrary. As in the first step, we prove that vertices are virtually special. Fix a vertex $x$ in an apartment $A(G, S)$ of $\mc B(G, k)$. Recall from Proposition \ref{prop:special_apartments} that there exists a Galois extension $K/k$ and a maximal torus $T$ of $G$ containing $S$ such that $$p_{K/k}(A(G, S)) = A(G_K, T_K)^{\Gal(K/k)}.$$
    Denote by $\mbf f \subset A(G_K, T_K)$ the facet of $\mc B(G, K)$ that contains $x_K$. Because the action of the group $G(K)$ preserves the polysimplicial structure and because $p_{K/k}$ is $G(k)$-equivariant, the stabiliser $G(k)_x$ stabilises the facet $\mbf f$. Further, the group $G(k)_x$ stabilises the set $\{z_1, \dots, z_r\}$ of vertices of $\mbf f$. In particular, the centroid $\gamma = \frac{1}{r}\sum_{i=1}^r z_i$ is fixed by $G(k)_x$. Indeed, the group $G(k)_x$ acts isometrically on $\mc B(G, K)$ hence, for any $g \in G(k)_x$, the map $$g: A(G_K, T_K) \to gA(G_K, T_K)$$ induced by $g$ is an isometry and thus affine, hence preserves convex combinations.
    
    On the other hand, observe that the Galois group $\Gamma = \Gal(K/k)$ acts on $A(G_K, T_K)$ by affine automorphisms that preserve the polysimplicial structure, and fixes $x_K$. Consequently, the group $\Gamma$ preserves the facet $\mbf f$ and, by similar arguments, also fixes the centroid $\gamma$. Hence, there exists $y \in A(G, S)$ such that $\gamma = y_K$. By $G(k)$-equivariance and injectivity of $p_{K/k}$, the group $G(k)_x$ must then fix the point $y$. Because $x$ is a vertex, it is the only point of $\mc B(G, k)$ fixed by $G(k)_x$ \cite[Proposition 3.10 (a)]{Sol}, hence $x = y$ and we have $$x_K = \frac{1}{r}\sum_{i=1}^r z_i.$$
    By Step 1, each of the $z_i$ is virtually special, hence, as a convex combination of the $z_i$ with rational weights, so is $x_K$. By Proposition \ref{prop:virtually_special_and_extensions}, we deduce that $x$ is virtually special. We then deduce that all convex combinations of vertices with rational coefficients are virtually special points, which proves that virtually special points are dense in every facet (and in particular in $A(G, S)$).
\end{proof}

We conclude with this transitivity property:

\begin{prop}\label{Transitivity}
	Let $x, y \in \mc B_{\QQ}(G, k)$. Then, there exists a finite Galois extension $K/k$, a split maximal torus $T$ in $G_K$, and $t \in T(K)$ such that $y_K = tx_K$.
\end{prop}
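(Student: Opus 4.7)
The plan is to enlarge the base field so that both $x$ and $y$ become special vertices of a common apartment of a pseudo-split group, and then realize the translation from $x_K$ to $y_K$ by an element of a split maximal torus after a further controlled ramification. The last step will amount to a direct appeal to the formula $\langle a, \nu(t) \rangle = -\omega(a(t))$ for the action of $T(K)$ on $A(G_K, T_K)$ by translations.

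By virtual specialness of $x$ and $y$, pick finite Galois extensions $K_x, K_y/k$ with $G_{K_x}, G_{K_y}$ pseudo-split such that $x_{K_x}$ and $y_{K_y}$ are special vertices. Set $K_0 = K_x K_y$; this is Galois over $k$, and $G_{K_0}$ is pseudo-split, since the split maximal torus of $G_{K_x}$ base-changed to $K_0$ is still a maximal torus. The explicit formula for $\Gamma'_{a_{K}}$ recalled in Step 1 of the proof of Proposition \ref{Density_rational} yields the inclusion $\Gamma'_{a_{K_x}} \subset \Gamma'_{a_{K_0}}$, so the walls through $x_{K_x}$ in $A(G_{K_x}, T)$ remain walls in $A(G_{K_0}, T_{K_0})$, ensuring that $x_{K_0}$, and likewise $y_{K_0}$, is still a special vertex. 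By the standard ``any two points lie in a common apartment'' property of affine buildings, there is a split maximal torus $T \subset G_{K_0}$ with $x_{K_0}, y_{K_0} \in A(G_{K_0}, T)$.

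Fix a special vertex $o \in A(G_{K_0}, T)$ as origin. The same coordinate description forces $\langle a, x_{K_0} - o\rangle, \langle a, y_{K_0} - o \rangle \in \QQ$ for every root $a \in \Phi(G_{K_0}, T)$, so $v := y_{K_0} - x_{K_0}$ satisfies $\langle a, v\rangle \in \QQ$ for all $a$. Since the canonical pairing $V^*(G_{K_0}, T) \times V(G_{K_0}, T) \to \RR$ is perfect, $v$ lifts to an element of $X_*(T) \otimes \QQ$; choose $\lambda \in X_*(T)$ and $N \in \NN^*$ with $v = \tfrac{1}{N}\bar\lambda$, where $\bar\lambda$ denotes the image of $\lambda$ in $V(G_{K_0}, T)$.

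Finally, let $K/k$ be a finite Galois extension containing $K_0$ whose value group contains $\tfrac{1}{N}$, for instance the Galois closure over $k$ of a suitable totally ramified extension of $K_0$ of degree $N$. The base change $T_K$ is a split maximal torus of $G_K$. Pick $\tau \in K^\times$ with $\omega(\tau) = \tfrac{1}{N}$ and set $t := \lambda(\tau^{-1}) \in T(K)$. The translation vector of $t$ on $A(G_K, T_K)$ satisfies $\langle a, \nu(t) \rangle = -\omega(a(t)) = \tfrac{\langle a, \lambda \rangle}{N} = \langle a, v \rangle$ for every root $a$, hence $\nu(t) = v$ under the natural identification $V(G_K, T_K) \cong V(G_{K_0}, T)$ induced by base change of cocharacters. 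Since the linear part of $p_{K/K_0}|_{A(G_{K_0}, T)}$ is exactly this identification, we also have $y_K - x_K = v$ in $V(G_K, T_K)$, whence $t \cdot x_K = x_K + \nu(t) = y_K$, as required. The main delicate step is the rationality claim $v \in X_*(T) \otimes \QQ$: it rests crucially on the explicit form of $\Gamma'_{a_{K_0}}$ from the density proof, after which the construction of $t$ is a routine manipulation of the valuated root datum formalism.
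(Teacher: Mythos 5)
Your proof is correct, and the overall strategy is the same as the paper's: reduce to a pseudo-split situation, compute the translation vector $v = y_{K_0} - x_{K_0}$ (or $y-x$), and then realize $v$ as $\nu(t)$ for a suitable $t$ over a further ramified Galois extension. The proofs diverge in the key technical step of producing $t$. The paper reduces to $G$ pseudo-split over $k$, writes $t$ as a product $\prod_{a\in\Delta} a^{\vee}(\varpi_K)^{n_a}$ of coroots, and solves the resulting integer linear system; integrality of the $n_a$ is obtained by taking $K/k$ totally ramified of degree $ND$ where $D$ is the determinant of the Cartan matrix, so the right-hand side lands in $D\ZZ$ and Cramer's rule applies. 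You instead observe that since $\langle a, v\rangle \in \QQ$ for all $a$ and the roots span the $\QQ$-structure of $V^*(G_{K_0},T)$, the vector $v$ itself lies in the image of $X_*(T)\otimes\QQ$, hence equals $\tfrac{1}{N}\bar\lambda$ for a single cocharacter $\lambda \in X_*(T)$; then $t = \lambda(\tau^{-1})$ does the job as soon as $\tfrac{1}{N}$ lies in the value group of $K$. Your route is arguably cleaner, since it avoids the Cartan-matrix arithmetic and the somewhat delicate integrality bookkeeping, at the modest cost of invoking $\QQ$-rationality of the perfect pairing. Both approaches produce a valid witness $(K,T,t)$ with $T$ a split maximal torus of $G_K$, and neither gives anything stronger than the other; in particular, the paper's Corollary \ref{Transitivity_split} (that in the pseudo-split case one may take $T$ to be the torus of the chosen apartment) is equally accessible from your version.
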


\begin{proof}
	We may and do assume that $G$ is pseudo-split over $k$.
	
	Let $A(G, T)$ be an apartment of $\mc B(G, k)$ that contains $x$ and $y$. Let $\Delta$ be a basis for $\Phi(G, T)$. Then, it follows from the proof of Proposition \ref{Density_rational} that we have $\langle a, y-x \rangle \in \QQ$ for each $a \in \Delta$. Let $N \in \mathbb N$ such that $$\forall a \in \Delta, \langle a, y-x \rangle \in \frac{1}{N} \omega(k^{\times}).$$ Then, if $K/k$ is a totally ramified Galois extension of degree $ND$, where $D$ is the determinant of the Cartan matrix $(\langle a, (a')^{\vee}\rangle)_{a, a' \in \Delta}$ of $\Phi(G,T)$, there exists a $t \in T(K)$ such that $$\forall a \in \Delta, \langle a, y_K-x_K \rangle = - \omega(a(t))$$ and therefore $y_K = tx_K$.
	Indeed, letting $a^{\vee} \in X_*(T)$ denote the coroot associated to $a$ for each $a \in \Delta$, we may look for $t$ of the form $\prod_{a \in \Delta} a^{\vee}(\varpi_{K})^{n_a} $ for $n_a \in \ZZ$.
	Then, the condition $y_K = tx_K$ is equivalent to the condition $$\forall a \in \Delta, \sum_{a' \in \Delta} \langle a, (a')^{\vee} \rangle n_{a'} = \frac{ND}{\omega(\varpi)} \langle a, y_K - x_K \rangle \in D\ZZ .$$
	This system has a unique solution $(n_a)_{a \in \Delta} \in \ZZ^{\Delta}$, which completes the proof.
	
\end{proof}

Note that, in the case when $G$ is pseudo-split over $k$, the proof yields a better result.

\begin{cor}\label{Transitivity_split}
	Assume that $G$ is pseudo-split over $k$. Let $x$ and $y$ in $\mc B_{\QQ}(G, k)$. Then, for any split maximal torus $T$ such that the apartment $A(G, T)$ contains $x$ and $y$, there exists a finite Galois extension $K/k$ and an element $t \in T(K)$ such that $y_K = tx_K$.
\end{cor}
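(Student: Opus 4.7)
The corollary is essentially an unpacking of the argument already given for Proposition \ref{Transitivity}: the construction produced in that proof, when $G$ is pseudo-split, works uniformly in any choice of split maximal torus $T$ whose apartment happens to contain both endpoints. My plan is therefore to verify that no step in that construction requires us to choose $T$, beyond the hypothesis that $x, y \in A(G, T)$.

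Concretely, I would fix the given $T$ and a basis $\Delta$ of $\Phi(G, T)$. By the characterisation of virtually special points in the pseudo-split case established in Step 1 of the proof of Proposition \ref{Density_rational}, both $x$ and $y$ have rational coordinates (with respect to the dual basis of $\Delta$ centred at any special vertex $o \in A(G,T)$), so $\langle a, y - x \rangle \in \QQ$ for every $a \in \Delta$. Choose $N \in \mbb N$ so that $\langle a, y-x\rangle \in \tfrac{1}{N}\omega(k^{\times})$ for all $a \in \Delta$, let $D$ be the determinant of the Cartan matrix $(\langle a, (a')^{\vee}\rangle)_{a, a' \in \Delta}$, and take $K/k$ a totally ramified Galois extension of degree $ND$ with uniformiser $\varpi_K$.

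I then look for $t \in T(K)$ of the form $t = \prod_{a \in \Delta} a^{\vee}(\varpi_K)^{n_a}$, with $n_a \in \ZZ$. Since $T$ is split, the element $t$ acts on $A(G_K, T_K) = A(G, T)$ by translation by the vector $\nu(t)$ characterised by $\langle a, \nu(t)\rangle = -\omega(a(t))$ for all $a \in \Phi$, and $p_{K/k}$ is compatible with these coordinates. Thus the equation $y_K = t x_K$ is equivalent to the linear system
\[
\forall a \in \Delta,\quad \sum_{a' \in \Delta} \langle a, (a')^{\vee}\rangle\, n_{a'} \;=\; \frac{ND}{\omega(\varpi_K)}\,\langle a, y_K - x_K\rangle.
\]
By the choice of $N$ and of $K$ (ramification index $ND$), the right-hand side lies in $D\ZZ$; since the coefficient matrix has determinant $\pm D$, Cramer's rule produces a unique integer solution $(n_a)_{a \in \Delta} \in \ZZ^{\Delta}$, yielding the desired $t$.

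The only point requiring any thought is the compatibility claim, namely that $p_{K/k}$ identifies $A(G, T)$ with $A(G_K, T_K)$ in a way that intertwines the $T(k)$-action on the former with the $T(K)$-action on the latter via the standard valuation formulas. This is exactly the content of the linear-part computation carried out in Theorem \ref{Functoriality_iso} (applied to the inclusion $k \hookrightarrow K$) combined with Proposition \ref{prop:pseudo_split_vertices}, so there is no real obstacle. In particular, no step relies on $T$ being any specific torus, which proves the strengthened statement.
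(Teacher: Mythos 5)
Your argument is essentially a verbatim restatement of the paper's proof of Proposition \ref{Transitivity}, together with the (correct) observation that nothing in that construction constrains the choice of the split maximal torus $T$ beyond $x, y \in A(G, T)$ — which is precisely how the paper obtains Corollary \ref{Transitivity_split}. The proof is correct and matches the intended argument.
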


\subsection{Mapping the virtually special points}

We now construct a canonical $G(k)$-equivariant map $$\vtheta: \mc B_{\QQ}(G, k) \to G^{\an}$$

The idea is to associate to each point its "analytic stabiliser", defined as follows: Let $K/k$ be a finite Galois extension such that $G_K$ is pseudo-split and such that $x_K$ is special. Denote by $\mc G_{x_K}^{\circ}$ the parahoric $K^{\circ}$-model of $G_K$ associated to the facet $\{x_K\}$, as defined in \cite[4.2]{Lou}. Then, the Berkovich analytification $(\mc G_{x_K}^{\circ})^{\an}$ constructed in \cite[1.2.4]{RTW1} is a strictly $K$-affinoid subgroup of $G^{\an}$ and, by \cite[Proposition 1.3]{RTW1}, the projection $G_x := \pr_{K/k}((\mc G_{x_K}^{\circ})^{\an})$ is a $k$-affinoid subgroup of $G$. This subgroup does not depend on the choice of the extension $K$. Indeed, if $K$ and $K'$ are two such extensions and $L$ a common finite Galois extension, then it follows from \cite[Théorème 4.5]{Lou} that $$\mc G_{x_L}^{\circ} = \mc G_{x_K}^{\circ} \otimes_{K^{\circ}}{L^{\circ}} = \mc G_{x_{K'}}^{\circ}\otimes_{K^{\circ}}{L^{\circ}}$$ and therefore $$(\mc G_{x_L}^{\circ})^{\an} = (\mc G_{x_K}^{\circ})^{\an} \hat\otimes_K L = (\mc G_{x_{K'}}^{\circ})^{\an} \hat \otimes_{K'} L$$ and finally $$\pr_{L/k} ((\mc G_{x_L}^{\circ})^{\an}) = \pr_{K/k}(\pr_{L/K}((\mc G_{x_L}^{\circ})^{\an})) = \pr_{K/k}((\mc G_{x_K}^{\circ})^{\an})$$ and, by the same argument $$\pr_{L/k} ((\mc G_{x_L}^{\circ})^{\an}) = \pr_{K'/k} ((\mc G_{x_{K'}}^{\circ})^{\an}).$$

Then, we have the following proposition:

\begin{prop}\label{Shilov}
	Let $x$ be a point in $\mc B_{\QQ}(G, k)$.  The $k$-affinoid subgroup $G_x$ has a unique Shilov boundary point, which we denote by $\vtheta(x)$.
\end{prop}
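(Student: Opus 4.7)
The plan is to first show that the strictly $K$-affinoid subgroup $(\mc G_{x_K}^{\circ})^{\an}$ of $G_K^{\an}$ has a unique Shilov boundary point, and then descend the statement along the projection $\pr_{K/k}$ to obtain the claim for $G_x$.

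For the first part, I would use the fact that, by \cite[Théorème 4.5]{Lou}, the group scheme $\mc G_{x_K}^{\circ}$ is a smooth affine $K^{\circ}$-model of $G_K$ with geometrically connected special fiber. Since a smooth connected affine group scheme over a field is integral, the canonical reduction of the strictly $K$-affinoid space $(\mc G_{x_K}^{\circ})^{\an}$ has a unique generic point. By the standard correspondence between Shilov boundary points of a strictly affinoid space and generic points of the irreducible components of its canonical reduction \cite[Proposition 2.4.4]{Ber}, this yields a unique Shilov boundary point $\vtheta_K(x_K) \in (\mc G_{x_K}^{\circ})^{\an}$, corresponding to the spectral seminorm on $\mc O((\mc G_{x_K}^{\circ})^{\an})$.

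For the descent, I would first observe that, since $x \in \mc B_{\QQ}(G,k)$ and $K/k$ is Galois, Theorem \ref{Functoriality_inclusions} (3) ensures that $x_K$ is fixed under the action of $\Gal(K/k)$ on $\mc B(G,K)$. The parahoric model $\mc G_{x_K}^{\circ}$ is determined by the facet $\{x_K\}$ and is therefore stable under the induced Galois action on $G_{K^{\circ}}$, as is its analytification. Uniqueness from the previous step then forces the Shilov point $\vtheta_K(x_K)$ itself to be Galois-fixed, so that its image under $\pr_{K/k}$ is a single well-defined point $\vtheta(x) \in G_x$, which is the natural candidate.

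It remains to verify that $\vtheta(x)$ is in fact the unique Shilov boundary point of $G_x$ as a $k$-affinoid subgroup. Here I would invoke the mechanism underlying \cite[Proposition 1.3]{RTW1}: the image under $\pr_{K/k}$ of a Galois-stable strictly $K$-affinoid domain is a strictly $k$-affinoid domain whose coordinate ring identifies with the $\Gal(K/k)$-invariants of the coordinate ring of the $K$-affinoid domain. The spectral seminorm on $\mc O(G_x)$ is then the restriction of that on $\mc O((\mc G_{x_K}^{\circ})^{\an})$, so any Shilov boundary point of $G_x$ lifts to a Galois-invariant seminorm on the larger algebra that computes its supremum norm; by the uniqueness established above, such a lift must coincide with $\vtheta_K(x_K)$, and hence project to $\vtheta(x)$. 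The main obstacle, in my view, is this last descent step: one must argue cleanly that the spectral norm and the Shilov boundary behave as expected under the Galois quotient $\pr_{K/k}$, while the rest of the argument is a direct application of the theory of parahoric integral models of \cite{Lou} together with the standard reduction/Shilov dictionary from \cite{Ber}.
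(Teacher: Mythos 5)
Your first half — identifying the unique Shilov point of $(\mc G_{x_K}^{\circ})^{\an}$ via smoothness and connectedness of the special fibre, using the reduction-map dictionary from \cite[2.4.4]{Ber} — is exactly what the paper does. Where you diverge is the descent to $k$: you propose a somewhat elaborate argument via Galois invariance of the Shilov point, lifting an arbitrary Shilov point of $G_x$ to a Galois-invariant seminorm upstairs, and invoking uniqueness. This is not wrong in spirit, but it is considerably more work than needed, and the lifting step you yourself flag as the ``main obstacle'' is delicate to justify cleanly (a Shilov point downstairs does not obviously lift to a Galois-invariant seminorm that you control). The paper bypasses all of this by citing a single fact from \cite[Proof of Proposition 2.4.5]{Ber}: the base-change projection $\pr_{K/k}$ maps the Shilov boundary of $G_x \hat\otimes_k K$ \emph{onto} the Shilov boundary of $G_x$. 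Once you know the Shilov boundary upstairs is a singleton, surjectivity alone immediately forces the one downstairs to be a singleton as well — no Galois-fixed-point analysis is required. I'd recommend replacing the descent paragraph with this one-line appeal to Berkovich; it is both shorter and avoids the lifting argument you were unsure about.
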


\begin{proof}
	Consider a finite Galois extension such that $G_K$ is pseudo-split and $x_K$ is a special vertex.
	
	By construction, the $K$-analytic subgroup $G_x \hat\otimes_k K$ is the analytification of the $K^{\circ}$-group scheme $\mc G = \mc G_{x_K}^{\circ}$ and, as such, is strictly $K$-affinoid. Because $\mc G$ is smooth, it follows from \cite[1.2.4]{RTW1} and \cite[2.4.4]{Ber} that the reduction map $$\rho: \mc G^{\an} \to \mc G_{\tilde K}$$ establishes a one-to-one correspondence between the Shilov boundary of $G_x \hat\otimes_k K$ and the generic points of the irreducible components of the reduction $\mc G_{\tilde K}$.
	
	As $\mc G_{\tilde K}$ is connected, the strictly $K$-affinoid domain $G_x \hat\otimes_k K$ has only one Shilov boundary point and the same goes for $G_x$ as the projection $\pr_{K/k}$ maps the Shilov boundary of $G_x\hat \otimes_k K$ onto that of $G_x$ \cite[Proof of Proposition 2.4.5]{Ber}.
	
\end{proof}

\begin{prop}\label{Equiv_theta}
	For any $x \in B_{\QQ}(G, k)$ and $g \in G(k)$, we have $$G_{gx} = gG_x g^{-1} \text{ as well as } \vtheta(gx) = g\vtheta(x) g^{-1}.$$
\end{prop}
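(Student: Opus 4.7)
The plan is to reduce the proposition to the $G(K)$-equivariance of the construction of the parahoric integral models, then propagate this equivariance through analytification, projection via $\pr_{K/k}$, and finally the Shilov boundary.

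First I would fix a finite Galois extension $K/k$ such that $G_K$ is pseudo-split and $x_K$ is a special vertex of $\mc B(G, K)$; by $G(k)$-equivariance of $p_{K/k}$ (Theorem \ref{Functoriality_inclusions}), the point $(gx)_K = gx_K$ is then also a special vertex, so the parahoric $K^\circ$-group schemes $\mc G_{x_K}^\circ$ and $\mc G_{gx_K}^\circ$ of \cite[Théorème 4.5]{Lou} are both defined. The key first step is to identify conjugation by $g$, viewed as a $K$-automorphism of $G_K$, with an isomorphism $c_g : \mc G_{x_K}^\circ \overset{\sim}{\to} \mc G_{gx_K}^\circ$ of $K^\circ$-group schemes. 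This follows from the characterisation of these integral models in \cite[Théorème 4.5]{Lou}: the smoothness and characterisation of their $K^\circ$-points as the connected parahoric stabiliser of the corresponding vertex ensure that, since conjugation by $g$ carries the stabiliser of $x_K$ to that of $gx_K$, it extends to a unique $K^\circ$-isomorphism between the two integral models lifting the given $K$-automorphism of $G_K$.

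Next I would analytify. The above isomorphism induces an isomorphism of strictly $K$-affinoid subgroups $(\mc G_{x_K}^\circ)^{\an} \overset{\sim}{\to} (\mc G_{gx_K}^\circ)^{\an}$ inside $G_K^{\an}$ which coincides with the restriction of conjugation by $g$, viewed as a $k$-analytic automorphism of $G^{\an}$ (and \emph{a fortiori} of $G_K^{\an}$). Since $\pr_{K/k} : G_K^{\an} \to G^{\an}$ is $G(k)$-conjugation equivariant — conjugation by $g \in G(k)$ is defined at the level of $G$ as a $k$-scheme, and $\pr_{K/k}$ is functorial in $k$-analytic morphisms — we obtain
\[
G_{gx} = \pr_{K/k}\!\left((\mc G_{gx_K}^\circ)^{\an}\right) = \pr_{K/k}\!\left(g \cdot (\mc G_{x_K}^\circ)^{\an} \cdot g^{-1}\right) = g \cdot \pr_{K/k}\!\left((\mc G_{x_K}^\circ)^{\an}\right) \cdot g^{-1} = g G_x g^{-1},
\]
which yields the first equality.

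For the second equality I would simply invoke the fact that any $k$-analytic automorphism of a $k$-affinoid space carries the Shilov boundary bijectively onto the Shilov boundary of its image. Since conjugation by $g$ is such an automorphism $G_x \overset{\sim}{\to} G_{gx}$ and both Shilov boundaries are singletons by Proposition \ref{Shilov}, we deduce $\vtheta(gx) = g \vtheta(x) g^{-1}$.

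The only step requiring any real care is the first one — the lift of conjugation by $g$ to an isomorphism of the two parahoric integral models. Everything else is formal, relying on the functoriality of analytification, the compatibility of $\pr_{K/k}$ with $k$-rational automorphisms of $G$, and the standard behaviour of Shilov boundaries under isomorphisms of affinoid domains.
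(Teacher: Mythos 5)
Your overall route agrees with the paper's: fix a finite Galois extension $K/k$ splitting the problem, show that conjugation by $g$ lifts to an isomorphism of the two parahoric $K^\circ$-models, analytify, project via $\pr_{K/k}$, and conclude by functoriality of Shilov boundaries. The second and third steps you describe are indeed formal and correct.

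The gap is in the first step. You assert that since ``conjugation by $g$ carries the stabiliser of $x_K$ to that of $gx_K$'', the isomorphism of integral models follows from the characterisation in \cite[Théorème 4.5]{Lou}. But that characterisation identifies $\mc G_{x_K}^\circ(K'^\circ)$ with the \emph{parahoric} subgroup $P^\circ_{x_{K'}}$ for each unramified $K'/K$, not with the full stabiliser. The parahoric $P^\circ_x$ is a (possibly proper) subgroup of $\hat P_x$, defined via a chosen maximal split torus as the group generated by $\mc Z^\circ(K^\circ)$ and the affine root filtration subgroups $U_{a,x}$. It is not immediate that $gP^\circ_{x_{K'}}g^{-1} = P^\circ_{gx_{K'}}$ — that is precisely the assertion requiring proof, and passing from ``stabiliser'' to ``connected parahoric stabiliser'' is where the argument must do real work.

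The paper handles this by first reducing, via the Bruhat decomposition $G(K) = P^\circ_{gx_K} N_G(T)(K) P^\circ_{x_K}$ (and, $x_K$ being special, $G(K) = P^\circ_{gx_K} Z_G(T)(K) P^\circ_{x_K}$), to the case $g = z \in Z_G(T)(K)$. Then it verifies by direct computation that $z U_{a,x_K} z^{-1} = U_{a, z\cdot x_K}$ for each root $a$, and hence that conjugation by $z$ carries the generating set of $P^\circ_{x_K}$ to that of $P^\circ_{z\cdot x_K}$. Your proposal would be complete if it incorporated this reduction and the explicit affine-root-filtration computation; as written, it asserts the key equality rather than proving it.
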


\begin{proof}
	Let $K/k$ be a finite Galois extension such that $G_K$ is pseudo-split and $x_K$ is a special vertex of $\mc B(G,k)$. Denote by $\mc G_{x_K}$ the parahoric $K^{\oo}$-model of $G$ associated to $x_K$. Then, the automorphism $\inta(g)$ of $G$ induces an isomorphism from $\mc G_{x_K}$ to a $K^{\circ}$-model $\mc H$ of $G$. We prove that $\mc H = \mc G_{g x_K}$, which will establish that the analytic automorphism $\inta(g)^{\an}$ of $G^{\an}$ maps $G_x$ to $G_{gx}$.

	The group $\mc H$ is smooth, connected, affine and, for each finite unramified extension $K'/K$, we have $$\mc H(K'^{\circ}) = g P_{x_{K'}}^{\circ} g^{-1}.$$
	By \cite[Théorème 4.5]{Lou}, it is sufficient to prove that $gP_{x_{K'}}^{\circ}g^{-1} = P_{gx_{K'}}^{\circ}$ for each unramified extension $K'/K$ to conclude. Let $T$ be a split maximal $K$-torus such that the apartment $A(G_K, T)$ contains $x_K$ and $gx_K$. The Bruhat decomposition \cite[Theorem 3.14 (a)]{Sol} rewrites as $$G(K) = P_{gx_K}N_G(T)(K) P_{x_K}.$$ By construction, we have $$N_G(T)(K)P_{x_K} = N_G(T)(K) P^{\circ}_{x_K} = P_{x_K}^{\circ}N_G(T)(K)$$ and therefore we also have $G(K) = P_{gx_K}^{\circ}N_G(T)(K) P_{x_K}^{\circ}$. Further, because $x_K$ is special, we may write $G(K) =  P_{gx_K}^{\circ}Z_G(T)(K) P_{x_K}^{\circ}$. Write $g = pzp'$, with $p \in P_{gx_K}^{\circ}$, $p'\in P_{x_K}^{\circ}$, $z\in Z_G(T)(K)$.
	We then have $gx = zx$ and it is sufficient to check that $zP_{x_K}^{\circ}z^{-1} = P^{\circ}_{zx_K}$. 
	
	First of all, for any $a \in \Phi(G_K,T)$ and $r \in \RR$, we have $zU_{a,r}z^{-1} = U_{a, r - \omega(a(z))}$\footnote{We think of $X^*(T)$ as included in $X^*(Z_G(T)) \otimes_{\ZZ} \QQ$}, and thus $$zU_{a, x_K}z^{-1} = U_{a,z\cdot x_K} \text{ as well as } zU_{x_K} z^{-1} = U_{z\cdot x_K}.$$ Consequently, $$z \mc Z^{\circ}(K^{\circ})(N_G(T)(K) \cap U_{x_K})z^{-1} = \mc Z^{\circ}(K^{\circ})(N_G(T)(K) \cap U_{z\cdot x_K}).$$ Because $P_{x_K}^{\circ}$ (resp. $P_{z\cdot x_K}^{\circ}$) is spanned by the $ U_{a,x_K}$ (resp. $U_{a, z\cdot x_K}$) and $\mc Z^{\circ}(K^{\circ})(N_G(T)(K) \cap U_{x_K})$ (resp.
	$\mc Z^{\circ}(K^{\circ})(N_G(T)(K) \cap U_{z\cdot x_K})$ ), we do have $zP_{x_K}^{\circ}z^{-1} = P^{\circ}_{z\cdot x_K}$.
	
	Because the same reasoning applies if we substitute any finite unramified extension $K'$ of $K$ for $K$ in the above proof, we find $\mc H = \mc G_{g \cdot x_K}^{\circ}$. Passing to Berkovich analytifications, the automorphism $\inta(g)^{\an}$ maps $G_x$ onto $G_{gx}$. As it is an analytic isomorphism, it also maps the unique Shilov boundary point of $G_x$ to that of $G_{gx}$, whence $$\vtheta(gx) = g\vtheta(x)g^{-1}.$$
\end{proof}

\begin{lem}\label{lem:Compatibility_extensions}
	Let $K/k$ be a finite Galois extension. We have the commutative diagram 
	\begin{center}
		\begin{tikzcd}
			\mc B_{\QQ}(G, K) \arrow[r, "\vtheta"] & G_K^{\an}\arrow[d, "\pr_{K/k}"] \\ \mc B_{\QQ}(G, k)\arrow[u, "p_{K/k}"] \arrow[r, "\vtheta"] & G^{\an}
		\end{tikzcd}.
	\end{center}
\end{lem}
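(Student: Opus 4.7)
The plan is to unpack the construction of $\vtheta$ over a common extension where both $x$ and $x_K$ become special. Given $x \in \mc B_{\QQ}(G,k)$, by Proposition \ref{prop:virtually_special_and_extensions} we have $x_K \in \mc B_{\QQ}(G,K)$, so both $\vtheta(x)$ and $\vtheta(x_K)$ are defined. Choose a finite Galois extension $L/k$ containing $K$ such that $G_L$ is pseudo-split and $x_L$ is a special vertex; such an $L$ exists because we may take a common finite Galois extension of $k$ over which $x$ becomes special and over which $x_K$ becomes special (replacing if needed by a Galois closure containing $K$). By Theorem \ref{Functoriality_inclusions} (4), we have $x_L = p_{L/K}(x_K)$, so the parahoric $L^{\circ}$-model $\mc G_{x_L}^{\circ}$ serves simultaneously to define $G_x$ and $G_{x_K}$.

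Next, I would verify the key algebraic identity $G_{x_K} = G_x \hat\otimes_k K$ as $K$-affinoid subgroups of $G_K^{\an}$. By construction we have the equalities
$$G_x = \pr_{L/k}((\mc G_{x_L}^{\circ})^{\an}), \qquad G_{x_K} = \pr_{L/K}((\mc G_{x_L}^{\circ})^{\an}),$$
together with the factorisation $\pr_{L/k} = \pr_{K/k} \circ \pr_{L/K}$. This immediately yields $G_x = \pr_{K/k}(G_{x_K})$; the base-change identification $G_{x_K} = G_x \hat\otimes_k K$ follows because $(\mc G_{x_L}^{\circ})^{\an} = (\mc G_{x_K}^{\circ})^{\an} \hat\otimes_K L$ when $K = L$, and the general case reduces to this by the same argument as in the well-definedness of $G_x$ (independence from the choice of splitting extension).

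Finally, I would invoke the behaviour of Shilov boundaries under ground-field extension. By Proposition \ref{Shilov} applied to both $k$ and $K$, each of $G_x$ and $G_{x_K}$ has a unique Shilov boundary point, namely $\vtheta(x)$ and $\vtheta(x_K)$ respectively. The projection $\pr_{K/k}: G_K^{\an} \to G^{\an}$ is known (cf.\ \cite[Proof of Proposition 2.4.5]{Ber}, already used in the proof of Proposition \ref{Shilov}) to map the Shilov boundary of a $K$-affinoid domain of the form $Y \hat\otimes_k K$ onto the Shilov boundary of $Y$. Applying this to $Y = G_x$, we conclude that $\pr_{K/k}(\vtheta(x_K))$ lies in the Shilov boundary of $G_x$, which is $\{\vtheta(x)\}$, so $\pr_{K/k}(\vtheta(x_K)) = \vtheta(x)$, as required.

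The only step that requires any care is the identification $G_{x_K} = G_x \hat\otimes_k K$: one must confirm that the definition of the analytic stabiliser via a sufficiently large splitting extension is compatible with the intermediate extension $K$. Once this compatibility is established, the rest is formal from uniqueness of the Shilov boundary point and the functoriality of Shilov boundaries under projection.
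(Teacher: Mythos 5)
Your proof is correct and takes essentially the same route as the paper: choose a common splitting extension $L$ over which $x$ becomes special, express both analytic stabilisers via the same parahoric model $\mc G_{x_L}^{\circ}$, use the factorisation $\pr_{L/k}=\pr_{K/k}\circ\pr_{L/K}$ to get $G_x=\pr_{K/k}(G_{x_K})$, and conclude via the fact that projection sends Shilov boundaries onto Shilov boundaries. The paper's version is slightly terser — it does not pause to spell out the base-change identity $G_{x_K}=G_x\hat\otimes_k K$, which you sketch but do not fully justify; that identity is indeed what underwrites the cited Shilov-boundary compatibility from \cite[Proof of Proposition 2.4.5]{Ber}, and it follows from the Galois-invariance of $(\mc G_{x_L}^{\circ})^{\an}$ together with \cite[Proposition 1.3]{RTW1}, exactly as in the well-definedness argument for $G_x$ you point to.
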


\begin{proof}
	Let $x \in \mc B_{\QQ}(G, k)$ and $L/K$ be a finite Galois extension such that $G_L$ is pseudo-split and $x_L$ is a special point. Then, we have $$G_{x_K} = \pr_{L/K}((\mc G_{x_L}^{\circ})^{\an})$$ as well as $$G_x = \pr_{L/k}((\mc G_{x_L}^{\circ})^{\an}) = \pr_{K/k}(G_{x_K}).$$ Because the projection $\pr_{K/k}$ maps Shilov boundaries to Shilov boundaries, we have $\vtheta(x) = \pr_{K/k}(\vtheta(x_K))$.
\end{proof}

\subsection{Extending to the building}

Our goal is now to extend the map $\vtheta$ to the whole of $\mc B(G, k)$. Given that we have the commutative diagram established in Lemma \ref{lem:Compatibility_extensions}
\begin{center}
	\begin{tikzcd}
		\mc B_{\QQ}(G, K) \arrow[r, "\vtheta"] & G_K^{\an}\arrow[d, "\pr_{K/k}"] \\ \mc B_{\QQ}(G, k)\arrow[u, "p_{K/k}"] \arrow[r, "\vtheta"] & G^{\an}
	\end{tikzcd}
\end{center}
for each finite Galois extension $K/k$, it is sufficient to work in the case where $G$ is pseudo-split, which we henceforth assume.

We begin by establishing a formula for the norm $\vtheta(x)$ associated to a point of the building.

Let $T$ be a split maximal torus and let $\Phi = \Phi(G, T)$ be the root system corresponding to $T$. Consider a minimal pseudo-parabolic subgroup $P$ containing $T$ and let $\Phi^+ = \Phi(P, T)$ be the set of positive roots corresponding to $P$. Denote by $\Phi_{\nd}$ the set of non divisible roots in $\Phi$ and by $\Phi^+_{\nd} = \Phi^+ \cap \Phi_{\nd}$. Then the multiplication map yields an open immersion $$\prod_{\alpha \in \Phi^+_{\nd}} U_{-\alpha} \times Z_G(T) \times \prod_{\alpha \in \Phi^+_{\nd}} U_{\alpha} \hookrightarrow G$$
whose image is independent of the order on $\Phi^+_{\nd}$. We denote this image by $\Omega(T, P)$ and call it the open Bruhat cell corresponding to the pair $(T, P)$.

If $o$ is a special point in $A(G, T)$, then, $o$ determines integral models $\mc U_{a, o}$ for each $U_{a}$ and an integral model $\mc G$ for $G$ defined in \cite[4.2]{Lou}.  Letting $\mc Z^{\circ}$ be the open subgroup of the Néron model of $Z = Z_G(T)$ containing its generic fibre and the connected component of its special fibre, it follows from \cite[Théorème 4.5]{Lou} that the above open immersion descends to an open immersion of $k^{\circ}$-schemes $$\prod_{\alpha \in \Phi^+_{\nd}} \mc U_{-\alpha, o} \times \mc Z^{\circ} \times \prod_{\alpha \in \Phi^+_{\nd}} \mc U_{\alpha, o} \hookrightarrow \mc G.$$
We denote by $\Omega(\mc T, \mc P)$ the image of this immersion.

For each non-divisible root $\alpha \in\, \Phi_{\nd}$, we have an $\mc T$-equivariant isomorphism of $k^{\circ}$-schemes: $$\mc U_{\alpha} \simeq \mc U_{2{\alpha}} \times \mc U_{\alpha}/\mc U_{2{\alpha}},$$
where $\mc T = \Spec(k^{\oo}[X^*(T)]) \subset \mc Z^{\circ}$. Moreover, each $\mc U_{2{\alpha}}$ (resp. $\mc U_{\alpha}/\mc U_{2{\alpha}}$) is an affine space over $k^{\circ}$ on which $\mc T$ acts linearly via the character $2{\alpha}$ (resp. ${\alpha}$). 
Fix $\mc T$-equivariant isomorphisms $$u_{\alpha}: \mbb A^{N_{\alpha}}_{k^{\oo}} \overset{\sim}{\to} \mc U_{\alpha}/\mc U_{2{\alpha}} \text{ and } u_{2{\alpha}}: \mbb A^{N_{2{\alpha}}}_{k^{\oo}} \overset{\sim}{\to} \mc U_{2{\alpha}}$$ for each non-divisible root ${\alpha}$. These yield a $\mc T$-equivariant open immersion of $k^{\oo}$-schemes $$\prod_{{\alpha} \in\ \Phi^+}\mbb A^{N_{-\alpha}}_{k^{\oo}}  \times \mc Z^{\circ} \times \prod_{{\alpha} \in\ \Phi^{+}}\mbb A^{N_{\alpha}}_{k^{\oo}} \overset{\sim}{\to} \Omega(\mc T, \mc P) \subset \mc G.$$

Lastly, we denote by $\vtheta_Z(\ast)$ the only point in the Shilov boundary of $\mc Z^{\an}$, a multiplicative norm on $k[Z]$. The uniqueness of that point follows from Proposition \ref{Shilov} applied to $Z$. Note that the Bruhat-Tits building of the quasi-reductive group $Z$ \cite[Remark C.2.12 (1)]{CGP} reduces to a point, which makes this notation consistent with the above.

\begin{prop}\label{Formule_theta}
	Assume that $G$ is pseudo-split. We keep the above notation. Let $${\mbf \Phi} = \{(a, i) \in\, \Phi \times \NN, 1 \le i \le N_a\}.$$
	
	\begin{enumerate}
		\item The point $\vtheta(o)$ lies in $\Omega(T, P)^{\an}$ and satisfies for each $f = \sum_{\nu \in \NN^{\mbf \Phi}} f_{\nu}\xi^{\nu} \in k[\Omega(T,P)] = k[Z][(\xi_{\alpha, i})_{(\alpha, i) \in {\mbf \Phi}}]$, $$|f|(\vtheta(o)) = \max_{\nu \in \NN^{\mbf \Phi}} |f_{\nu}|(\vtheta_Z(\ast)).$$
		
		\item Given the identification between $A(G, T)$ and $V(G, T)$ given by $o$, we have for each $x \in A_{\QQ}(G, T)$  and $f \in k[\Omega(T,P)]$ : 
		\begin{equation}\label{eq:Formule_theta}
		|f|(\vtheta(x)) = \max_{\nu \in \NN^{\mbf \Phi}}|f_{\nu}|(\vtheta_Z(\ast)) \prod_{\substack{\alpha \in \Phi\\1 \le i \le N_{\alpha}}} e^{\nu(\alpha, i)\langle \alpha, x\rangle}. 
		\end{equation}
	\end{enumerate}
\end{prop}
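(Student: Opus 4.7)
For Part 1, the plan is to locate $\vtheta(o)$ in $\Omega(T, P)^{\an}$ via the reduction map, then identify the associated norm using the product structure of $\Omega(\mc T, \mc P)$. Recall from the proof of Proposition \ref{Shilov} that $\vtheta(o)$ is the unique point of $\mc G^{\an}$ whose image under the reduction map $\rho: \mc G^{\an} \to \mc G_{\tilde k}$ is the generic point $\eta$ of the smooth connected special fibre. Because $\Omega(\mc T, \mc P)$ is a non-empty (hence dense) open subscheme of $\mc G$, its special fibre contains $\eta$, hence $\vtheta(o) \in \Omega(\mc T, \mc P)^{\an} \subset \Omega(T, P)^{\an}$. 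Using the $\mc T$-equivariant isomorphisms $u_\alpha, u_{2\alpha}$, the open Bruhat cell $\Omega(\mc T, \mc P)$ becomes identified, as a $k^\circ$-scheme, with $\mc Z^\circ \times \prod_{(\alpha, i) \in \mbf \Phi} \mbb A^1_{k^\circ}$, with coordinate ring $\mc O(\mc Z^\circ)[\xi_{\alpha, i}]$. The reduction map on this product is coordinatewise; hence the unique point of $\Omega(\mc T, \mc P)^{\an}$ reducing to the generic point of the product is the \emph{product Shilov point}, given by the Gauss-type formula $|\sum_\nu f_\nu \xi^\nu| = \max_\nu |f_\nu|(\vtheta_Z(\ast))$.

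For Part 2, let $K/k$ be a finite Galois extension and $t \in T(K)$ provided by Corollary \ref{Transitivity_split} with $x_K = t \cdot o_K$. Proposition \ref{Equiv_theta} applied over $K$ gives $\vtheta(x_K) = t \vtheta(o_K) t^{-1}$, and Lemma \ref{lem:Compatibility_extensions} gives $\vtheta(x) = \pr_{K/k}(\vtheta(x_K))$, so for $f \in k[\Omega(T, P)]$ we have $|f|(\vtheta(x)) = |\inta(t)^\ast f|(\vtheta(o_K))$. The $\mc T$-equivariance of $u_\alpha$ for the character $\alpha$ (and of $u_{2\alpha}$ for $2\alpha$) shows $\inta(t)^\ast \xi_{\alpha, i} = \alpha(t) \xi_{\alpha, i}$ for every $(\alpha, i) \in \mbf \Phi$, so applying Part 1 over $K$ yields
\[
|f|(\vtheta(x)) = \max_\nu |f_\nu|(\vtheta_Z(\ast)) \prod_{(\alpha, i)} |\alpha(t)|^{\nu(\alpha, i)}.
\]
Finally, $t$ acts on $A(G_K, T_K)$ by translation by the vector $\nu_K(t)$ with $\langle \alpha, \nu_K(t)\rangle = -\omega_K(\alpha(t))$; combining this with the identification $x - o = x_K - o_K$ coming from the linear part of $p_{K/k}$ (proof of Theorem \ref{Functoriality_iso}) gives $|\alpha(t)| = e^{\langle \alpha, x\rangle}$, which yields the claimed formula.

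The main obstacle is the explicit identification in Part 1 of $\vtheta(o)$ with the product Gauss-type norm on $\mc O(\mc Z^\circ)[\xi_{\alpha, i}]$. This requires verifying that for the product $\mc Z^\circ \times \mbb A^M_{k^\circ}$ of flat affine $k^\circ$-schemes of finite type, the unique analytic point reducing to the generic point of the special fibre is precisely the product of the individual Shilov points --- a clean but essential computation hinging on the compatibility of Berkovich analytification with fibre products and on the classical fact that the Shilov point of $(\mbb A^N_{k^\circ})^{\an}$ is the Gauss norm. Part 2 is then essentially functorial, the subtlety being the bookkeeping between divisible and non-divisible roots in the conjugation action on the coordinates $\xi_{\alpha, i}$.
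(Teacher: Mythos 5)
Your proof is correct and takes essentially the same route as the paper's: reduction to the generic point of the special fibre of $\Omega(\mc T,\mc P)$ for Part 1, then conjugation by $t\in T(K)$ with $x_K=t\cdot o_K$ and the $\mc T$-equivariance of the coordinates $\xi_{\alpha,i}$ for Part 2. (You in fact invoke Corollary \ref{Transitivity_split} rather than Proposition \ref{Transitivity} as the paper's text does; since one needs $t$ to lie in the \emph{given} torus $T$ so that $\inta(t)$ preserves $\Omega(T,P)$, the Corollary is the right reference here, and you also spell out the "product Shilov point" identification that the paper leaves more implicit.)
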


\begin{proof}
	\begin{enumerate}
		\item The proof is analogous to that of \cite[Proposition 2.6]{RTW1}.
		
		Recall that $\vtheta(o)$ is the only point of the Shilov boundary of the affinoid subgroup $\mc G^{\an}$ of $G^{\an}$, that is, by \cite[1.2.4]{RTW1}, the only point in $\mc G^{\an}$ that is mapped to the generic point of $\mc G_{\tilde k}$ by the reduction map $$\rho: \mc G^{\an} \to \mc G_{\tilde k}.$$
		Because $\Omega(\mc T, \mc P)$ is an open subset of $\mc G$ that meets the special fibre, its special fibre $\Omega(\mc T, \mc P)_{\tilde k}$ contains the generic point of $\mc G$. As a consequence of the commutativity of the following diagram
		\begin{equation*}
		\begin{tikzcd}
		\Omega(\mc T, \mc P)^{\an} \arrow[d, "\subset"] \arrow[r, "\rho"] & \Omega(\mc T, \mc P)_{\tilde k} \arrow[d, "\subset"] \\ \mc G^{\an} \arrow[r, "\rho"] & \mc G_{\tilde k}
		\end{tikzcd},
		\end{equation*}
		the open subset $\Omega(\mc T, \mc P)^{\an}$ of $\mc G^{\an}$ must contain $\vtheta(o)$.
		Moreover, the condition that $\vtheta(o)$ reduces to the generic point of $\Omega(\mc T, \mc P)_{\tilde k}$ may be rewritten as the claim that, for any function $f \in k[\Omega(T, P)] = k[Z][(\xi_{a, i})_{a\in\, _k\Phi, 1 \le i \le N_a}]$, we have the equivalences:
		$$|f|(\vtheta(o)) \le 1 \Longleftrightarrow f \in k^{\oo}[\mc Z][(\xi_{a,i})_{(\alpha, i) \in {\mbf \Phi}}] $$ and $$|f|(\vtheta(o)) < 1 \Longleftrightarrow f \text{ maps to zero in } (\tilde k\otimes_{k^{\oo}} k^{\oo}[\mc Z])[(\xi_{\alpha, i})_{(\alpha, i)\in {\mbf \Phi}}].$$
		It follows that the restriction of $\vtheta(o)$ to $k[Z]$ is the Shilov boundary of the affinoid subgroup $\mc Z^{\an} \subset Z^{\an}$.
		
		\item By Proposition \ref{Transitivity} there exist a finite Galois extension $K$ and $t \in T(K)$ such that $x_K = t\cdot o_K$. By Proposition \ref{Equiv_theta}, we have $\vtheta(x_K) = t\vtheta(o_K)t^{-1}$. In particular, because $\inta(t)$ preserves $\Omega(T, P)$, we must have $\vtheta(x_K) \in \Omega(T, P)^{\an}$. Moreover, we have $$|f|(\vtheta(x)) = |f|(\vtheta(x_K)) = |\tau^*(f)|(\vtheta(o_K)),$$  where $\tau$ is the only map that makes the following diagram commute 
		\begin{center}
			\begin{tikzcd}
				\prod_{{\alpha} \in\ \Phi^+}\mbb A^{N_{-\alpha}}_K  \times \mc Z \times \prod_{{\alpha} \in\ \Phi^{+}}\mbb A^{N_{\alpha}}_K \arrow[r, "\tau"] \arrow[d, "\sim"] & \prod_{{\alpha} \in\ \Phi^+}\mbb A^{N_{-\alpha}}_K  \times \mc Z \times \prod_{{\alpha} \in\ \Phi^{+}}\mbb A^{N_{\alpha}}_K \arrow[d, "\sim"]\\ \Omega(T, P)_K \arrow[r, "\mathrm{int}(t_K)_{|\Omega(T, P)_K}"] & \Omega(T, P)_K
			\end{tikzcd}.
		\end{center}
		Now, because $T_K$ normalises $U_{\alpha, K}$ for each $\alpha \in \Phi_{\nd}$ and, further, preserves the weight space decomposition $U_{\alpha, K} \simeq U_{\alpha, K}/U_{2\alpha, K} \times U_{2\alpha, K}$, it follows that $\tau$ acts on each $\AA_K^{N_{\alpha}}$ by multiplication by the character $\alpha$. 
		In other words, $$\tau^*: K[Z][(\xi_{\alpha, i})_{(\alpha,i)\in {\mbf \Phi}}] \to K[Z][(\xi_{\alpha, i})_{(\alpha,i)\in {\mbf \Phi}}] $$ is the only morphism of $K[Z]$-algebras that maps $\xi_{\alpha, i}$ to $\alpha(t) \xi_{\alpha, i}$ for each $\alpha \in \Phi$ and $i \in [\![1, N_{\alpha}]\!]$. Consequently, if $f = \sum_{\nu \in \NN^{\mbf \Phi}} f_{\nu}\xi^{\nu} $, then $$\tau^*(f) = \sum_{\nu \in \NN^{\mbf \Phi}} f_{\nu}\prod_{(\alpha, i) \in {\mbf \Phi}} \alpha(t)^{\nu(a,i)} \xi^{\nu}$$ and therefore $$|f|(\vtheta(x)) = \max_{\nu \in \NN^{\mbf \Phi}} |f_{\nu}|(\vtheta_Z(\ast)) \prod_{(\alpha, i) \in {\mbf \Phi}} |\alpha(t)|^{\nu(a,i)}.$$
		Now, given that, $o_K$ is taken as origin of $A(G_K, T_K)$ and that, by definition of the apartment \cite[Définition 3.3]{Lou}, $t$ acts on $A(G_K, T_K)$ by translation by the vector $\nu(t) \in V(G_K, T_K)$ such that $\langle \alpha, \nu(t) \rangle = -\omega(\alpha(t))$, we have $$|f|(\vtheta(x)) = \max_{\nu \in \NN^{\mbf \Phi}} |f_{\nu}|(\vtheta_Z(\ast))\prod_{(\alpha,i)\in {\mbf \Phi}}e^{\nu(\alpha,i)\langle \alpha, x \rangle}.$$
	\end{enumerate}
\end{proof}

\begin{thm}\label{thm:construction_theta}
	The map $\vtheta$ extends uniquely to a continuous $G(k)$-equivariant map $$\vtheta: \mc B(G, k) \to G^{\an}.$$
\end{thm}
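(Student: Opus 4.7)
The plan is to first dispatch uniqueness by density, then reduce to the pseudo-split case via Lemma \ref{lem:Compatibility_extensions}, extend $\vtheta$ from $A_{\QQ}(G, T)$ to the whole apartment $A(G, T)$ via the explicit formula of Proposition \ref{Formule_theta}, and finally extend to the entire building using $G(k)$-equivariance.

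Uniqueness is immediate: $\mc B_{\QQ}(G, k)$ is dense in $\mc B(G, k)$ by Proposition \ref{Density_rational} and $G^{\an}$ is Hausdorff, so any continuous extension is unique. For existence, if the theorem is known over a finite Galois extension $K/k$ making $G_K$ pseudo-split, then by Lemma \ref{lem:Compatibility_extensions} the composite $\pr_{K/k} \circ \tilde\vtheta \circ p_{K/k}: \mc B(G, k) \to G^{\an}$ is a continuous $G(k)$-equivariant extension of $\vtheta$. We may therefore assume $G$ pseudo-split, and fix $T$, $P$, and a special origin $o \in A(G, T)$ as in Proposition \ref{Formule_theta}. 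The right-hand side of formula (\ref{eq:Formule_theta}) is a continuous function of $x \in V(G, T) \simeq A(G, T)$ and defines a multiplicative seminorm on $k[\Omega(T, P)]$ for every $x \in A_{\QQ}(G, T)$. Since multiplicativity of seminorms on a fixed algebra is preserved under pointwise limits, the formula extends, for each $x \in A(G, T)$, to a multiplicative seminorm on $k[\Omega(T, P)]$, hence to a point of $\Omega(T, P)^{\an} \subset G^{\an}$; this yields a continuous map $\vtheta_T: A(G, T) \to G^{\an}$ agreeing with $\vtheta$ on $A_{\QQ}(G, T)$. The $N_G(T)(k)$-equivariance $\vtheta_T(nx) = n\vtheta_T(x)n^{-1}$ follows from Proposition \ref{Equiv_theta} by continuity.

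One then extends $\vtheta$ to the whole building via $\vtheta(gx) := g\vtheta_T(x)g^{-1}$ for $g \in G(k)$ and $x \in A(G, T)$. \textbf{The main obstacle is the well-definedness of this extension}, which, via the usual quotient description $\mc B(G, k) = G(k) \times A(G, T)/{\sim}$, reduces to showing that conjugation by any $p \in \hat P_x$ fixes $\vtheta_T(x)$. For virtually special $x$, this is immediate from Proposition \ref{Equiv_theta}: conjugation by $p$ sends the affinoid subgroup $G_x$ to $G_{px} = G_x$ and so fixes its unique Shilov boundary point $\vtheta(x)$. For general $x \in A(G, T)$, we pass to a second apartment: since $\hat P_x$ is bounded and acts on the spherical building at $x$, a standard fixed-point argument provides an apartment $A' = A(G, T')$ through $x$ stabilised by $p$. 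On $A'$ the element $p$ acts as an affine isometry whose fixed-point set is a rational affine subspace in which virtually special points are dense (by the coordinate-based argument in Step~1 of the proof of Proposition \ref{Density_rational}); the identity $p\vtheta_{T'}(y)p^{-1} = \vtheta_{T'}(y)$ then extends from virtually special $y$ to all $y$ in that fixed-point set by continuity, giving in particular $p\vtheta_{T'}(x)p^{-1} = \vtheta_{T'}(x)$, and the equality $\vtheta_T(x) = \vtheta_{T'}(x)$ on the apartment overlap follows by the same density-continuity argument. Granted well-definedness, $G(k)$-equivariance is built in, and global continuity of $\vtheta$ follows from that of $\vtheta_T$, the joint continuity of the conjugation map $G(k) \times G^{\an} \to G^{\an}$, and the quotient topology on $\mc B(G, k)$.
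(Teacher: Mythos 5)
Your overall strategy — uniqueness by density, reduction to the pseudo-split case via Lemma~\ref{lem:Compatibility_extensions}, the explicit formula of Proposition~\ref{Formule_theta} to extend on an apartment — coincides with the paper's. But your final step (extension to the whole building via the quotient description $\mc B(G,k) = G(k)\times A(G,T)/\!\sim$ and a well-definedness check) is genuinely different from the paper, which instead shows convergence of $(\vtheta(y_n))$ for arbitrary sequences of virtually special points $y_n \to x$, using the finiteness of the star of the facet through $x$ (a consequence of $k$ being local) to extract a subsequence lying in a single apartment. Your route is potentially cleaner and more general, but it has a real gap.

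The gap is your assertion that "a standard fixed-point argument provides an apartment $A'$ through $x$ stabilised by $p$." This is false for a general $p \in \hat P_x$. Already in the Bruhat--Tits tree of $\mathrm{PGL}_2(\QQ_p)$: for a vertex $x$ of valence $p+1 \ge 3$, an element of $\hat P_x$ whose induced permutation of the $p+1$ edges at $x$ is a single cycle of length $\ge 3$ fixes no edge and swaps no pair of edges, hence stabilises no geodesic line through $x$. The same happens for $x$ in an open edge, taking $p$ in the Iwahori acting on the star of one endpoint as a regular unipotent element, which fixes only the direction of the given edge. So there is in general no $p$-stable apartment through $x$, and the passage to $A'$ cannot be carried out.

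The repair is to work with the facet $\mbf f \subset A(G,T)$ containing $x$ rather than with a hypothetical stabilised apartment — and this makes the argument even cleaner, since it avoids the residue-field finiteness used in the paper. Because $p$ is a simplicial automorphism fixing $x \in \mbf f$, it preserves $\mbf f$, hence acts on $\ol{\mbf f}$ as an affine isometry permuting its vertices. The fixed-point set of $p$ in $\ol{\mbf f}$ is the polytope of convex combinations of vertices with $\langle p\rangle$-invariant weights; its extreme points are the barycentres of $\langle p\rangle$-orbits of vertices, which are rational convex combinations of vertices of $\mbf f$ and hence virtually special (Step~2 of the proof of Proposition~\ref{Density_rational}). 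Rational convex combinations of these barycentres form a dense set of virtually special points of $A(G,T)$ fixed by $p$, and $x$ lies in their closure. Choosing such $y_n \to x$, Proposition~\ref{Equiv_theta} gives $p\vtheta(y_n)p^{-1} = \vtheta(py_n) = \vtheta(y_n)$; continuity of $\vtheta_T$ on $A(G,T)$ and of conjugation by $p$ on $G^{\an}$ then gives $p\vtheta_T(x)p^{-1} = \vtheta_T(x)$. This also removes the need to compare $\vtheta_T$ with $\vtheta_{T'}$ across apartments.

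One lesser issue: your last sentence invokes "the quotient topology on $\mc B(G,k)$" for the continuity of the global map. That the action map $G(k)\times A(G,T) \to \mc B(G,k)$ is a topological quotient for the metric topology is not established in the paper and should be either cited or argued; alternatively, once well-definedness is in place, one can verify continuity sequentially as the paper does, using the star-finiteness argument to reduce an arbitrary converging sequence to one in a single apartment, where continuity follows from the explicit formula.
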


\begin{proof}
	Note that, because $\vtheta$ is already $G(k)$-equivariant, the continuity of the $G(k)$-action on $\mc B(G,k)$ and $G^{\an}$ will immediately imply that the extension is equivariant.
	Consequently, all we need to do is prove that, for any point $x \in \mc B(G, k)$ and any sequence $(x_n) \in \mc B_{\QQ}(G, k)^{\NN}$ converging to $x$, the sequence $\vtheta(x_n)$ converges in $G^{\an}$ and that the limit depends only on $x$.
	
	The proof comprises two steps, one mainly using Berkovich, and one using Bruhat-Tits theory. First of all, we treat the case of a sequence of virtually special points in an apartment converging to a point in that apartment, which follows easily from Proposition \ref{Formule_theta}. Next, we reduce the general case to the first step using the geometry of the building. The main observation is the fact that, from any sequence of points in the building converging to a point $x$, we may extract a subsequence lying in a single apartment containing $x$, a consequence of the local finiteness of the building.
	
	\textit{Step 1: }Assume that $x \in A(G, T)$. Then, it follows from Proposition \ref{Formule_theta} that, for any sequence $(x_n) \in A_{\QQ}(G, T)^{\NN}$ converging to $x$, the sequence $\vtheta(x_n)$ converges in $\Omega(T, P)^{\an} \subset G^{\an}$ and that the limit depends only on $x$. Namely, for any $f \in k[\Omega(T, P)]$, we have $$|f|(\vtheta(x_n)) = \max_{\nu \in \NN^{\mbf \Phi}} |f_{\nu}|(\vtheta_Z(\ast))\prod_{(\alpha,i)\in {\mbf \Phi}}e^{\nu(\alpha,i)\langle \alpha, x_n \rangle} \underset{n \to \infty}{\longrightarrow} \max_{\nu \in \NN^{\mbf \Phi}} |f_{\nu}|(\vtheta_Z(\ast))\prod_{(\alpha,i)\in {\mbf \Phi}}e^{\nu(\alpha,i)\langle \alpha, x \rangle}.$$
	
	\textit{Step 2: }Now, consider an arbitrary sequence $(y_n) \in \mc B_{\QQ}(G, k)^{\NN}$ converging to $x$.  Consider $\mathbf{f} \subset \mc B(G, k)$ the facet of $\mc B(G, k)$ containing $x$, that is the only relatively open polysimplex of $\mc B(G, k)$ containing $x$. We may associate to it the parahoric $k^{\oo}$-group scheme $\mc G^{0}_{\mbf f}$. By \cite[Théorème 4.7]{Lou}, the \textit{star} of $\mbf f$, that is the set of facets of $\mc B(G, k)$ whose closure contains $\mbf f$, is in one-to-one correspondence with the set of parabolic subgroups of $\mc G^0_{\mbf f, \tilde k}$. In particular, because $k$ is local, the residue field $\tilde k$ is finite and the set of parabolic $\tilde k$-subgroups of $\mc G^0_{\mbf f, \tilde k}$ is finite and the star of $\mbf f$ is therefore finite. 
	
	Because the union of the facets in the star of $\mbf f$ is a neighbourhood of $x$ \cite[Lemma VI.3A]{Brown}, it follows that it contains the $y_n$ for all sufficiently large $n$. Because the star of $\mbf f$ is a finite collection, there must exist a facet $\mbf f'$ such that $\mbf f \subset \ol{\mbf f'}$ and a subsequence $(y_{n_j})$ such that $y_{n_j} \in \mbf f'$ for all $j \in \NN$. Moreover, by \cite[7.4.9]{BT1}, the group $\mc G^0_{\mbf f}(k^{\circ})$ acts transitively on the apartments containing $\mbf f$. Therefore, there exists $g \in \mc G^0_{\mbf f}(k^{\circ})$ such that $ \mbf f' \subset g\cdot A(G, T)$. Consequently, we may write $y_{n_j} = gx_{n_j},$ with $x_{n_j} \in A_{\QQ}(G, T)$ for all $j \in \NN$. Note that the sequence $(x_{n_j})$ also converges to $x$. It will suffice to prove that $\lim_{j \to \infty} \vtheta(y_{n_j})  = \lim_{j \to \infty} \vtheta(x_{n_j})$.
	
	Because $\vtheta$ is $G(k)$-equivariant, we have $\vtheta(y_{n_j}) = g\vtheta(x_{n_j})g^{-1}$ for all $j \in \NN$. By continuity of the action, we have $$\lim_{j \to \infty} \vtheta(y_{n_j}) = g \left( \lim_{j \to \infty} \vtheta(x_{n_j})\right) g^{-1}.$$
	
	By the first step, we know that the limit $\lim_{j \to \infty} \vtheta(x_{n_j})$ is independent of the choice of the sequence of points in $A_{\QQ}(G, T)$ converging to $x$. Moreover, by Proposition \ref{Density_rational}, we know that there exists a sequence $(u_n)$ of virtually special points in $\mbf f \cap \mc B_{\QQ}(G, k)$ that converges to $x$. Therefore, we have $$\lim_{j \to \infty} \vtheta(y_{n_j}) = g\left(\lim_{j \to \infty} \vtheta(u_j) \right) g^{-1} =  \lim_{j \to \infty} \vtheta(gu_j) = \lim_{j \to \infty} \vtheta(u_j).$$
	
	The exact same argument applies to any subsequence of $(y_n)$, which establishes the fact that from any subsequence of $(\vtheta(y_n))$, we may extract a subsequence that converges to the common limit of the sequences $(\vtheta(x_n))$, for $(x_n)$ an arbitrary sequence of points in $A_{\QQ}(G, T)$ that converges to $x$. Hence, $(\vtheta(y_n))$ converges to that same limit.
	
	This proves that, for any sequence of points $(x_n)$ in $\mc B_{\QQ}(G, k)$ converging to a point $x$ in $A(G, T)$, the limit $\lim_{n \to \infty} \vtheta(x_n)$ exists and depends only on $x$. The general case easily follows. Indeed, if $y \in \mc B(G, k)$, then there exists $g \in G(k)$ such that $gy \in A(G, T)$ and,  for any sequence of virtually special points $(y_n)$ converging to $y$, the sequence $(\vtheta(y_n))$ converges to a limit depending only on $y$ if and only if the same holds for the sequence $(\vtheta(gy_n)) = (g\vtheta(y_n)g^{-1})$.
\end{proof}

Note that the density of virtually special points and the continuity of $\vtheta$ yield the following as a direct corollary of Lemma \ref{lem:Compatibility_extensions}.

\begin{prop}\label{Compatibility_extensions}
	Let $K/k$ be a finite Galois extension. We have the commutative diagram 
	\begin{center}
		\begin{tikzcd}
			\mc B(G, K) \arrow[r, "\vtheta"] & G_K^{\an}\arrow[d, "\pr_{K/k}^{\an}"] \\ \mc B(G, k)\arrow[u, "p_{K/k}"] \arrow[r, "\vtheta"] & G^{\an}
		\end{tikzcd}.
	\end{center}
\end{prop}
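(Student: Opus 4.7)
The plan is a direct density argument, exploiting the preparatory Lemma \ref{lem:Compatibility_extensions} which already handles the virtually special case. The whole proof reduces to checking that the two compositions in the square are continuous maps $\mc B(G, k) \to G^{\an}$, that they agree on a dense subset, and that the target is Hausdorff.

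First I would verify continuity of each map in the diagram. Both horizontal arrows $\vtheta: \mc B(G, K) \to G_K^{\an}$ and $\vtheta: \mc B(G, k) \to G^{\an}$ are continuous by Theorem \ref{thm:construction_theta}. The right vertical arrow $\pr_{K/k}^{\an}: G_K^{\an} \to G^{\an}$ is continuous by general properties of Berkovich analytifications of scalar extensions. For the left vertical arrow $p_{K/k}$, condition 1 of Theorem \ref{Functoriality_inclusions} tells us that its restriction to any apartment $A(G, S)$ is an affine isomorphism onto an affine subspace of $A(G_K, T)$, hence continuous on apartments; combined with the $G(k)$-equivariance from condition 2 and the continuity of the $G(k)$-action on both buildings, one sees that $p_{K/k}$ is continuous on $\mc B(G, k) = G(k) \cdot A(G, S)$.

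Consequently, $\vtheta$ and $\pr_{K/k}^{\an} \circ \vtheta \circ p_{K/k}$ are both continuous maps $\mc B(G, k) \to G^{\an}$. By Proposition \ref{Density_rational}, the subset $\mc B_{\QQ}(G, k)$ is dense in $\mc B(G, k)$, and by Proposition \ref{prop:virtually_special_and_extensions} we have $p_{K/k}(\mc B_{\QQ}(G, k)) \subset \mc B_{\QQ}(G, K)$, so the hypothesis of Lemma \ref{lem:Compatibility_extensions} applies pointwise along $\mc B_{\QQ}(G, k)$. That lemma therefore asserts precisely that the two continuous maps coincide on this dense subset.

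Finally, since $G^{\an}$ is Hausdorff, two continuous maps into $G^{\an}$ that agree on a dense subset must agree everywhere, which yields the desired commutativity. There is no genuine obstacle here: all the technical content has been absorbed into Theorem \ref{thm:construction_theta} (continuity and density-based construction of $\vtheta$), Proposition \ref{Density_rational} (density of virtually special points), and Lemma \ref{lem:Compatibility_extensions} (the virtually special case of the diagram).
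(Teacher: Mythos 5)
Your proposal is correct and takes exactly the approach the paper intends: the result is stated as a direct corollary of Lemma \ref{lem:Compatibility_extensions} via density of virtually special points (Proposition \ref{Density_rational}) and continuity of $\vtheta$ (Theorem \ref{thm:construction_theta}). Your write-up merely spells out the continuity of $p_{K/k}$ and $\pr_{K/k}^{\an}$ and the Hausdorffness of $G^{\an}$, which the paper leaves implicit.
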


\section{Embedding in pseudo-flag varieties} \label{sect:Compactification}

In this section, our goal is to prove that we can recover the family of polyhedral compactifications of $\mc B(G, k)$ constructed in \cite[Chap. 1]{Char} -- the abstract building-theoretic analog of Satake's compactifications of symmetric spaces -- by mapping $\mc B(G, k)$ in the $(G/P_G(\mu))^{\an}$ for various $\mu \in X_*(G)$. 

\subsection{Dependence on the pseudo-parabolic subgroup}

Let $P$ be a pseudo-parabolic subgroup. We check that the composition $$\vtheta_P: \mc B(G, k) \overset{\vtheta}{\longrightarrow} G^{\an} \overset{\lambda_P}{\longrightarrow} (G/P)^{\an}$$ depends only on the conjugacy class of $P$, in the following sense.

\begin{prop}\label{IndependanceP}
	Let $Q$ be a pseudo-parabolic subgroup of $G$ conjugate to $P$ and $h \in G(k)$ such that $Q = hPh^{-1}$.  Then, setting the isomorphism $$\iota_{Q,P}: \begin{array}{ccc} G/P & \to & G/Q \\ gP & \mapsto & gh^{-1}Q\end{array},$$ induced by the orbit map $g \mapsto gh^{-1}Q$, we have $\vtheta_Q = \iota_{Q,P}^{\an}\circ \vtheta_P$.
	
\end{prop}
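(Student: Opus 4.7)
The plan is to reduce the equality $\vtheta_Q = \iota_{Q,P}^{\an}\circ \vtheta_P$ to a single identification of points in $(G/Q)^{\an}$, and then to verify it using an Iwasawa-type decomposition together with the Shilov-boundary characterisation of $\vtheta$. At the level of $k$-schemes, the definition of $\iota_{Q,P}$ yields the commutative diagram
\[
\iota_{Q,P}\circ \pi_P \;=\; \pi_Q \circ r_{h^{-1}},
\]
where $\pi_P: G \to G/P$ and $\pi_Q: G \to G/Q$ are the natural projections and $r_{h^{-1}}: G \to G$ denotes right multiplication by $h^{-1}$. Analytifying and postcomposing with $\vtheta$, the proposition becomes equivalent to proving
\[
\pi_Q^{\an}(\vtheta(x)) \;=\; \pi_Q^{\an}(\vtheta(x)\cdot h^{-1})\quad\text{in }(G/Q)^{\an}
\]
for every $x \in \mc B(G, k)$. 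Since $h \in G(k)$, the morphism $r_{h^{-1}}$ is defined over $k$ and $r_{h^{-1}}^{\an}$ is continuous; combined with the continuity of $\vtheta$ (Theorem \ref{thm:construction_theta}) and $\pi_Q^{\an}$, and the density of $\mc B_{\QQ}(G, k)$ in $\mc B(G, k)$ (Proposition \ref{Density_rational}), it suffices to establish this equality for $x$ virtually special.

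For such an $x$, I would choose a finite Galois extension $K/k$ such that $G_K$ is pseudo-split and $x_K := p_{K/k}(x)$ is a special vertex of $\mc B(G, K)$. By Proposition \ref{Compatibility_extensions}, and the fact that $\pi_Q$ and $r_{h^{-1}}$ are defined over $k$ and hence commute with $\pr_{K/k}$, the equality descends from the analogous statement over $K$ for $y := \vtheta(x_K) \in G_K^{\an}$, namely $\pi_{Q_K}^{\an}(y) = \pi_{Q_K}^{\an}(y\cdot h^{-1})$ in $(G_K/Q_K)^{\an}$.

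To establish this, I would invoke the Iwasawa-type decomposition $G(K) = Q(K)\cdot G_{x_K}(K)$, valid because $x_K$ is a special vertex: it follows from the Bruhat decomposition \cite[Theorem 3.14]{Sol} together with the inclusion $N_G(T)(K) \subset G_{x_K}(K)\cdot Z_G(T)(K) \subset G_{x_K}(K)\cdot Q(K)$ for a maximal $K$-split torus $T \subset Q$. Write $h = q \cdot g$ with $q \in Q(K)$ and $g \in G_{x_K}(K) = \mc G_{x_K}^\circ(K^\circ)$. The crucial observation is that right multiplication by $g$ sends the $K$-affinoid subgroup $G_{x_K}^{\an} = (\mc G_{x_K}^\circ)^{\an}$ to itself, and therefore induces an analytic automorphism of this $K$-affinoid space; such an automorphism must fix the Shilov boundary, which by Proposition \ref{Shilov} consists of the single point $y$. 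Therefore $y \cdot g = y$, whence also $y \cdot g^{-1} = y$, and
\[
y \cdot h^{-1} \;=\; y \cdot g^{-1} \cdot q^{-1} \;=\; y \cdot q^{-1}.
\]
Since $q^{-1} \in Q(K)$, right multiplication by $q^{-1}$ preserves the fibers of $\pi_Q: G \to G/Q$, so $\pi_{Q_K}^{\an}(y\cdot h^{-1}) = \pi_{Q_K}^{\an}(y\cdot q^{-1}) = \pi_{Q_K}^{\an}(y)$. Projecting via $\pr_{K/k}$ and extending to all $x$ by continuity and density then completes the proof.

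The main obstacle will be pinning down the Iwasawa-type decomposition $G(K) = Q(K)\cdot G_{x_K}(K)$ for an arbitrary pseudo-parabolic $Q$ in the quasi-reductive context; the reductive-case proofs carry over with the combinatorial machinery developed in \cite{Sol} and \cite{Lou}, but the verification requires some care, especially since $G/Q$ need not be proper. Everything else is essentially formal, resting on the invariance of the Shilov boundary of $\vtheta(x_K)$ under right multiplication by elements of $G_{x_K}(K)$ and on the tautology that right multiplication by elements of $Q(K)$ has no effect after projecting to $G/Q$.
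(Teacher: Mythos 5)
Your argument is correct and reaches the conclusion by a genuinely different route than the paper. The paper reasons at the level of $k^{\circ}$-integral models: it decomposes $h = kp$ with $k \in \mc G_x^{\circ}(k^{\circ})$ and $p \in P(k)$ (so that $Q = kPk^{-1}$), forms the integral models $\mc P = P_{\mc G}(\mu^{\circ})$ and $\mc Q = k\mc P k^{-1}$, and identifies $\vtheta_P(x)$ and $\vtheta_Q(x)$ as the unique preimages under the reduction map of the generic points of $(\mc G/\mc P)_{\tilde k}$ and $(\mc G/\mc Q)_{\tilde k}$; the conclusion then follows because $\iota_{\mc Q, \mc P}$ carries the generic point of one special fibre to that of the other. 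You instead rephrase the claim as the invariance $\pi_Q^{\an}(\vtheta(x)) = \pi_Q^{\an}(\vtheta(x)\cdot h^{-1})$, decompose $h = qg$ with $q \in Q(K)$ and $g \in G_{x_K}(K)$, and exploit the simpler structural fact that right translation by $g$ preserves the affinoid $(\mc G_{x_K}^{\circ})^{\an}$ and hence fixes its unique Shilov boundary point, while right translation by $q^{-1}\in Q(K)$ is annihilated by $\pi_Q^{\an}$. The two proofs use the Iwasawa decomposition in dual ways---you to peel off $g$ on the right, the paper to conjugate by $k$ on the left---but yours bypasses the reduction-map analysis for the model of $G/P$, which makes it somewhat more streamlined. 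One small caveat: your derivation of the Iwasawa decomposition $G(K) = Q(K)\cdot G_{x_K}(K)$ from the building Bruhat decomposition only yields $G(K)\subset G_{x_K}(K)\cdot Q(K)\cdot G_{x_K}(K)$ a priori; it is cleaner (and sufficient) to cite \cite[Theorem 3.12]{Sol} directly, after noting that $Q$ contains a maximal $K$-split torus whose apartment passes through $x_K$, which holds for any pseudo-parabolic subgroup by the building axioms.
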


\begin{proof}
	Because both sides of the equation are continuous maps on $\mc B(G, k)$, it is sufficient to prove that they agree on the dense subset of virtually special points. Moreover, Proposition \ref{Compatibility_extensions} allows us to reduce to proving the equality for special vertices in the case when $G$ is pseudo-split over $k$.
	
	Now, assume that $G$ is pseudo-split and consider a special vertex $x \in \mc B(G, k)$. Denote by $\mc G$ the parahoric $k^{\circ}$-group scheme $\mc G_x^{\circ}$ associated to $x$. Let $\mu$ be a cocharacter of $G$ such that $P = P_G(\mu)$. Then, there exists a maximal split torus $T \subset G$ such that $\mu$ factors through $T$. Now, by construction, $\mc G$ contains the split $k^{\circ}$-torus $\mc T = \Spec(k^{\circ}[X^*(T)])$ and $\mu$ descends to a cocharacter $\mu^{\circ}$ of $\mc T$.
	Let $\mc P$ be the subgroup of $\mc G$ defined by $\mc P = P_{\mc G}(\mu^{\circ})$. Then, the image $\vtheta_P(x) \in (G/P)^{\an}$ is the unique preimage of the generic point of $(\mc G/\mc P)_{\tilde k}$ under the reduction map $$\rho_{\mc P}: (\mc G/\mc P)^{\an} \to (\mc G/\mc P)_{\tilde k}.$$
	Indeed, the commutative diagram 
	\begin{center}
		\begin{tikzcd}
			G^{\an} \arrow[r, "\lambda_P^{\an}"] & (G/P)^{\an}\\
			\mc G^{\an} \arrow[u, "\subset"] \arrow[d, "\rho"] \arrow[r, "\lambda_{\mc P}^{\an}"] & (\mc G/\mc P)^{\an} \arrow[d, "\rho_{\mc P}"] \arrow[u, "\subset"]\\
			\mc G_{\tilde k} \arrow[r, "(\lambda_{\tilde P})_{\tilde k}"] & (\mc G/\mc P)_{\tilde k}
		\end{tikzcd}
	\end{center}
	and the surjectivity of the map $(\lambda_{\mc P})_{\tilde k}: \mc G_{\tilde k} \to (\mc G/\mc P)_{\tilde k}$ imply that $\vtheta(x)$, which by construction is the unique preimage of the generic point of $\mc G_{\tilde k}$ under the reduction map $\rho$, maps to a preimage of the generic point of $(\mc G/\mc P)_{\tilde k}$ in $(\mc G/\mc P)^{\an}$. To establish the uniqueness of the preimage, recall that, for any non-empty affine open subset $\mc U$ of $\mc G/\mc P$, the space $\mc U^{\an}$ contains a unique preimage of the generic point of $(\mc G/\mc P)_{\tilde k}$ by \cite[Proposition 2.4.4]{Ber}. The fact that, by construction, $(\mc G/\mc P)^{\an}$ is covered by subsets of the form $\mc U^{\an}$ and that any two of these meet along a subset of the same type allows one to conclude.
	
	Next, observe that due to the Iwasawa decomposition \cite[Theorem 3.12]{Sol}, we may write $h = kp$, with $k \in \mc G(k^{\circ})$ and $p \in P(k)$. Hence, we have $Q = kPk^{-1} = P_G(k \cdot \mu)$. Set $\mc Q = P_{\mc G}(k \cdot \mu^{\circ})$. Then, we have $\mc Q = k\mc P k^{-1}$ and the map $\iota_{Q, P}$ descends to a map $$\iota_{\mc Q,\mc P}: \begin{array}{ccc} \mc G/\mc P & \to & \mc G/\mc Q \\ g\mc P & \mapsto & gk^{-1}\mc  Q\end{array}.$$ Therefore, the diagram \begin{center}
		\begin{tikzcd}
			(G/P)^{\an} \arrow[r, "\iota_{Q,P}^{\an}"]&  (G/Q)^{\an}\\ (\mc G/\mc P)^{\an} \arrow[u, "\subset"] \arrow[r, "\iota_{\mc Q, \mc P}^{\an}"] \arrow[d,"\rho_{\mc P}"]& (\mc G/\mc Q)^{\an} \arrow[d, "\rho_{\mc Q}"] \arrow[u, "\subset"]\\
			(\mc G/\mc P)_{\tilde k} \arrow[r, "(\iota_{\mc Q, \mc P})_{\tilde k}"] & (\mc G/\mc Q)_{\tilde k}
			
		\end{tikzcd}
	\end{center}
	commutes. Because $(\iota_{\mc Q, \mc P})_{\tilde k}$ maps generic points to generic points, the map $\iota_{Q, P}^{\an}$ maps the unique preimage in $(\mc G/\mc P)^{\an}$ of the generic point of $(\mc G/\mc P)_{\tilde k}$ to the unique preimage in $(\mc G/\mc Q)^{\an}$ of the generic point of $(\mc G/\mc Q)_{\tilde k}$. In other words, $$\vtheta_Q(x) = \iota_{Q, P}^{\an}(\vtheta_P(x)).$$
	
\end{proof}

\begin{rmk}
    The map $\iota_{Q, P}$ does not depend on the choice of $h$. Indeed, if $h' \in G(k)$ also satisfies $Q = h'Ph'^{-1}$, then, by \cite[Proposition 3.5.7]{CGP}, we have $h' h^{-1} \in N_G(Q)(k) = Q(k)$, and thus $h^{-1}Q = h'^{-1}Q$.
\end{rmk}

\begin{cor}\label{cor:equiv_thetaP}
	The map $\vtheta_P$ is $G(k)$-equivariant.
\end{cor}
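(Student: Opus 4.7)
The plan is to deduce the $G(k)$-equivariance of $\vtheta_P$ by combining Proposition~\ref{IndependanceP} applied to the conjugate subgroup $Q := gPg^{-1}$ with the conjugation equivariance $\vtheta(gx) = g\vtheta(x)g^{-1}$ (established in Proposition~\ref{Equiv_theta} for virtually special points and extended to $\mc B(G,k)$ by continuity in Theorem~\ref{thm:construction_theta}).

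Fix $g \in G(k)$ and $x \in \mc B(G, k)$, and set $Q = gPg^{-1}$. Two distinct $G$-isomorphisms $G/P \overset{\sim}{\to} G/Q$ will enter the argument. The first is the orbit map $\iota_{Q,P}: g'P \mapsto g'g^{-1}Q$ furnished by Proposition~\ref{IndependanceP} applied with $h = g$, and it satisfies $\vtheta_Q = \iota_{Q,P}^{\an}\circ \vtheta_P$ by that proposition. The second is the map $\bar c_g: g'P \mapsto gg'g^{-1}Q$ induced on quotients by the inner automorphism $c_g$ of $G$, which fits into the tautological scheme-theoretic intertwining $\lambda_Q \circ c_g = \bar c_g \circ \lambda_P$.

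The main step is to compute $\vtheta_Q(gx)$ in two ways. Proposition~\ref{IndependanceP} applied at $gx$ gives $\vtheta_Q(gx) = \iota_{Q,P}^{\an}(\vtheta_P(gx))$; the equivariance of $\vtheta$ combined with the intertwining above gives
$$\vtheta_Q(gx) = \lambda_Q^{\an}(c_g^{\an}\vtheta(x)) = \bar c_g^{\an}(\lambda_P^{\an}\vtheta(x)) = \bar c_g^{\an}(\vtheta_P(x)).$$
Equating the two expressions yields $\vtheta_P(gx) = (\iota_{Q,P}^{-1}\circ \bar c_g)^{\an}(\vtheta_P(x))$.

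The proof is then completed by the purely coset-theoretic observation that $\iota_{Q,P}^{-1}\circ \bar c_g: G/P \to G/P$ coincides with left translation by $g$: indeed $\iota_{Q,P}^{-1}(g''Q) = g''gP$, hence $\iota_{Q,P}^{-1}(\bar c_g(g'P)) = \iota_{Q,P}^{-1}(gg'g^{-1}Q) = gg'P$. No substantial obstacle arises; the only subtlety is to track carefully which of the two isomorphisms $G/P \simeq G/Q$ is being used at each step, as the discrepancy between them is precisely what converts the conjugation equivariance of $\vtheta$ into the left-translation equivariance of $\vtheta_P$ on $(G/P)^{\an}$.
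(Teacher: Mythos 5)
Your proof is correct and follows essentially the same route as the paper's: both combine Proposition~\ref{IndependanceP} with the conjugation equivariance $\vtheta(gx)=g\vtheta(x)g^{-1}$, and resolve the discrepancy between the two natural isomorphisms $G/P\simeq G/gPg^{-1}$ (or $G/g^{-1}Pg$ in the paper) to recover left translation. The only difference is bookkeeping — the paper conjugates by $g^{-1}$ and peels the left $g$ off the target early, whereas you conjugate by $g$ and resolve the two identifications in a single coset computation at the end — but the key lemmas and the underlying idea are identical.
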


\begin{proof}
	Let $g \in G(k)$. We have the following commutative diagram  
	\begin{center}
		\begin{tikzcd}
			G \arrow[equals]{d} \arrow[r, "\cdot g^{-1}"] & G \arrow[r, "\lambda_P"] & G/P \\
			G \arrow[rr, "\lambda_{g^{-1}Pg}"] & & G/g^{-1}Pg \arrow[u, "\iota_{P, g^{-1}Pg}"]
		\end{tikzcd},
	\end{center}
	in which all maps are $G(k)$-equivariant for the action by left-translation.
	Now, recall from Proposition \ref{Equiv_theta} that, for each $x \in \mc B(G, k)$, we have $$\vtheta(gx) = g\vtheta(x)g^{-1}.$$
	Consequently, we have 
	$$\vtheta_P(gx) = \lambda^{\an}_P(g\vtheta(x)g^{-1}) = g\lambda^{\an}_P(\vtheta(x)g^{-1}).$$
	By the above diagram, we have $$\lambda_P^{\an}(\vtheta(x)g^{-1}) = \iota_{P, gPg^{-1}}^{\an} (\vtheta_{g^{-1}Pg}(x)) = \vtheta_P(x).$$
	Finally, we find $$\vtheta_P(gx) = g\vtheta_P(x),$$ which completes the proof.
\end{proof}

\subsection{Restricting to an apartment: An explicit formula}\label{sect:Formula_ThetaP}

The following sections will establish that, if $P$ is a suitable pseudo-parabolic subgroup of $G$, then $\vtheta_P$ is an embedding with relatively compact image. The argument hinges on an explicit formula for the restriction of $\vtheta_P$ to an apartment $A(G, T)$ of the building such that $T \subset P$ in the case where $G$ is pseudo-split. The argument is essentially identical -- yet not reducible -- to that of Proposition \ref{Formule_theta}.

Assume that $G$ is pseudo-split over $k$ and let $A$ be an apartment in $\mc B(G, k)$ associated to a maximal split torus $T$. We denote by $\Phi = \Phi(G, T)$ the root system of $G$ corresponding to $T$. Let $o$ be a special vertex in $A$ and denote by $\mc G$ the parahoric $k^{\circ}$-model of $G$ associated to $o$. For each $\alpha \in \Phi_{\nd}$, we denote by $\mc U_{\alpha} \subset \mc G$ the $k^{\circ}$-model of the root group $U_{\alpha}$ associated to $o$. 

Let $P = P_G(\mu)$ be a pseudo-parabolic subgroup containing $T$ and $U^- = U_G(-\mu)$. Arguing as in the proof of Proposition \ref{IndependanceP}, we may find a $k^{\circ}$-cocharacter $\mu^{\circ}: \mbb G_{m, k^{\circ}} \to \mc G$ such that $\mu^{\circ} \otimes_{k^{\oo}} k = \mu$. Then, let $\mc P = P_{\mc G}(\mu^{\oo})$ and $\mc U^- = U_{\mc G}(-\mu^{\oo})$. Let $$\Psi = \Phi(U^-, T) = \{\alpha \in \Phi, \langle \alpha, \mu \rangle < 0\}. $$ Then, we claim that the multiplication map $$m: \prod_{\alpha \in \Psi_{\nd}} \mc U_{\alpha} \to \mc U^-$$ is a $\mc T$-equivariant isomorphism of $k^{\oo}$-schemes.

For each non-divisible root $\alpha \in\, \Psi_{\nd}$, we have a $\mc T$-equivariant isomorphism of $k^{\circ}$-schemes: $$\mc U_{\alpha} \simeq \mc U_{2{\alpha}} \times \mc U_{\alpha}/\mc U_{2{\alpha}}.$$ Moreover, each $\mc U_{2{\alpha}}$ (resp. $\mc U_{\alpha}/\mc U_{2{\alpha}}$) is an affine space over $k^{\circ}$ on which $\mc T$ acts linearly via the character $2{\alpha}$ (resp. ${\alpha}$). 
For each $\alpha \in \Phi$, set $N_{\alpha} = \dim_k(U_{\alpha}/U_{2\alpha})$ and fix $\mc T$-equivariant isomorphisms $$u_{\alpha}: \mbb A^{N_{\alpha}}_{k^{\oo}} \overset{\sim}{\to} \mc U_{\alpha}/\mc U_{2{\alpha}} \text{ and } u_{2{\alpha}}: \mbb A^{N_{2{\alpha}}}_{k^{\oo}} \overset{\sim}{\to} \mc U_{2{\alpha}}$$ for each non-divisible root ${\alpha}$. These yield a $\mc T$-equivariant isomorphism of $k^{\circ}$-schemes $$\prod_{\alpha \in \Psi} \AA^{N_{\alpha}}_{k^{\circ}} \overset{\sim}{\to} \mc U^-.$$ 
In the following, we identify $k[U^-]$ to the polynomial algebra $k[(\xi_{\alpha, i})_{(\alpha, i) \in\, \Psi \times \NN, 1 \le i \le N_{\alpha}}]$ using this isomorphism.

\begin{rmk}\label{term:adapted_coordinates}
    We call a system of coordinates $(\xi_{\alpha, i})_{(\alpha, i) \in \mbf \Psi}$ on $U^-$ so constructed \textit{adapted to} the point $o$.
\end{rmk}

Denote by $\Omega(\mc T, \mc P)$ the image of the open immersion defined by the multiplication map \cite[Proposition 2.1.8 (3)]{CGP} $$m: \mc U^- \times \mc P \to \mc G.$$ and let $\Omega(T, P) = \Omega(\mc T, \mc P) \otimes_{k^{\oo}} k$.

\begin{prop} \label{Formula_ThetaP}
	We keep all the above notation. Let $x$ be a point in $A$. We let $\Psi = \Phi(U_G(-\mu), T)$ and $$\mbf \Psi = \{(\alpha, i) \in\, \Psi \times \NN, 1 \le i \le N_{\alpha}\}.$$
	
	\begin{enumerate}
		\item The image $\vtheta_P(x)$ lies in  $(U^-)^{\an} \simeq (\Omega(T, P)/P)^{\an} \subset (G/P)^{\an}$.
		\item For each $f =\sum_{\nu \in \NN^{\mbf \Psi}} f_{\nu}\xi^{\nu} \in k[U_G(-\mu)] = k\left[(\xi_{\alpha, i})_{(\alpha, i)\in {\mbf \Psi}}\right]$, we have $$|f(\vtheta_P(x))| = \max_{\nu \in \NN^{\mbf \Psi}} |f_{\nu}| \prod_{(\alpha, i)\in {\mbf \Psi}}e^{\nu(\alpha, i) \langle \alpha, x\rangle}.$$
	\end{enumerate}
\end{prop}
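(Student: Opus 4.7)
The argument closely mirrors that of Proposition \ref{Formule_theta}: first treat the case $x = o$ via the reduction map on the parahoric integral model, then handle arbitrary virtually special $x$ through the $T(K)$-equivariance of $\vtheta_P$ after passing to a finite Galois extension $K/k$, and finally extend to all $x \in A$ by continuity of $\vtheta_P$ (Theorem \ref{thm:construction_theta}) together with density of $A_{\QQ}(G, T) \subset A$ (Proposition \ref{Density_rational}).

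For $x = o$: the open Bruhat immersion $\mc U^- \times \mc P \hookrightarrow \mc G$ descends via $\mc G \to \mc G/\mc P$ to an open immersion $\mc U^- \hookrightarrow \mc G/\mc P$ whose special fibre contains the generic point of $(\mc G/\mc P)_{\tilde k}$. Arguing as in the proof of Proposition \ref{IndependanceP}, $\lambda_{\mc P}^{\an}: \mc G^{\an} \to (\mc G/\mc P)^{\an}$ is compatible with reduction, so $\vtheta_P(o) = \lambda_{\mc P}^{\an}(\vtheta(o))$ reduces to the generic point of $(\mc G/\mc P)_{\tilde k}$. It therefore lies in $(\mc U^-)^{\an}$ and is characterised there as the Gauss point. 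In the coordinates $(\xi_{\alpha, i})_{(\alpha, i) \in \mbf \Psi}$, this yields $|f(\vtheta_P(o))| = \max_{\nu} |f_\nu|$ for every $f = \sum_\nu f_\nu \xi^\nu \in k[U^-]$.

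For $x \in A_{\QQ}(G, T)$: Corollary \ref{Transitivity_split} gives a finite Galois extension $K/k$ and an element $t \in T(K)$ with $x_K = t \cdot o_K$. Combining the $G(K)$-equivariance of $\vtheta_P$ over $K$ (Corollary \ref{cor:equiv_thetaP} applied to $G_K$) with the compatibility of $\vtheta_P$ with finite Galois extensions (Proposition \ref{Compatibility_extensions}) yields $\vtheta_P(x) = t \cdot \vtheta_P(o_K)$ in $(G/P)^{\an}$. Under the identification $U^- \hookrightarrow G/P$, the left action of $t \in T(K) \subset P(K)$ on $(G/P)^{\an}$ restricts to conjugation by $t$ on $U^-$, and since $T$ acts on $U_{\alpha}/U_{2\alpha}$ (resp.\ $U_{2\alpha}$) via the character $\alpha$ (resp.\ $2\alpha$), this pullback sends each $\xi_{\alpha, i}$ to $\alpha(t)\, \xi_{\alpha, i}$. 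The identity $|\alpha(t)| = e^{-\omega(\alpha(t))} = e^{\langle \alpha, x - o \rangle}$, which encodes the $T$-translation action on $A(G, T)$, then produces the announced formula for virtually special $x$.

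Both sides of the formula in (2) depend continuously on $x \in A$, so by density the identity persists for every $x \in A$, establishing part (2). Part (1) follows automatically: the right-hand side of (2) defines a multiplicative seminorm on $k[U^-]$, which identifies $\vtheta_P(x)$ with a bona fide point of $(U^-)^{\an} \subset (G/P)^{\an}$. The principal technical step, exactly as in the proof of Proposition \ref{Formule_theta}, is the reduction-theoretic identification of $\vtheta_P(o)$ with the Gauss point of $(U^-)^{\an}$ via the Bruhat open cell; the remaining pieces are a formal consequence of equivariance and density.
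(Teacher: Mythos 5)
Your proposal is correct and follows essentially the same three-step scheme as the paper's own proof: reduction-theoretic identification of $\vtheta_P(o)$ with the Gauss point of $(U^-)^{\an}$ via the open Bruhat cell, translation to virtually special $x$ using Corollary \ref{Transitivity_split} together with the $T$-weight decomposition of the coordinates $\xi_{\alpha, i}$ (the paper phrases this via conjugation $\inta(t_K)$ on $U^-$, you via the left $t$-action on $G/P$ restricted to the big cell — the same map), and extension to all of $A$ by density of $A_{\QQ}(G, T)$ and continuity of $\vtheta_P$.
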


\begin{proof}
	The arguments are essentially the same as those of Proposition \ref{Formule_theta}. 
	
	The statement will follow from the fact that $\vtheta(x)$ lies in $\Omega(T, P)^{\an}$ for all $x \in A$. To establish this, we proceed in three steps, first proving it for the origin $o$, then extending to any virtually special point, and finally concluding by density. 
	\begin{enumerate}
		\item First of all, because $\Omega(\mc T, \mc P)$ is an open subset of $\mc G$ that meets the special fibre of $\mc G$, the special fibre $\Omega(\mc T, \mc P)_{\tilde k}$ contains the generic point of $\mc G_{\tilde k}$. Therefore, the analytification $\Omega(\mc T, \mc P)^{\an}$ contains $\vtheta(o)$. Now, because the diagram
		\begin{center}
			\begin{tikzcd}
				\Omega(\mc T, \mc P)^{\an} \arrow[d, "\rho"] \arrow[r, "\lambda_{\mc P}^{\an}"] & (\Omega(\mc T, \mc P)/\mc P)^{\an} \arrow[d, "\rho"] \\
				\Omega(\mc T, \mc P)_{\tilde k}  \arrow[r, "\lambda_{\mc P_{\tilde k}}"] & (\Omega(\mc T, \mc P)/\mc P)_{\tilde k} 
			\end{tikzcd}
		\end{center}
		commutes and the bottom arrow is surjective (and therefore dominant), the image $\vtheta_P(o)$ is the unique Shilov boundary of $(\mc G/\mc P)^{\an}$.
		Identifying $(\Omega(\mc T, \mc P)/\mc P)^{\an}$ to $(\mc U^-)^{\an}$ via $\lambda_{\mc P}^{\an}$, the point $\vtheta_P(o)$ may be understood as the only seminorm such that, for each $f \in k[U^-] = k[(\xi_{\alpha, i})_{(\alpha, i) \in {\mbf \Psi}}]$, we have $$|f(\vtheta_P(o))| \le 1 \iff f \in k^{\circ}[(\xi_{\alpha, i})_{(\alpha, i) \in {\mbf \Psi}}]$$ and $$|f(\vtheta_P(o))| < 1 \iff f \text{ maps to zero in } \tilde k[(\xi_{\alpha, i})_{(\alpha, i)\in \mbf \Psi}].$$ In other words, if $f = \sum_{\nu \in \NN^{\mbf \Psi}} f_{\nu} \xi^{\nu} \in k[(\xi_{\alpha, i})]$, we have $$|f(\vtheta_P(o))| = \max_{\nu \in \NN^{\mbf \Psi}} |f_{\nu}|.	$$
		\item Next, consider a virtually special point $x \in A_{\QQ}(T, k)$. By Corollary \ref{Transitivity_split}, we have at our disposal a finite Galois extension $K/k$ and a $t \in T(K)$ such that $x_K = t o_K$. Then, we have $\vtheta(x_K) = t\vtheta(o_K)t^{-1}$ and, because $\Omega(T, P)$ is normalised by $T$, we have $\vtheta(x_K) \in \Omega(T, P)_K^{\an}$ and, consequently, $\vtheta(x) = \pr_{K/k}^{\an}(\vtheta(x_K)) \in \Omega(T, P)^{\an}$, and finally $\vtheta_P(x) \in (\Omega(T, P)/P)^{\an}$. Moreover, for $f \in k[U^-]$, we have $$|f|(\vtheta_P(x)) = |f|(\vtheta_P(x_K)) = |\tau^*(f)|(\vtheta_P(o_K)),$$ where $\tau$ is the only map that makes the following diagram commute
		\begin{center}
			\begin{tikzcd}
				\prod_{{\alpha} \in \Psi}\mbb A^{N_{\alpha}}_K \arrow[r, "\tau"] \arrow[d, "\sim"] & \prod_{{\alpha} \in \Psi}\mbb A^{N_{\alpha}}_K  \arrow[d, "\sim"]\\ (U^-)_K \arrow[r, "\mathrm{int}(t_K)"] & (U^-)_K
			\end{tikzcd}.
		\end{center}
		In other words, $\tau^*$ is the only $k$-automorphism of $k[(\xi_{\alpha,i})_{(\alpha, i)\in {\mbf \Psi}}]$ that maps $\xi_{\alpha, i}$ to $\alpha(t) \xi_{\alpha, i}$ for each $\alpha \in \Psi$ and $i \in [\![1, N_{\alpha}]\!]$. We deduce that if $f = \sum_{\nu \in \NN^{\mbf \Psi}} f_{\nu} \xi^{\nu} \in k[(\xi_{\alpha,i})_{(\alpha, i)\in {\mbf \Psi}}]$, we have $$\tau^*(f) = \sum_{\nu \in \NN^{\mbf \Psi}} \left(f_{\nu} \prod_{(\alpha, i)\in {\mbf \Psi}} \alpha(t)^{\nu(\alpha, i)}\right) \xi^{\nu}$$ and thus $$|f|(\vtheta_P(x_K)) = \max_{\nu \in \NN^{\mbf \Psi}} |f_{\nu}| \prod_{(\alpha, i)\in {\mbf \Psi}} |\alpha(t)|^{\nu(\alpha, i)} = \max_{\nu \in \NN^{\mbf \Psi}} |f_{\nu}| \prod_{(\alpha, i)\in {\mbf \Psi}} e^{\nu(\alpha, i)\langle \alpha, x\rangle } $$ using point 1.
		
		\item If $x$ is an arbitrary point in $A$, then there exists a sequence $(x_n)$ of points in $A_{\QQ}(T, k)$ that converges to $x$. Then, it is clear from the continuity in $x$ of the formula in point 2. that the sequence $(\vtheta_P(x_n))$ converges to a seminorm on $k[U^-]$, that is that $\vtheta_P(x) \in \Omega(T, P)^{\an}$ and that the formula for $\vtheta_P(x)$ is as announced.
	\end{enumerate}
	
\end{proof}

\subsection{Injectivity of the map}

This section establishes the injectivity of the map $\vtheta_P$ under the condition that $P$ is \textit{of non-degenerate type}, a property we define below.

\begin{defn}\label{def:non-degenerate_type}
	Let $G$ be a quasi-reductive group over a field $k$ and $P$ be a pseudo-parabolic subgroup of $G$. We say that $P$ is \textit{of non-degenerate type} if it does not contain any minimal nontrivial $k$-isotropic perfect smooth connected normal subgroup of $G$.
\end{defn}

The property of being of non-degenerate type admits the following Lie-theoretic characterisations.

\begin{prop}\label{Non-degenerate_type}
	Let $G$ be a quasi-reductive group over a field $k$ and $P$ be a pseudo-parabolic subgroup of $G$. Fix a $k$-split maximal torus $S \subset P$ and $P_0$ a minimal pseudo-parabolic subgroup such that $S \subset P_0 \subset P$. Denote by $P = L_P \ltimes U$ the Levi decomposition of $P$ corresponding to $S$. Let $\Phi = \Phi(G, S)$ be the root system associated to $S$ and $\Delta \subset \Phi$ be the system of simple roots corresponding to $P_0$.
	The following are equivalent.
	\begin{enumerate}
		\item $P$ is of non-degenerate type.
		\item The type $I = \Phi(L_P, S) \cap \Delta$ of $P$ contains no connected components of the Dynkin diagram $\Delta$.
		\item The subset $-\Psi = \Phi(U, S)$ spans $V^*(G, S) = \Span_{\RR}(\Phi)$ as a vector space.
	\end{enumerate}
\end{prop}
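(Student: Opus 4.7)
The plan is to prove the chain $(1) \Leftrightarrow (2) \Leftrightarrow (3)$ by treating the Lie-theoretic equivalence $(2) \Leftrightarrow (3)$ combinatorially, and converting $(1)$ into $(2)$ via an identification of the minimal nontrivial $k$-isotropic perfect smooth connected normal subgroups of $G$ with the factors coming from the irreducible components of $\Phi$. I will write $P = P_G(\lambda)$ with $\lambda \in X_*(S)$ chosen so that $\langle \alpha, \lambda \rangle \ge 0$ for every $\alpha \in \Delta$ and $\langle \alpha, \lambda \rangle = 0$ iff $\alpha \in I$. Then $\Phi_I := \Phi(L_P, S) = \{\alpha \in \Phi : \langle \alpha, \lambda \rangle = 0\}$ and $\Phi(U, S) = \{\alpha : \langle \alpha, \lambda \rangle > 0\} = \Phi^+ \setminus \Phi_I^+$. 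Decompose $\Delta = \Delta_1 \sqcup \cdots \sqcup \Delta_r$ into connected components with corresponding irreducible components $\Phi_i$ of $\Phi$, giving a direct sum decomposition $V^*(G, S) = \bigoplus_i V^*_i$ with $V^*_i := \Span_{\RR}(\Phi_i)$.

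For $(2) \Leftrightarrow (3)$: since $\Phi(U, S) = \bigsqcup_i (\Phi(U, S) \cap \Phi_i)$, the span of $\Phi(U, S)$ decomposes as the direct sum of the $\Span_{\RR}(\Phi(U, S) \cap \Phi_i)$, so it suffices to check componentwise that $\Span_{\RR}(\Phi(U, S) \cap \Phi_i) = V^*_i$ iff $\Delta_i \not\subset I$. If $\Delta_i \subset I$, then $\Phi_i \subset \Phi_I$, so $\Phi(U, S) \cap \Phi_i = \emptyset$ and the span is zero. Conversely, if $\Delta_i \not\subset I$, set $J_i := I \cap \Delta_i$ and, for each $\beta \in \Delta_i$, choose a geodesic path $\beta = \beta_0, \beta_1, \ldots, \beta_m$ in the connected Dynkin diagram of $\Delta_i$ ending at some $\beta_m \in \Delta_i \setminus J_i$. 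A standard descending induction on $k$ shows that each tail sum $\sigma_k := \beta_k + \cdots + \beta_m$ is a positive root: the geodesic property ensures that $\beta_k$ is adjacent only to $\beta_{k+1}$ among $\{\beta_{k+1}, \ldots, \beta_m\}$, so $\langle \sigma_{k+1}, \beta_k^\vee \rangle = \langle \beta_{k+1}, \beta_k^\vee \rangle < 0$, forcing $\sigma_k = \beta_k + \sigma_{k+1}$ to be a positive root. Since $\sigma_k$ involves $\beta_m \notin J_i$, it lies in $\Phi_i^+ \setminus \Phi_{J_i}^+ \subset \Phi(U, S) \cap \Phi_i$. Setting $\sigma_{m+1} := 0$, the telescoping identity $\beta_k = \sigma_k - \sigma_{k+1}$ exhibits $\beta = \beta_0$ as an element of $\Span_{\RR}(\Phi(U, S) \cap \Phi_i)$. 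Varying $\beta$ over $\Delta_i$ yields $\Span_{\RR}(\Phi(U, S) \cap \Phi_i) \supseteq \Span_{\RR}(\Delta_i) = V^*_i$.

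For $(1) \Leftrightarrow (2)$: the key structural input is that the minimal nontrivial $k$-isotropic perfect smooth connected normal subgroups of $G$ are exactly the subgroups $G_i$ generated by $\{U_\alpha : \alpha \in \Phi_i\}$, one for each irreducible component $\Phi_i$ of $\Phi$. This follows from the structure theory of quasi-reductive groups (cf.\ \cite[C.2]{CGP}) together with the observation that root groups attached to different irreducible components commute, so the $G_i$ are normal in $G$, pairwise commuting, perfect, $k$-isotropic (since $\Phi_i \ne \emptyset$), and minimal with these properties. Granting this, $P \supseteq G_i$ is equivalent to $U_\alpha \subset P$ for every $\alpha \in \Phi_i$; as $U_\alpha \subset P_G(\lambda)$ iff $\langle \alpha, \lambda \rangle \ge 0$ and $\Phi_i$ is symmetric under $\alpha \mapsto -\alpha$, this forces $\langle \alpha, \lambda \rangle = 0$ for all $\alpha \in \Phi_i$, i.e.\ $\Phi_i \subset \Phi_I$, i.e.\ $\Delta_i \subset I$. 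Hence $P$ fails $(1)$ iff some $G_i \subset P$ iff some $\Delta_i \subset I$, which is precisely the negation of $(2)$.

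The delicate step is the structural identification of the minimal $k$-isotropic perfect normal subgroups with the $G_i$: in the reductive case this is immediate from the decomposition of $G$ into almost-simple factors, but in the quasi-reductive setting over imperfect fields one must appeal to CGP's structure theory (or argue directly from the commutation relations available for the relative root datum) to check both that each $G_i$ is genuinely minimal and that every such normal subgroup arises this way.
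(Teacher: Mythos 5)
Your proof is correct. The $(1) \Leftrightarrow (2)$ step is essentially identical to the paper's: both invoke \cite[Proposition C.2.32]{CGP} to identify the minimal nontrivial $k$-isotropic perfect smooth connected normal subgroups with the $\langle U_\alpha : \alpha \in \Phi_i \rangle$, then observe that containing all of them forces $\Phi_i \subset \Phi(L_P, S)$, hence $\Delta_i \subset I$.

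For $(2) \Leftrightarrow (3)$, your route differs from the paper's in the $(2)\Rightarrow(3)$ direction, and in a way worth noting. The paper fixes a single $\alpha \in \Delta_i \setminus I$ and, for each $\beta \in I \cap \Delta_i$, appeals to $s_\alpha(\beta) = \beta - \langle\alpha,\beta\rangle\alpha$ with the assertion $\langle\alpha,\beta\rangle < 0$; but this pairing vanishes when $\beta$ is not adjacent to $\alpha$ in the Dynkin diagram, in which case $s_\alpha(\beta) = \beta$ lies in the span of $I$ and the argument produces nothing. Your geodesic-path construction — tail sums $\sigma_k = \beta_k + \cdots + \beta_m$ along the unique tree path from $\beta$ to a vertex in $\Delta_i \setminus J_i$, each shown to be a root by the descending induction on the adjacency pattern, and each lying in $\Phi(U,S)$ because its $\beta_m$-coefficient is nonzero — handles the non-adjacent case correctly and recovers each simple root by telescoping. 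So your version is a complete argument where the paper's, as literally written, only treats the adjacent case; the underlying idea (propagate from $\Delta_i \setminus I$ across the connected diagram) is the same, but you supply the inductive machinery needed to actually reach every simple root. The componentwise reduction using $\Phi(U,S) = \bigsqcup_i (\Phi(U,S) \cap \Phi_i)$ is a clean organizing step the paper leaves implicit.

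One small remark on $(1) \Leftrightarrow (2)$: you justify $U_\alpha \subset P_G(\lambda) \iff \langle\alpha,\lambda\rangle \ge 0$; this is the same fact the paper uses via $U_\alpha \subset P \iff \alpha \in \Phi(P,S)$. Both are standard from \cite[C.2]{CGP}; no gap there.
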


\begin{proof}
	It follows from \cite[Proposition C.2.32]{CGP} that the minimal nontrivial $k$-isotropic perfect smooth connected normal subgroup of $G$ are the $N_{\Phi_i} = \langle U_{\alpha}, \alpha \in \Phi_i \rangle$ for $\Phi_i$ ranging in the set of irreducible components of the root system $\Phi$. Because $P$ contains the root group $U_{\alpha}$ if and only if $\alpha \in \Phi(P, S)$, we have the equivalences $$N_{\Phi_i} \subset P \iff \Phi_i \subset \Phi(P, S) \iff \Phi_i \subset \Phi(L_P, S) = \Phi(P, S) \cap -\Phi(P,S).$$
	The equivalence $1 \iff 2$ immediately follows as the connected components of the Dynkin diagram $\Delta$ are precisely the $\Delta \cap \Phi_i$ for $\Phi_i$ ranging among the irreducible components of $\Phi$.
	
	We now prove the equivalence $2 \iff 3$. Assume that $I$ contains a connected component $\Delta_i = \Delta \cap \Phi_i$ of the Dynkin diagram $\Delta$. Then, we have $\Phi_i \subset \Phi(L_P, S)$ and therefore $\Phi(U, S) = \Phi(P, S)\setminus \Phi(L_P, S) \subset \bigsqcup_{j \ne i} \Phi_j$ and therefore $\Span_{\RR}(\Phi(U, S))$ is contained in $\Span_{\RR}\left(\bigsqcup_{j \ne i} \Phi_j\right)$, which is a proper subspace of $\Span_{\RR}(\Phi)$ because the $\Phi_i$ are pairwise orthogonal. Conversely, if $I$ contains no connected component of $\Delta$, let $\Phi_i$ be an irreducible component of $\Phi$. By assumption, there exists an $\alpha \in \Phi_i$ such that $\alpha \in \Delta \setminus I$. Because $P_0 \subset P$, that is $\Delta \subset \Phi(P,S)$, we must have $\alpha \in \Phi(U, S)$. Then, for each $\beta \in I \cap \Phi_i$, the root $s_{\alpha}(\beta) = \beta - \underbrace{\langle \alpha, \beta \rangle}_{< 0} \alpha$ is a sum of elements of $\Delta$ and is not contained in the span of $I$, hence it lies in $\Phi(U, S)$. Therefore, the subspace $\Span_{\RR}(\Phi(U, S))$ contains $I \cap \Phi_i$. Because it contains $(\Delta \setminus I) \cap \Phi_i$, it actually contains $\Delta \cap \Phi_i$ and therefore contains $\Phi_i$. 
	
\end{proof}

\begin{prop}\label{Injectivity_ThetaP_apartment}
	The map $\vtheta_P$ is injective if and only if $P$ is of non-degenerate type
\end{prop}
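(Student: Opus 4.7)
The approach is to analyse $\vtheta_P$ on a single apartment via the explicit formula of Proposition \ref{Formula_ThetaP}, then globalise using $G(k)$-equivariance (Corollary \ref{cor:equiv_thetaP}) together with the fact that any two points of an affine building lie in a common apartment. To handle the general (not necessarily pseudo-split) case, I would pass to a finite Galois extension $K/k$ such that $G_K$ is pseudo-split, exploiting the commutative square $\vtheta_P = \pr_{K/k}^{\an} \circ \vtheta_{P_K} \circ p_{K/k}$ of Proposition \ref{Compatibility_extensions}.

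For the sufficiency direction, assume $P$ is of non-degenerate type and let $x, y \in \mc B(G, k)$ satisfy $\vtheta_P(x) = \vtheta_P(y)$. Pick a common apartment $A(G, S)$ containing $x$ and $y$; by Proposition \ref{IndependanceP}, I may replace $P$ by a $G(k)$-conjugate so that $S \subset P$, say $P = P_G(\mu)$ with $\mu \in X_*(S)$. Let $T \supset S_K$ be a maximal $K$-split torus of $G_K$, so that $\mu \in X_*(T)$ and $P_K \supset T$. Galois-equivariance of $\vtheta_{P_K}$ (under the action of Theorem \ref{Galois_action_building} on the source and the canonical action on $(G_K/P_K)^{\an}$) ensures that $\vtheta_{P_K}(x_K)$ and $\vtheta_{P_K}(y_K)$ are Galois-invariant points of $(G_K/P_K)^{\an}$; since $\pr_{K/k}$ is injective on Galois-invariants, these points coincide. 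Proposition \ref{Formula_ThetaP} then forces $\langle \alpha, x_K - y_K\rangle = 0$ for every $\alpha \in \Phi(U_{G_K}(-\mu), T)$. The restriction of any such $\alpha$ to $S$ lies in $\{0\} \cup \Phi(U_G(-\mu), S)$, so the vector $x_K - y_K \in V(G, S) \hookrightarrow V(G_K, T)$ is annihilated by $\Phi(U_G(-\mu), S)$; by Proposition \ref{Non-degenerate_type} this subset spans $V^*(G, S)$, whence $x = y$.

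For the necessity direction, if $P$ is not of non-degenerate type, pick by Proposition \ref{Non-degenerate_type} a nonzero $v \in V(G, S)$ annihilating $\Phi(U_G(-\mu), S)$, any point $x \in A(G, S)$, and set $y = x + v$. Each root $\alpha \in \Phi(U_{G_K}(-\mu), T)$ restricts on $S$ to $0$ or an element of $\Phi(U_G(-\mu), S)$, both of which annihilate $v$; by Proposition \ref{Formula_ThetaP} we then obtain $\vtheta_{P_K}(x_K) = \vtheta_{P_K}(y_K)$, and applying $\pr_{K/k}$ yields $\vtheta_P(x) = \vtheta_P(y)$ despite $x \ne y$. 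This direction requires no Galois-equivariance, only the commutativity of the square.

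The main obstacle I anticipate lies in justifying the two inputs used in the sufficiency argument: that $\vtheta_{P_K}$ is equivariant for the Galois action of Theorem \ref{Galois_action_building}, and that $\pr_{K/k}$ is injective on the Galois-invariant locus of $(G_K/P_K)^{\an}$. The Galois-equivariance should follow from the canonical construction of $\vtheta$ via parahoric group schemes (Proposition \ref{Shilov}), since $\sigma(\mc G^{\circ}_{x_L}) = \mc G^{\circ}_{p_\sigma(x_L)}$ by functoriality, combined with the density of virtually special points and continuity of $\vtheta$; the injectivity on Galois-invariants is a standard consequence of $X_K^{\an}/\Gal(K/k) \to X^{\an}$ being a topological quotient for any $k$-scheme $X$ of finite type.
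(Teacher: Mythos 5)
Your reduction to a single apartment and your necessity direction coincide with the paper's. The difference is in the sufficiency direction: you propose to lift the equality $\vtheta_P(x)=\vtheta_P(y)$ to $\vtheta_{P_K}(x_K)=\vtheta_{P_K}(y_K)$ by invoking (a) Galois-equivariance of $\vtheta_{P_K}$, so that $\vtheta_{P_K}(x_K)$ and $\vtheta_{P_K}(y_K)$ are Galois-fixed, and (b) injectivity of $\pr_{K/k}$ on the Galois-fixed locus. Fact (b) is indeed standard, but (a) — equivariance of $\vtheta_K\colon\mc B(G,K)\to G_K^{\an}$ for the $\Gal(K/k)$-action of Theorem \ref{Galois_action_building} on the source and the natural action on the target — is \emph{not} proved anywhere in the paper; it would require an argument parallel to Proposition \ref{Equiv_theta}, built from Theorem \ref{thm:full_galois_eq}, the identity $\tilde\sigma(\mc G^{\circ}_{x_L})=\mc G^{\circ}_{p_{\tilde\sigma}(x_L)}$ for parahoric models, and density of virtually special points, roughly as you sketch. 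The paper sidesteps this entirely: rather than lift the equality, it tests $\vtheta_P(x)$ directly against a $k$\emph{-rational} weight vector $\phi\in\Lie(U^-)^*$ of weight $-a$. Since $\phi$ lives in $k[U^-]$, the identity $|\phi(\vtheta_P(x))|=|\phi_K(\vtheta_{P_K}(x_K))|$ holds by the very definition of $\pr_{K/k}$ on seminorms — no Galois-invariance statement is needed — and writing $\phi_K=\sum_{\alpha_{|S}=a}\phi_{\alpha,i}\xi_{\alpha,i}$ together with the pairing compatibility $\langle\alpha,x_K\rangle=\langle a,x\rangle$ yields $|\phi(\vtheta_P(x))|=(\max|\phi_{\alpha,i}|)\,e^{\langle a,x-o\rangle}$, from which $\langle a,x-o\rangle=\langle a,y-o\rangle$ for all $a\in\Phi(U^-,S)$ follows immediately. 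Your route would work once the Galois-equivariance of $\vtheta$ is established, and would in fact give the slightly stronger conclusion $\vtheta_{P_K}(x_K)=\vtheta_{P_K}(y_K)$; the paper's route is shorter and self-contained. (A minor point: for $\alpha\in\Phi(U_{G_K}(-\mu),T)$ one always has $\alpha_{|S}\neq 0$, since $\langle\alpha_{|S},\mu\rangle=\langle\alpha,\mu\rangle<0$; the ``$\{0\}\cup$'' in your restriction claim is vacuous but harmless.)
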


\begin{proof}
	Observe that, because $\vtheta_P$ is $G(k)$-equivariant and because any two points lie in an apartment, the map $\vtheta_P$ is injective if and only if there exists an apartment $A$ such that $\vtheta_{P|A}$ is injective. 
	
Let $S$ be a maximal $k$-split torus contained in $P$. Let $o$ be a special point of $A(G, k)$ and $K$ be a finite Galois extension such that $G_K$ is pseudo-split and $o_K$ is special. Let $T$ be a maximal $K$-split torus that contains $S_K$ such that $p_{K/k}(A(G, S)) \subset A(G_K, T)$. By Propositions \ref{Compatibility_extensions} and \ref{Formula_ThetaP}, we have the commutative diagram 
	\begin{center}
		\begin{tikzcd}
			A(G_K, T) \arrow[r, "\vtheta_{P_K}"] & (U^-)_K^{\an}\arrow[d, "\pr_{K/k}^{\an}"] \\ A(G, S)\arrow[u, "p_{K/k}"] \arrow[r, "\vtheta_P"] & (U^-)^{\an}
		\end{tikzcd},
	\end{center}
	where $U^-$ is the $k$-split unipotent radical of the opposite pseudo-parabolic subgroup to $P$ with respect to $S$. With the notations of Section \ref{sect:Formula_ThetaP} associated to the point $o_K$, we have, for each $f = \sum_{\nu \in \NN^{\mbf \Psi}} f_{\nu}\xi^{\nu} \in K[U^-] = K[(\xi_{\alpha, i})_{(\alpha, i)\in {\mbf \Psi}}]$ and $x \in A(G_K, T)$: \begin{equation} \label{eq: RelFormulathetaP}
	    |f(\vtheta_{P_K}(x))| = \max_{\nu \in \NN^{\mbf \Psi}} |f_{\nu}|\prod_{(\alpha, i) \in {\mbf \Psi}}e^{\nu(\alpha, i)\langle \alpha, x - o_K \rangle}. 
	\end{equation}
	Note also that, for each character $\chi \in X^*(T)$ and each cocharacter $\lambda \in X_*(S)$, we have the following compatibility property between duality pairings $$\langle \chi, \lambda_K \rangle = \langle \chi_{|S}, \lambda \rangle .$$
	Now, let $a \in \Phi(U^-, S)$ be a relative weight of the Lie algebra of $U^-$, and $\phi \in \Lie(U^-)^* \subset k[U^-]$ be a nonzero weight vector of weight $-a$. Then, we may write $$\phi = \sum_{\substack{(\alpha, i) \in {\mbf \Psi}\\ \alpha_{|S} = a}}\phi_{\alpha, i} \xi_{\alpha, i},$$ with $\phi_{\alpha, i}\in K$ and not all $\phi_{\alpha, i}$ equal to zero. Hence, we have, for all $x \in A(G, S)$: $$|\phi(\vtheta_P(x))| = |\phi_K(\vtheta_{P_K}(x_K))| = \left(\max_{\substack{(\alpha, i) \in {\mbf \Psi}\\ \alpha_{|S} = a}} |\phi_{\alpha, i}|\right) e^{\langle a, x-o\rangle}.$$
	Therefore, if $x, y \in A(G, S)$ satisfy $\vtheta_P(x) = \vtheta_P(y)$, then $\langle a, x-o \rangle = \langle a, y-o \rangle $ for each $a \in \Phi(U^-, S)$. Conversely, if $x$ and $y$ in $A(G,S)$ satisfy $\langle a, x-y \rangle = 0$ for each $a \in \Phi(U^-, S)$, then the observation following Equation (\ref{eq: RelFormulathetaP}) that $\vtheta_{P_K}(x) = \vtheta_{P_K}(y)$, and therefore $\vtheta_P(x) = \vtheta_P(y)$. 
	In other words, for $x$ and $y$ in $A(G,S)$, we have $\vtheta_P(x) = \vtheta_P(y)$ if and only if $x-y$ lies in the annihilator $$\Phi(U^-, S)^0 = \{v \in V(G, S), \forall a \in \Phi(U^-, S), \langle a, v \rangle = 0\}.$$ Hence, the map $\vtheta_P$ is injective if and only if the subset $\Phi(U^-, S)$ spans the dual space $V^*(G, S)$, that is if and only if $P$ is of non-degenerate type by Proposition \ref{Non-degenerate_type}.
\end{proof}

\subsection{Relative compactness}

In this section, we establish that, if $P$ is a pseudo-parabolic subgroup of $G$, then the image of the map $\vtheta_P$ is relatively compact. As usual, it will suffice to consider the case when $G$ is pseudo-split. The main insight is that the Cartan decomposition \cite[Theorem 3.13 (d)]{Sol} allows one to restrict to proving that sequences taking values in a suitably chosen cone admit a convergent subsequence, which will follow from the previous section.

\begin{prop}\label{prop:relative_compactness}
	The map $\vtheta_P$ has relatively compact image. In particular, if $P$ is of non-degenerate type, then $\vtheta_P$ factors through a $G(k)$-equivariant compactification of $\mc B(G, k)$.
\end{prop}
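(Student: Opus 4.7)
The plan is to exploit Proposition \ref{Formula_ThetaP} together with the Cartan decomposition for quasi-reductive groups: the decomposition reduces relative compactness of $\vtheta_P(\mc B(G, k))$ to relative compactness of the image of a single suitably chosen Weyl chamber, and on such a chamber the formula reads off boundedness directly. To get started, I would reduce to the pseudo-split case by picking a finite Galois extension $K/k$ over which $G_K$ is pseudo-split. Proposition \ref{Compatibility_extensions}, together with the base-change isomorphism $(G/P)_K \simeq G_K/P_K$, yields a commutative square
\begin{center}
\begin{tikzcd}
\mc B(G, K) \arrow[r, "\vtheta_{P_K}"] & (G/P)_K^{\an}\arrow[d, "\pr_{K/k}^{\an}"] \\ \mc B(G, k)\arrow[u, "p_{K/k}"] \arrow[r, "\vtheta_P"] & (G/P)^{\an}
\end{tikzcd}
\end{center}
so that $\vtheta_P(\mc B(G, k)) \subset \pr_{K/k}^{\an}(\vtheta_{P_K}(\mc B(G, K)))$. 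Since $\pr_{K/k}^{\an}$ is continuous, it suffices to treat the pseudo-split case.

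Assuming $G$ is pseudo-split over $k$, I would fix a maximal split torus $T$ with $T \subset P = P_G(\mu)$, a special vertex $o \in A(G, T)$, and the compact open stabiliser $\mc G_o^{\circ}(k^{\circ})$. The Cartan decomposition \cite[Theorem 3.13 (d)]{Sol} provides a closed Weyl chamber $\ol{\mc C} \subset A(G, T)$ with apex $o$ such that $\mc B(G, k) = \mc G_o^{\circ}(k^{\circ}) \cdot \ol{\mc C}$; since the Weyl group permutes chambers freely, there is freedom to choose the one best adapted to $P$. By Corollary \ref{cor:equiv_thetaP} and continuity of the $G(k)$-action on $(G/P)^{\an}$, relative compactness of $\vtheta_P(\mc B(G, k))$ then follows from relative compactness of $\vtheta_P(\ol{\mc C})$.

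Set $U^- = U_G(-\mu)$ and $\Psi = \Phi(U^-, T) = \{\alpha \in \Phi : \langle \alpha, \mu \rangle < 0\}$, so that $\Psi$ lies on one side of the hyperplane $\mu^\perp$. I would take $\mc C$ to be the chamber with apex $o$ satisfying $\langle \alpha, \cdot \rangle \leq 0$ for every $\alpha \in \Psi$. Proposition \ref{Formula_ThetaP} then gives, for any $f = \sum_\nu f_\nu \xi^\nu \in k[U^-]$ and any $x \in \ol{\mc C}$,
\[
|f(\vtheta_P(x))| = \max_\nu |f_\nu| \prod_{(\alpha, i) \in \mbf \Psi} e^{\nu(\alpha, i)\langle \alpha, x\rangle} \leq \max_\nu |f_\nu|,
\]
each factor in the product being bounded by $1$ since $\nu(\alpha, i) \geq 0$ and $\langle \alpha, x\rangle \leq 0$. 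Thus $\vtheta_P(\ol{\mc C})$ lies inside the closed unit polydisc $\{y \in (U^-)^{\an} : |\xi_{\alpha, i}(y)| \leq 1\}$ cut out by the Tate algebra in the coordinates $(\xi_{\alpha, i})_{(\alpha, i) \in \mbf \Psi}$, which is compact, giving the main assertion. The second statement is then a formal consequence of Proposition \ref{Injectivity_ThetaP_apartment} combined with the openness of $\vtheta_P$ onto its image, which is to be established separately in this section.

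The main subtlety I expect lies in ensuring that the Cartan decomposition is available in exactly the form used here for quasi-reductive $G$ and, more importantly, in aligning the sign conventions relating $\mu$, $P = P_G(\mu)$, and $U^- = U_G(-\mu)$ so that the chosen chamber genuinely sits on the correct side of every hyperplane indexed by $\Psi$. Once these conventions are nailed down, the conclusion is a direct reading of the explicit formula of Proposition \ref{Formula_ThetaP}.
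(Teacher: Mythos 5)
Your proof is correct and takes essentially the same approach as the paper: reduce to the pseudo-split case via base change, use the Cartan decomposition to restrict attention to a cone inside a single apartment on which the formula of Proposition \ref{Formula_ThetaP} gives an immediate bound, and conclude by compactness of a parahoric group. The paper works with the cone $C(P) = \{x \in A(G,T) : \langle \alpha, x - o\rangle \le 0\ \text{for all } \alpha \in \Psi\}$ (a union of closed Weyl chambers) rather than a single Weyl chamber inside it, and concludes by extracting a convergent subsequence rather than observing that the image lies in the Berkovich spectrum of a Tate algebra; both are cosmetic variations on the same argument, with your polydisc observation being a mildly cleaner way to finish.
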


\begin{proof}
	First of all, thanks to Proposition \ref{Compatibility_extensions}, we have the following commutative diagram of continuous maps
	\begin{center}
		\begin{tikzcd}
			\mc B(G, K) \arrow[r, "\vtheta_{P_K}"] & (G/P)_K^{\an} \arrow[d, "\pr_{K/k}^{\an}"] \\ 
			\mc B(G, k) \arrow[u, "p_{K/k}"] \arrow[r, "\vtheta_P"]  & (G/P)^{\an}
		\end{tikzcd}
	\end{center}	
	for any finite Galois extension $K/k$. It follows that, if $\vtheta_{P_K}$ has relatively compact image, then so does $\vtheta_P$. 
	
	It is therefore sufficient to prove our claim in the case where $G$ is pseudo-split over $k$, which we will assume in what follows. Let $T$ be a maximal split torus contained in $P$ and $\mu \in X_*(T)$ be a cocharacter such that $P = P_G(\mu)$. Set $U^- = U_G(-\mu)$. Fix a special vertex $o$ in $A(G, T)$, set $\Psi = \Phi(U^-, T)$, and consider the cone $$C(P) = \{x \in A(G, T)\ |\ \forall \alpha \in \Psi, \langle \alpha, x - o \rangle \le 0\}.$$
	Observe that we have $$A(G, T) = W(G, T)(k) \cdot C(P),$$ where the action  of $W(G, T)(k) = \dfrac{N_G(T)(k)}{Z_G(T)(k)}$ on $A(G, T)$ is given by the identification of $A(G, T)$ with $V(G, T)$ setting $o$ as origin.
	Moreover, because $o$ is special, the projection $$N_G(T)(k)_o \to W(G, T)(k)$$ is onto, hence we have $$A(G, T) = N_G(T)(k)_o \cdot C(P).$$ Moreover, by \cite[Theorem 3.11]{Sol}, we have $$\mc B(G, k)  = G(k)_o \cdot A(G, T),$$ and therefore $$\mc B(G, k) = G(k)_o \cdot C(P).$$
	Because $\vtheta_P$ is $G(k)$-equivariant and continuous and because $G(k)_o$ acts through its compact quotient $G(k)_o/Z(G)(k)$, proving that $\vtheta_P$ has relatively compact image reduces to proving that its restriction to $C(P)$ has relatively compact image. Now, let $(x_n)$ be a sequence in $C(P)$. Identify $A(G, T)$ to $V(G, T)$ using $o$ as origin and assume the notations of Section \ref{sect:Formula_ThetaP}. Then, by Proposition \ref{Formula_ThetaP}, we have $\vtheta_P(x_n) \in (U^-)^{\an}$ for each $n \in \NN$ and, for each $f = \sum_{\nu \in \NN^{\mbf \Psi}} f_{\nu}\xi^{\nu}\in k[U^-] = k[(\xi_{\alpha, i})_{(\alpha, i) \in {\mbf \Psi}}]$, we have $$|f|(\vtheta_P(x_n)) = \max_{\nu \in \NN^{\mbf \Psi}} |f_{\nu}| \prod_{(\alpha, i)\in {\mbf \Psi}}e^{\nu(\alpha, i) \langle \alpha, x_n\rangle}.$$
	For each $\alpha \in \Psi$, the sequence $(e^{\langle \alpha, x_n\rangle})$ is bounded. Consequently, there exists a subsequence $(x_{n_j})$ such that $(e^{\langle \alpha, x_{n_j}\rangle })$ converges for each $\alpha \in \Psi$. Then, for each $f \in k[U^-]$, the sequence $(|f|(\vtheta_P(x_{n_j})))$ converges and thus the subsequence $(\vtheta_P(x_{n_j}))$ converges in $(U^-)^{\an}$.
\end{proof}

The proof actually yields the following more precise statement.

\begin{prop}\label{prop:relative_compactness_conewise}
    Let $S$ be a maximal split torus in $P$ and $o$ a special vertex in $A(G, S)$. Consider the cone $$C(P) = \{x \in A(G, S), \forall \alpha \in \Phi(R_{u,k}(P), S), \langle \alpha, x-o \rangle \ge 0\}.$$
    Then, the image $\vtheta_P(C(P))$ is relatively compact in the open subspace $(\Omega(S, P)/P)^{\an} \subset (G/P)^{\an}$.
\end{prop}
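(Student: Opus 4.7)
This proposition is a refinement of Proposition \ref{prop:relative_compactness}: the same explicit computation places $\vtheta_P(C(P))$ inside a compact affinoid polydisc contained in the open subspace $(\Omega(S, P)/P)^{\an}$.

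First, I would reduce to the case in which $G$ is pseudo-split, exactly as in the proof of Proposition \ref{prop:relative_compactness}. One chooses a finite Galois extension $K/k$ such that $G_K$ is pseudo-split, together with a maximal $K$-split torus $T \subset P_K$ containing $S_K$, and arranges that $P_K = P_{G_K}(\mu_K)$ where $P = P_G(\mu)$ with $\mu \in X_*(S)$. Then $p_{K/k}(C(P)) \subset C(P_K)$ by the compatibility $\langle \alpha, x_K - o_K\rangle = \langle \alpha_{|S}, x-o\rangle$, and the analytic projection $\pr_{K/k}^{\an}$ carries $(\Omega(T, P_K)/P_K)^{\an}$ into $(\Omega(S, P)/P)^{\an}$, so that relative compactness descends from the pseudo-split situation.

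Assume now that $G$ is pseudo-split with $S$ a maximal split torus contained in $P$, and write $U^- = U_G(-\mu)$, so that $\Psi := \Phi(U^-, S) = -\Phi(R_{u,k}(P), S)$. By Proposition \ref{Formula_ThetaP}, the map $\vtheta_P$ lands in $(U^-)^{\an} \simeq (\Omega(S, P)/P)^{\an}$, and in the coordinates $(\xi_{\alpha, i})_{(\alpha, i) \in \mbf \Psi}$ on $U^-$ adapted to $o$,
$$|f(\vtheta_P(x))| \,=\, \max_{\nu \in \NN^{\mbf \Psi}} |f_\nu| \prod_{(\alpha, i) \in \mbf \Psi} e^{\nu(\alpha, i) \langle \alpha, x - o \rangle}$$
for every $f = \sum_\nu f_\nu \xi^\nu \in k[U^-]$. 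The defining inequalities of $C(P)$ rewrite in terms of $\Psi$ as $\langle \alpha, x - o \rangle \le 0$ for all $\alpha \in \Psi$ and $x \in C(P)$, so every exponential factor above is bounded by $1$; hence $|f(\vtheta_P(x))| \le \max_\nu |f_\nu|$ and $\vtheta_P(x)$ is dominated by the Gauss norm on $k[U^-]$.

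It follows that $\vtheta_P(C(P))$ is contained in the compact affinoid polydisc
$$\mc D \,=\, \mc M\bigl(k\{(\xi_{\alpha, i})_{(\alpha, i) \in \mbf \Psi}\}\bigr) \,\subset\, (U^-)^{\an},$$
which establishes relative compactness in $(\Omega(S, P)/P)^{\an}$. The only obstacles are essentially bookkeeping: tracking sign conventions between $\Phi(R_{u,k}(P), S)$ and its negative $\Psi$, and verifying that the descent from $K$ to $k$ in the reduction step respects both the cone $C(P)$ and the open subscheme $\Omega(S, P)$.
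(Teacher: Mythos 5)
Your proof is correct and follows essentially the same line as the paper's, which simply notes that the sequential compactness argument in Proposition \ref{prop:relative_compactness} already confines $\vtheta_P(C(P))$ to $(\Omega(S,P)/P)^{\an}$. The one genuine refinement you add is worth noting: rather than extracting convergent subsequences, you observe that the formula of Proposition \ref{Formula_ThetaP} together with the sign condition $\langle\alpha, x-o\rangle \le 0$ for $\alpha \in \Psi$ on $C(P)$ places $\vtheta_P(C(P))$ inside the unit affinoid polydisc $\mc M(k\{(\xi_{\alpha,i})\}) \subset (U^-)^{\an}$, giving an explicit compact set bounding the image rather than an abstract compactness argument. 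The reduction to the pseudo-split case you sketch does require one extra point (not explicit in either your note or the paper) that $K$ can be chosen so that $o_K$ remains a special vertex of $A(G_K, T)$, which follows from the fact that vertices are virtually special (Step 2 of the proof of Proposition \ref{Density_rational}).
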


\subsection{Openness of the corestriction}

In this section, we check that the corestriction 
$$\vtheta_P: \mc B(G, k) \to \ol{\vtheta_P(\mc B(G, k))}$$ is an open map. In particular, if $P$ is of non-degenerate type, we prove that  $\vtheta_P$ is an embedding.

\begin{prop}\label{prop:openness}
    The corestricted map $\vtheta_P: \mc B(G, k) \to \ol{\vtheta_P(\mc B(G, k))}$ is open. In particular, whenever $P$ is of non-degenerate type, the map $\vtheta_P$ is an embedding.
\end{prop}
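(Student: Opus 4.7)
The plan is to prove openness pointwise, reducing first to a single apartment via $G(k)$-equivariance, then to the pseudo-split case via Galois descent, and finally analyzing the apartment map explicitly using Proposition \ref{Formula_ThetaP}.

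First, by the $G(k)$-equivariance of $\vtheta_P$ (Corollary \ref{cor:equiv_thetaP}), openness at $x$ is equivalent to openness at $gx$ for any $g \in G(k)$, since both actions are by homeomorphisms. Since every point of $\mc B(G, k)$ is $G(k)$-conjugate to a point in any fixed apartment $A = A(G, S)$ with $S \subset P$ a maximal split torus, it suffices to establish openness at points of such an $A$. I would then reduce to the pseudo-split case: for $K/k$ a finite Galois extension splitting $G$, the commutative diagram of Proposition \ref{Compatibility_extensions}, combined with the fact that $p_{K/k}$ is a topological embedding onto the closed Galois-fixed locus in $\mc B(G, K)$ and that $\pr_{K/k}^{\an}$ is continuous, allows one to transfer openness from $\vtheta_{P_K}$ to $\vtheta_P$ by a diagram chase on subspace topologies.

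Assuming $G$ pseudo-split, with $S \subset P$, $P = P_G(\mu)$, $U^- = U_G(-\mu)$ and $\Psi = \Phi(U^-, S)$, the formula of Proposition \ref{Formula_ThetaP} shows that $\vtheta_P|_A$ factors as
\[
A \twoheadrightarrow A/\Psi^0 \overset{j}{\longrightarrow} \vtheta_P(A) \hookrightarrow (U^-)^{\an},
\]
where $\Psi^0 \subset V(G, S)$ is the annihilator of $\Psi$. The first arrow is the canonical affine quotient, and is open. The map $j$ is a continuous bijection whose inverse can be read off by evaluating on the coordinate functions $\xi_{\alpha, i}$ of $k[U^-]$: the explicit formula shows that these recover the real numbers $(\langle \alpha, x - o\rangle)_{\alpha \in \Psi}$, which in turn determine the class of $x$ in $A/\Psi^0$. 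Hence $j$ is a topological embedding onto $\vtheta_P(A)$, and $\vtheta_P|_A$ is open onto its image equipped with the subspace topology inherited from $(U^-)^{\an}$.

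The main obstacle will be promoting this apartment-level openness to openness of $\vtheta_P$ on the whole building. Given $x \in A$ and an open neighbourhood $U$ of $x$ in $\mc B(G, k)$, one needs to exhibit an open $V \subseteq (G/P)^{\an}$ with $\vtheta_P(x) \in V$ and $V \cap \vtheta_P(\mc B(G, k)) \subseteq \vtheta_P(U)$. By Proposition \ref{prop:relative_compactness_conewise} and continuity, one may shrink $U$ so that $\vtheta_P(U) \subseteq (U^-)^{\an}$, which is open in $(G/P)^{\an}$. The plan is then to exploit the fact that the compact parahoric $G(k)_o$ (for $o$ a special vertex in $A$) acts transitively on the apartments through $o$ and preserves the open subset $\vtheta_P^{-1}((U^-)^{\an})$: a preimage $y$ of a point near $\vtheta_P(x)$ should be movable into $A$ by an element of $G(k)_o$ close to the identity, after which the apartment-level openness applies. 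Making this ``Iwahori straightening'' continuous and uniform in $\vtheta_P(y)$ is the delicate technical point, and will rely on Bruhat--Tits theory for the parahoric at $o$ together with the fact that $(U^-)^{\an}$ trivialises the projection $G^{\an} \to (G/P)^{\an}$ over a neighbourhood of $\vtheta_P(o)$. Once openness is established, the embedding statement in the non-degenerate case follows immediately from Proposition \ref{Injectivity_ThetaP_apartment}.
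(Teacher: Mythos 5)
The main gap is in the passage from apartment-level openness to openness on the whole building. You correctly identify this as ``the delicate technical point,'' but you do not actually carry it out, and the vague appeal to an ``Iwahori straightening'' that is ``continuous and uniform in $\vtheta_P(y)$'' is precisely the kind of local argument that is hard to make rigorous here: there is no obvious reason why a preimage $y$ with $\vtheta_P(y)$ close to $\vtheta_P(x)$ can be moved into $A$ by an element of $G(k)_o$ \emph{close to the identity}, and no tool in the paper's toolbox delivers such uniformity. The paper sidesteps this entirely with a global argument. Fixing a special vertex $o \in A(G,S)$, the coarse Cartan decomposition gives $\mc B(G,k) = G(k)_o\cdot A(G,S)$, hence $\ol{\vtheta_P(\mc B(G,k))} = G(k)_o\cdot \ol{\vtheta_P(A(G,S))}$; since both source and target are compact Hausdorff, the action map $\pi_P\colon G(k)_o\times \ol{\vtheta_P(A(G,S))}\to \ol{\vtheta_P(\mc B(G,k))}$ is automatically a \emph{quotient map}. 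Given $U\subset \mc B(G,k)$ open, the set $V = (\id\times\vtheta_P)(\pi^{-1}(U))$ is open by the apartment-level openness, and the image $\pi_P(V) = \vtheta_P(U)$ is then open provided $V$ is $\pi_P$-saturated. This saturation is the crucial new ingredient (Lemma~\ref{lem:saturation_piP}): if $x\in \ol{\vtheta_P(A(G,S))}$ and $gx\in\vtheta_P(\mc B(G,k))$ for some $g\in G(k)$, then already $x\in\vtheta_P(A(G,S))$. It is proved by characterising the points of $\vtheta_P(A(G,S))$ intrinsically as those $x\in\ol{\vtheta_P(A(G,S))}$ that induce \emph{norms} (rather than merely seminorms) on their local ring $\mc O_{(G/P)^{\an},x}$, a property manifestly invariant under $G(k)$. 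Nothing in your proposal anticipates this characterisation or the quotient-map structure, and without something equivalent the proof does not close.

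Two smaller remarks. First, the claim that reducing to the pseudo-split case is a ``diagram chase'' is not sound: from $\vtheta_P = \pr_{K/k}^{\an}\circ\vtheta_{P_K}\circ p_{K/k}$, the openness of $\vtheta_{P_K}$ does not propagate, because the embedding $p_{K/k}$ has \emph{closed} image and the projection $\pr_{K/k}^{\an}$ is only continuous. The paper's lemmas do not reduce to the pseudo-split case but rather exploit the extension $K/k$ only to obtain explicit coordinates. Second, and this is a correct part of your proposal: the factorisation of $\vtheta_P|_A$ through $A/\Psi^0$ and the inversion of the resulting map via the coordinate functions $\xi_{\alpha,i}$ is essentially Lemma~\ref{lem:rel_openness_rest_apt}, though you also need to know that $\vtheta_P(A)$ is open in its closure (Lemma~\ref{lem:rel_openness_img_apt}), which your subbase-of-halfspaces argument would have to be supplemented by.
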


The proof goes in three steps. First of all, we prove that the restriction of $\vtheta_P$ to an apartment $A(G, S)$ (and thus any apartment) induces an open map to the closure of its image.
Secondly, we prove that the action map $$G(k) \times \ol{\vtheta_P(A(G, S))} \longto \ol{\vtheta_P(\mc B(G, k))}$$ is a topological quotient map. Lastly, we deduce that the map $\vtheta_P: \mc B(G, k) \to \ol{\vtheta_P(\mc B(G, k))}$  is open.

\begin{lem}\label{lem:rel_openness_img_apt}
	Let $S$ be a maximal split torus of $G$. The image $\vtheta_P(A(G, S))$ is open in its closure $\ol{\vtheta_P(A(G, S))}$.
\end{lem}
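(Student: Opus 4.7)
The plan is to show that for each $y_0 = \vtheta_P(x_0) \in \vtheta_P(A(G, S))$ there exists an open neighborhood $V$ of $y_0$ in $(G/P)^{\an}$ with $V \cap \ol{\vtheta_P(A(G, S))} \subseteq \vtheta_P(A(G, S))$, which is exactly the required openness. The key ingredient is the partial formula derived in the proof of Proposition \ref{Injectivity_ThetaP_apartment}: writing $P = P_G(\mu)$, $U^- = U_G(-\mu)$, and $\Psi = \Phi(U^-, S)$, and fixing a special vertex $o \in A(G, S)$, the map $\vtheta_P|_{A(G, S)}$ takes values in the open subspace $(U^-)^{\an} \subseteq (G/P)^{\an}$ cut out by the open Bruhat cell $\Omega(S, P) \to G/P$, and, for every weight vector $\phi \in \mathrm{Lie}(U^-)^*$ of weight $-a$ with $a \in \Psi$,
\begin{equation*}
|\phi(\vtheta_P(x))| = C_\phi \, e^{\langle a, x - o\rangle} \qquad (x \in A(G, S)),
\end{equation*}
for some constant $C_\phi > 0$ depending only on $\phi$.

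Next I would pick a $k$-basis $(\phi_{a,i})_{a \in \Psi,\, 1 \le i \le M_a}$ of $\mathrm{Lie}(U^-)^*$ consisting of weight vectors ($\phi_{a,i}$ of weight $-a$) and set
\begin{equation*}
V := \bigl\{ y \in (U^-)^{\an} : e^{-1} e^{\langle a, x_0 - o\rangle} < \tfrac{|\phi_{a,i}(y)|}{C_{\phi_{a,i}}} < e \cdot e^{\langle a, x_0 - o\rangle}\ \forall (a,i) \bigr\},
\end{equation*}
an open subset of $(G/P)^{\an}$ containing $y_0$. Given $y \in V \cap \ol{\vtheta_P(A(G, S))}$, take a net $(x_\lambda) \subset A(G, S)$ with $\vtheta_P(x_\lambda) \to y$; eventually $\vtheta_P(x_\lambda) \in V$, forcing $|\langle a, x_\lambda - x_0\rangle| < 1$ for every $a \in \Psi$. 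Because $\Psi$ spans the linear dual of $V(G, S)/\Psi^0$, these uniform bounds keep the class of $x_\lambda - x_0$ inside a compact subset of the finite-dimensional real vector space $V(G, S)/\Psi^0$. Extracting a convergent subnet, lifting to some $x_\infty \in A(G, S)$, and invoking the continuity of $\vtheta_P$ together with its factoring through $V(G, S)/\Psi^0$ gives $\vtheta_P(x_\infty) = y$, so $y \in \vtheta_P(A(G, S))$, as desired.

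The main subtlety will be rigorously justifying that $\vtheta_P|_{A(G,S)}$ really factors continuously through $V(G, S)/\Psi^0$ in the non-pseudo-split case. For this I would pass to a finite Galois extension $K/k$ with $G_K$ pseudo-split, apply Proposition \ref{Formula_ThetaP} to $\vtheta_{P_K}$ on $A(G_K, T_K)$, and combine with Proposition \ref{Compatibility_extensions}: the value $|f(\vtheta_P(x))| = |f(\vtheta_{P_K}(p_{K/k}(x)))|$ for $f \in k[U^-]$ then reads off from the pseudo-split polyradius formula and depends only on the restrictions $\alpha|_S$ of absolute roots $\alpha \in \Phi((U^-)_K, T_K)$, whose set coincides with $\Psi$. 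Once this factorization is in hand, the compactness-boundedness step in the main body is a direct consequence of the explicit formula.
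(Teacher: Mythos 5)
Your proof is correct and takes a genuinely different route from the paper's. The paper extends the explicit formula in the $K$-coordinates $(\xi_{\alpha,i})$ to points of the closure lying in $(U^-)^{\an}$, characterises the image inside that open cell as the locus where all $|\xi_{\alpha,i}| > 0$, and then patches the rest of the closure using the Bruhat decomposition and $N_G(S)(k)$-equivariance to conclude. You instead work directly with $k$-rational weight vectors $\phi \in \Lie(U^-)^*$, construct for each $y_0 = \vtheta_P(x_0)$ an explicit bounded open neighbourhood $V \subset (U^-)^{\an}$, and argue by compactness in the finite-dimensional quotient $V(G,S)/\Psi^0$ (through which $\vtheta_P|_{A(G,S)}$ factors continuously, by Proposition~\ref{Injectivity_ThetaP_apartment} and the openness of the affine quotient map) that any closure point in $V$ has a preimage.

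Two things your approach buys: you sidestep the Bruhat decomposition step entirely, because your neighbourhood $V$ is automatically contained in the open subvariety $(\Omega(S,P)/P)^{\an} \subset (G/P)^{\an}$ and that already suffices; and by working with $k$-rational $\phi$ rather than $K$-coordinates you avoid the mild subtlety of interpreting $|\xi_{\alpha,i}(x)|$ for a point $x$ of the $k$-analytification. What you give up is the explicit characterisation of $\ol{\vtheta_P(A(G,S))} \cap (U^-)^{\an}$ as the non-vanishing locus of the coordinates, though the paper anyway reproves the needed version of that characterisation from scratch in the proof of Lemma~\ref{lem:saturation_piP}, so nothing is lost for the subsequent arguments. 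One cosmetic remark: since $(G/P)^{\an}$ over a local field is first countable, you can replace nets by sequences, matching the paper's conventions.
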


\begin{proof}
	Without loss of generality, we may assume $S$ to be contained in $P$.
	
	Let $o$ be a special vertex in $A(G, S)$. Let $T$ be a maximal torus of $G$ and $K/k$ a finite Galois extension such that 
	\begin{enumerate}
		\item The torus $T_K$ is split.
		\item The apartment $A(G_K, T_K)$ contains $p_{K/k}(A(G, S))$.
		\item The point $o_K \in A(G_K, T_K)$ is a special vertex.
	\end{enumerate}
	Then, as in the proof of Proposition \ref{Non-degenerate_type}, we have the diagram 
	\begin{center}
		\begin{tikzcd}
			A(G_K, T) \arrow[r, "\vtheta_{P_K}"] & (U^-)_K^{\an}\arrow[d, "\pr_{K/k}^{\an}"] \\ A(G, S)\arrow[u, "p_{K/k}"] \arrow[r, "\vtheta_P"] & (U^-)^{\an}
		\end{tikzcd},
	\end{center}
	where $U^-$ is the $k$-split unipotent radical of the pseudo-parabolic subgroup of $G$ opposite to $P$ with respect to $S$.
	We may rewrite the explicit formula in Proposition \ref{Formula_ThetaP} as the statement that, for each $x \in \vtheta_P(A(G, S))$ and $f \in K[U^-] = K[(\xi_{\alpha, i})_{(\alpha, i) \in {\mbf \Psi}}]$, we have $$|f(x)| = \max_{\nu \in \NN^{\mbf \Psi}} |f_{\nu}| \prod_{(\alpha, i) \in {\mbf \Psi}} |\xi_{\alpha, i}(x)|^{\nu(\alpha, i)},$$
	where, for each $\alpha \in \Phi(U^-, S)$, the family $(\xi_{\alpha, i})$ is a choice of coordinates on $U_{\alpha}$ given by $o_K$ through the procedure described in the introduction to Section \ref{sect:Formula_ThetaP}.
	It follows from the proof of Proposition \ref{prop:relative_compactness} that this formula extends to all points $x \in \ol{\vtheta_P(A(G, S))} \cap (U^-)^{\an}$ and that a point $x \in \ol{\vtheta_P(A(G, S))} \cap (U^-)^{\an}$ lies in $\vtheta_P(A(G, S))$ if and only if $|\xi_{\alpha, i}(x)| > 0$ for all $(\alpha, i) \in {\mbf \Psi}$.
	
	It follows that the image $\vtheta_P(A(G, S))$ is open in $\ol{\vtheta_P(A(G, S))} \cap (\Omega(S, P)/P)^{\an}$.
	Note that the Bruhat decomposition \cite[Theorem C.2.8]{CGP} implies that $G/P = \bigcup_{w \in W} \dot w\Omega(S, P)/P$, where, for each $w \in W$, we choose an arbitrary lift $\dot w$ of $w$ in $N_G(S)(k)$, see \cite[II.1.9.7]{Jantzen}.
	By $N_G(S)(k)$-equivariance of $\vtheta_P$, we deduce that $\vtheta_P(A(G, S))$ is open in $\ol{\vtheta_P(A(G, S))}\cap (\dot w\Omega(S, P)/P)^{\an}$ for each $w \in W$, and therefore open in $\ol{\vtheta_P(A(G, S))}$.
	
\end{proof}
    
We now conclude our first step.

\begin{lem}\label{lem:rel_openness_rest_apt}

    Let $S$ be a maximal split torus of $G$. The map $$\vtheta_P: A(G, S) \to \ol{\vtheta_P(A(G, S))}$$ is open.
\end{lem}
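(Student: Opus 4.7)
The plan is to factor $\vtheta_P|_{A(G,S)}$ through a topological quotient of $A(G,S)$, identify the induced map as a homeomorphism onto its image, and combine with Lemma \ref{lem:rel_openness_img_apt} to conclude. Let $\mu \in X_*(S)$ be a cocharacter with $P = P_G(\mu)$, let $U^- = U_G(-\mu)$, and set
\[
V_0 := \Phi(U^-, S)^0 = \{v \in V(G,S) \mid \langle a, v \rangle = 0 \text{ for all } a \in \Phi(U^-,S)\} \subset V(G, S).
\]
The first step is to observe that, with the notations of the proof of Lemma \ref{lem:rel_openness_img_apt} (i.e.\ using the extension $K/k$ and maximal torus $T$ provided by Proposition \ref{prop:special_apartments}, and a choice of origin $o$ such that $o_K$ is a special vertex), the formula of Proposition \ref{Formula_ThetaP} combined with Proposition \ref{Compatibility_extensions} and the equality $\langle \alpha, p_{K/k}(x) - o_K\rangle = \langle \alpha_{|S}, x - o\rangle$ yields, for every $f = \sum_\nu f_\nu \xi^\nu \in K[U^-_K]$ and every $x \in A(G,S)$:
\[
|f(\vtheta_P(x))| = \max_{\nu \in \NN^{\mbf \Psi}} |f_\nu| \prod_{(\alpha, i) \in \mbf \Psi} e^{\nu(\alpha, i) \langle \alpha_{|S}, x - o\rangle}.
\]
In particular, taking $f = \xi_{\alpha, i}$ gives $|\xi_{\alpha, i}(\vtheta_P(x))| = e^{\langle \alpha_{|S}, x - o\rangle}$. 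From this one reads off that $\vtheta_P(x) = \vtheta_P(y)$ if and only if $\langle a, x - y\rangle = 0$ for every $a \in \Phi(U^-, S)$, i.e.\ $x - y \in V_0$.

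Consequently, $\vtheta_P|_{A(G,S)}$ factors through the quotient by translation by $V_0$:
\[
A(G,S) \xrightarrow{\ \pi\ } A(G,S)/V_0 \xrightarrow{\ \overline{\vtheta_P}\ } \vtheta_P(A(G,S)).
\]
The quotient map $\pi$ is open (quotient of a finite-dimensional real affine space by the free translation action of a linear subspace), and $\overline{\vtheta_P}$ is a continuous bijection by construction. To conclude that $\vtheta_P|_{A(G,S)}$ is open onto its image, it therefore suffices to show that $\overline{\vtheta_P}$ is a homeomorphism, for which I will construct an explicit continuous inverse.

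For each $(\alpha, i) \in \mbf \Psi$, the function $z \mapsto \log|\xi_{\alpha, i}(z)|$ is continuous and real-valued on $\vtheta_P(A(G,S))$, since $|\xi_{\alpha, i}(\vtheta_P(x))| = e^{\langle \alpha_{|S}, x-o\rangle} > 0$ on the image. By the formula above, the resulting tuple $\bigl(\log|\xi_{\alpha, i}(\vtheta_P(x))|\bigr)_{(\alpha,i)}$ equals $\bigl(\langle \alpha_{|S}, x - o\rangle\bigr)_{(\alpha,i)}$, and the $\RR$-linear map
\[
V(G,S)/V_0 \longrightarrow \RR^{\mbf \Psi}, \qquad [v] \longmapsto \bigl(\langle \alpha_{|S}, v\rangle\bigr)_{(\alpha, i)}
\]
is injective by definition of $V_0$, hence a topological embedding. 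Composing its continuous left inverse (on its image) with the tuple $(\log|\xi_{\alpha, i}|)$ produces a continuous inverse $\overline{\vtheta_P}^{-1}: \vtheta_P(A(G,S)) \to A(G,S)/V_0$, so $\overline{\vtheta_P}$ is a homeomorphism and $\vtheta_P: A(G,S) \to \vtheta_P(A(G,S))$ is open.

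Finally, Lemma \ref{lem:rel_openness_img_apt} asserts that $\vtheta_P(A(G,S))$ is itself open in $\ol{\vtheta_P(A(G,S))}$. Composing the open map $\vtheta_P: A(G,S) \to \vtheta_P(A(G,S))$ with the open inclusion $\vtheta_P(A(G,S)) \hookrightarrow \ol{\vtheta_P(A(G,S))}$ gives the desired openness of the corestriction. The only mildly delicate point is the bookkeeping between absolute roots of $T$ over $K$ and relative roots of $S$ over $k$ in the formula of Proposition \ref{Formula_ThetaP}, but this is already carried out in the proof of Proposition \ref{Injectivity_ThetaP_apartment} and can simply be reused here.
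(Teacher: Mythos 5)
Your overall strategy — factor $\vtheta_P|_{A(G,S)}$ through the quotient by $V_0 = \Phi(U^-,S)^0$, then show the induced map is a homeomorphism onto its image, then compose with the open inclusion from Lemma \ref{lem:rel_openness_img_apt} — is sound, and the idea of constructing an explicit continuous inverse is a legitimate alternative to the paper's subbase argument, in which one instead shows directly that the images of the half-spaces $H^{\pm}_{a,r}$ are open.

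There is, however, a gap in the step where you assert that $z \mapsto \log|\xi_{\alpha,i}(z)|$ is a continuous real-valued function on $\vtheta_P(A(G,S))$. The $\xi_{\alpha,i}$ lie in $K[U^-_K]$, not in $k[U^-]$, while $\vtheta_P(x)$ is a point of $(U^-)^{\an}$, i.e.\ a multiplicative seminorm on $k[U^-]$; the value $|\xi_{\alpha,i}(z)|$ is only defined after choosing a lift of $z$ to $(U^-_K)^{\an}$, and it depends on that lift in general. Your formula $|\xi_{\alpha,i}(\vtheta_P(x))| = e^{\langle \alpha_{|S}, x - o\rangle}$ implicitly selects the lift $\vtheta_{P_K}(x_K)$, but the assignment $\vtheta_P(x) \mapsto \vtheta_{P_K}(x_K)$ is a section of $\pr_{K/k}^{\an}$ whose continuity has not been established — so you have not produced a continuous function on $\vtheta_P(A(G,S))$. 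You gesture at the needed fix in your closing sentence but do not carry it out: the correct move is to replace the $K$-rational coordinates $\xi_{\alpha,i}$ by $k$-rational weight vectors $\phi_a \in \Lie(U^-)^* \subset k[U^-]$ of weight $-a$, for $a$ ranging over $\Phi(U^-,S)$. Then $|\phi_a(\vtheta_P(x))| = C_a\, e^{\langle a, x-o\rangle}$ with $C_a > 0$, as computed in the proof of Proposition \ref{Injectivity_ThetaP_apartment}, and $z \mapsto \log|\phi_a(z)|$ is a genuine continuous function on the open locus $\{|\phi_a| > 0\}$ of $(U^-)^{\an}$. Since the $a \in \Phi(U^-,S)$ span $V^*(G,S)$ modulo the annihilator of $V_0$, the resulting tuple $(\log|\phi_a|)_a$ furnishes the desired continuous inverse, and the rest of your argument goes through unchanged.
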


\begin{proof}
    As in the previous lemma, we may assume that $S$ is contained in $P$. Let $U^-$ be the $k$-split unipotent radical of the pseudo-parabolic subgroup opposite to $P$ with respect to $S$. 
    Let $o$ be a special vertex in $A(G, S)$. Recall from Proposition \ref{Injectivity_ThetaP_apartment} that $\vtheta_P$ factors through the quotient map $$A(G, S) \to A(G, S)/\langle \Phi(U^-, S) \rangle ^{0},$$ a surjective affine map, hence open. We may therefore assume that $P$ is of non-degenerate type, ie. that we have the implication $$(\forall a \in \Phi(U^-, S), \langle a, x \rangle = 0) \implies x = 0$$ for each $x \in V(G, S)$.
    Under this assumption, the open half-spaces $$H_{a, r}^+ = \{x \in A(G, S), \langle a, x - o\rangle > r\} \text{ and } H_{a,r}^- = \{x \in A(G, S), \langle a, x - o\rangle < r\}$$ for $a \in \Phi(U^-, S) $ form a subbase of open sets for the topology of $A(G, S)$. Because $\vtheta_P$ is injective, we need only check that these half-spaces have an open image.
    Consider a system of coordinates $(\xi_{\alpha, i})_{(\alpha, i)\in \mbf \Psi}$ on $U^-_K$ as in Section \ref{sect:Formula_ThetaP}. Let $\phi = \sum_{\substack{(\alpha, i) \in {\mbf \Psi}\\ \alpha_{|S} = a}}\phi_{\alpha, i} \xi_{\alpha, i} \in k[U^-]$ be a nonzero weight vector of weight $-a$ in $\Lie(U^-)^*$. Then, it follows from Proposition \ref{Formula_ThetaP} that, for each $r \in \RR$, we have $$\vtheta_P(H_{a,r}^+) = \vtheta_P(A(G, S)) \cap \{x \in (U^-)^{\an}, |\phi(x)| > Ce^r\}$$ and $$ \vtheta_P(H_{a,r}^-) = \vtheta_P(A(G, S)) \cap \{x \in (U^-)^{\an}, |\phi(x)| < Ce^r\},$$ where $C = \max_{\substack{(\alpha, i) \in {\mbf \Psi}\\ \alpha_{|S} = a}} |\phi_{\alpha, i}|$.  Consequently, for each $r \in \RR$, the images $\vtheta_P(H_{a, r}^+)$ and $\vtheta_P(H_{a, r}^-)$ are open in $\vtheta_P(A(G, S))$, and thus open in $\ol{\vtheta_P(A(G, S))}$ by Lemma \ref{lem:rel_openness_img_apt}.
\end{proof}

Now, let $o$ be a special vertex in $A(G, S)$. By \cite[Theorem 3.12]{Sol}, we have a coarse Cartan decomposition $$\mc B(G, k) = G(k)_o \cdot A(G, S), $$ and therefore $$\ol{\vtheta_P(\mc B(G, k))} = G(k)_o \cdot \ol{\vtheta_P(A(G, S))}.$$
Because $G(k)_o \times \ol{\vtheta_P(A(G, S))}$ and $\ol{\vtheta_P(\mc B(G, k))}$ are compact Hausdorff by Proposition \ref{prop:relative_compactness}, the surjective action map $$\pi_P: G(k)_o \times \ol{\vtheta_P(A(G, S))} \to \ol{\vtheta_P(\mc B(G, k))}$$ is in fact a quotient map.

In order to conclude that $\vtheta_P$ is an open map, we need to make one last observation.

\begin{lem}\label{lem:saturation_piP}
    Let $S$ be a maximal split torus of $G$ and $x \in \ol{\vtheta_P(A(G, S))}$. If, for some $g \in G(k)$, we have $gx \in \vtheta_P(\mc B(G, k))$, then $x \in \vtheta_P(A(G, S))$.
\end{lem}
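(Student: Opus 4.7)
The plan is as follows. First, the $G(k)$-equivariance of $\vtheta_P$ and the $G(k)$-invariance of its image reduce the lemma to the inclusion
$$\ol{\vtheta_P(A(G,S))} \cap \vtheta_P(\mc B(G,k)) \subset \vtheta_P(A(G,S)),$$
since $gx \in \vtheta_P(\mc B(G,k))$ immediately gives $x = g^{-1}(gx) \in \vtheta_P(\mc B(G,k))$. Let $x = \vtheta_P(z)$ for some $z \in \mc B(G,k)$ lie in the left-hand side. Using the Bruhat decomposition $G/P = \bigcup_w \dot w\, \Omega(S,P)/P$ together with the $N_G(S)(k)$-equivariance of each set in sight, I may --- after translating $x$ by a suitable $\dot w^{-1}$ --- assume $x \in (\Omega(S,P)/P)^{\an} = (U^-)^{\an}$. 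By the characterization in the proof of Lemma \ref{lem:rel_openness_img_apt}, after passing to a finite Galois extension $K/k$ making $G_K$ pseudo-split, it suffices to show that the adapted coordinates $|\xi_{\alpha,i}(x)|$ are strictly positive for every $(\alpha,i) \in \mbf\Psi$.

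Next I would deduce this positivity via the multiplicative-norm structure of $\vtheta(z)$. For virtually special $z$, $\vtheta(z_K)$ is the Shilov boundary point of the smooth connected $K$-affinoid subgroup $(\mc G_{z_K}^{\circ})^{\an}$; the smoothness and geometric irreducibility of the reduction force this Shilov point to be a multiplicative \emph{norm} on the corresponding affinoid algebra. Since the assumption $\vtheta_P(z) \in (U^-)^{\an}$ implies $\vtheta(z_K) \in \Omega(S_K,P_K)^{\an}$, and since the $\lambda_P^*\xi_{\alpha,i}$ are nonzero regular functions on $\Omega(S_K,P_K)$, the norm property forces $|\xi_{\alpha,i}(x)| = |\lambda_P^*\xi_{\alpha,i}|(\vtheta(z_K)) > 0$; this positivity descends to $k$ under $\pr_{K/k}^{\an}$, which preserves multiplicative norms when restricted to the $k$-rational subring of functions.

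The main obstacle is extending from virtually special to arbitrary $z$, since for general $z$ the seminorm $\vtheta(z_K)$ is only a limit of Shilov norms and could, a priori, degenerate into a multiplicative seminorm with nontrivial kernel. I would handle this by combining the hypothesis $x \in \ol{\vtheta_P(A(G,S))}$, which provides an approximating sequence $y_n \in A(G,S)$ with $\vtheta_P(y_n) \to x$ and coordinate values $|\xi_{\alpha,i}(\vtheta_P(y_n))| = e^{\langle \alpha, y_n - o\rangle}$ governed by Proposition \ref{Formula_ThetaP}, with the density of virtually special points (Proposition \ref{Density_rational}) and a careful limit argument tracking how the virtually special approximations of $z$ relate to the $y_n$ via the $N_G(S)(k)$-action. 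The outcome should be that any hypothetical vanishing $|\xi_{\alpha_0,i_0}(x)| = 0$ would force $y_n$ to escape to infinity in the direction $-\alpha_0$, which is incompatible with $x$ arising as the image of an actual point of the building via $\vtheta_P$.
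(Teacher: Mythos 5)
Your opening reduction is correct and matches the paper: the $G(k)$-invariance of $\vtheta_P(\mc B(G,k))$ lets you assume $x = \vtheta_P(z)$ for some $z \in \mc B(G,k)$, the Bruhat decomposition puts $x$ into $(\Omega(S,P)/P)^{\an}$, and passing to an extension $K$ with adapted coordinates reduces the claim to positivity of all $|\xi_{\alpha,i}(x)|$. Where you diverge from the paper, and where a genuine gap opens, is in how you try to certify that positivity. You argue that $\vtheta(z_K)$ is the Shilov point of the smooth connected parahoric model, hence a multiplicative \emph{norm}; but this only makes sense for $z$ virtually special, and you explicitly acknowledge that you do not know how to pass from virtually special to arbitrary $z$. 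Your proposed fix (``a careful limit argument tracking how the virtually special approximations of $z$ relate to the $y_n$ via the $N_G(S)(k)$-action'') is a plan, not a proof: tracking two independent approximating sequences against the $N_G(S)(k)$-action is exactly the kind of bookkeeping that can go wrong, and you have not produced the estimate that would rule out $|\xi_{\alpha_0,i_0}(x)| = 0$.

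The paper sidesteps this entirely. Proposition \ref{Formula_ThetaP} already gives the explicit formula
$$|\xi_{\alpha,i}(\vtheta_{P_K}(y_K))| = e^{\langle \alpha_{|S},\,y\rangle}$$
for \emph{every} $y \in A(G,S)$, not just virtually special ones (this was established by density once and for all in step 3 of its proof). From this, one reads off that for $x \in \ol{\vtheta_P(A(G,S))}$ the following are equivalent: $x \in \vtheta_P(A(G,S))$; all $|\xi_{\alpha,i}(x_K)| > 0$; $x$ induces a \emph{norm} (not merely a seminorm) on $\mc O_{(G/P)^{\an},x}$. The last condition is manifestly stable under the $G(k)$-action, which acts by analytic automorphisms. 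Applying this characterization once to the apartment $A'$ containing $z$ shows $x = \vtheta_P(z)$ induces a norm; applying it again to $A(G,S)$ then puts $x$ in $\vtheta_P(A(G,S))$, with no Shilov-boundary analysis of $\vtheta(z)$ and no delicate limit argument needed. So: your reduction and your intuition about what should force positivity are right, but the route through the Shilov structure of $\vtheta(z)$ leads into an obstruction you did not resolve; the clean exit is to extract the norm characterization directly from the apartment formula, which is $G(k)$-equivariant for free.
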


\begin{proof}
    The proof is completely analogous to that of \cite[Lemma 3.33]{RTW1}. The key point is the fact that, using Proposition \ref{Formula_ThetaP}, we may characterise the points of $\vtheta_P(A(G, S))$ as the points $x$ of $\ol{\vtheta_P(A(G, S))}$ that induce norms on their local ring $\mc O_{(G/P)^{\an}, x}$, a property that is clearly preserved by the action of $G(k)$.
    
    First of all, reasoning as in the proof of Lemma \ref{lem:rel_openness_img_apt}, we observe that there exists a $\dot w \in N_G(S)(k)$ such that $\dot wx \in (\Omega(S, P)/P)^{\an}$. Moreover, the lemma holds for $x$ if and only if it does for $\dot wx$. We may therefore assume that $x$ lies in $\ol{\vtheta_P(A(G, S))} \cap (\Omega(S, P)/P)^{\an}$. 	
	Let $o$ be a special vertex in $A(G, S)$. Let $T$ be a maximal torus of $G$ and $K/k$ a finite Galois extension such that 
	\begin{enumerate}
		\item The torus $T_K$ is split.
		\item The apartment $A(G_K, T_K)$ contains $p_{K/k}(A(G, S))$.
		\item The point $o_K \in A(G_K, T_K)$ is a special vertex.
	\end{enumerate}
    Denote by $U^-$ the $k$-split unipotent radical of the opposite pseudo-parabolic to $P$ with respect to $S$. 	Identify $A(G, S)$ with $V(G, S)$ (resp. $A(G_K, T_K)$ with $V(G_K, T_K)$) setting $o$ (resp. $o_K$) as origin. Under this identification, the map $p_{K/k}: A(G, S) \to A(G_K, T_K)$ is the map induced by base change on cocharacters.
	Consider the sets $\Psi = \Phi(U^-, S)$ and $\Psi_K = \Phi(U-, T_K)$. Let $(\xi_{\alpha, i})_{(\alpha, i) \in \mbf \Psi_K}$ be a system of coordinates on $U^-_K$ that is adapted to $o_K$ (Remark \ref{term:adapted_coordinates}). 
	Let $(x_n)$ be a sequence in $A(G, S)$ such that $x = \lim_{n \to \infty} \vtheta_P(x_n)$. The sequence $(\vtheta_{P_K}((x_n)_K))$ converges to a point $x_K$ in $\ol{\vtheta_{P_K}(A(G_K, T_K))} \cap (\Omega(T, P)/P)_K^{\an}$ by Proposition \ref{Formula_ThetaP}.
	Moreover, we have the equivalences
	$$\begin{array}{ccl} x_K \in \vtheta_{P_K}(A(G_K, T_K)) & \iff &  \forall (\alpha, i) \in \mbf \Psi_K, |\xi_{\alpha, i}(x_K)| > 0 \\
	& \iff & x_K \text{ defines a norm on } \mc O_{(G/P)_K^{\an}, x_K}\end{array}.$$
	To conclude, first observe that, by Proposition \ref{Formula_ThetaP}, for each weight $\alpha \in \Psi_K$ and $a = \alpha_{|S} \in \Psi$ its restriction to $S$, we have $$|\xi_{\alpha, i}(x_K)| = \lim_{n \to \infty} e^{\langle a, x_n \rangle}.$$
	Therefore, we have the equivalence $$x_K \in \vtheta_{P_K}(A(G_K, T_K)) \iff x \in \vtheta_P(A(G, S)),$$ and in particular the implication $$x \in \vtheta_P(A(G, S)) \implies x \text{ defines a norm on } \mc O_{(G/P)^{\an}, x},$$ as the seminorm defined by $x$ on $\mc O_{(G/P)^{\an}, x}$ is the restriction of the one defined by $x_K$ on $\mc O_{(G/P)_K^{\an}, x_K}$, hence a norm.
	On the other hand, if $a \in \Psi$ and $\phi \in \Lie(U^-)^*$ is a nonzero weight vector of weight $-a$, then as observed in the proof of Lemma \ref{lem:rel_openness_rest_apt}, there exists $C > 0$ such that $$|\phi(x)| = |\phi_K(x_K)| = \lim_{n \to \infty} C e^{\langle a, x_n \rangle}.$$
	It follows that, if $x$ defines a norm on $\mc O_{(G/P)^{\an}, x}$, then by the above computation, we have $$\forall (\alpha, i) \in \mbf \Psi_K, |\xi_{\alpha, i}(x_K)| > 0,$$ and therefore $x \in \vtheta_P(A(G, S))$, by the above observations.
	
\end{proof}

We now carry out the proof of Proposition \ref{prop:openness}.

\begin{proof}[Proof of Proposition \ref{prop:openness}]
    Let $U$ be an open subset of $\mc B(G, k)$. Consider the diagram 
    \begin{center}
        \begin{tikzcd}
            G(k)_o \times A(G, S) \arrow[r, "\id \times \vtheta_P"] \arrow[d, "\pi"] & G(k)_o \times \ol{\vtheta_P(A(G, S))} \arrow[d, "\pi_P"] \\
            \mc B(G, k) \arrow[r, "\vtheta_P"] & \ol{\vtheta_P(\mc B(G, k))}
        \end{tikzcd}.
    \end{center}
    Then, the subset $V = (\id\times \vtheta_P)(\pi^{-1}(U))$ is open in $G(k)_o \times \ol{\vtheta_P(A(G, S))}$ by Lemma \ref{lem:rel_openness_rest_apt}. As we established above, the rightmost map $\pi_P$ is a quotient map. Therefore, in order to prove that $\vtheta_P(U) = \pi_P(V)$ is open in $\ol{\vtheta_P(\mc B(G, k)}$, it is sufficient to check that $V$ is $\pi_P$-saturated, which is a direct consequence of Lemma \ref{lem:saturation_piP}.
\end{proof}
\section{Description of the boundary}\label{sect:Boundary}

In this section, we establish that the family of compactifications of $\mc B(G, k)$ constructed above corresponds to the one constructed in \cite{Char} and specialises to the one constructed in \cite{RTW1} in the reductive case. More precisely, we will prove that, if $P$ is a pseudo-parabolic subgroup of $G$ of non-degenerate type and $S$ a maximal split torus in $P$, then the compactification     
$\ol{\vtheta_P(\mc B(G, k))} \subset (G/P)^{\an}$ is $G(k)$-equivariantly homeomorphic to the compactification $\ol{\mc B(G, k)}$ defined in \cite[Section 5]{Char} for a suitable choice of fan.
We deduce that the compactifications are stratified and interpret the strata in terms of group-theoretic data.

\subsection{Polyhedral compactifications of affine buildings}

In this section, we introduce terminology and results from \cite{Char} that will be in constant use in what follows.

The article \cite{Char} associates to each affine building $\mc I$, modeled on an affine Coxeter complex $\Sigma$, and each fan $\mc F$ in the direction $\vec \Sigma$ of $\Sigma$ subject to some compatibility relations \cite[2.3]{Char} a canonical compactification $\ol I^{\mc F}$ of $\mc I$.
We give an overview of the construction, first compactifying each apartment and then glueing the compactifications of the various apartments together to get a compactification of the building.

\subsubsection*{Fans and compactifications of affine spaces}

We first describe the compactifications of apartments in \cite[Section 3]{Char}.

The setting is that of a Euclidean space $V$ whose dual is endowed with a root system $\Phi \subset V^*$. Denote by $W$ the Weyl group of $\Phi$. 
Recall that a cone in $V$ is said to be \textit{polyhedral} if it can be defined by a finite number of linear inequalities. We call a cone \textit{relatively open} if it is open in its linear span. A polyhedral cone is relatively open if and only if it can be defined by a finite number of linear equations and strict linear inequalities.
Then, a fan in the sense of \cite{Char} is a family $\mc F$ of relatively open polyhedral cones of $V$ satisfying the following conditions:
\begin{enumerate}
	\item The family $\mc F$ is a finite partition of $V$.
	\item $\{0\} \in \mc F$.
	\item For each $f \in \mc F$, the boundary $\partial f = \ol{f} \setminus f$ is a union of cones of $\mc F$, called the \textit{proper faces} of $f$ (we also consider $g$ to be a face of itself).
	\item For all cones $ f$ and $g$ in $\mc F$, if $ f$ is a face of $g$, then $\ol{f} = \Span_{\RR}(f) \cap \ol{g}$.
\end{enumerate}

\begin{ex}
	The root system $\Phi \subset V^*$ determines a hyperplane arrangement in $V$. The facets, in the sense of \cite[Ch. V, §1.3]{Bou}, then form a fan $\mc F$ in $V$ if the root system $\Phi$ is essential. In the following, we call this fan the Weyl fan. Its facets will be called Weyl facets, and its open facets called Weyl chambers.
\end{ex}

\begin{rmk}\label{def:closed_fan}
The above definition differs slightly from the one coming from toric geometry adopted in e.g. \cite[1.1]{Fulton} or \cite[Appendix B]{RTW1}, which we shall call a \textit{closed fan}, namely a finite set $\mc F$ of closed strictly convex polyhedral cones (ie. containing no lines) such that any two cones in $\mc F$ meet along a face, and such that any face of a cone in $\mc F$ belongs to $\mc F$. In that context, a face of a closed polyhedral  convex cone $f$ is defined as the intersection of $f$ with a supporting hyperplane.
The operations of taking closures and relative interiors allow one to pass from fans on the one hand to closed fans covering $V$ and vice versa. These operations also establish a correspondence between both notions of faces, as we check below.
\end{rmk}

\begin{lem}\label{lem:strict_convexity_fan}
    Let $\mc F$ be a fan in $V$. For each cone $g$ in $\mc F$, the closure $\ol g$ is strictly convex.
\end{lem}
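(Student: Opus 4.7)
The proof will go by contradiction. Suppose $\ol g$ contains a line, equivalently a nonzero $v \in V$ with both $v$ and $-v$ in $\ol g$. The strategy is to localize the line inside a minimal face of $g$ and reach a contradiction either from the convexity of polyhedral cones, via the midpoint $0$, or from the partition property of $\mc F$.

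The first step will be a reduction. Using axiom (1) (partition) together with axiom (3) (the boundary of $g$ is a disjoint union of cones of $\mc F$), the point $v$ lies in a unique cone $f \in \mc F$, and $f$ is a face of $g$ (possibly $g$ itself). Applying axiom (4) to the pair $(f,g)$ gives $\ol f = \Span_{\RR}(f) \cap \ol g$; since $-v$ belongs to both $\Span_{\RR}(f)$ and $\ol g$, we obtain $-v \in \ol f$, so the whole line $\RR v$ sits inside $\ol f$. By the same reasoning, $-v$ lies in some face $h$ of $f$.

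The second step is a case split. If $h = f$, then $v$ and $-v$ both lie in $f$; the polyhedral cone $f$ is convex, so its midpoint $0 = \tfrac{1}{2}(v + (-v))$ also lies in $f$. But $\{0\} \in \mc F$ is a distinct element of the partition, forcing $f = \{0\}$ and contradicting $v \neq 0$. If instead $h$ is a proper face of $f$, a second application of axiom (4), now to $(h,f)$, yields $v \in \Span_{\RR}(h) \cap \ol f = \ol h$. It remains to check that $\ol h \cap f = \emptyset$: by axiom (3), $\ol h$ is a disjoint union of cones in $\mc F$, each of which sits inside $\ol h \subsetneq \ol f$ and is therefore a proper face of $f$; by the partition property all these faces are disjoint from $f$, which contradicts $v \in f \cap \ol h$.

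The only delicate point will be justifying the strict inclusion $\ol h \subsetneq \ol f$ whenever $h$ is a proper face of $f$. This will follow from the uniqueness of relative interiors: the relative interior of $\ol f$ is $f$ (since $f$ is relatively open), similarly for $h$, so $\ol h = \ol f$ would force $h = f$. Everything else reduces to a mechanical application of the four fan axioms.
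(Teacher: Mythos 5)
Your proof is correct, and it rests on the same two key ingredients as the paper's argument — axiom (4) to push the line $\pm v$ from $\ol{g}$ into the closure of a smaller cone, and axiom (2) (that $\{0\}$ is its own cone in $\mc F$) for the final contradiction — but the organization is genuinely different. The paper chooses a cone $g$ of \emph{minimal dimension} with the line property; a single application of axiom (4) then shows $x$ cannot lie in a proper face, so $x \in g$, and the relative openness of $g$ (via Rockafellar, Theorem 6.1) forces $0 = \tfrac12(x + (-x)) \in g$, contradicting axiom (2). You instead perform an explicit two-step descent to a face $f$ containing $v$ and then a face $h$ of $f$ containing $-v$, and split on whether $h = f$: the case $h = f$ reproduces the paper's endgame (convexity forces $0 \in f$), while the case $h \neq f$ yields a contradiction from the partition directly. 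The minimality trick and your bounded descent plus case split are essentially interchangeable; neither is clearly shorter. One simplification worth noting: in the case $h \subsetneq f$ you do not need the ``delicate point'' $\ol h \subsetneq \ol f$, nor the classification of the cones inside $\ol h$ as proper faces of $f$. Since $h \subset \partial f$ and $\partial f = \ol f \setminus f$ is closed (because $f$ is relatively open in $\Span_{\RR}(f)$, so $\Span_{\RR}(f) \setminus f$ is closed), we get $\ol h \subset \partial f$, which is disjoint from $f$ by definition; that $v$ lies in both $f$ and $\ol h$ is then already the contradiction, with no appeal to axiom (1) beyond the uniqueness already used to define $f$.
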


\begin{proof}
    Let $g \in \mc F$ be a cone of minimal dimension such that there exists $x\in V\setminus\{ 0\}$ such that $x \in \ol g$ and $-x \in \ol g$.
    Note that $x$ is in $g$. Indeed, otherwise, there exists a proper face $f$ of $g$ such that $x \in f$. Then, by Condition 4, we have $-x  \in \ol f$, which contradicts the minimality of $ g$.
    Now, since $g$ is relatively open and $x \in g$, it follows from \cite[Theorem 6.1]{Rockafellar} that $0 = \frac{1}{2}(x+ (-x)) \in g$, which contradicts Condition 2.
    
\end{proof}

\begin{lem}\label{lem:faces_in_fans}
    Let $\mc F$ be a fan in $V$ and $g$ a cone in $\mc F$. Let $f$ be a relatively open polyhedral cone in $V$. The following are equivalent:
    \begin{enumerate}
        \item The cone $f$ is a face of $g$.
        \item There exists a linear form $\alpha \in V^*$ such that $\alpha(x)  \ge 0$ for each $x \in g$ and such that $\ol f  = (\ker \alpha) \cap \ol g$.
    \end{enumerate} 
\end{lem}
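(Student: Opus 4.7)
The plan is to prove the bijection between fan faces of $g$ and convex-geometric faces of the closed polyhedral cone $\ol g$, realized by the operations $h \mapsto \ol h$ and $F \mapsto \mrm{relint}(F)$. Granting this bijection, the lemma reduces to the classical exposure result for polyhedral cones: every convex face of a closed polyhedral cone $C$ is of the form $\ker \alpha \cap C$ for some linear form $\alpha \in V^*$ nonnegative on $C$.

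The heart of the proof will be to show that for every fan face $h$ of $g$, the closure $\ol h$ is a convex-geometric face of $\ol g$. This requires two inputs: Condition 4, which gives $\ol h = \Span(h) \cap \ol g$, and the partition axiom. I first observe that for a proper face $h$, the open cone $g$ and the subspace $\Span(h)$ are disjoint: any $x \in g \cap \Span(h)$ would lie in $\Span(h) \cap \ol g = \ol h \subseteq \partial g$ by Condition 4, contradicting the partition of $\ol g$ by fan faces (as $x$ would simultaneously belong to $g$ and to a proper fan face of $g$). Since $g = \mrm{relint}(\ol g)$ and $\ol g$ is a strictly convex closed polyhedral cone by Lemma \ref{lem:strict_convexity_fan}, this combined with the full fan partition (not just disjointness of $g$ and $\Span(h)$) then yields that $\Span(h) \cap \ol g$ is a convex face of $\ol g$. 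Since both fan faces and relative interiors of convex faces form finite partitions of $\ol g$ into relatively open convex sets, the map $h \mapsto \ol h$ is then a bijection, with inverse $\mrm{relint}$.

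Once the bijection is established, the two directions follow readily. For $(1) \Rightarrow (2)$, the closure $\ol f$ is a convex face of $\ol g$, hence exposed by some linear form $\alpha$ with $\alpha \ge 0$ on $\ol g$ and $\ker \alpha \cap \ol g = \ol f$ (taking $\alpha = 0$ when $f = g$). Conversely, for $(2) \Rightarrow (1)$, the set $\ker \alpha \cap \ol g$ is automatically a convex face of $\ol g$ (being cut out by a supporting hyperplane), and its relative interior coincides with $f$ since $f$ is a relatively open polyhedral cone with closure $\ol f = \ker \alpha \cap \ol g$; by the bijection, this relative interior is a fan face of $g$, so $f \in \mc F$ is a face of $g$.

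The main obstacle lies in the second stage of the key step, namely deducing that $\Span(h) \cap \ol g$ is a convex face of $\ol g$ from the disjointness of $g$ and $\Span(h)$ alone (which is insufficient in general convex geometry, as a linear subspace can meet the boundary of a polyhedral cone along a non-face subset). Overcoming this requires leveraging the full combinatorial structure of the fan—in particular, that all cones of $\mc F$ contained in $\ol g$ jointly partition $\ol g$ and each piece satisfies Condition 4—for instance via induction on the codimension of $h$ in $g$, or by comparing the fan partition of $\ol g$ with the partition by relative interiors of its convex-geometric faces.
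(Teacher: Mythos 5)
Your approach differs from the paper's: you plan to establish a bijection $h \mapsto \ol h$ between fan faces of $g$ and convex-geometric faces of $\ol g$, and then deduce both directions of the lemma from the classical exposure theorem for polyhedral cones. The paper instead proves the two implications directly. For $(1) \Rightarrow (2)$ it constructs a supporting functional by taking $\alpha$ in the relative interior of the normal cone $f^{0}\cap g^{\vee}$; strict convexity via Lemma \ref{lem:strict_convexity_fan} ensures $g^{\vee}$ is full-dimensional, a relative-interior argument gives $\alpha>0$ on $\ol g\setminus\Span_{\RR}(f)$, and Condition 4 then identifies $\ker\alpha\cap\ol g$ with $\ol f$. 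For $(2) \Rightarrow (1)$ it writes $\ker\alpha\cap\ol g$ as a finite union of closures of fan faces, selects the one of maximal dimension, and applies Condition 4 once more.

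There is, however, a genuine gap, and you acknowledge it yourself. Your whole plan hinges on the claim that for a fan face $h$ of $g$ the slice $\Span_{\RR}(h)\cap\ol g=\ol h$ is a convex-geometric face of $\ol g$. You correctly establish $\Span_{\RR}(h)\cap g=\emptyset$ for proper $h$, and you correctly observe that this disjointness alone is insufficient --- indeed, with $\ol g=\RR_{\ge 0}^{4}$ and $W=\Span_{\RR}(e_{1}+e_{2},\ e_{3}-e_{4})$, the subspace $W$ misses the interior of $\ol g$ yet $W\cap\ol g=\RR_{\ge 0}(e_{1}+e_{2})$ is not a face. Having flagged this as ``the main obstacle,'' you only gesture at possible remedies (``induction on the codimension,'' ``comparing partitions'') without carrying any of them out. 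Note also that the obstruction cannot be removed by looking at the pair $(h,g)$ alone: in the example above, the data $(h,g)$ satisfy Condition 4 ($\Span_{\RR}(h)\cap\ol g=\ol h$); what breaks is Condition 4 for the \emph{other} cones that would have to be adjoined to $\mc F$ in order to partition $\partial g$. Supplying that global argument is precisely the non-trivial content of $(1)\Rightarrow(2)$, which the paper sidesteps by the local normal-cone computation. Since both of your implications are routed through the unproven bijection, the gap blocks the entire proof as written.
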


\begin{proof}
    \textit{(Step 1: $(i) \implies (ii)$)} Assume that $f$ is a face of $g$. Let us first assume that $g$ is open. Then because $\ol g$ is strictly convex, the dual cone $$g^{\vee} = \left\{\alpha \in V^*,\forall x \in g, \alpha(x) \ge 0\right\}$$ has non-empty interior by \cite[Corollary 14.6.1]{Rockafellar}. Because the annihilator $$f^0 = \{\alpha \in V^*, f \subset \ker \alpha\}$$ meets the interior of $g^{\vee}$ (that is, because $f$ lies in a supporting hyperplane of $g$, see \cite[Theorem 11.6]{Rockafellar}), the intersection $f^{0} \cap g^{\vee}$ has non-empty relative interior. Let $\alpha \in f^0 \cap g^{\vee}$ be a relative interior point. Let $x \in \ol g \setminus \Span_{\RR}(f)$. Because $\alpha$ is in the relative interior of $f^{0}\cap g^{\vee}$, it lies in the relative interior of the half-space $$f^{0} \cap \{x\}^{\vee} = \{\alpha \in f^0, \alpha(x) \ge 0\}$$ by \cite[Corollary 6.5.2]{Rockafellar}, that is the (relatively) open half-space $\{\alpha \in f^{0}, \alpha(x) > 0\},$ hence $\alpha(x) > 0$. Finally, we have $$(\ker \alpha) \cap \ol g = \Span_{\RR}(f) \cap \ol g = \ol f.$$
    In the general case, let $H_0$ be a supporting hyperplane of $g$ in $\Span_{\RR}(g)$ such that $H_0 \cap \ol g = \ol f$. Then, the subspace $H = H_0 \oplus \Span_{\RR}(g)^{\perp}$ is a supporting hyperplane of $g$ in $V$ such that $H \cap \ol g = \ol f$.
    
    \textit{(Step 2: $(ii) \implies (i)$)} Let $\alpha \in V^*$ be a linear form such that $\alpha(x) \ge 0$ for each $x \in g$ and such that $\ol f = (\ker \alpha)\cap \ol g$. Because each face of $g$ is relatively open, the form $\alpha$ is either identically zero or only assumes positive values on it. Consequently, the intersection $(\ker \alpha)\cap \ol g$ is a union of faces of $g$, say $$(\ker \alpha )\cap \ol g = f_1 \cup \dots \cup f_r.$$
    Because the intersection is closed, we in fact have $$(\ker \alpha )\cap \ol g = \ol f_1 \cup \dots \cup \ol f_r.$$
    One of the $\ol f_i$ must then have non-empty interior in $\Span_{\RR}((\ker \alpha) \cap \ol g)$, from which we deduce that $$\ol f_i = \Span_{\RR}(f_i) \cap \ol g = \Span_{\RR}((\ker \alpha) \cap \ol g) \cap \ol g  = (\ker \alpha) \cap \ol g = \ol f.$$
    Because both $f_i$ and $f$ are convex and relatively open, we conclude that $f = f_i$, hence $f$ is indeed a face of $g$.
\end{proof}

We now assume that we are given an affine space $A$ directed by $V$. Two affine convex cones $f = x+\vec f$ and $g = y + \vec g$ in $A$, directed by cones $\vec f$ and $\vec g$ in $V$, are said to be \textit{parallel} ($f\parallel g$) if they have the same direction, that is if $\vec f = \vec g$. The cones $f$ and $g$ are said to be \textit{equivalent} ($f \sim g$) if they are parallel and have non-empty intersection or, equivalently, if they share a parallel subcone, ie. if there exists a cone $h \subset f \cap g$ that is parallel to $f$ and $g$. Call $f \subset A$ an \textit{affine $\mc F$-cone} if its direction $\vec f$ belongs to $\mc F$. Denote by $\mc F_A$ the set of affine $\mc F$-cones in $A$.

Then, the compactification of $A$ associated to a fan $\mc F$ in $V$ is the quotient of the set of affine $\mc F$-cones in $A$ under the equivalence relation defined above: $$\ol A^{\mc F} = \underbrace{\{x + \vec f, x \in A, \vec f \in \mc F\}}_{\mc F_A}/\sim.$$ The equivalence classes can be grouped by direction, thereby partitioning the compactification: $$\ol A^{\mc F} = \bigsqcup_{\vec f \in \mc F} A_{\vec f}.$$ For each $\vec f \in \mc F$, the subspace $A_{\vec f} = \{[x + \vec f], x \in A\}$ is called the \textit{façade} of $\ol A^{\mc F}$ of type $\vec f$. This façade is naturally an affine space under $V/\Span_{\RR}(\vec f)$.
Geometrically, setting an origin at some point $o \in A$, we compactify each open cone $o + \vec f = \{x \in A, \forall i \in I, \alpha_i(x-o) > 0\}$ by allowing the $\alpha_i(x-o)$ for $i\in I$, which we think of as the distance from $x$ to the walls of the cone $o+\vec f$, to assume infinite values.

Precisely, we have the following statement
\begin{prop}[{Corollary of \cite[Proposition 3.2.4]{Char}}]\label{prop:convergence_in_Charignon}
	Let $(x_n)$ be a sequence of points in $A$, and $l = [a + \vec f] \in \ol A^{\mc F}$. Then, the sequence $(x_n)$ converges to $l$ if and only if, for each linear form $\alpha \in V^*$ such that $\vec f \subset \ker \alpha$, we have $$\alpha(x_n - a) \underset{n \to +\infty}{\longto} 0$$ and, for each $\alpha \in V^*$ with $\alpha(x) > 0$ for each $x$ in $\vec f$, we have $$\alpha(x_n - a ) \underset{n \to +\infty}{\longto} +\infty.$$
\end{prop}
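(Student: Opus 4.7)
The plan is to unfold the topology on $\ol A^{\mc F}$ constructed in \cite[Section 3.2]{Char} and translate convergence at $l$ into the stated conditions on linear forms. A basic neighborhood of $l = [a + \vec f]$ has the form
\[
V(\Omega, (\alpha_i)_{i \in I}, N) = \bigl\{[x + \vec g] : \vec f \text{ is a face of } \vec g,\ x \in \Omega,\ \forall i \in I,\ \alpha_i(x - a) > N\bigr\},
\]
where $\Omega$ is an open neighborhood of $a + \Span_{\RR}(\vec f)$ in $A$, the $\alpha_i \in V^*$ are linear forms strictly positive on $\vec f \setminus \{0\}$, and $N > 0$. Letting $\Omega$, the family $(\alpha_i)$, and $N$ vary yields a neighborhood basis of $l$.

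For the direct implication, assume $x_n \to l$. If $\alpha \in V^*$ vanishes on $\Span_{\RR}(\vec f)$, then $\alpha$ factors through the affine space $A/\Span_{\RR}(\vec f)$ in which the façade $A_{\vec f}$ lies, and eventual containment of $x_n$ in shrinking tubular neighborhoods $\Omega$ of $a + \Span_{\RR}(\vec f)$ forces $\alpha(x_n - a) \to 0$. If $\alpha \in V^*$ is strictly positive on $\vec f \setminus \{0\}$, eventual membership of $x_n$ in every $V(\Omega, \{\alpha\}, N)$ directly yields $\alpha(x_n - a) \to +\infty$.

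For the converse, fix a basic neighborhood $V(\Omega, (\alpha_i)_{i \in I}, N)$ of $l$. The depth conditions $\alpha_i(x_n - a) > N$ hold for $n$ large by the second hypothesis. To check that $x_n \in \Omega$ eventually, decompose $V = \Span_{\RR}(\vec f) \oplus W$ and apply the first hypothesis to a basis of $W^*$ extended by zero to $V^*$; this shows that the $W$-component of $x_n - a$ tends to $0$, so $x_n$ eventually enters every tubular neighborhood of $a + \Span_{\RR}(\vec f)$, in particular $\Omega$. The main subtlety is compatibility condition 4 in the definition of a fan, which guarantees that $\ol{\vec g} \cap \Span_{\RR}(\vec f) = \ol{\vec f}$ whenever $\vec f$ is a face of $\vec g$; this ensures that the positivity of the $\alpha_i$ on $\vec f$ already controls the escape of $x_n$ to infinity from every façade $A_{\vec g}$ approaching $A_{\vec f}$, so that the sets $V(\Omega, (\alpha_i)_{i \in I}, N)$ indeed form the promised neighborhood basis of $l$.
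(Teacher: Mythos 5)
The paper does not actually prove this statement: it is quoted as a direct specialisation of \cite[Proposition 3.2.4]{Char} to sequences lying in the open stratum $A = A_{\{0\}}$, with no argument supplied. Your proposal instead tries to re-derive it from an explicit neighbourhood basis of $l$ in $\ol A^{\mc F}$, which is a legitimately different and more self-contained route; but then the entire weight of the proof rests on the asserted description of that basis, which you state without deriving it from Charignon's definition of the topology -- and that description is precisely the content of the result being cited. The two-part structure (transverse component controlled by the annihilator of $\vec f$, depth controlled by forms positive on $\vec f$) is the right skeleton, and your use of condition 4 of the fan axioms to control the boundary façades is pointing in the right direction.

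There is, however, a concrete error in the basis as you state it, and it breaks your converse. If $\Omega$ is allowed to be an \emph{arbitrary} open neighbourhood of the affine subspace $a + \Span_{\RR}(\vec f)$, then the sets $V(\Omega, (\alpha_i), N)$ generate a strictly finer filter at $l$ than the correct one. Take $V = \RR^2$, $\vec f = \{(t,0) : t>0\}$, $a = 0$, and the pinching neighbourhood $\Omega = \{(x,y) : |y| < (1+x^2)^{-1}\}$ of the $x$-axis: the sequence $x_n = (n, n^{-1/2})$ satisfies both hypotheses of the proposition (so it does converge to $l$), yet it never enters $\Omega$. So either the proposition is false or your $V(\Omega,\dots)$ is not a neighbourhood of $l$; it is the latter. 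The same conflation appears in your converse, where you pass from ``the $W$-component of $x_n - a$ tends to $0$'' to ``$x_n$ eventually enters $\Omega$'': that inference is valid only for \emph{uniform} tubes, i.e.\ for $\Omega$ of the form $\{x : |\beta_j(x-a)| < \epsilon,\ j = 1,\dots,m\}$ with the $\beta_j$ spanning the annihilator $\vec f^{\,0}$. Restricting $\Omega$ to such tubes repairs both directions and matches the actual topology. A smaller point in the same vein: for the direct implication you need $V(\Omega,\{\alpha\},N)$ to be a neighbourhood of $l$ for \emph{every} $\alpha$ strictly positive on $\vec f$, whereas a basis will only use finitely many such forms; one should reduce the general $\alpha$ to a positive combination of finitely many generators of the dual cone $\vec f^{\,\vee}$ modulo $\vec f^{\,0}$, using that the forms in $\vec f^{\,\vee}$ are either identically zero or strictly positive on the relatively open cone $\vec f$.
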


These façades form a stratification of $\ol A^{\mc F}$, specifically we have $$A_{\vec g} \subset \ol{A_{\vec f}} \iff \vec f \text{ is a face of } \vec g.$$

\subsubsection*{Fans associated to types of pseudo-parabolic subgroups}

We now define the family of fans that will concern us in the following. 

Weyl chambers are in one-to-one correspondence with bases of the root system: Given a basis $\Delta$ of the root system, the cone $$C(\Delta) = \{x \in V, \forall \alpha \in \Delta, \alpha(x) > 0\}$$ is a Weyl chamber. Conversely, given a Weyl chamber $C$, the intersection $C^{\vee}\cap \Phi$ of the dual cone of $C$ with $\Phi$ is a positive system of roots that gives rise to a unique basis. Moreover, given a Weyl chamber $C$ corresponding to a basis $\Delta$, the faces of $C$ are in one-to-one correspondence with the subsets of $\Delta$: If $J \subset \Delta$, then the set $$\{x \in V, \forall \alpha \in J, \alpha(x) = 0 \text{ and } \forall \alpha \in \Delta \setminus J, \alpha(x) > 0\}$$ is a face of $C$, said to be of type $J$. 

\begin{ex}
	Fix a Weyl chamber $C$ corresponding to a basis $\Delta$ of $\Phi$, and a subset $J \subset \Delta$ containing no connected component of $\Delta$, considered as a Dynkin diagram. Say a face of $C$ is \textit{admissible} if its type contains no connected component of $\Delta$. Then, for each admissible face $f$ of $C$ of type $I$, we let $\ol f^J$ denote the union of $f$ and its faces of type included in $I \cup (J \cap I^{\perp})$. 
	Then, the set $$J \cdot f = \bigcup_{w \in \langle s_{\alpha}, \alpha \in J \cap I^{\perp}\rangle } w\ol f^J$$ is a relatively open polyhedral cone, and the set $$\mc F^J = \bigcup_{w \in W} w \cdot \{J \cdot f, f \text{ admissible face of } C\}$$ is a fan in $V$ by \cite[Proposition 2.5.3]{Char}.
\end{ex}

When the root system is given by a maximal split torus $S$ in a quasi-reductive group $G$ over a field $k$, we have a natural group-theoretic interpretation for this fan.

Let $P$ is a pseudo-parabolic subgroup of non-degenerate type in a quasi-reductive group $G$ over a field $k$. Let $P_0$ be a minimal pseudo-parabolic subgroup of $G$ and $S$ a maximal split torus such that $S \subset P_0 \subset P$. Set $V = V(G, S)$, and $\Phi = \Phi(G, S)$. Let $W$ be the Weyl group of $\Phi$.
Let $\Delta$ be the basis of the root system $\Phi$ corresponding to $P_0$ and $J = \Delta \cap -\Phi(P, S)$ be the type of $P$.
Set $$C(P) = \{x \in V(G,S), \forall \alpha \in \Phi(\mc R_{u,k}(P), S), \langle \alpha, x \rangle \ge 0\}, $$ and let $$C = \{x \in V(G, S), \forall \alpha \in \Delta, \langle \alpha, x \rangle > 0\}$$ be the Weyl chamber associated to $P_0$.
These cones are related as follows.
\begin{prop}\label{prop:gp_interpretation_FJ}
    With the above notation, we have $$C(P) = \ol{J \cdot C}.$$
    Consequently, the open cones of $\mc F^J$ are the $\overset{\circ}{\arc{C(Q)}}$, where $Q$ ranges among the pseudo-parabolic subgroups containing $S$ conjugate to $P$. Moreover, the correspondence $P \mapsto C(P)$ is $W$-equivariant.
\end{prop}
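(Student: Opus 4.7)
The plan is to reduce the first equality to a combinatorial identity inside the root system $\Phi$, and then extract the remaining two assertions by $W$-equivariance. I would begin by picking a cocharacter $\lambda \in X_*(S)$ with $P = P_G(\lambda)$, normalised so that $\langle \alpha, \lambda\rangle \ge 0$ for every $\alpha \in \Delta$; such a $\lambda$ exists because $P_0 \subset P$ forces $\Phi^+ = \Phi(P_0, S) \subset \Phi(P, S)$. With this normalisation, the type of $P$ becomes $J = \{\alpha \in \Delta : \langle \alpha, \lambda\rangle = 0\}$, and expanding an arbitrary positive root in the simple basis identifies
$$\Phi(\mc R_{u,k}(P), S) = \{\alpha \in \Phi : \langle \alpha, \lambda\rangle > 0\} = \Phi^+ \setminus \Phi_J^+,$$
where $\Phi_J$ is the sub root system generated by $J$. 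The first equality uses the structure-theoretic description of $\mc R_{u,k}(P_G(\lambda))$ as $U_G(\lambda)$ in the quasi-reductive setting, cf. \cite[\S 3.5]{CGP}.

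The heart of the argument is the combinatorial identity
$$J \cdot C = \{x \in V(G, S) : \langle \alpha, x\rangle > 0 \text{ for all } \alpha \in \Phi^+ \setminus \Phi_J^+\}.$$
For the inclusion $\subseteq$, I would take $x = wy$ with $w \in W_J$ and $y \in \ol C^J$; the standard fact that $W_J$ preserves $\Phi^+ \setminus \Phi_J^+$ setwise gives $w^{-1}\alpha \in \Phi^+ \setminus \Phi_J^+$, and writing $w^{-1}\alpha$ in the simple basis and combining $\langle \beta, y\rangle > 0$ for $\beta \in \Delta \setminus J$ with $\langle \beta, y\rangle \ge 0$ for $\beta \in J$ yields $\langle \alpha, x\rangle = \langle w^{-1}\alpha, y\rangle > 0$. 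For $\supseteq$, I would use that every $W_J$-orbit meets the closed dominant cone of $\Phi_J$ to produce $w \in W_J$ with $\langle \beta, w^{-1}x\rangle \ge 0$ for $\beta \in J$; the remaining strict inequalities $\langle \beta, w^{-1}x\rangle > 0$ for $\beta \in \Delta \setminus J$ come from rewriting them as $\langle w\beta, x\rangle > 0$ and invoking the same $W_J$-stability. Passing to closures, and using that $\Phi^+ \setminus \Phi_J^+$ spans $V^*(G, S)$ because $J$ contains no connected component of $\Delta$, gives $\ol{J \cdot C} = C(P)$.

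The remaining two assertions follow by transport of structure. The $W$-equivariance of $P \mapsto C(P)$ is immediate from $\Phi(\mc R_{u,k}(\dot w P \dot w^{-1}), S) = w \cdot \Phi(\mc R_{u,k}(P), S)$ for any lift $\dot w \in N_G(S)(k)$. The maximal cones of $\mc F^J$ are by construction the $w \cdot (J \cdot C)$ for $w \in W$, so the first equality combined with $W$-equivariance identifies them with the $\overset{\circ}{\arc{C(\dot w P \dot w^{-1})}}$. To finish, every pseudo-parabolic $Q$ containing $S$ that is $G(k)$-conjugate to $P$ is of the form $\dot w P \dot w^{-1}$: writing $Q = gPg^{-1}$, the tori $S$ and $g^{-1}Sg$ are both maximal split tori of $P$ and are $P(k)$-conjugate by \cite[Theorem C.2.3]{CGP}, so $g$ can be replaced by an element of $N_G(S)(k)$. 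The main technical subtlety in the whole argument is the identification of $\Phi(\mc R_{u,k}(P), S)$ in the quasi-reductive (rather than pseudo-reductive) setting; once that is secured, the rest is pure Coxeter combinatorics driven by the $W_J$-stability of $\Phi^+ \setminus \Phi_J^+$.
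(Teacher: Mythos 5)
Your proof is correct but takes a genuinely different route from the paper. The paper establishes $C(P) = \ol{J\cdot C}$ by writing $\ol{J\cdot C} = \bigcup_{w\in W_J} w(\ol C)$ as a union of closed Weyl chambers and then arguing via algebraic group structure theory: the inclusion $\ol{J\cdot C}\subset C(P)$ follows because $W_J = W(L_P, S)$ stabilises $C(P)$ (conjugation by $L_P$ preserves $\mc R_{u,k}(P)$), and the reverse inclusion follows by identifying the Weyl chambers contained in $C(P)$ with minimal pseudo-parabolic subgroups of $G$ containing $S$ and contained in $P$, then invoking the $L_P(k)$-equivariant bijection $Q\mapsto Q\cap L_P$ of \cite[Proposition 2.2.10]{CGP} and the transitivity of $N_{L_P}(S)(k)$ on minimal pseudo-parabolics of $L_P$. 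You instead prove the sharper pointwise identity $J\cdot C = \{x : \langle\alpha,x\rangle > 0,\ \alpha\in\Phi^+\setminus\Phi_J^+\}$ by pure Coxeter combinatorics, hinging on the $W_J$-stability of $\Phi^+\setminus\Phi_J^+$ — a classical fact that sidesteps any appeal to transitivity of conjugation or the Levi correspondence. Your approach is more elementary and self-contained for this particular identity, and it has the side benefit of directly identifying Charignon's open cone $J\cdot C$ with the relative interior $\overset{\circ}{\arc{C(P)}}$ (rather than deducing this only after passing to closures), which makes the second assertion of the proposition fall out immediately. The paper's approach, in exchange, keeps the argument consonant with the group-theoretic dictionary between cones and pseudo-parabolic subgroups used throughout Section 4. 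One small remark: the spanning of $V^*(G,S)$ by $\Phi^+\setminus\Phi_J^+$ is not actually needed to pass from $J\cdot C$ to its closure (any dense open subset of a closed convex set has that closure); it is needed only to guarantee strict convexity of $C(P)$ for the fan axioms, which is a separate matter already handled by Proposition \ref{Non-degenerate_type}.
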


\begin{proof}
    Observe that $$\ol{J \cdot C} = \bigcup_{w \in \langle s_{\alpha}, \alpha \in J \rangle } w(\ol C),$$ which is the union of the closed Weyl chambers associated to the bases $w(\Delta)$ for $w$ in $W_J = \langle s_{\alpha}, \alpha \in J \rangle$.
    Let $L_P$ be the Levi subgroup of $P$ corresponding to $S$. Then, the reflection group $W_J$ is the Weyl group $W(L_P, S)$. In particular, because the conjugacy action of $L_P$ on $P$ preserves $\mc R_{u,k}(P)$, the group $W_J$ stabilises the cone $C(P)$. Hence, because $\ol C \subset C(P)$, we must in turn have $\ol{J \cdot C} \subset C(P)$. 
    
    On the other hand, the cone $C(P)$ is a union of Weyl facets with non-empty interior and is therefore the closure of the union of the Weyl chambers that it contains. Further, a Weyl chamber is contained in $C(P)$ if and only if it corresponds to a minimal pseudo-parabolic subgroup of $G$ containing $S$ and contained in $P$ by \cite[Proposition 3.5.14]{CGP}. Now, by \cite[Proposition 2.2.10]{CGP}, the map $$Q \mapsto Q \cap L_P$$  establishes an $L_P(k)$-equivariant and order-preserving bijection between pseudo-parabolic subgroups of $P$ containing $S$ and pseudo-parabolic subgroups of $L_P$ containing $S$. By \cite[Theorem C.2.3]{CGP} and \cite[Theorem C.2.5]{CGP}, the normaliser $N_{L_P}(S)(k)$ acts transitively on the minimal pseudo-parabolic subgroups of $L_P$ containing $S$, hence it acts transitively on the minimal pseudo-parabolic subgroups of $P$ containing $S$. Hence, the Weyl chambers contained in $C(P)$ are precisely the $w(\Delta)$ for $w \in W(L_P, S)(k) = W_J$. We conclude that $C(P)$ is in fact equal to $\bigcup_{w \in W_J} w(\ol C) = \ol{J \cdot C}$, which completes the proof.
\end{proof}

\subsubsection*{Compactifications of affine buildings}

To conclude these preliminary considerations, we extend the above construction to yield compactifications of affine buildings. In \cite{Landvogt}, where the affine buildings are Bruhat-Tits buildings of reductive groups, this is obtained by way of a glueing procedure, much like the building itself: stabilisers are defined for points of the compactification of a given apartment with respect to the Weyl fan, then the compactification of $\mc B(G, k)$ is defined to be the quotient of $G(k) \times \ol A^{\mc F}$ by an equivalence relation similar to that defining the building $\mc B(G, k)$.

A salient feature of $\cite{Char}$ is that it performs the same construction while solely relying on the internal geometry of the building. For this purpose, the author introduces the notion of a cone in the building, which in some sense is a glueing of affine cones in various apartments containing a common subcone of a Weyl facet, called its \textit{core}.

Consider an apartment $A$ in an affine building $\mc B$. We denote by $V$ the direction of $A$ and by $\Phi \subset V^*$ its vectorial root system. We also assume that we are given a fan $\mc F$ in $V$ that is invariant under the action of the Weyl group $W = W(\Phi)$. Under this assumption, the notion of an affine $\mc F$-cone can be defined in the whole building: An affine $\mc F$-cone is a subset $f \subset \mc B$ such that there exist an apartment $B$ containing $f$ and an isomorphism of affine Coxeter complexes $\phi_B: A \to B$ such that $\phi_B^{-1}(f)$ is an affine $\mc F$-cone in $A$. For each apartment $B$, denote by $\mc F_B$ the set of affine $\mc F$-cones contained in $B$.

Then, for each cone $f \in \mc F$, the \textit{core} of $f$ is the convex cone $$\delta(f) = f \cap \Fix(\Stab_W(f)).$$

\begin{ex}\label{ex:cores_in_FJ}
	In the case when $\mc F = \mc F^J$ with the notations of the previous example, then, for each face $f$ of the Weyl cone $C$ of admissible type $I$, the core of $J \cdot f$ is the face of $C$ of type $I \cup (J \cap I^{\perp})$.
\end{ex}

Likewise, given any affine $\mc F$-cone $f = x + \vec f$ in $A$, the core of $f$ is defined as $\delta(f) = x + \delta(\vec f)$. 

We then define the cone of $\mc B$ associated to an affine cone $f$ in $A$ to be $$\tilde f = \bigcup_{\delta(f) \subset B} \phi_B(f),$$ where $B$ ranges among all apartments containing the core $\delta(f)$, and $\phi_B: A \to B$ is an isomorphism of Coxeter complexes fixing the intersection $A \cap B$. Denote by $\mc F_{\mc B}$ the set $$\mc F_{\mc B} = \{\tilde f, f \in \mc F_A, A \text{ apartment of } \mc B\}.$$ It can then be checked that the $\phi_B(f)$ all have the same core, which we also call the core of the cone $\tilde f$ and denote by $\delta(\tilde f)$.

Finally, the compactification of $\mc B$ is constructed in the same way as that of $A$, ie. as equivalence classes of cones. The only problem is to define the notion of parallelism for cones in $\mc B$, which is where cores play a crucial role.
Indeed, it follows from \cite[Corollaire 4.5.9]{Char} that, given any two cones $F$ and $G$ in $\mc B$, there exists an apartment $A$ that contains parallel subcones of $\delta(F)$ and $\delta(G)$. We then say that $F$ and $G$ are \textit{parallel} ($F \parallel G$) if there exist parallel subcones $f \subset \delta(F)$ and $g \subset \delta(G)$ and an apartment $A$ containing $f$ and $g$ such that $f \parallel g$ in $A$ (this parallelism does not depend on the specific $A$).
Likewise, we say that $F$ and $G$ are \textit{equivalent} ($F \sim G$) if $F$ and $G$ are parallel and have a non-empty intersection. Then, we set $$\ol{\mc B}^{\mc F} = \mc F_{\mc B}/\sim.$$
We call $\ol{\mc B}^{\mc F}$ the \textit{polyhedral compactification} of $\mc B$ with respect to the fan $\mc F$.

As in the previous case, the equivalence classes can be grouped in parallelism classes: $$\ol{\mc B}^{\mc F} = \bigcup_{F \in \mc F_{\mc B}} \mc B_F.$$ For each $F \in \mc F_{\mc B}$, the subspace $\mc B_F = \{G \in \mc F_{\mc B}, G \parallel F\}$ is called the (outer) \textit{façade} of $\mc B$ of type $F$. 
The façade $\mc B_F$ is naturally an affine building whose apartments are the façades $A_f$, for apartments $A$ containing affine $\mc F$-cones $f$ such that $\tilde f \parallel F$. These façades are either equal or disjoint, and form the strata of a topological stratification of $\ol{\mc B}^{\mc F}$ \cite[§8.2.2]{Char}.

\subsection{A comparison}\label{Comparison_compactifications}

Because the building $\mc B(G,k)$ arises from valuated root data in $G(k)$ \cite[Section 3.2]{Sol}, Proposition 7.4 of \cite{Char} applies. It is thus sufficient, in order to identify the aforementioned compactifications, to prove that the procedure of taking the closure of a given apartment in one space and the other yields homeomorphic spaces.

\begin{prop}\label{prop:comparison_compactifications}
	Let $P$ be a pseudo-parabolic subgroup of non-degenerate type, let $P_0 \subset P$ be a minimal pseudo-parabolic subgroup, and let $S \subset P_0$ a maximal split torus. Denote by $\Delta$ the basis of $\Phi(G, S)$ associated to $P_0$ and let $J \subset \Delta$ be the type of $P$.
	
	The map $\vtheta_P$ extends to a continuous map $$\ol{A(G, S)}^{\mc F^J} \to (G/P)^{\an}$$ that is a homeomorphism onto its image.
\end{prop}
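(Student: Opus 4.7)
The approach adapts \cite[Section 4]{RTW1} to the quasi-reductive setting, with the main tools being the explicit formula of Proposition \ref{Formula_ThetaP} for $\vtheta_P$ restricted to an apartment, the transport identity $\vtheta_Q = \iota_{Q,P}^{\an} \circ \vtheta_P$ of Proposition \ref{IndependanceP}, and the characterisation of convergence in the polyhedral compactification given by Proposition \ref{prop:convergence_in_Charignon}. The plan is first to reduce to the case where $G$ is pseudo-split: by Proposition \ref{Compatibility_extensions}, together with the fact that the fan $\mc F^J$ on $V(G, S)$ is induced by the analogous fan on $V(G_K, T)$ for a suitable splitting Galois extension $K/k$, the general case follows from the pseudo-split one.

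Assume $G$ pseudo-split. By Proposition \ref{prop:gp_interpretation_FJ}, every cone $\vec f$ in $\mc F^J$ lies in the closure of some open cone $\interior{\arc{C(Q)}}$ for a pseudo-parabolic $Q = wPw^{-1}$ conjugate to $P$ and containing $S$. Given $l = [a + \vec f]$ in the façade $A_{\vec f}$ and any sequence $(x_n)$ in $A(G, S)$ converging to $l$, Proposition \ref{prop:convergence_in_Charignon} dictates that $\langle \alpha, x_n - a\rangle \to 0$ for $\alpha \in V^*$ vanishing on $\vec f$, while $\langle \alpha, x_n - a\rangle \to +\infty$ for $\alpha$ strictly positive on $\vec f$. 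Computing with $Q$ instead of $P$ via Proposition \ref{IndependanceP}, the weights $\beta \in \Psi_Q := \Phi(U_G(-\mu_Q), S)$ satisfy $\langle \beta, v\rangle \le 0$ for all $v \in \vec f \subset C(Q)$, so each scalar $e^{\langle \beta, x_n\rangle}$ converges (to a finite positive value if $\beta$ vanishes on $\vec f$, and to $0$ otherwise). Plugging these limits into the formula of Proposition \ref{Formula_ThetaP} applied to $Q$ yields a well-defined limit seminorm in $(U_G(-\mu_Q))^{\an}$, which via $(\iota_{Q,P}^{\an})^{-1}$ defines a point $\bar\vtheta_P(l) \in (G/P)^{\an}$. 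Independence from the choice of $Q$ (a face $\vec f$ can lie in the closures of several open cones) and from the choice of the sequence $(x_n)$ is immediate from the explicit formula.

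Continuity of the extension $\bar\vtheta_P$ follows by the same asymptotic analysis applied to a sequence $(l_n)$ in $\ol{A(G, S)}^{\mc F^J}$ converging to $l$. For injectivity, distinct pseudo-parabolics $Q, Q'$ conjugate to $P$ and containing $S$ correspond to distinct cosets in $W/W_J$, hence to disjoint Bruhat cells of $G/P$; the images of the open façades lie in the analytifications of these disjoint cells, so it suffices to check injectivity within each façade. The latter reduces to an analog of Proposition \ref{Injectivity_ThetaP_apartment} for the restricted system of weights on $\Span_{\RR}(\vec f)^{\perp}$: the non-degenerate type hypothesis on $P$ -- equivalently, $J$ contains no connected component of $\Delta$ by Proposition \ref{Non-degenerate_type} -- propagates to the relevant quotient and gives the required spanning property. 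Finally, since $\ol{A(G, S)}^{\mc F^J}$ is compact and $(G/P)^{\an}$ is Hausdorff, the continuous injection $\bar\vtheta_P$ is automatically a homeomorphism onto its image.

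The main obstacle will be consistency across façades: concretely, one must verify that the limit $\bar\vtheta_P(l)$ computed using different choices of $Q$ with $\vec f \subset C(Q) \cap C(Q')$ gives the same point, and that sequences converging to $l$ from inside different open cones produce the same limit. This will reduce to the compatibility of the transport identity of Proposition \ref{IndependanceP} in the limit, which in turn amounts to checking the pasting of Bruhat cell charts on $(G/P)^{\an}$; this is built into the Bruhat decomposition \cite[Theorem C.2.8]{CGP} but requires careful bookkeeping of the identifications.
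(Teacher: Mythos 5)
Your proposal and the paper's proof aim at the same target but use genuinely different strategies, and the strategy you sketch has gaps that are not merely bookkeeping. The paper invokes the criterion of \cite[Lemma 3.28]{GJT}: to identify $\ol{A(G,S)}^{\mc F^J}$ with $\ol{\vtheta_P(A(G,S))}$, it suffices to verify that two sequences $(x_n)$, $(x'_n)$ in $A(G,S)$ converge to the same point of $\ol{A(G,S)}^{\mc F^J}$ if and only if $(\vtheta_P(x_n))$ and $(\vtheta_P(x'_n))$ converge to the same point of $(G/P)^{\an}$. Both directions of this equivalence are then reduced to convergence of the scalars $\langle a, x_n\rangle$ for $a$ running over $\Phi(\mc R_{u,k}(P), S)$, via Proposition \ref{prop:convergence_in_Charignon} on one side and the explicit formula of Proposition \ref{Formula_ThetaP} on the other. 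The crucial advantage of this route is that well-definedness, continuity, and injectivity of the extension are all consequences of the sequence criterion; there is never a need to construct the extension pointwise or to patch together limits computed in different Bruhat cells.

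You instead try to build the extension $\bar\vtheta_P$ directly on each boundary façade by choosing a conjugate $Q$ of $P$ with $\vec f \subset C(Q)$, pushing the formula for $\vtheta_Q$ to the boundary, and transporting back along $\iota_{Q,P}^{\an}$. You correctly identify the consistency issue at the end, but underestimate its size. The problem is not only that different choices of $Q$ must agree: it is also that two sequences $(x_n)$, $(x'_n)$ converging to the same boundary point $l$ may live in different cones $C(Q)$, $C(Q')$, and you must show their images in $(G/P)^{\an}$ converge to the same point. The paper handles this by observing that both image sequences have the \emph{same} limit (by hypothesis in the $(ii)\Rightarrow(i)$ direction), which can then be moved into a single cell $(\Omega(S,P)/P)^{\an}$ by one Weyl-group element $\dot w$; your framework provides no analogous mechanism because you are constructing limits rather than comparing given ones.

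A second gap is the injectivity argument. The claim that the open façades map into \emph{disjoint} Bruhat cells of $G/P$, so that injectivity may be checked façade by façade, is not sufficient: the interior façade $A(G,S)$ itself meets every Bruhat cell (this is the content of Lemma \ref{lem:rel_openness_img_apt}), and Bruhat cells are only locally closed, so sequences in one cell can converge into the boundary of another. Injectivity across the interior and the various boundary strata, and between boundary strata of different types, requires the kind of two-sided limit comparison the paper does. Finally, the reduction "the general case follows from the pseudo-split one" via the assertion that $\mc F^J$ on $V(G,S)$ is induced by a fan on $V(G_K, T)$ also needs justification: $p_{K/k}$ embeds $A(G,S)$ as a proper affine subspace of $A(G_K, T)$, so the compactification of the former is not obtained by restriction of a compactification of the latter. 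The paper avoids this by always phrasing the formula of Proposition \ref{Formula_ThetaP} in terms of the restrictions $\alpha_{|S}$ of absolute roots, staying within the relative root system throughout.
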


\begin{proof}
	By \cite[Lemma 3.28]{GJT}, it suffices to prove that, for any two sequences $(x_n)$ and $(x'_n)$ of points in $A(G, S)$, the following are equivalent:
\begin{enumerate}[(i)]
	\item The sequences $(x_n)$ and $(x'_n)$ converge and have the same limit in $\ol{A(G, S)}^{\mc F^J}$.
	\item The sequences $(\vtheta_P(x_n))$ and $(\vtheta_P(x'_n))$ converge and have the same limit in $(G/P)^{\an}$.
\end{enumerate} 
We use Propositions \ref{Formula_ThetaP} and \ref{prop:convergence_in_Charignon} to characterise convergence in one space and the other. Both characterisations can be translated into convergence conditions on the sequences $(\langle a, x_n\rangle)$ and $(\langle a, x'_n\rangle)$ for certain roots $a$.

Fix a special vertex $o \in A(G, S)$. Identify $A(G, S)$ with its direction $V(G, S)$ setting $o$ as origin. 
Let 
$$\begin{array}{ccl} 
C(P) & = & \{x \in A(G, S), \forall a \in \Phi(\mc R_{u, k}(P), S),  \langle a, x\rangle \ge 0\}
\end{array}.$$ 

\textit{(Step 1: (i) $\implies$ (ii))}
Let $(x_n)$ and $(x'_n)$ be two sequences of points in $A(G, S)$. Assume that both $(x_n)$ and $(x'_n)$ converge in $\ol{A(G, S)}^{\mc F^J}$ and have the same limit $l$ lying in the façade $A(G, S)_{\vec f}$, where $\vec f \in \mc F^J$. 

Without loss of generality, we may assume that $\vec f$ is a face of $\interior{C(P)}$. Indeed, by construction, $\vec f$ is a face of an open cone of $\mc F^J$ and, by Proposition \ref{prop:gp_interpretation_FJ}, the Weyl group $W(G, S)(k)$ acts transitively on these open cones. If $w \in W(G, S)(k)$ maps $\vec f$ to a face of $\interior{C(P)}$, and if $\dot w \in N_G(S)(k)$ is a lift of $w$, then $(\dot wx_n)$ and $(\dot wx'_n)$ both converge and have the same limit $\dot w l$ lying in $A(G, S)_{w \vec f}$, and $w \vec f$ is a face of $\interior{C(P)}$. Furthermore, $(\vtheta_P(x_n))$ and $(\vtheta_P(x'_n))$ converge and have the same limit in $(G/P)^{\an}$ if and only if the same holds for $(\vtheta_P(\dot w x_n))$ and $(\vtheta_P(\dot w x'_n))$ by $G(k)$-equivariance of $\vtheta_P$ and continuity of the action of $G(k)$ on $(G/P)^{\an}$.

Let us therefore assume that $\vec f$ is a face of $\interior{C(P)}$. Each $a \in \Phi(\mc R_{u,k}(P), S)$, assumes only nonnegative values on $\vec f$. In fact, because $\vec f$ is relatively open, each $a$ is either identically zero or positive on $\vec f$. Therefore, by Proposition \ref{prop:convergence_in_Charignon}, the limits $$\lim_{n \to \infty} \langle a, x_n \rangle \text{ and } \lim_{n \to \infty} \langle a, x'_n \rangle$$ both exist in $\RR \cup \{+\infty\}$ and coincide for each $a \in \Phi(\mc R_{u,k}(P), S)$.

We now deduce that $(\vtheta_P(x_n))$ and $(\vtheta_P(x'_n))$ converge in $(G/P)^{\an}$ and have the same limit. We use Proposition \ref{Formula_ThetaP}.
Let $T$ be a maximal torus of $G$ containing $S$ and $K/k$ be a finite Galois extension such that:
\begin{enumerate}
	\item The torus $T_K$ is split.
	\item The apartment $A(G_K, T_K)$ contains $p_{K/k}(A(G, S))$.
	\item The point $o_K \in A(G_K, T_K)$ is a special vertex.
\end{enumerate}
Identify $A(G_K, T_K)$ with its direction $V(G_K, T_K)$ by setting the origin at $o_K$. Let $\mu: \mbb G_m \to S$ such that $P = P_G(\mu)$ and set $U^- = U_G(-\mu)$. By Proposition \ref{Formula_ThetaP}, both $(\vtheta_P(x_n))$ and $(\vtheta_P(x'_n))$ take their values in $(\Omega(T, P)/P)^{\an}$. Moreover, identifying $\Omega(T, P)/P$ with $U^-$ and choosing a system of coordinates $(\xi_{\alpha, i})_{(\alpha, i)\in \mbf \Psi_K}$ on $(U^-)_K$ adapted to $o_K$, we have, for each $f = \sum_{\nu \in \NN^J} f_{\nu}\xi^{\nu} \in K[U^-]$: $$|f(\vtheta_{P_K}((x_n)_K))| = \max_{\nu \in \NN^{\mbf \Psi_K}} |f_{\nu}|\prod_{(\alpha, i) \in \mbf \Psi_K }e^{\nu(\alpha, i)\langle \alpha, (x_n)_K \rangle},$$
which we may rewrite $$|f(\vtheta_{P_K}((x_n)_K))| = \max_{\nu \in \NN^{\mbf \Psi_K}} |f_{\nu}|\prod_{a \in \Psi} \prod_{\substack{(\alpha, i) \in \mbf \Psi_K\\ \alpha_{|S} = a}}e^{\nu(\alpha, i)\langle a, x_n \rangle}.$$

Evidently, for each $f \in K[U^-]$, the quantities $|f(\vtheta_{P_K}((x_n)_K))| $ and $|f(\vtheta_{P_K}((x'_n)_K))| $ converge in $\RR$ with the same limit, hence the sequences $(\vtheta_{P_K}((x_n)_K))$ and $(\vtheta_{P_K}((x'_n)_K))$ converge in $(U^-)^{\an}_K$ and have the same limit. By continuity of $\pr_{K/k}^{\an}$, so do the sequences $(\vtheta_P(x_n))$ and $(\vtheta_P(x'_n))$, by commutativity of the diagram 
\begin{center}
	\begin{tikzcd}
		A(G_K, T) \arrow[r, "\vtheta_{P_K}"] & (U^-)_K^{\an}\arrow[d, "\pr_{K/k}^{\an}"] \\ A(G, S)\arrow[u, "p_{K/k}"] \arrow[r, "\vtheta_P"] & (U^-)^{\an}
	\end{tikzcd}.
\end{center}

\textit{(Step 2: (ii) $\implies$ (i))} Let $(x_n)$ and $(x'_n)$ be two sequences of points of $A(G, S)$. Assume that $(\vtheta_P(x_n))$ and $(\vtheta_P(x'_n))$ both converge in $(G/P)^{\an}$. We first prove that the common limit of the two sequences may be assumed to lie in $(\Omega(S, P)/P)^{\an}$. 

Recall from Proposition \ref{prop:gp_interpretation_FJ} that $$A(G, S) = \bigcup_{\substack{Q \text{ conjugate to } P\\ S \subset Q}} C(Q).$$
Because only finitely many subgroups $Q$ conjugate to $P$ contain $S$, there exists such a $Q$ and a subsequence $(x_{n_j})_{j \in \NN}$ such that $\vtheta_P(x_{n_j}) \in C(Q)$ for each $j \in \NN$. In particular, we have $\lim_{n \to \infty} \vtheta_P(x_n) \in \ol{\vtheta_P(C(Q))}$.
By Proposition \ref{prop:relative_compactness_conewise}, for each pseudo-parabolic subgroup $Q$ containing $S$ conjugate to $P$, the image $\vtheta_Q(C(Q))$ is relatively compact in $(\Omega(S, Q)/Q)^{\an}$, ie. $\ol{\vtheta_Q(C(Q))} \subset (\Omega(S, Q)/Q)^{\an}$. 
Let $w \in W(G, S)(k)$ be such that $w C(Q) = C(P)$ and $\dot w$ be a lift of $w$ in $N_G(Q)(k)$.	Then, we have $P = \dot w Q \dot w^{-1}$, and therefore, by Proposition \ref{IndependanceP}: $$\ol{\vtheta_P(C(Q))} = \iota_{P, Q}^{\an}(\vtheta_Q(C(Q))) \subset \iota_{P, Q}^{\an}((\Omega(S, Q)/Q)^{\an}) = \dot w (\Omega(S, P)/P)^{\an}.$$
Consequently, the sequences $(\vtheta_P(\dot w^{-1} x_n))$ and $(\vtheta_P(\dot w^{-1} x'_n))$ converge to a point in $(\Omega(S, P)/P)^{\an}$. Moreover, the sequences $(x_n)$ and $(x'_n)$ converge to the same limit if and only if the same holds for $(\dot w^{-1} x_n)$ and $(\dot w^{-1}x'_n)$.

Let us therefore reduce to the case when $(\vtheta_P(x_n))$ and $(\vtheta_P(x'_n))$ both converge in $(\Omega(S, P)/P)^{\an} \simeq (U^-)^{\an}$. Choose a special vertex $o$ in $A(G, S)$, a maximal torus $T$ of $G$ containing $S$ and a finite Galois extension $K/k$ as in Step 1 and make the same choice of notations. Let $a \in -\Phi(\mc R_{u,k}(P), S)$ and let $\phi = \sum_{\substack{(\alpha, i) \in {\mbf \Psi_K}\\ \alpha_{|S} = a}} \phi_{\alpha, i} \xi_{\alpha, i} \in k[U^-] \subset K[U^-]$ be a $k$-linear form on the root group $U_a$. Then, we have $$|\phi(\vtheta_P(x_n))| = |\phi_K(\vtheta_{P_K}((x_n)_K))| = \left(\max_{\substack{(\alpha, i) \in {\mbf \Psi_K}\\ \alpha_{|S} = a}} |\phi_{\alpha, i}|e^{\langle \alpha, (x_n)_K \rangle}\right),$$ which rewrites as $$|\phi(\vtheta_P(x_n))| = \left(\max_{\substack{(\alpha, i) \in {\mbf \Psi_K}\\ \alpha_{|S} = a}} |\phi_{\alpha, i}| \right) e^{\langle a, x_n\rangle}.$$
Because the sequences $(|\phi(\vtheta_P(x_n))|)$ and $(|\phi(\vtheta_P(x'_n))|)$ converge with the same limit, the sequences $(\langle a, x_n\rangle)$ and $(\langle a, x'_n\rangle)$ must converge in $\RR \cup \{- \infty\}$ with the same limit.
We deduce that, for each $a \in \Phi(\mc R_{u,k}(P), S)$, we have $$\lim_{n \to \infty} \langle a, x_n\rangle = \lim_{n \to \infty} \langle a, x'_n \rangle \in \RR \cup \{+\infty\}.$$
Let $$I_c = \left\{a \in \Phi(\mc R_{u,k}(P), S), \lim_{n \to \infty} \langle a, x_n \rangle \in \RR\right\}$$ and $$I_d = \left\{a \in \Phi(\mc R_{u,k}(P), S), \lim_{n \to \infty} \langle a, x_n \rangle = +\infty \right\}.$$
Then, denoting by $\vec f$ the set $$ \vec f = \{x \in V(G, S), \forall a \in I_c, \langle a, x\rangle = 0 \text{ and } \forall a \in I_d, \langle a, x \rangle > 0\},$$ the cone $\vec f$ is a face of $C(P)$ by Lemma \ref{lem:faces_in_fans} and it follows from Proposition \ref{prop:convergence_in_Charignon} that $(x_n)$ and $(x'_n)$ converge to a common limit lying in the façade $A(G, S)_{\vec f}$, which completes the proof of the equivalence.
\end{proof}

By the criterion \cite[Proposition 7.4]{Char}, the compactification $\ol{\vtheta_P(\mc B(G, k))}$ is thus $G(k)$-equivariantly homeomorphic to the polyhedral compactification of $\mc B(G, k)$ associated to the fan $\mc F^J$.

\subsection{Identifying the strata}

It now follows from \cite[Corollaire 8.2.11]{Char} that the compactification $\ol{\vtheta_P(\mc B(G, k))}$ is stratified by affine buildings. Precisely, we have a $G(k)$-equivariant homeomorphism $$\ol{\vtheta_P(\mc B(G, k))} = \bigcup_{F \in (\mc F^J)_{\mc B(G, k)}} \mc B(G, k)_{F},$$ where the right-hand side is indexed by cones in $\mc B(G, k)$ in the sense of \cite[Sections 5.2]{Char}. The stratum $\mc B(G, k)_{F}$ corresponding to a cone $F$ is in turn defined as the set of equivalence classes of cones parallel to $F$ for the relation \cite[Définition 5.4.1]{Char}.

We begin by proving that these strata -- which are affine buildings -- can be interpreted as Bruhat-Tits buildings of certain Levi subgroups of pseudo-parabolic subgroups of $G$.

\begin{thm}\label{Stratification}
	Let $F$ be a cone in $(\mc F^J)_{\mc B(G, k)}$. There exists a unique pseudo-parabolic subgroup $Q$ of $G$ such that $\mc B(G, k)_F$ is stable under the action of $Q(k)$. Moreover, this stratum identifies $Q(k)$-equivariantly to the Bruhat-Tits building of the quasi-reductive group $Q/\mc R_{us,k}(Q)$.
\end{thm}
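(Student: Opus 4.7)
The plan is to construct $Q$ from the core $\delta(F)$, verify that $Q(k)$ preserves the stratum $\mc B(G, k)_F$, identify $\mc B(G, k)_F$ equivariantly with $\mc B(Q/\mc R_{us,k}(Q), k)$ apartment by apartment, and deduce uniqueness. For the construction of $Q$: every cone $F \in (\mc F^J)_{\mc B(G,k)}$ admits a representative $f = x + \vec f$ in some apartment $A(G, S)$, with core $\delta(f) = x + \delta(\vec f)$. Thanks to Example \ref{ex:cores_in_FJ}, the direction $\delta(\vec f)$ is the relative interior of a single Weyl facet of $V(G, S)$, so for any rational cocharacter $\lambda \in X_*(S)_{\QQ}$ in this relative interior the pseudo-parabolic $Q := P_G(\lambda)$ is well defined and does not depend on $\lambda$. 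Independence of the apartment chosen to represent $F$ comes from the fact that any two apartments containing $\delta(F)$ are conjugate by an element of $Z_G(\lambda)(k) \subseteq Q(k)$, which normalises $Q$.

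I would next check that $Q(k)$ preserves the parallelism class of $F$. The Levi $L_Q = Z_G(\lambda)$ preserves $A(G, S)$ and acts by affine transformations whose linear part fixes $\Span_{\RR}(\delta(\vec f))$ pointwise, hence sends $f$ to a parallel affine cone meeting $f$ far enough inside $\vec f$. For the $k$-split unipotent radical $\mc R_{us,k}(Q) = U_G(\lambda)$, the root groups involved are those $U_\alpha$ with $\langle \alpha, \lambda\rangle > 0$; each of their $k$-points sends any apartment to another apartment sharing a subcone of $f$ far enough in direction $\vec f$, as follows from the filtration structure of the valuated root group datum: $U_{\alpha, r}(k)$ fixes a half-apartment of the form $\{z : \langle \alpha, z\rangle \ge -r\}$, and hence preserves the parallelism class of any cone whose direction is strictly positive on $\alpha$.

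The technical heart of the argument is the identification with $\mc B(L_Q, k)$. Via the canonical isomorphism $L_Q \xrightarrow{\sim} Q/\mc R_{us,k}(Q)$, it suffices to build a $Q(k)$-equivariant homeomorphism $\mc B(G, k)_F \xrightarrow{\sim} \mc B(L_Q, k)$, where $Q(k)$ acts on the target through the projection $Q(k) \to L_Q(k)$. Apartments of $\mc B(L_Q, k)$ are indexed by the maximal split tori $S' \subset L_Q$, which are precisely the maximal split tori of $G$ contained in $L_Q$; each such $S'$ gives both an apartment $A(L_Q, S')$ and, via Charignon's construction, a façade $A(G, S')_{\vec f'}$ in $\ol{A(G, S')}^{\mc F^J}$ for the direction $\vec f'$ of the representative of $F$ in $A(G, S')$. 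I would match these via a canonical affine isomorphism $A(L_Q, S') \cong A(G, S')_{\vec f'}$ whose linear part is the identification $V(L_Q, S') \cong V(G, S')/\Span_{\RR}(\vec f')$ coming from $\Phi(L_Q, S') = \Phi(G, S') \cap \Span_{\RR}(\vec f')^{0}$. Compatibility of valuated root data reduces to the observation that the root groups $U_\alpha$ for $\alpha \in \Phi(L_Q, S')$ lie in $L_Q$ and carry compatible parameterisations; the valuations $\varphi_\alpha$ computed in $G$ coincide with those computed in $L_Q$, both being governed by the same root group filtrations. Finally, Charignon's gluing of façades coincides with Bruhat-Tits gluing for $L_Q$, since in both cases the gluing is controlled by intersections of apartments, and hence by the same valuated root group datum.

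For uniqueness, if $Q'$ is a second pseudo-parabolic with $Q'(k)$ stabilising the stratum, then, after conjugation, we may assume $S \subset Q'$; the requirement that $Q'(k)$ preserves $A(G, S)_{\vec f}$ forces $\Phi(L_{Q'}, S) = \Phi(G, S) \cap \Span_{\RR}(\vec f)^{0}$ and that $\Phi(\mc R_{us,k}(Q'), S)$ contain all roots strictly positive on $\delta(\vec f)$; combined with the pseudo-parabolic structure, this pins down $Q' = Q$. The main obstacle is the third step: the topology on the stratum $\mc B(G, k)_F$ is inherited from $(G/P)^{\an}$ through Proposition \ref{prop:comparison_compactifications}, while that on $\mc B(L_Q, k)$ is constructed intrinsically from the valuated root group datum of $L_Q$, and reconciling the two demands delicate bookkeeping of adapted coordinates as in Section \ref{sect:Formula_ThetaP} together with a careful descent of parameterisations from $G$ to $L_Q$.
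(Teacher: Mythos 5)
Your overall plan — extract $Q$ from the core, show $Q(k)$ preserves the parallelism class, identify $\mc B(G, k)_F$ with the building of a Levi factor apartment by apartment, then pin down $Q$ — matches the structure of the paper's argument, and the group $Q = P_G(\lambda)$ you propose is the same as the one the paper produces. However, there are two genuine gaps.

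First, your independence-of-apartment argument claims any two apartments containing $\delta(F)$ are conjugate by an element of $Z_G(\lambda)(k)$. What building theory gives you directly is conjugacy by the pointwise stabiliser $\hat P_{\delta(F)}$, which contains not only $Z_G(\lambda)(k)$-elements but also root groups $U_\alpha$ with $\langle\alpha,\lambda\rangle > 0$; showing that the conjugating element can actually be chosen in $Z_G(\lambda)(k)$ requires exactly the inner-façade/essentialisation machinery of the Appendix, which you don't invoke. The paper instead bypasses this by proving that $\hat P_{\delta(F)}$ is Zariski dense in $Q$ (Proposition \ref{Zariski_density_fixator}, relying on the density-of-$\mathcal{O}$-points Corollary \ref{Zariski_density_int_points}), which characterises $Q$ intrinsically as the Zariski closure of a canonical subgroup of $G(k)$ and settles well-definedness, $G(k)$-equivariance, and the fact that $Q(k)$ is the \emph{full} stabiliser of the façade in a single sweep.

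Second, your ``third step'' identifying $\mc B(G,k)_F$ with $\mc B(L_Q,k)$ is where the real content lives, and the route you sketch — matching valuated root group data apartment by apartment and asserting the gluings agree — does not by itself assemble into a continuous bijection, nor does it explain why the map is a homeomorphism (it isn't a purely combinatorial statement since the building topology is not the CW topology). The paper resolves this by constructing one global map $p: L_Q(k)\cdot A(G,S) \to \mc B(G,k)_F$, $x \mapsto \lim_{t\to+\infty}(x+t\mu)$, showing it is invariant under the translation action of $V = \ker(V(G,S)\to V(L_Q,S))$, hence factors through the essentialisation $\tilde\pi: \mc B(L_Q,G,k) \to \mc B(L_Q,k)$ from [BT1, 7.6.4], and then checking that the induced map $\bar p$ is a homeomorphism via the coarse Cartan decomposition and compactness of $L_Q(k)_x/Z(L_Q)(k)$. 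Your worry about reconciling the topology inherited from $(G/P)^{\an}$ with the intrinsic one is a red herring: Proposition \ref{prop:comparison_compactifications} already identifies these, so the identification with $\mc B(L_Q,k)$ is carried out entirely in the Charignon model and never touches $(G/P)^{\an}$. Finally, your uniqueness argument as phrased (``$Q'(k)$ preserves $A(G,S)_{\vec f}$ forces...'') would also be satisfied by a smaller pseudo-parabolic $Q' \subsetneq Q$; what one actually needs is that $Q(k)$ is the \emph{full} stabiliser of the façade, which the paper proves in Proposition \ref{Action_on_facade} by showing that any strictly larger pseudo-parabolic would contain a lift of a reflection moving $[F]$ outside the façade.
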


The proof of this theorem will take up most of this section and will therefore be broken down into several steps, namely Propositions \ref{Zariski_density_fixator}, \ref{Action_on_facade}, and \ref{Identifying_facade}. The outline goes as follows: We begin by associating to an apartment $A(G, S)$ containing the core of $F$ a pseudo-parabolic subgroup $Q$ containing $S$. We prove that this pseudo-parabolic subgroup $Q$ depends only on the parallelism class of $F$ (thus neither on the specific representative $F$ nor the specific apartment containing $\delta(F)$ that we choose). We deduce from these observations that $Q(k)$ stabilises the façade $\mc B(G, k)_F$ and that the $k$-split unipotent radical $\mc R_{us,k}(Q)(k)$ acts trivially on that same façade.
Lastly, we check that, given a Levi factor $L_Q \subset Q$, the projections $A(G, S) \to A(G, S)_F$ with $S \subset L_Q$ glue together to a $L_Q(k)$-equivariant map $L_Q(k) \cdot A(G, S) \epi \mc B(G, k)_{F}$ that factors through an equivariant, simplicial homeomorphism $\mc B(L_Q, k) \to \mc B(G, k)_{F}$, which will complete the proof.

\subsubsection*{Associating a pseudo-parabolic subgroup to a façade}

Let us now carry out the first step, namely identifying the pseudo-parabolic subgroup $Q$ and establishing its sole dependency on the parallelism class of $F$. The argument hinges on the following preliminary observation.

\begin{prop}\label{Zariski_density_analytic_open}
	Let $X$ be a smooth irreducible variety over a field $k$ complete with respect to a non-trivial absolute value. Then, any non-empty open subset of $X(k)$ for its analytic topology is Zariski-dense in $X$.
\end{prop}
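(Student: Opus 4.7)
The plan is to combine the non-archimedean implicit function theorem, which provides analytic charts around $k$-rational points of a smooth variety, with the identity principle for convergent power series over $k$. These together imply that no non-zero regular function on $X$ can vanish identically on a non-empty analytic open of $X(k)$, which amounts to the Zariski-density of $U$.

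Concretely, I would reason by contradiction. Suppose $U$ is not Zariski-dense, so that its Zariski closure $Y$ in $X$ is a proper closed subvariety. Pick a point $x \in U$; by smoothness of $X$ at $x$ (of dimension $n = \dim X$), there exists a regular system of parameters $t_1, \dots, t_n$ generating the maximal ideal $\mf m_x \subset \mc O_{X,x}$. The non-archimedean implicit function theorem then provides an analytic open neighborhood $W$ of $x$ in $X(k)$ such that $\phi = (t_1, \dots, t_n) : W \to D$ is an analytic isomorphism onto an open polydisk $D \subset k^n$. The Taylor expansion map $\mc O_{X,x} \to k[[t_1, \dots, t_n]]$ is injective by Krull's intersection theorem, and, upon shrinking $D$ suitably, factors through the subring of convergent power series on $D$.

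Since $Y \subsetneq X$ and $x \in Y$, the ideal of $Y$ in $\mc O_{X,x}$ is non-zero; pick a non-zero $f$ in it. Its image $\tilde f \in \mc O(D)$ is then a non-zero convergent power series vanishing on the non-empty analytic open $\phi(U \cap W) \subset D$. To reach a contradiction, I would invoke the identity principle for convergent power series in several variables: a non-zero element of $\mc O(D)$ cannot vanish identically on a non-empty open subset. This is proved by induction on $n$, reducing to the one-variable case, in which a non-zero convergent power series has only isolated zeros.

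The main subtlety is the appeal to the identity principle in several variables and to the non-archimedean implicit function theorem; once local analytic coordinates are in place, the result follows immediately. Neither input is deep, but both deserve explicit attention as they are not always covered in classical references on algebraic geometry.
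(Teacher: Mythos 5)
Your proof is correct and follows the same approach as the paper, which simply cites Poonen's Proposition 3.5.75 (stated there for local fields) together with the observation that the argument uses only completeness of $k$; Poonen's proof likewise combines the non-archimedean implicit function theorem with the fact that a nonzero convergent power series cannot vanish on a nonempty analytic open. Your write-out of the identity principle is sound: the one-variable case follows from Weierstrass preparation on closed subdisks, and the multivariable case reduces to it by expanding in the last variable and applying the inductive hypothesis to the coefficient power series.
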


\begin{proof}
	A proof is given in \cite[Proposition 3.5.75 and Remark 3.5.76]{Poonen} in the case of local fields, but a careful inspection shows this proof only to depend on the completeness of $k$ (specifically in the use of the implicit function theorem over $k$ in the proof of \cite[Proposition 3.5.73 (i)]{Poonen}).
\end{proof}

We take note of the following special case.

\begin{cor}\label{Zariski_density_int_points}
	Let $k$ be a complete discretely valued field. Let $\mf X$ be a smooth irreducible variety over the valuation ring $k^{\circ} = \{x \in k, \omega(x) \ge 0\}$ and let $X = \mf X_k$. Then, the set of integral points $\mf X(k^{\circ})$ is either empty or Zariski-dense in $X$.
\end{cor}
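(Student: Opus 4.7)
The plan is to deduce this directly from Proposition \ref{Zariski_density_analytic_open} by showing that $\mathfrak{X}(k^{\circ})$, when non-empty, is an open subset of $X(k)$ for the analytic topology. Combined with Proposition \ref{Zariski_density_analytic_open}, that will force Zariski-density in $X = \mathfrak{X}_k$ (which is smooth and irreducible because $\mathfrak{X}$ is).

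Assume $\mathfrak{X}(k^{\circ}) \ne \emptyset$ and fix $x_0 \in \mathfrak{X}(k^{\circ})$. The key local input is smoothness of $\mathfrak{X}$ over $k^{\circ}$ at $x_0$, which yields an open subscheme $\mathfrak{U} \subset \mathfrak{X}$ containing the image of $x_0$ and an étale morphism $\pi: \mathfrak{U} \to \mathbb{A}^d_{k^{\circ}}$, where $d = \dim X$. The map $\pi(k^{\circ}): \mathfrak{U}(k^{\circ}) \to (k^{\circ})^d$ and its generic fiber $\pi_k(k): \mathfrak{U}_k(k) \to k^d$ then fit into a commutative square, and by the non-archimedean implicit function theorem applied to the étale morphism $\pi$ at $x_0$ (a standard consequence of Hensel's lemma, as used in \cite[Proposition 3.5.73 (i)]{Poonen}), there is an analytic open neighborhood $V$ of $\pi(x_0) \in (k^{\circ})^d$ and an analytic open neighborhood $W$ of $x_0$ in $\mathfrak{U}_k(k)$ such that $\pi_k(k)$ induces a homeomorphism $W \xrightarrow{\sim} V$. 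Up to shrinking $V$ we may assume $V \subset (k^{\circ})^d$, and then $W \subset \mathfrak{U}(k^{\circ}) \subset \mathfrak{X}(k^{\circ})$. This exhibits $\mathfrak{X}(k^{\circ})$ as analytically open in $X(k)$.

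With openness established, Proposition \ref{Zariski_density_analytic_open} applied to the smooth irreducible variety $X$ immediately gives Zariski-density of $\mathfrak{X}(k^{\circ})$ in $X$. There is no serious obstacle here; the only delicate point is making sure that the implicit-function-theorem argument indeed goes through over an arbitrary complete discretely valued field $k$, not just a local field, but this is precisely the level of generality at which the Hensel-lemma-based proof of \cite[Proposition 3.5.73 (i)]{Poonen} operates, so the argument transfers verbatim, just as was already noted in the proof of Proposition \ref{Zariski_density_analytic_open} itself.
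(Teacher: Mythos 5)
Your overall strategy — show that $\mathfrak{X}(k^{\circ})$, when non-empty, is analytically open in $X(k)$ and then apply Proposition \ref{Zariski_density_analytic_open} — is exactly what the paper intends (it gives no explicit proof, merely calling the corollary a "special case"), and your argument is correct. One small remark on the key step: after shrinking $V$ into $(k^{\circ})^d$, the inclusion $W \subset \mathfrak{U}(k^{\circ})$ is not a formal consequence of $\pi_k(W) \subset (k^\circ)^d$; it uses precisely the Hensel-lemma content you cite, namely that the local analytic inverse of the \'etale map near the $k^{\circ}$-point $x_0$ takes $(k^\circ)^d$-values to $k^\circ$-points of $\mathfrak{U}$ rather than merely to $k$-points. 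You signal this, so the argument is sound, but it is worth being explicit.

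There is, incidentally, a slightly more elementary route to the openness that sidesteps the implicit function theorem and does not even use smoothness of the model: pick an affine open $\mathfrak{U} = \Spec k^{\circ}[x_1,\dots,x_n]/I$ of $\mathfrak{X}$ through which $x_0$ factors; then $\mathfrak{U}(k^{\circ}) = \mathfrak{U}_k(k) \cap (k^{\circ})^n$, and since the valuation is discrete, $(k^{\circ})^n$ is open in $k^n$, so $\mathfrak{U}(k^{\circ})$ is an analytic open neighbourhood of $x_0$ in $X(k)$ contained in $\mathfrak{X}(k^{\circ})$. This buys a proof where smoothness enters only through Proposition \ref{Zariski_density_analytic_open}; your \'etale-chart version instead localises the smoothness hypothesis but is otherwise equivalent in strength. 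Both are fine.
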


From this analytic observation, we draw the following consequence.

\begin{prop}\label{Zariski_density_fixator}
	Let $Q$ be the Zariski closure in $G$ of the pointwise stabiliser $P_{\delta(F)}$ of $\delta(F)$ in $G(k)$. Then $Q$ is a pseudo-parabolic subgroup in $G$ and, for any cone $F'$ in $\mc B(G, k)$ parallel to $F$, the pointwise stabiliser $\hat P_{\delta(F')}$ is Zariski-dense in $Q$.
\end{prop}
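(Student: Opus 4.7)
The plan is to identify $Q$ explicitly as a pseudo-parabolic subgroup $P_G(\lambda)$ for a cocharacter $\lambda$ adapted to the direction of $\delta(F)$, and then to deduce Zariski density from Proposition \ref{Zariski_density_analytic_open} applied to analytically open subsets of root groups contained in $\hat P_{\delta(F)}$. The main obstacle will be the final comparison showing that this construction depends only on the parallelism class of $F$.

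Fix an apartment $A(G, S)$ containing $\delta(F)$, pick an apex $x \in \delta(F)$, and identify $A(G, S)$ with $V(G, S)$ via $x$. Since $\vec{\delta(F)}$ is a rational relatively open polyhedral cone in $V(G, S)$ -- a Weyl facet in view of Example \ref{ex:cores_in_FJ} -- I may choose a cocharacter $\lambda \in X_*(S)$ whose image lies in its relative interior, and set $Q' = P_G(\lambda)$.

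For the inclusion $\hat P_{\delta(F)} \subset Q'(k)$, I would use the decomposition of pointwise stabilizers along the lines of \cite[7.1.8, 7.4.4]{BT1}, together with the explicit action of the valuated root datum on $A(G, S)$: a nontrivial element $u \in U_a(k)$ fixes a point $y \in A(G, S)$ if and only if $\varphi_a(u) + \langle a, y\rangle \ge 0$. Requiring $u$ to fix the entire ray $\{x + t\lambda,\ t \ge 0\} \subset \delta(F)$ thus forces $\langle a, \lambda\rangle \ge 0$, placing $u$ in $Q'(k)$. For the reverse Zariski density, the crucial point is that for each root $a$ with $\langle a, \lambda\rangle \ge 0$, setting $r_a := -\langle a, x\rangle$, the subgroup $U_{a, r_a} = \{u \in U_a(k),\ \varphi_a(u) \ge r_a\}$ sits inside $\hat P_{\delta(F)}$: the condition $\varphi_a(u) + \langle a, x + tv\rangle \ge 0$ for $t \ge 0$ and $v \in \vec{\delta(F)}$ reduces to $\varphi_a(u) \ge r_a$ because $\langle a, v\rangle \ge 0$. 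This subgroup is an analytically open subset of the smooth irreducible affine variety $U_a$, hence Zariski-dense in $U_a$ by Proposition \ref{Zariski_density_analytic_open}; similarly the kernel of $\nu$ inside $Z_G(S)(k)$ is a $p$-adic open subgroup and hence Zariski-dense in $Z_G(S)$. Since $Q'$ is generated by $Z_G(S)$ together with the root groups $U_a$ for $\langle a, \lambda\rangle \ge 0$, the Zariski closure of $\hat P_{\delta(F)}$ is a closed algebraic subgroup of $Q'$ containing all of these, and thus equals $Q'$.

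Finally, for a cone $F' \parallel F$, the same procedure applied in an apartment containing $\delta(F')$ produces a pseudo-parabolic subgroup $Q''$ in which $\hat P_{\delta(F')}$ is Zariski-dense. To verify $Q'' = Q'$, I would invoke the existence of a common apartment $A''$ containing parallel subcones $f \subset \delta(F)$ and $f' \subset \delta(F')$ from \cite[Corollaire 4.5.9]{Char}. Applying the previous two steps inside $A''$ to $f$ and to $f'$ yields a single pseudo-parabolic $R = P_G(\lambda'')$ -- the same for both, because $\vec f = \vec{f'}$ determines the same cocharacter class in $V(G, S'')$ -- and the inclusions $\hat P_{\delta(F)} \subset \hat P_f$, $\hat P_{\delta(F')} \subset \hat P_{f'}$ give $Q' \subseteq R \supseteq Q''$. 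Comparing relative root systems on both sides, all governed by the common direction, then forces $Q' = R = Q''$.
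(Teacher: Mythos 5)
Your approach coincides with the paper's in all essentials: identify $Q$ as $P_G(\lambda)$ for a cocharacter in the relative interior of $\vec{\delta(F)}$, use the fixed-point description of root-group actions on the apartment to get the inclusion $\hat P_{\delta(F)} \subset Q(k)$, and get the reverse Zariski density from analytic (or integral-model) openness of the filtration subgroups $U_{a,r}$ and of the bounded part of $Z_G(S)(k)$, invoking Proposition~\ref{Zariski_density_analytic_open} (the paper uses its Corollary~\ref{Zariski_density_int_points}; this is an immaterial difference). The final step for $F' \parallel F$ via a common apartment from \cite[Corollaire 4.5.9]{Char} also matches the paper's strategy.

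There is, however, a genuine gap in your inclusion argument $\hat P_{\delta(F)} \subset Q(k)$. You invoke the decomposition $\hat P_{\Omega} = \hat N_{\Omega} U_{\Omega}$ from \cite[7.1.8, 7.4.4]{BT1}, but then only verify the $U_{\Omega}$ half: your computation with $\varphi_a(u) + \langle a, y\rangle \ge 0$ shows that the root-group contribution $U_{\delta(F)}$ lands in $Q(k)$, and it rules out $U_a \cap \hat P_{\delta(F)}$ for $\langle a, \lambda\rangle < 0$, but you never address $\hat N_{\delta(F)} = N_G(S)(k) \cap \hat P_{\delta(F)}$. Without this half, the inclusion is not established, and since the reverse direction gives $\ol{\hat P_{\delta(F)}} \supseteq Q$ with $Q$ a pseudo-parabolic, any overshoot would land you in a strictly larger pseudo-parabolic. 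The paper closes this by noting that an element $n \in \hat N_{\delta(F)}$ fixes both the vertex $x$ and a point $x + \mu$ on the ray, so the image of $n$ in $W(G,S)$ fixes $[\mu] \in V(G,S)$; since the Weyl group is finite and $X_*(Z(G))_{\RR}$ is $W$-fixed, this forces $w\mu = \mu$ in $X_*(S)_{\RR}$, hence $n \in Z_G(\mu)(k) \subset Q(k)$. You should add this argument (or an equivalent one) to make the inclusion complete.
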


\begin{proof}
	We give an indirect proof, namely we first construct a suitable pseudo-parabolic subgroup $Q$ then prove that, for any cone $F'$ parallel to $F$, the pointwise stabiliser $$\hat P_{\delta(F')} = \{g \in G(k), \forall x \in \delta(F'), gx = x\}$$ is Zariski-dense in $Q$.
	
	Let $S$ be a maximal split torus of $G$ such that the apartment $A(G, S)$ contains the core $\delta(F)$ of $F$ \cite[Définition 3.3.2]{Char}. Because the direction $\vec \delta(F)$ is a Weyl facet in $V(G, S)$ (Example \ref{ex:cores_in_FJ}) , it corresponds to a unique parabolic subset $\Psi$ of $\Phi(G, S)$, namely the intersection of the root system with the dual cone of $\vec \delta(F)$ $$\Psi = \Phi(G, S) \cap \vec \delta(F)^{\vee} = \{\alpha \in \Phi(G, S)\ |\ \forall x \in \vec \delta(F), \langle \alpha, x\rangle \ge 0\}.$$
	This subset is indeed parabolic as it is clear that, whenever $\alpha, \beta \in \Psi$ and $\alpha + \beta \in \Phi(G, S)$, we have $\alpha + \beta \in \Psi$ and, if $\alpha \in \Phi(G, S)$, then because the hyperplane $\{x \in V(G, S), \langle \alpha, x \rangle = 0\}$ is part of the hyperplane arrangement that defines $\Sigma(G, S)$, the root $\alpha$ assumes a constant sign on each facet of $\Sigma(G, S)$, hence either $\alpha \in \Psi$ or $-\alpha \in \Psi$. By \cite[Theorem C.2.15]{CGP}, the subset $\Psi$ then determines a unique pseudo-parabolic subgroup $Q$ of $G$ containing $S$ such that $\Psi$ is the set of $S$-weights in the Lie algebra of $Q$, namely $Q = P_G(\mu)$ for any cocharacter $\mu \in \vec \delta(F)$.
	
	We now prove that the pointwise stabiliser $\hat P_{\delta(F)}$ is Zariski-dense in $Q$. Fix a special vertex $o$ in $A(G, S)$ and the associated valuation $$(\varphi_{\alpha}: U_{\alpha}(k) \to \RR)_{\alpha \in \Phi(G,S)}$$ of the generating root datum $(Z_G(S)(k), (U_{\alpha}(k))_{\alpha \in \Phi(G,S)})$. Denote by $x$ the vertex of the cone $\delta(F)$. By \cite[Définition 7.1.8 and Proposition 7.4.4]{BT1}, the pointwise stabiliser $\hat P_{\delta(F)}$ is spanned by the subgroup $\hat N_{\delta(F)} = N_G(S)(k)\cap \hat P_{\delta(F)}$ and the root subgroups $U_{\alpha,f_{\delta(F)}(\alpha)	}$ for $\alpha \in \Phi$, where $$f_{\delta(F)}(\alpha) = \inf\{k \in \RR\,|\, \forall  z \in \delta(F), \langle \alpha, z-o \rangle + k\ge 0 \}  .$$ First of all, observe that if $n \in N_{\delta(F)}$, then $n$ fixes the vertex $x$ of $\delta(F)$ as well as the point $x+\mu$. Hence, the linear part of $n$ fixes $\mu$, that is $n$ commutes with $\mu$. Hence, we have the inclusion $\hat N_{\delta(F)} \subset Z_G(\mu)(k) \subset Q(k)$. Next, recall from \cite[Proposition 7.4.5]{BT1} that, if $\alpha \in \Phi(G,S)$ and $u \in U_{\alpha}(k)\setminus \{1\}$, then the set of points of $A(G, S)$ that are fixed under $u$ is the affine root $$\{z \in A(G, S), \langle \alpha, z-o \rangle + \varphi_{\alpha}(u) \ge 0\}.$$
	It follows that, if $\langle \alpha, \mu \rangle < 0$ and $u \in U_{\alpha}(k)\setminus \{1\}$, there exists $t > 0$ such that $x + t\mu$ is not fixed under $u$ and therefore $u$ does not fix $\delta(F)$. Hence, if $\langle \alpha, \mu \rangle < 0$, we have $U_{\alpha, \delta(F)} = \{1\}$. Therefore $\hat P_{\delta(F)}$ is spanned by a subgroup of $Z_G(\mu)(k)$ and subgroups of $U_{\alpha}(k)$ for $\langle \alpha, \mu \rangle \ge 0$, hence $\hat P_{\delta(F)} \subset Q(k)$. Consequently, the Zariski closure of $\hat P_{\delta(F)}$ lies in the Zariski closure of $Q(k)$, ie. $Q$ by Proposition \ref{Zariski_density_analytic_open}.
	On the other hand, if $\langle \alpha, \mu \rangle \ge 0$, we have $\langle \alpha, x-o \rangle = \inf \{ \langle \alpha, z-o\rangle, z \in \delta(F)\}$ and therefore $f_{\delta(F)}(\alpha) = \inf(\Gamma'_{\alpha} \cap [-\langle \alpha, x-o\rangle , +\infty[ ) < \infty$. Hence, for each nondivisible $\alpha \in \Phi(Q, S)$, the stabiliser $\hat P_{\delta(F)}$ contains the subgroup $U_{\alpha, f_{\delta(F)}(\alpha)}U_{2\alpha, f_{\delta(F)}(2\alpha)}$, which is the set of $k^{\circ}$-points of a smooth connected $k^{\circ}$-model of $U_{\alpha}$ by \cite[Lemme 4.4]{Lou}, and therefore Zariski-dense in $U_{\alpha}$ by Corollary \ref{Zariski_density_int_points}. Moreover, because the maximal compact subgroup $Z_G(S)(k)_b$ of $Z_G(S)(k)$ acts trivially on the apartment $A(G, S)$, it fixes $\delta(F)$ and is thus contained in $\hat N_{\delta(F)}$. Because $Z_G(S)(k)_b$ is the set of integral points of a smooth connected $k^{\circ}$-model of $Z_G(S)$ by \cite[Proposition 4.3]{Lou}, it is Zariski-dense in $Z_G(S)$. Consequently, the Zariski closure of $\hat P_{\delta(F)}$ contains $U_{\alpha}$ for each nondivisible $\alpha \in \Phi(Q, S)$ as well as $Z_G(S)$, hence it contains $Q$, which completes the proof.
	
	It is clear that the above argument is independent of the specific vertex $x \in A(G, S)$ of the affine cone $\delta(F)$. Hence, for any affine cone $F' \subset A(G, S)$ parallel to $F$, the pointwise stabiliser $\hat P_{\delta(F')}$ is Zariski-dense in $Q$.
	
	Lastly, if $\mf F \subset \mc B(G, k)$ is an affine cone that is parallel to $F$, then there exist parallel subcones $F' \subset F$ and $\mf F' \subset \mf F$ and an apartment $A(G, S')$ that contains both $\delta(F')$ and $\delta(\mf F')$. Then, several applications of the above result yield that the Zariski closures of the pointwise stabilisers of $\delta(F), \delta(F'), \delta(\mf F)$, and $\delta(\mf F')$ are all equal to $Q$.
	
\end{proof}

\begin{cor}\label{Transitivity_on_facade}
	For any maximal split torus $S'$ in $G$ such that $A(G, S')$ contains $\delta(F')$ for some cone $F'$ in $\mc B(G, k)$ parallel to $F$, we have the inclusion $S' \subset Q$.
\end{cor}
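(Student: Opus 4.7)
The plan is to exhibit a Zariski-dense subgroup of $S'$ that is contained in $Q(k)$, then take Zariski closures. The key input will be the Zariski density already established in Proposition \ref{Zariski_density_fixator}, together with Corollary \ref{Zariski_density_int_points} applied to the canonical $k^{\circ}$-model of the split torus $S'$.

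First, I would consider the maximal bounded subgroup $S'(k)_b$ of $S'(k)$. Since $S'$ is a $k$-split torus, it admits a canonical smooth connected $k^{\circ}$-model $\mc S'$ (a product of copies of $\mbb G_{m,k^{\circ}}$), and $S'(k)_b = \mc S'(k^{\circ})$. By Corollary \ref{Zariski_density_int_points}, this subgroup is Zariski-dense in $S'$. Moreover, $S'(k)_b$ is contained in $Z_G(S')(k)_b$, which acts trivially on the apartment $A(G, S')$, exactly as used in the proof of Proposition \ref{Zariski_density_fixator}. In particular, $S'(k)_b$ fixes $\delta(F')$ pointwise, so $S'(k)_b \subset \hat P_{\delta(F')}$.

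Next, by the last assertion of Proposition \ref{Zariski_density_fixator}, the subgroup $\hat P_{\delta(F')}$ is Zariski-dense in $Q$, hence in particular contained in $Q(k)$. Combining this with the previous paragraph gives $S'(k)_b \subset Q(k)$. Since $S'(k)_b$ is Zariski-dense in $S'$ and $Q$ is Zariski-closed in $G$, taking Zariski closures yields $S' \subset Q$, which is the desired inclusion.

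There is no substantive obstacle here: the statement is a direct consequence of Proposition \ref{Zariski_density_fixator} once one has a canonical Zariski-dense subgroup of $S'$ sitting inside the pointwise stabiliser of $\delta(F')$. The only minor subtlety is to ensure that the relevant subgroup of $S'(k)$ truly acts trivially on the whole apartment $A(G, S')$ (and not merely on some part of it), which is already built into the construction of the apartment via the identification of $S'(k)_b$ with a subgroup of $Z_G(S')(k)_b$ acting trivially by translations.
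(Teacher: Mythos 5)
Your proof is correct and follows essentially the same route as the paper: the paper shows $\hat P_{\delta(F')}$ contains $Z_G(S')(k)_b$, which is Zariski-dense in $Z_G(S')$ by Corollary \ref{Zariski_density_int_points}, so the Zariski closure $Q$ of $\hat P_{\delta(F')}$ contains $Z_G(S')\supset S'$. Your variant, working with $S'(k)_b$ inside $S'$ rather than $Z_G(S')(k)_b$ inside $Z_G(S')$, is an inessential cosmetic difference.
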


\begin{proof}
	The pointwise stabiliser $\hat P_{\delta(F')}$ contains the maximal compact subgroup $Z_G(S')(k)_b$ of $Z_G(S')$, which is the set of $k^{\circ}$-points of a smooth connected $k^{\circ}$-model of $Z_G(S')$ by \cite[Lemme 4.4]{Lou} and therefore Zariski-dense in $Z_G(S')$ by Corollary \ref{Zariski_density_int_points}. It follows that the Zariski closure $Q$ of $\hat P_{\delta(F')}$ contains $Z_G(S')$, and therefore $S'$.
\end{proof}

\subsubsection*{Action of the pseudo-parabolic subgroup on the façade}

We now deduce that $Q(k)$ acts on the façade $\mc B(G, k)_{F}$ through its quotient $(Q/\mc R_{us,k}(Q))(k) = Q(k)/\mc R_{us,k}(Q)(k)$, the first step towards identifying the façade with the Bruhat-Tits building of $Q/\mc R_{us,k}(Q)$.

\begin{prop}\label{Action_on_facade}
	The group $Q(k)$ is the stabiliser of the façade $\mc B(G, k)_{F}$ in $G(k)$. Moreover, the subgroup $\mc R_{us,k}(Q)(k)$ fixes $\mc B(G, k)_{F}$ pointwise.
\end{prop}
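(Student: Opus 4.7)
My plan has two parts, mirroring the two assertions. I first prove the pointwise fixing by $\mc R_{us,k}(Q)(k) = U_G(\mu)(k)$, using that $Q = P_G(\mu)$ for any $\mu \in \vec\delta(F)$ (Proposition \ref{Zariski_density_fixator}). To show $u \in U_G(\mu)(k)$ fixes the class of $F' \parallel F$, I will exhibit a subcone of $\delta(F')$ of direction $\vec\delta(F)$ that $u$ fixes pointwise; this subcone is then common to $\delta(F')$ and $u\delta(F') = \delta(uF')$, so parallelism $uF' \parallel F'$ follows from the framework of \cite[§5]{Char}, and intersection is clear, giving $uF' \sim F'$. To produce the subcone, I pick an apartment $A(G, S')$ containing $\delta(F')$ (so $S' \subset Q$ by Corollary \ref{Transitivity_on_facade}) and choose $\mu \in X_*(S')_{\QQ} \cap \vec\delta(F)$ with $Q = P_G(\mu)$, decomposing $u$ as a product of $u_\alpha \in U_\alpha(k)$ for $\alpha \in \Phi(G, S')$ with $\langle\alpha, \mu\rangle > 0$. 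Each $u_\alpha$ fixes the affine half-apartment $\{z : \langle\alpha, z - o\rangle + \varphi_\alpha(u_\alpha) \ge 0\}$ by \cite[Proposition 7.4.5]{BT1}. Since $\langle\alpha, v\rangle \ge 0$ for every $v \in \vec\delta(F)$ (both $\mu$ and $v$ lie in the closed positive Weyl chamber for which $\alpha$ is positive), translating the apex of $\delta(F')$ sufficiently far in the direction of $\mu$ places a whole subcone of direction $\vec\delta(F)$ inside every such half-apartment simultaneously.

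\textbf{The inclusion $\mathrm{Stab}_{G(k)}(\mc B(G,k)_F) \subset Q(k)$.} If $g$ stabilises $\mc B(G,k)_F$, then $gF \parallel F$, so $\hat P_{\delta(gF)} = g \hat P_{\delta(F)} g^{-1}$ is Zariski-dense in $Q$ by Proposition \ref{Zariski_density_fixator}; since $\hat P_{\delta(F)}$ has the same Zariski closure, $gQg^{-1} = Q$, hence $g \in N_G(Q)(k) = Q(k)$ by the self-normalising property of pseudo-parabolic subgroups \cite[Proposition 3.5.7]{CGP}.

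\textbf{The inclusion $\mathrm{Stab}_{G(k)}(\mc B(G,k)_F) \supset Q(k)$.} The Levi decomposition $Q = L_Q \ltimes \mc R_{us,k}(Q)$ with $L_Q = Z_G(\mu)$, combined with the pointwise-fixing result, reduces the remaining task to showing that $L_Q(k)$ sends every $F' \parallel F$ to a cone parallel to $F$. I would verify this on generators: split torus elements in $S'(k)$ act by translation on $A(G, S')$ and preserve every direction; root group elements $u_\alpha$ for $\alpha \in \Phi(L_Q, S') = \Phi \cap \vec\delta(F)^{\perp}$ (this identity holds because $\mu$ lies in the interior of $\vec\delta(F)$) satisfy $\langle\alpha, v\rangle = 0$ for all $v \in \vec\delta(F)$, so a half-apartment analysis analogous to the first step shows their action carries cones of direction $\vec\delta(F)$ to cones of the same direction; finally, representatives of $W(L_Q, S')(k)$ fix $\vec\delta(F)$ pointwise, since this Weyl subgroup is exactly the stabiliser of the Weyl facet $\vec\delta(F)$ in $W(G, S')$.

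The principal obstacle is this last step: translating the algebraic centralisation of $\mu$ into genuine preservation of $\vec\delta(F)$ at the level of the polyhedral compactification requires careful use of Charignon's parallelism formalism—in particular \cite[Corollaire 4.5.9]{Char}—to compare cores of cones across distinct apartments linked by the $L_Q(k)$-action, ensuring that the assembled generator-by-generator check genuinely yields preservation of the parallelism class in $\mc B(G, k)_F$.
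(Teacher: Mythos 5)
Your proof is correct, and one of its three components takes a genuinely different route from the paper's. The pointwise-fixing argument (exhibiting a common fixed subcone of $\delta(F')$ via half-apartments and translating far along $\mu$) and the generator-by-generator check for $Q(k) \subset \mathrm{Stab}$ are essentially the same as the paper's, the latter merely reorganised through the Levi decomposition (and the explicit inclusion of Weyl representatives among the generators of $L_Q(k)$ is redundant, since they are already products of root group and centraliser elements).

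The genuine divergence is in the maximality direction, $\mathrm{Stab}_{G(k)}(\mc B(G,k)_F) \subset Q(k)$. You argue: $g$ stabilises the façade $\Rightarrow gF \parallel F$ $\Rightarrow$ by Proposition \ref{Zariski_density_fixator} the Zariski closure of $\hat P_{\delta(gF)} = g\hat P_{\delta(F)}g^{-1}$ is $Q$, but it is also $gQg^{-1}$, so $gQg^{-1} = Q$, whence $g \in N_G(Q)(k) = Q(k)$ by \cite[Proposition 3.5.7]{CGP}. The paper instead invokes \cite[Theorem C.2.23]{CGP} to present $\mathrm{Stab}$ as $Q'(k)$ for a pseudo-parabolic $Q' \supsetneq Q$, then derives a contradiction by producing a Weyl element $m \in Q'(k)$ lifting a reflection $s_\alpha$ with $\langle \alpha, \mu\rangle < 0$, which cannot preserve $\vec\delta(F)$ and hence maps $[F]$ out of the façade. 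Your argument is more direct: it re-uses Proposition \ref{Zariski_density_fixator} twice and bypasses the Tits-system correspondence between overgroups of a minimal pseudo-parabolic and pseudo-parabolic subgroups, at the modest cost of invoking self-normalisation. Both are valid; yours arguably fits more naturally with the infrastructure built earlier in the section.

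Two small imprecisions in your write-up worth noting: "$\mu \in \vec\delta(F)$" should be "$\mu$ in the relative interior of $\vec\delta(F')$" (a boundary cocharacter would give a strictly larger pseudo-parabolic), and when working in an apartment $A(G, S')$ with $S' \ne S$ you write $\vec\delta(F)$ where $\vec\delta(F')$ is meant; the directions live in different vector spaces $V(G,S)$ and $V(G,S')$ and are only identified after choosing an isomorphism of Coxeter complexes. These are notational and do not affect the substance.
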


\begin{proof}
	It follows from the previous proposition that, if $F'$ is an affine cone in an apartment $A(G, S')$ such that $\delta(F')\parallel\delta(F)$, then we have $S' \subset Q$. We check that $Q(k)$ maps the façade $A(G, S')_{F'}$ into $\mc B(G, k)_{F}$. To this end, it is sufficient to check that the centraliser $Z_G(S')(k)$ and the root groups $U_{\alpha}(k)$ for $\alpha \in \Phi(Q, S')$ all map $A(G, S')_{F'}$ into $\mc B(G, k)_{F}$. Let $\mu \in X_*(S')$ be such that $Q = P_G(\mu)$, so that $$\Phi(Q, S') = \{\alpha \in \Phi(G, S')\,|\, \langle \alpha, \mu \rangle \ge 0\}.$$ First of all, the subgroup $Z_G(S')(k)$ acts on $A(G, S')$ by translations, and therefore permutes affine cones in $A(G, S')$ parallel to $\delta(F')$, hence $Z_G(S')(k)$ maps $A(G, S')_{F'}$ to itself. Next, we deal with root groups. Let $\alpha \in \Phi(G, S')$ such that $\langle \alpha, \mu \rangle \ge 0$ and $u \in U_{\alpha}(k)$. By \cite[Proposition 7.4.5]{BT1}, there exists a half-space $H$ in $A(G, S')$ with direction $$\vec H = \{x \in A(G, S')\,|\, \langle \alpha, x\rangle \ge 0\}$$ that $u$ fixes pointwise. Denote by $x'$ the vertex of $F'$. There exists $v \in X_*(S')$ such that $x+v \in H$ and therefore $x+v+\vec\delta(F') \subset H$. Then, because $u$ induces an affine map from $A(G, S')$ to $A(uS'u^{-1}, k)$ that fixes $H$, the cone $u(\delta(F')) = \delta(u(F'))$ is parallel to $u(x+v+\vec \delta(F')) = x+v+\vec \delta(F')$, and therefore to $\delta(F')$. Therefore, $u(F')$ is parallel to $F'$ and we have $u([F']) \in \mc B(G, k)_{F}.$ Moreover, if $\langle \alpha, \mu \rangle > 0$, then we may choose the cocharacter $v$ of the form $n \mu$ for some sufficiently large $n \in \NN$. Then, the cone $x+n\mu+\vec \delta(F')$ is a parallel subcone of $F'$ that is fixed by $u$, hence, in that case, $u$ fixes the equivalence class $[F']$. 
	
	We have thus proved that the group $Q(k)$ leaves $\mc B(G, k)_{F}$ globally invariant and that the subgroup $\mc R_{us,k}(Q)(k)$, which is spanned by the $U_{\alpha}(k)$ for $\langle \alpha, \mu \rangle > 0$, fixes $\mc B(G, k)_{F}$ pointwise.
	
	Assume that the stabiliser $\Stab_{G(k)}(\mc B(G, k)_F)$ properly contains $Q(k)$. Then, by \cite[Theorem C.2.23]{CGP}, there exists a pseudo-parabolic subgroup $Q' \supsetneq Q$ such that $$Q'(k) = \Stab_{G(k)}(\mc B(G, k)_F).$$
	Then, there exists a root $\alpha \in \Phi(Q', S)$ such that $\alpha \not \in \Phi(Q, S)$, ie. $\langle \alpha, \mu \rangle < 0$. Then, we have $-\alpha \in \Phi(Q, S) \subset \Phi(Q', S)$, hence $Q'(k)$ contains the root groups $U_{\alpha}(k)$ and $U_{-\alpha}(k)$. Consequently, by \cite[Proposition C.2.24]{CGP}, the subgroup $Q'(k)$ contains an element $m \in N_G(S)(k)$ that lifts the reflection $s_{\alpha} \in W(G,S)(k)$ with respect to the hyperplane $H_{\alpha} = \{x \in V(G, S)\, |\,  \langle \alpha, x \rangle = 0\}$. But the cone $\vec \delta(F)$ is contained in the open half-space $\{x \in V(G, S)\, |\, \langle \alpha, x\rangle < 0\}$ and is thus not invariant under $s_{\alpha}$, hence $m$ maps $\delta(F)$ to a cone with direction $s_{\alpha}(\vec \delta(F)) \ne \vec \delta(F)$, hence $m$ maps the equivalence class $[F] \in \mc B(G, k)_F$ outside of the façade $\mc B(G, k)_F$, a contradiction.
\end{proof}

\subsubsection*{Recovering the façade}

Lastly, we identify the façade $\mc B(G, k)_{F}$ with the Bruhat-Tits building of $Q/\mc R_{us,k}(Q)$ (under the guise of a Levi subgroup $L_Q$ of $Q$).

\begin{prop}\label{Identifying_facade}
	Let $S$ be a maximal split torus such that $\delta(F) \subset A(G, S)$ and $L_Q$ be the Levi subgroup of $Q$ that contains $S$. Let $f = F \cap A(G, S)$.
	\begin{enumerate}
		\item There exists a unique continuous $L_Q(k)$-equivariant surjection $$p: L_Q(k) \cdot A(G, S) \epi \mc B(G, k)_{F}$$ that extends the projection $A(G, S) \epi A(G, S)_f$.
		\item The map $p$ factors through a $L_Q(k)$-equivariant simplicial homeomorphism $$\bar p: \mc B(L_Q, k) \overset{\sim}{\to} \mc B(G, k)_{F}.$$
	\end{enumerate}
\end{prop}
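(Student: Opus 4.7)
The plan begins by canonically identifying the façade $A(G, S)_f$ with the standard apartment $A(L_Q, S)$ of $\mc B(L_Q, k)$. Since $\vec{\delta(F)}$ is a Weyl facet corresponding to $Q$ (Example \ref{ex:cores_in_FJ}), the set $\Phi(L_Q, S)$ consists of those $\alpha \in \Phi(G, S)$ vanishing on $\Span_{\RR}(\vec f)$, and dualising yields a canonical affine isomorphism $A(L_Q, S) \simeq A(G, S)_f$ through the identification $V(L_Q, S) \simeq V(G, S)/\Span_{\RR}(\vec f)$. Moreover, as the valuated root datum of $L_Q$ along $S$ is the restriction of that of $G$ to $\Phi(L_Q, S)$, this identification carries the affine roots of $A(L_Q, S)$ to the descents of the affine roots of $A(G, S)$ attached to $\Phi(L_Q, S)$, making it an isomorphism of valued Coxeter complexes.

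I would construct $\bar p$ first. Define it on the apartment by $\bar p(\bar x) := [x + \vec f]$ for any lift $x \in A(G, S)$ of $\bar x$ (well-defined because distinct lifts differ by $\Span_{\RR}(\vec f)$), and extend by $L_Q(k)$-equivariance using the action of Proposition \ref{Action_on_facade}. Since $\mc B(L_Q, k)$ is the quotient of $L_Q(k) \times A(L_Q, S)$ by the equivalence relation determined by pointwise fixators, this yields a well-defined continuous map $\bar p \colon \mc B(L_Q, k) \to \mc B(G, k)_F$ provided the fixator $\hat P_{\bar x}^{L_Q}$ in $L_Q(k)$ of $\bar x \in A(L_Q, S)$ fixes the class $[x + \vec f]$ in $\mc B(G, k)_F$. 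This fixator is generated by $N_{L_Q}(S)(k) \cap \hat P_{\bar x}^{L_Q}$ and the root subgroups $U_{\alpha, f_{\bar x}(\alpha)}$ for $\alpha \in \Phi(L_Q, S)$. For the root-subgroup contribution: as $\alpha$ vanishes on $\Span_{\RR}(\vec f)$, the fixed half-space in $A(G, S)$ of any nontrivial element has a direction containing $\Span_{\RR}(\vec f)$ and thus contains the whole cone $x + \vec f$, fixing its class pointwise. For the normaliser contribution: each such $n$ satisfies $nx - x \in \Span_{\RR}(\vec f)$ and acts on $A(G, S)$ with linear part in $W(L_Q, S)$, which fixes $\Span_{\RR}(\vec f)$ pointwise and hence preserves the cone $\vec f$, so that $n(x + \vec f) = nx + \vec f$ remains in the class $[x + \vec f]$. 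The map $p$ of Part 1 is then defined as the composition of the natural projection $L_Q(k) \cdot A(G, S) \epi \mc B(L_Q, k)$, $lx \mapsto l\bar x$ (well-defined by the analogous containment $\hat P_x^{L_Q} \subset \hat P_{\bar x}^{L_Q}$), with $\bar p$; it is continuous, $L_Q(k)$-equivariant, restricts to the projection $A(G, S) \to A(G, S)_f$, and is unique with these properties by equivariance.

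For surjectivity of $\bar p$: by Corollary \ref{Transitivity_on_facade}, every class in $\mc B(G, k)_F$ has a representative with core in some apartment $A(G, S')$ with $S' \subset Q$, and using the decomposition $Q(k) = L_Q(k) \cdot \mc R_{us,k}(Q)(k)$, the $Q(k)$-conjugacy of maximal split tori in $Q$, and the triviality of the $\mc R_{us,k}(Q)(k)$-action on the façade (Proposition \ref{Action_on_facade}), one conjugates by an element of $L_Q(k)$ to reduce to $S' = S$. Injectivity: given $\bar p(y_1) = \bar p(y_2)$, find a common apartment in $\mc B(L_Q, k)$ containing both $y_i$ via the building axiom, reduce by $L_Q(k)$-equivariance to $y_1, y_2 \in A(L_Q, S)$, and apply the identification of paragraph 1. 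Finally, $\bar p$ is a continuous equivariant bijection restricting to an isomorphism of polysimplicial Coxeter complexes on each apartment; since the building topology on both sides arises as the quotient of $L_Q(k) \times A(L_Q, S)$ by equivalence relations that coincide under $\bar p$ (because the pointwise fixators agree by the analysis of paragraph 2), $\bar p$ descends to the identity on these quotients and is therefore a homeomorphism. The most delicate point throughout is the fixator computation, which is what ties the valuated root datum of $L_Q$ together with the cone geometry on the façade.
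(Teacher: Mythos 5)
Your proof takes a genuinely different route from the paper's. The paper constructs $p$ first, as the limit $p(x) = \lim_{t\to+\infty}(x + t\mu)$ in the sense of \cite{Char}, shows it is invariant under the auxiliary $V$-action of \cite[Proposition 7.6.4]{BT1}, and then factors it through the essentialisation map $\tilde\pi: L_Q(k)\cdot A(G, S)\to \mc B(L_Q, k)$; all the fixator information that you compute by hand is thereby outsourced to pre-existing Bruhat--Tits machinery. You instead construct $\bar p$ directly on the apartment and extend by equivariance, verifying well-definedness by an explicit pointwise-fixator computation in $L_Q(k)$, splitting the fixator of $\bar x$ into root-group and normaliser contributions and checking each preserves the class $[x + \vec f]$. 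This is a legitimate, arguably more hands-on, argument, and your treatments of surjectivity (via the decomposition $Q(k) = L_Q(k)\mc R_{us,k}(Q)(k)$ and the triviality of the unipotent action) and injectivity (via a common apartment and equivariance) parallel the paper's. One caveat worth making explicit, because both your argument and the paper's lean on it implicitly: the characterisation of $\Phi(L_Q, S)$ should be stated in terms of vanishing on the core $\Span_{\RR}(\vec{\delta(F)})$, which is what $Q$ sees; that this agrees with vanishing on $\Span_{\RR}(\vec f)$ is exactly the equality $\Span_{\RR}(\vec f) = \ker\bigl(V(G,S)\to V(L_Q,S)\bigr)$ that makes the dimensions match, and your normaliser step (that $W(L_Q, S)$ fixes $\Span_{\RR}(\vec f)$ pointwise) depends on it.

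The one genuine gap is in the final homeomorphism step. You assert that the topologies on $\mc B(L_Q, k)$ and on $\mc B(G, k)_F$ both arise as quotients of $L_Q(k) \times A(L_Q, S)$ by equivalence relations that coincide under $\bar p$. Two points require justification. First, the topology on the façade $\mc B(G, k)_F$ is defined in \cite{Char} intrinsically, as a subspace of the compactification $\ol{\mc B(G, k)}^{\mc F^J}$; identifying this with the quotient topology of $L_Q(k)\times A(L_Q, S)$ is not automatic and is in fact close to the substance of what one wants to prove. Second, the assertion that the fixators agree requires the full equality $\hat P_{\bar x}^{L_Q} = \Stab_{L_Q(k)}\bigl([x + \vec f]\bigr)$; your paragraph-two analysis establishes only the inclusion $\hat P_{\bar x}^{L_Q}\subset \Stab_{L_Q(k)}([x+\vec f])$, and deducing the reverse inclusion from injectivity of $\bar p$ would make the homeomorphism argument circular. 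The paper's approach, using the compactness of $L_Q(k)_x/Z(L_Q)(k)$ together with the coarse Cartan decomposition $\mc B(L_Q, k) = \bigl(L_Q(k)_x/Z(L_Q)(k)\bigr)\cdot A(L_Q, S)$ and a subsequence-extraction argument, sidesteps both issues by directly proving $\bar p$ and $\bar p^{-1}$ continuous. You should also note that your simpliciality claim relies on the identification of paragraph one being an isomorphism of \emph{valued} Coxeter complexes, which hides the wall-comparison argument the paper carries out via \cite[Proposition 7.6.4 (ii)]{BT1} and \cite[Section 3.3.4]{Char}.
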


\begin{proof}
	\begin{enumerate}
		\item Uniqueness follows directly from the $L_Q(k)$-equivariance of $p$. Our goal is now to prove that the $L_Q(k)$-equivariant map $$ \begin{array}{ccc}
		p: L_Q(k) \cdot A(G, S) & \longto & \mc B(G, k)_F \\
		x & \longmapsto & \lim\limits_{t \to +\infty} \left( x + t\mu  \right)
		\end{array}$$ is well-defined, where the operator $+$ denotes the action of the vector space $$V = \{x \in V(G, S)\, |\, \forall \alpha \in \Phi(G,S), \langle \alpha, \mu \rangle = 0 \implies \langle \alpha, x\rangle = 0\} $$ on $L_Q(k) \cdot A(G, S)$ defined in \cite[Proposition 7.6.4 (iii)]{BT1} and described in the Appendix.

        Let $S'$ be a maximal split torus in $L_Q$ and $x \in A(G, S')$. For each root $\alpha \in \Phi(G, S')$, we have $$\begin{array}{ccc}\langle\alpha, x + t\mu \rangle \underset{t \to +\infty}{\longto} +\infty &\text{ if } &\langle \alpha, \mu \rangle > 0\end{array}$$ and $$\begin{array}{ccc}\langle \alpha, x+t\mu \rangle \underset{t \to +\infty}{\longto}  \langle \alpha, x\rangle &\text{ if } & \langle \alpha, \mu\rangle =0\end{array}$$ hence, by \cite[Proposition 3.2.4]{Char} , the limit $p(x) = \lim_{t \to +\infty} (x+t\mu)$ exists and lies in the façade $A(G, S')_{\vec f'}  \subset \mc B(G, k)_F$, where $\vec f' \subset X_*(S')_{\RR}$ is the cone in the fan $\mc F^P$ that contains $\vec \delta(F)$. More precisely, for each $S' \subset L_Q$, the restriction of $p$ to the apartment $A(G, S')$ is the projection $x \mapsto [x+\vec f']$ of $A(G, S')$ onto its façade of type $\vec f'$ \cite[Définition 3.1.3]{Char}.
		In particular, the image of the map $p$ is the union $\bigcup_{S' \subset L_Q} A(G, S')_{\vec f'}$ over maximal split tori $S' \subset L_Q$. To prove that this union is the entire façade $\mc B(G, k)_F$, recall from \cite[Proposition 8.1.5]{Char} that the façade $\mc B(G, k)_F$ is the union of the façades $A(G, S')_{f'}$ for apartments $A(G, S')$ containing a cone $f'$ such that $\tilde f'\parallel F$. Let $A(G, S')$ be an apartment containing a cone $f'$ such that $\tilde f'\parallel F$. By Corollary \ref{Transitivity_on_facade}, there exists $q \in Q(k)$ such that $A(G, S') = qA(G, S)$. Then, because $Q(k)$ maps $\mc B(G, k)_F$ to itself, we must have $A(G, S')_{f'} = qA(G, S)_f$. Write $q = mu$ with $m \in L_Q(k)$ and $u \in \mc R_{u,k}(Q)(k)$. Because $\mc R_{u,k}(Q)(k)$ acts trivially on the $\mc B(G, k)_F$ by Proposition \ref{Action_on_facade}, we then have $A(G, S')_{f'} = mA(G, S)_f$. Hence, we have $$\mc B(G, k)_F = L_Q(k) \cdot A(G, S)_f$$ and the map $p$ is therefore surjective.
		Finally, the map $p$ is $L_Q(k)$-equivariant because the $V$-action on $L_Q(k) \cdot A(G, S)$ is continuous and commutes with the action of $L_Q(k)$.
		\item Recall from \cite[Proposition 7.6.4 (i)]{BT1} that we have a canonical $L_Q(k)$-equivariant map $\tilde \pi: L_Q(k) \cdot A(G, S) \epi \mc B(L_Q, k)$ that extends the projection $A(G, S) \epi A(L_Q, S)$. Moreover, the map $\tilde \pi$ is invariant under the action of $V$ on $L_Q(k) \cdot A(G, S)$ and it follows from \cite[Proposition 7.6.4 (iii)]{BT1} that any $V$-invariant map with domain $L_Q(k)\cdot A(G, S)$ factors through $\tilde \pi$. Now, observe that, if $S' \subset L_Q$ is a maximal split torus, $x \in A(G, S')$, $\mu \in X_*(S')$ such that $Q = P_G(\mu)$,  and $v \in V $, then for each $\alpha \in \Phi(G, S')$, we have $$\begin{array}{ccc}\langle\alpha, x + v + t\mu \rangle \underset{t \to +\infty}{\longto} +\infty &\text{ if } &\langle \alpha, \mu \rangle > 0\end{array}$$ and $$\begin{array}{ccc}\langle \alpha, x+v+t\mu \rangle \underset{t \to +\infty}{\longto}  \langle \alpha, x\rangle &\text{ if } & \langle \alpha, \mu\rangle =0\end{array},$$ and therefore $p(x+v) = p(x)$. The map $p$ is thus invariant under $V$ and, by the above discussion, there exists a map $$\ol p: \mc B(L_Q, k) \to \mc B(G, k)_F$$ such that $p$ admits the factorisation
		\begin{center}
			\begin{tikzcd}
				L_Q(k) \cdot A(G, S) \arrow{dr}{\tilde \pi} \arrow{rr}{p} && \mc B(G, k)_F\\
				& \mc B(L_Q, k) \arrow{ur}{\ol p}
			\end{tikzcd}.
		\end{center}
		The map $\ol p$ is surjective because $p$ is, and $L_Q(k)$-equivariant because $p$ and $\tilde \pi$ are $L_Q(k)$-equivariant and surjective. The restriction of $\ol p$ to the apartment $A(L_Q, S)$ is a surjective affine map between affine spaces of the same dimension, and therefore one-to-one. The $L_Q(k)$-equivariance of $\ol p$ then implies that its restriction to each apartment is one-to-one, and therefore that $\ol p$ is globally one-to-one -- as any two points in $\mc B(L_Q, k)$ are contained in an apartment -- and finally that $\ol p$ is bijective.
		To prove that $\ol p$ is a homeomorphism, we first observe that the restriction of $\ol p$ to $A(L_Q, S)$, a bijective affine map, is a homeomorphism onto its image. Next, we recall that if $x \in A(L_Q, S)$, then, by \cite[Theorem 3.11 	(d) and (e)]{Sol}, we have $$\mc B(L_Q, k) = L_Q(k)_x \cdot A(L_Q, S).$$ Further, because $Z(L_Q)(k)$ acts trivially on the building $\mc B(L_Q, k)$ and is contained and cocompact in the stabiliser $L_Q(k)_x$ by \cite[Proposition 3.9 (a) and (b)]{Sol}, we may write $$\mc B(L_Q, k) = \underbrace{\frac{L_Q(k)_x}{Z(L_Q)(k)}}_{\text{compact}} \cdot A(L_Q, S).$$
		Now, because $\bar p$ is $L_Q(k)$-equivariant, the action of $Z(Q)(k)$ on $\mc A(G, S)_f$ is also trivial and we also have $$\mc B(G, k)_F = \frac{L_Q(k)_x}{Z(L_Q)(k)} \cdot A(G, S)_f.$$
		Let $(u_n)_{n \in \NN}$ be a sequence of points in $\mc B(L_Q, k)$ that converges to a point $u \in \mc B(L_Q, k)$ and, for each $n \in \NN$, write $u_n = k_n x_n$ with $k_n \in L_Q(k)_x/Z(L_Q)(k)$ and $x_n \in A(G, S)$. Let $\varphi: \NN \to \NN$ be an increasing function. Then, because $L_Q(k)_x/Z(L_Q)(k)$ is compact, we may extract from $k_{\varphi(n)}$ a subsequence $k_{\varphi\circ \psi(n)}$ that converges to a point $k \in L_Q(k)_x/Z(L_Q)(k)$. Then, the sequence $x_{\varphi \circ \psi(n)}$ converges to $x = k^{-1} u$ and, by continuity of the $L_Q(k)$-action and of the restriction of $\ol p$ to $A(G, S)$, we may write $$\ol p(u_{\varphi \circ \psi(n)}) = k_{\varphi\circ \psi(n)} \ol p(x_{\varphi \circ \psi(n)}) \underset{n \to \infty}{\longto} k \ol p(x) = \ol p(u).$$
		We have thus proved that, from any subsequence of $(\ol p(u_n))$ we may extract a subsubsequence that converges to $\ol p(u)$, hence $(\ol p(u_n))$ converges to $\ol p(u)$. Hence, the map $\ol p$ is continuous. Because the above argument only relied on $L_Q(k)$-equivariance and the continuity of the restriction to an apartment, it also applies to $(\ol p)^{-1}$, hence $\ol p$ is a homeomorphism.
		
		Lastly, we check that $\ol p$ is simplicial, which amounts to proving that it maps walls in $\mc B(L_Q, k)$ to walls in $\mc B(G, k)_F$. By $L_Q(k)$-equivariance, it is sufficient to check this property in our distinguished apartment $A(L_Q, S)$. By \cite[Section 3.3.4]{Char}, the walls in $A(G, S)_f$ are the images under $p$ of the walls in $A(G, S)$ whose direction contains the cone $\vec f$ or, equivalently the subspace $V =  \Span_{\RR}(\vec f)$. Now, if $H \subset A(G, S)$ is a wall, then we have $\ol p(H) = p(\tilde \pi^{-1}(H))$. By \cite[Proposition 7.6.4 (ii)]{BT1}, the affine subspace $\tilde \pi^{-1}(H)$ is a wall of $A(G, S)$ and its direction contains $V$, as the linear part of $\tilde \pi$ is the quotient map $V(G, S) \epi V(L_Q, S) = V(G, S)/V$. Hence, the affine subspace $\ol p(H)$ is a wall in $A(G, S)_f$, which completes the proof.
	\end{enumerate}
\end{proof}

\subsubsection*{Relevant pseudo-parabolic subgroups}
We have thus proved that the stratification of $\ol{\vtheta_P(\mc B(G, k))}$ is indexed by pseudo-parabolic subgroups of $G$. In this last section, we characterise those pseudo-parabolic subgroups that give rise to a stratum in terms of $P$, thereby completing our description.

Fix a maximal split torus $S$ of $G$ and a minimal pseudo-parabolic subgroup $P_0$ such that $S \subset P_0 \subset P$. Let $\Phi = \Phi(G, S)$ be the root system associated to $S$ and $\Delta \subset \Phi$ be the system of simple roots corresponding to $P_0$. We endow $\Delta$ with its Dynkin diagram structure. All subsets of $\Delta$ thus inherit an induced graph structure. Denote by $J$ the type of $P$, that is the set of simple roots $ J = \Delta \cap -\Phi(P, S)$. We have proved in Section \ref{Comparison_compactifications} that $\ol{\vtheta_P(\mc B(G, k))}$ is the polyhedral compactification in the sense of \cite{Char} associated with the fan $\mc F^J$ of \cite[Définition 2.5.2]{Char}.

In keeping with the terminology of \cite{RTW1}, we give the following definition:

\begin{defn}\label{def:relevant_pseudo-par}
	Let $Q$ be a pseudo-parabolic subgroup in $G$. By \cite[Theorem 5.1.3 and Proposition 5.1.4 (iii)]{CP2}, there exists a unique pseudo-parabolic subgroup $Q$ in the conjugacy class of $Q$ such that $P_0 \subset Q$. Then, we say that $Q$ is \textit{$J$-relevant} if there exists $I \subset \Delta$, none of whose connected components is contained in $J$, such that the type of $Q'$ is $I \sqcup (J \cap I^{\perp})$.
\end{defn}

Then, we have 

\begin{prop}
	Let $Q$ be a pseudo-parabolic subgroup in $G$. The following are equivalent:
	\begin{enumerate}
		\item The subgroup $Q$ is $J$-relevant.
		\item There exists a cone $F$ in $(\mc F^J)_{\mc B(G, k)}$ such that $Q(k)$ is the stabiliser in $G(k)$ of the stratum $\mc B(G, k)_F$.
	\end{enumerate}
\end{prop}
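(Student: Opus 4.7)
The plan is to unravel both properties into a common language of types of Weyl facets and pass between them via the construction of the fan $\mc F^J$ laid out in Example \ref{ex:cores_in_FJ} and Proposition \ref{prop:gp_interpretation_FJ}. The key bridge is Proposition \ref{Zariski_density_fixator}: for a cone $F \in (\mc F^J)_{\mc B(G,k)}$ whose core lies in $A(G,S)$, the stabiliser is $Q = P_G(\mu)$ for any $\mu \in \vec{\delta(F)}$, and this identifies the Levi type of $Q$ with the set of roots vanishing on $\vec{\delta(F)}$.

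For the implication $(2) \Rightarrow (1)$, I would begin with a cone $F$ stabilised by $Q(k)$. Using $G(k)$-transitivity on pairs (apartment, cone in that apartment), together with $W(G,S)(k)$-equivariance via lifts in $N_G(S)(k)$, I reduce to the case where $\delta(F) \subset A(G,S)$ and $\vec{\delta(F)}$ sits in the closure of the standard Weyl chamber $C$ corresponding to $P_0$. By Example \ref{ex:cores_in_FJ}, $\vec{\delta(F)}$ is then a face of $C$ of type $I \sqcup (J \cap I^\perp)$ for some admissible subset $I \subset \Delta$ (admissibility meaning $I$ contains no connected component of $\Delta$, by the definition of $\mc F^J$). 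Proposition \ref{Zariski_density_fixator} then identifies $Q$ with $P_G(\mu)$ for $\mu$ in the relative interior of this face, so the type of $Q$ is exactly $I \sqcup (J \cap I^\perp)$; since $P_0 \subset Q$, this $Q$ is the canonical representative of its conjugacy class containing $P_0$, and $Q$ is $J$-relevant by Definition \ref{def:relevant_pseudo-par}.

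For the converse $(1) \Rightarrow (2)$, suppose $Q$ is $J$-relevant, so after conjugating we may assume $P_0 \subset Q$ and the type of $Q$ equals $I \sqcup (J \cap I^\perp)$ for some admissible $I \subset \Delta$. Let $f'$ be the face of $C$ of type $I$; by Example \ref{ex:cores_in_FJ} the cone $\vec g := J \cdot f'$ belongs to $\mc F^J$ and its core is precisely the face of $C$ of type $I \sqcup (J \cap I^\perp)$. Pick any vertex $x \in A(G,S)$, set $f = x + \vec g \in (\mc F^J)_{A(G,S)}$, and take $F = \tilde f \in (\mc F^J)_{\mc B(G,k)}$. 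Then $\vec{\delta(F)}$ is the face of $C$ of type $I \sqcup (J \cap I^\perp)$, and Proposition \ref{Zariski_density_fixator} together with Proposition \ref{Action_on_facade} identifies the stabiliser of $\mc B(G,k)_F$ with $P_G(\mu) = Q$ for any $\mu$ in the relative interior of $\vec{\delta(F)}$.

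The only obstacle worth flagging is the bookkeeping step that the Levi type of $P_G(\mu)$, for $\mu$ lying in the relative interior of the Weyl facet of type $T \subset \Delta$, is exactly $T$; this is standard (it is the set of simple roots vanishing on $\mu$, and $P_G(\mu)$ is determined by $\{\alpha \in \Phi : \langle \alpha, \mu\rangle \ge 0\}$), but it is the point where the translation between the combinatorics of $\mc F^J$ and the type of a pseudo-parabolic actually happens, so it should be recorded carefully.
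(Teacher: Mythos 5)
Your argument follows the same route as the paper's proof: reduce by conjugacy invariance, use Propositions \ref{Zariski_density_fixator} and \ref{Action_on_facade} to identify the pseudo-parabolic $Q$ with $P_G(\mu)$ for $\mu$ in the direction of the core of $F$, and translate everything into types of Weyl facets. The substantive issue is in the bridge between Example \ref{ex:cores_in_FJ} and Definition \ref{def:relevant_pseudo-par}. Example \ref{ex:cores_in_FJ} calls a face of type $I$ admissible when $I$ contains no connected component of $\Delta$, whereas Definition \ref{def:relevant_pseudo-par} asks that no connected component \emph{of $I$} be contained in $J$. These are genuinely different: in type $A_2$ with $J=\{\alpha\}$, $I=\{\alpha\}$ satisfies the first but not the second; conversely $I=\Delta$ satisfies the second but not the first whenever $\Delta$ is connected. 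They also produce different collections of core types: in type $A_4$ with $J=\{\alpha_1,\alpha_2,\alpha_4\}$, the Example-admissible $I=\{\alpha_1\}$ gives $I\cup(J\cap I^\perp)=\{\alpha_1,\alpha_4\}$, which is not $J$-relevant according to Definition \ref{def:relevant_pseudo-par} and its reformulation in the proposition that follows. So when you write that $\vec{\delta(F)}$ has type $I\sqcup(J\cap I^\perp)$ ``for some admissible $I$'' and that therefore $Q$ is $J$-relevant ``by Definition \ref{def:relevant_pseudo-par}'', the conclusion does not actually follow; symmetrically, in the converse direction the face $f'$ of type $I$ from the Definition's $I$ need not be admissible in the Example's sense, so $J\cdot f'$ is not guaranteed to be a cone of $\mc F^J$ as constructed there.

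The paper avoids this by citing \cite[Section~3.3.2]{Char} directly for the characterisation that the directions of cores of cones in $(\mc F^J)_{A(G,S)}$ are precisely the Weyl facets of type $I\cup(J\cap I^\perp)$ with $I$ having no connected component contained in $J$ --- exactly the condition Definition \ref{def:relevant_pseudo-par} is calibrated against. You should either appeal to that citation as the paper does, or insert a lemma reconciling the two admissibility conditions. (It is worth noting that Example \ref{ex:cores_in_FJ}'s condition, read literally, is already in tension with the fan axiom $\{0\}\in\mc F^J$ for connected $\Delta$, which strongly suggests it is an imprecision in the paper and was meant to agree with the Definition; but your proof cannot simply take that for granted.)
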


\begin{proof}
	Because both statements are invariant under $G(k)$-conjugacy, we may assume that $Q$ contains $P_0$. Then, the equivalence essentially follows from the second example in \cite[Section 3.3.2]{Char}. Namely, the proof of Proposition \ref{Zariski_density_fixator} and Proposition \ref{Action_on_facade} imply that $Q$ is the stabiliser of a façade $\mc B(G, k)_F$ for some $F \in (\mc F^J)_{\mc B(G, k)}$ if and only if, for any  apartment $A(G, S')$ with $S' \subset Q$, the Weyl facet $$\vec \delta  = \{x \in V(G, S')\, |\, \forall \alpha \in \Phi(Q, S'), \langle \alpha, x \rangle \ge 0 \}$$ is the direction of the core of an affine cone in $(\mc F^J)_{A(G, S')}$.
	
	Now, by \cite[Section 3.3.2]{Char}, the directions of the cores of the cones in $(\mc F^J)_{A(G, S)}$ are precisely the Weyl facets whose type is of the form $I \cup (J \cap I^{\perp})$ for some $I \subset \Delta$ none of whose connected component is contained in $J$, from which we deduce both implications.
\end{proof}

We have thus proved the following theorem.

\begin{thm}\label{thm:stratification}
	The compactification $\ol{\vtheta_P(\mc B(G, k))}$ is $G(k)$-equivariantly homeomorphic to the disjoint union $$\ol{\vtheta_P(\mc B(G, k))} = \bigsqcup_{\substack{Q \subset G\\Q \,J\text{-relevant pseudo-parabolic}}} \mc B(Q/\mc R_{us,k}(Q), k).$$
	For each $J$-relevant pseudo-parabolic subgroup $Q$ of $G$, the group $Q(k)$ is the stabiliser of the stratum $\mc B(Q/\mc R_{us,k}(Q), k)$.
\end{thm}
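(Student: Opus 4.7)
The plan is to assemble the machinery developed throughout Section \ref{sect:Boundary}, which has already broken the theorem into manageable pieces. The key inputs are: (i) Proposition \ref{prop:comparison_compactifications}, which identifies $\ol{\vtheta_P(\mc B(G, k))}$ with the Charignon polyhedral compactification; (ii) \cite[Corollaire 8.2.11]{Char}, which describes this compactification as a disjoint union of affine-building strata indexed by parallelism classes of cones in $(\mc F^J)_{\mc B(G,k)}$; (iii) Theorem \ref{Stratification} (via Propositions \ref{Zariski_density_fixator}, \ref{Action_on_facade}, \ref{Identifying_facade}), which attaches to each parallelism class $[F]$ a unique pseudo-parabolic subgroup $Q(F)\subset G$ and gives a $Q(F)(k)$-equivariant homeomorphism $\mc B(G,k)_F \cong \mc B(Q(F)/\mc R_{us,k}(Q(F)), k)$; and (iv) the immediately preceding proposition, which characterises the pseudo-parabolic subgroups arising this way as exactly the $J$-relevant ones.

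First I would apply Proposition \ref{prop:comparison_compactifications} together with the glueing criterion \cite[Proposition 7.4]{Char}, whose hypotheses are met because $\mc B(G,k)$ arises from a valuated root datum by \cite[Section 3.2]{Sol}, to obtain a $G(k)$-equivariant homeomorphism
\[\ol{\vtheta_P(\mc B(G,k))} \;\cong\; \ol{\mc B(G, k)}^{\mc F^J}.\]
Then I would invoke \cite[Corollaire 8.2.11]{Char} to write the right-hand side as a disjoint union of outer façades indexed by parallelism classes of cones in the building, and apply Theorem \ref{Stratification} in each stratum to replace $\mc B(G,k)_F$ by $\mc B(Q(F)/\mc R_{us,k}(Q(F)), k)$, with stabiliser $Q(F)(k)$.

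The remaining point is to verify that the assignment $[F] \mapsto Q(F)$ is a \emph{bijection} onto the set of $J$-relevant pseudo-parabolic subgroups. Surjectivity is precisely the content of the preceding proposition; to see that $Q$ determines the class $[F]$, I would argue that, conjugating to the standard situation with $S \subset P_0 \subset Q$, the type $K$ of $Q$ has the form $I \cup (J \cap I^{\perp})$ where $I \subset \Delta$ contains no connected component in $J$. Since $I$ and $J \cap I^{\perp}$ are mutually orthogonal subsets of $\Delta$, the connected components of $K$ (as a subdiagram) split into those lying in $I$ and those lying in $J$; hence $I$ is recovered as the union of those connected components of $K$ not entirely contained in $J$. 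This determines the cone $J\cdot f$ of $\mc F^J$ whose core has the prescribed type, hence determines the parallelism class $[F]$.

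I do not anticipate a serious obstacle: every substantive geometric or group-theoretic ingredient has already been proved in the preceding subsections, and the argument is a bookkeeping exercise stitching the previous propositions together. The only mildly delicate step is the combinatorial recovery of $I$ from the type $I \cup (J \cap I^{\perp})$ described above, but this is a one-line observation on Dynkin diagrams once the orthogonality of $I$ and $J \cap I^{\perp}$ is exploited.
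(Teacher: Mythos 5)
Your proposal is correct and follows essentially the same route as the paper: the paper does not write out a separate formal proof of Theorem \ref{thm:stratification} but presents it as the culmination of Proposition \ref{prop:comparison_compactifications}, the criterion of \cite[Proposition 7.4]{Char}, the stratification result \cite[Corollaire 8.2.11]{Char}, Theorem \ref{Stratification}, and the proposition characterising $J$-relevant pseudo-parabolics. One small point where you are more careful than the paper's exposition: the proposition preceding the theorem only states that $Q$ is $J$-relevant if and only if $Q(k)$ stabilises \emph{some} façade, while your combinatorial argument recovering $I$ from the type $I \cup (J \cap I^{\perp})$ explicitly establishes that the façade is uniquely determined by $Q$, which is needed for the disjoint union to be indexed by $J$-relevant subgroups. (In fact one can also argue directly from the proof of Proposition \ref{Zariski_density_fixator}: once $S$ is fixed with $S \subset Q$, the direction $\vec{\delta}(F)$ of the core of any cone $F$ whose façade has stabiliser $Q$ must be the Weyl facet dual to $\Phi(Q, S)$, which is determined by $Q$; then the cone of $\mc F^J$ with that core is unique, which is the content of your observation.) Either way, the proposal is a faithful and slightly sharpened write-up of what the paper does.
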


For the sake of completeness, we now include the comparison between our definition of $J$-relevant and the characterisation given in \cite[Proposition 3.24 (ii)]{RTW1}, thereby proving that the terminology is consistent in the reductive case. 

\begin{prop}
	Let $Q$ be a pseudo-parabolic subgroup of $G$ containing $P_0$. Denote by $T \subset \Delta$ its type. Endow $T$ with the graph structure induced by the Dynkin diagram structure on $\Delta$. Let $\tilde T$ be the union of the connected components of $T$ that meet $\Delta \setminus J$. The following are equivalent:
	\begin{enumerate}
		\item $Q$ is $J$-relevant
		\item For any simple root $\alpha \in \Delta$, if $\alpha \in J$  and $\alpha \perp \tilde T$, then $\alpha \in T$.
	\end{enumerate}

\end{prop}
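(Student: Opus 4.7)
The plan is to prove both implications by taking $I = \tilde T$ as the canonical candidate and unpacking how $\tilde T$ behaves under the decomposition $T = I \cup (J\cap I^\perp)$. The key combinatorial fact driving everything is that two simple roots $\alpha, \beta \in \Delta$ are orthogonal if and only if they are non-adjacent in the Dynkin diagram, so if $\alpha, \beta \in T$ lie in distinct connected components of the induced subgraph on $T$, they are already non-adjacent in $\Delta$, hence orthogonal. In other words, distinct connected components of $T$ are mutually orthogonal as subsets of $\Delta$.

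For the forward direction, I would start from a decomposition $T = I \cup (J\cap I^\perp)$ with no connected component of $I$ contained in $J$, and identify $\tilde T$. Since every element of $J\cap I^\perp$ is orthogonal to all of $I$, no element of $J\cap I^\perp$ can be in the same connected component of $T$ as an element of $I$; so the connected components of $T$ are exactly those of $I$ together with those of $J \cap I^\perp$. The components of $J\cap I^\perp$ sit inside $J$ and therefore do not meet $\Delta \setminus J$, while each component of $I$ does meet $\Delta \setminus J$ by hypothesis. Thus $\tilde T = I$. Now if $\alpha \in J$ satisfies $\alpha \perp \tilde T = I$, then $\alpha \in J \cap I^\perp \subset T$, which is condition (2).

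For the reverse direction, I would set $I := \tilde T$ and verify that $T = I \cup (J \cap I^\perp)$ with $I$ having no connected component contained in $J$. The last point is immediate from the definition of $\tilde T$: each component meets $\Delta\setminus J$ by construction. The inclusion $\tilde T \cup (J\cap \tilde T^\perp) \subset T$ follows from $\tilde T \subset T$ and from condition (2) applied to any $\alpha \in J\cap \tilde T^\perp$. For the reverse inclusion, take $\beta \in T$; either its connected component in $T$ meets $\Delta\setminus J$, in which case $\beta \in \tilde T$, or it is entirely contained in $J$, in which case $\beta \in J$ and, by the key orthogonality fact above, $\beta$ is orthogonal to every other connected component of $T$, in particular to $\tilde T$, so $\beta \in J\cap \tilde T^\perp$.

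The whole argument is essentially combinatorial and none of the steps presents a genuine obstacle; the only point demanding minor care is checking that components of $J\cap I^\perp$ and components of $I$ do not merge into larger components of $T$, which is exactly where orthogonality of $J\cap I^\perp$ and $I$ is used. No appeal to anything beyond the definitions and the Dynkin diagram structure on $\Delta$ is required.
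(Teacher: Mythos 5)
Your proposal is correct and follows essentially the same route as the paper: in both directions you exploit that $I$ and $J\cap I^\perp$ are mutually orthogonal (equivalently, non-adjacent in the Dynkin diagram), so the connected components of $T$ split as those of $I$ together with those of $J\cap I^\perp$, which identifies $\tilde T$ with $I$ and yields both implications. The only difference from the paper's write-up is that you make the non-adjacency $\Leftrightarrow$ orthogonality fact explicit; otherwise the arguments coincide.
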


\begin{proof}
	Assume that $Q$ is $J$-relevant. Then, there exists $I$ such that $T = I \sqcup (J \cap I^{\perp})$. Observe that $I$ and $J \cap I^{\perp}$ are disconnected for the graph structure on $T$, so that the connected components of $T$ are precisely those of $I$ and those of $J \cap I^{\perp}$. Then, because no connected component of $I$ is contained in $J$, the connected components of $T$ that meet $\Delta \setminus J$ are precisely the connected components of $I$. Therefore, a simple root $\alpha$ that is in $J$ and orthogonal to $\tilde T = I$ is in $J \cap I^{\perp}$ and thus in $T$.
	
	Conversely, if $T$ satisfies condition 2, then setting $I = \tilde T$, no connected component of $I$ is contained in $J$ by construction, and we have $T = I \sqcup (J \cap I^{\perp})$. Indeed, condition 2 immediately implies that $J \cap I^{\perp} \subset T$. On the other hand, if $\alpha \in T \setminus I$ then, because $I$ is a union of connected components, we have $\alpha \in I^{\perp}$. Moreover, by construction, $T \setminus I$ is the union of the connected components of $T$ that are contained in $J$, hence $\alpha \in J$, hence $\alpha \in J \cap I^{\perp}$, hence $T \subset I \sqcup (J \cap I^{\perp})$, which completes the proof.
\end{proof}

\section{Appendix: Levi subgroups and their inner façades}

In the following, we fix a quasi-reductive group $G$ over a discretely valued field $k$ with a perfect residue field. Let $S$ be a $k$-split torus and $G_1 = Z_G(S)$ be its centraliser.
In this section, we relate the Bruhat-Tits building of $G_1$ to that of the ambient group $G$, which essentially amounts to rephrasing the results of \cite[§7.6]{BT1} in our particular context.

\subsection*{The inner façade}

Observe that the apartments in the Bruhat-Tits building $\mc B(G_1, k)$ are in one-to-one correspondence with the maximal $k$-split tori of $G_1$, that is the maximal $k$-split tori of $G$ that contain $S$. Therefore, a maximal $k$-split torus $T$ of $G$ containing $S$ gives rise to two apartments, namely $A(G, T)$ and $A(G_1, T)$, and two root systems $\Phi = \Phi(G, T)$, and $\Phi_1 = \Phi(G_1, T)$.  Moreover, these apartments are related by the map $$\pi: A(G, T) \epi A(G_1, T)$$ that associates to each valuation $\varphi = (\varphi_{\alpha})_{\alpha \in\Phi}$ in $A(G, T)$ the family $\varphi_1 = (\varphi_{\alpha})_{\alpha \in \Phi_1}$, which is established in \cite[Proposition 7.6.3]{BT1} to be a valuation of the generating root data $(Z_G(T)(k), (U_{\alpha}(k))_{\alpha \in \Phi_1})$ in $G_1(k)$. The map $\pi$ is well-defined as it preserves equipollence, affine, and its linear part is the quotient map $$V(G, T) = \frac{X_*(T)_{\RR}}{\Span_{\RR}(\Phi)^{\perp}} \epi V(G_1, T) = \frac{X_*(T)_{\RR}}{\Span_{\RR}(\Phi_1)^{\perp}}.$$
Further, if $T$ is a maximal $k$-split torus in $G_1$, then the subspace $\mc B (G_1, G, k)$ defined as $$\mc B(G_1, G, k) = G_1(k) \cdot A(G, T) =  \bigcup_{\substack{T' \text{ max. }k\text{-split torus of } G\\S \subset T'}} A(G, T')$$ may be thought of as a Euclidean extension of the Bruhat-Tits building of $G_1$, in the following sense.
By \cite[Proposition 7.6.4]{BT1}, there exists a unique $G_1(k)$-equivariant map $$\tilde \pi: \mc B(G_1, G, k) \to \mc B(G_1, k)$$ that extends $\pi$. Moreover, it enjoys the following properties:
\begin{enumerate}
	\item We have $\tilde \pi^{-1}(A(G_1, T)) = A(G, T)$, and the inverse image of each apartment of $\mc B(G_1, k)$ is an apartment in $\mc B(G, k)$.
	\item There exists a unique action of the vector space $$V = \ker\big( V(G, T) \to V(G_1, T)\big) = \frac{\Span_{\RR}(\Phi_1)^{\perp}}{\Span_{\RR}(\Phi)^{\perp}}$$ on $\mc B(G_1, G, k)$ that extends the action of $V$ on $A(G, T)$ by translation and commutes with the action of $G_1(k)$ in the sense that, for each $x \in \mc B(G_1, G, k)$, $g \in G_1(k)$, and $v \in V$, we have $$g \cdot (x+v) = g\cdot x + v.$$ Then, the map $\tilde \pi$ is constant on $V$-orbits and factors through a bijection $\mc B(G_1, G, k)/V \overset{\sim}{\longto} \mc B(G_1, k)$.
\end{enumerate}

To understand the action of $V$ more concretely, note that we have $$V = \frac{X_*(Z(G_1)) \otimes_{\ZZ} \RR}{X_*(Z(G)) \otimes_{\ZZ} \RR}.$$ Hence, $V$ naturally sits inside $V(G, T')$ for each maximal $k$-split torus $T'$ containing $S$. In particular, the vector space $V$ acts on each $A(G, T')$ by translation. Then, the action of $V$ on $\mc B(G_1, G, k)$ restricts to the translation action on $A(G, T')$ for each maximal $k$-split torus $T'$ in $G_1$.

\begin{rmk}
	With the terminology of \cite{Rou2}, the subspace $\mc B(G_1, G, k)$ endowed with the polysimplicial structure given by the affine roots with direction in $\Phi_1$ is an inessential affine building \cite[2.3.4]{Rou2}. It is the \textit{inner façade} associated to the sector face support $V$ \cite[3.3.14]{Rou2}. Point 2 translates in that language to the statement that the map $\tilde \pi$ realises the essentialisation map of $\mc B(G_1, G, k)$ \cite[2.1.12]{Rou2}.
\end{rmk}

\subsection*{Inner façades and field extensions}

For each discretely valued extension $K/k$ with perfect residue field, we set $$\mc B(G_1, G, K) := \mc B((G_1)_K, G_K, K) = G_1(K) \cdot A(G_K, T)$$ for $T$ any maximal $K$-split torus of $G_K$ containing $S_K$.

In the case where $K/k$ is Galois, we establish that the action of $\Gal(K/k)$ on $\mc B(G, K)$ stabilises $\mc B(G_1, G, K)$, and that both the essentialisation map and the $V$-action are compatible with the Galois action.

\begin{prop}\label{Galois_action_inner_facade}
	Let $K/k$ be a Galois extension of discretely valued fields with perfect residue fields. Let $T$ be a maximal $K$-split torus containing $S_K$. Let $$V^K = \ker(V(G_K, T) \to V((Z_G(S))_K, T)).$$
	Then
	\begin{enumerate}
		\item The action of $\Gal(K/k)$ on $\mc B(G, K)$ stabilises $\mc B(G_1, G, K)$.
		\item The map $\tilde \pi_K: \mc B(G_1, G, K) \to \mc B(G_1, K)$ is $\Gal(K/k)$-equivariant.
		\item The action of $V^K$ on $\mc B(G_1, G, K)$ commutes with the action of $\Gal(K/k)$ in the following sense: For each $v \in V^K$, each $x \in \mc B(G_1, G, K)$, and each $\sigma \in \Gal(K/k)$, we have $$\sigma(x+v) = \sigma(x) + \sigma(v).$$
	\end{enumerate}
\end{prop}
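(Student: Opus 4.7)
The three statements are naturally linked: (1) provides the framework in which (2) and (3) make sense, and both (2) and (3) are ultimately statements about compatibility between linear/affine data on apartments and the Galois action.

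For part (1), the plan is to use the fact that, because $S$ is defined over $k$, the base change $S_K$ is $\Gal(K/k)$-stable. For each maximal $K$-split torus $T \subset G_K$ with $S_K \subset T$ and each $\sigma \in \Gal(K/k)$, Theorem \ref{Galois_action_building} then ensures that $p_\sigma$ maps $A(G_K, T)$ onto $A(G_K, T^\sigma)$, and $T^\sigma$ still contains $S_K = \sigma(S_K)$, so $A(G_K, T^\sigma) \subset \mc B(G_1, G, K)$. Taking unions over all such $T$ proves that $\mc B(G_1, G, K)$ is $\Gal(K/k)$-stable.

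For part (2), the idea is to use the uniqueness property of $\tilde \pi_K$ from \cite[Proposition 7.6.4]{BT1}: it is the only $G_1(K)$-equivariant map extending the quotient map $\pi_T: A(G_K, T) \epi A((G_1)_K, T)$ for some fixed maximal $K$-split torus $T \supset S_K$. Fix such a $T$ and consider the map $\tilde \pi_K' = p_\sigma^{-1} \circ \tilde \pi_K \circ p_\sigma$. Using the $G_1(K)$-equivariance of $\tilde \pi_K$ and the fact that $p_\sigma$ intertwines the $G_1(K)$-action on source and target up to the $\sigma$-twist (more precisely $p_\sigma(gx) = \sigma(g)p_\sigma(x)$), one checks formally that $\tilde \pi_K'$ is again $G_1(K)$-equivariant. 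It then suffices to check that $\tilde \pi_K' = \tilde \pi_K$ on the apartment $A(G_K, T)$, which reduces to checking the equality of two affine maps $A(G_K, T) \to A((G_1)_K, T)$. The linear parts are the quotient $V(G_K, T) \epi V((G_1)_K, T)$ composed (on either side) with the Galois action on the corresponding vector spaces; these commute because both actions are induced by pullback on cocharacters, which is functorial with respect to the inclusion $Z(G_1) \subset G_1$. For the affine part, one uses that the apartments admit $\Gal(K/k)$-fixed points (for instance by averaging, since the action is affine and the stabiliser is compact) and chooses origins accordingly. Uniqueness then forces $\tilde \pi_K' = \tilde \pi_K$ globally, which is exactly the equivariance statement.

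For part (3), first note that $V^K$ is $\Gal(K/k)$-stable: indeed, $V^K = X_*(Z(G_1))_\RR/X_*(Z(G))_\RR$ intrinsically (via the identifications of Property 2 above), and $Z(G_1)$ and $Z(G)$ are both defined over $k$, so their cocharacter groups carry natural Galois actions compatible with the quotient. Fix a $K$-split torus $T \supset S_K$; for $x \in A(G_K, T)$ and $v \in V^K \subset V(G_K, T)$, the action is by translation, and since $p_\sigma$ restricts to an affine map $A(G_K, T) \to A(G_K, T^\sigma)$ whose linear part is the pullback action, we get $\sigma(x+v) = \sigma(x) + \sigma(v)$ directly. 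To extend this identity to arbitrary $x \in \mc B(G_1, G, K)$, write $x = g \cdot y$ with $g \in G_1(K)$ and $y \in A(G_K, T)$, and use that the $V^K$-action commutes with $G_1(K)$ together with the $\sigma$-twisted equivariance $\sigma(gy) = \sigma(g)\sigma(y)$. The main subtlety, and where care is required, is precisely in (2): one must simultaneously track the $G_1(K)$-equivariance, the $\sigma$-twist on the group side, and the behaviour of the linear parts on apartments, but the uniqueness clause of \cite[Proposition 7.6.4]{BT1} does the bulk of the work once the apartment-level check is carried out.
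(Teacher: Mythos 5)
Parts (1) and (3) of your plan track the paper's proof closely and are fine. Part (2), however, has a genuine gap in the step where you pin down the affine (constant) part.

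You propose to compare $\tilde\pi_K$ and $\tilde\pi_K' = (p^{G_1}_\sigma)^{-1}\circ\tilde\pi_K\circ p_\sigma$ on $A(G_K,T)$ by matching linear parts (which is fine, since $\sigma$ acts compatibly on $X_*(T)$ and hence commutes with the two quotient maps to $V(G_K,T)$ and $V((G_1)_K,T)$), and then checking agreement at a single point chosen to be a $\Gal(K/k)$-fixed point. But this last step does not go through as stated. First, $A(G_K,T)$ is not $\sigma$-stable unless $T^\sigma = T$, so the averaging/fixed-point argument does not produce a fixed point \emph{inside the apartment you are working in}; you would at best get a fixed point somewhere in $\mc B(G_1, G, K)$. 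Second, and more seriously, even if you had a $\sigma$-fixed point $o\in A(G_K, T)$, evaluating $\tilde\pi_K'(o) = (p_\sigma^{G_1})^{-1}(\tilde\pi_K(o))$ and asking it to equal $\tilde\pi_K(o)$ amounts to asking that $\tilde\pi_K(o)$ be $\sigma$-fixed in $\mc B(G_1, K)$, which is precisely an instance of the equivariance you are trying to prove; the argument is circular. The uniqueness clause of \cite[Proposition 7.6.4]{BT1} does not rescue this, because uniqueness requires you to already know that $\tilde\pi_K'$ restricted to some apartment equals $\pi_T$ there, which is exactly the missing check.

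The paper closes this gap differently: it invokes Proposition \ref{Galois_action_valuated_root_data}, which describes explicitly how the valuations of the root datum transform under $p_\sigma$ (namely $\varphi_a^x\circ\sigma^{-1} = \varphi^{p_\sigma(x)}_{\sigma(a)}$). Since $\pi_T$ is defined by forgetting the valuations indexed by roots outside $\Phi((G_1)_K,T)$, one can compute pointwise that $(p_\sigma^{G_1})^{-1}\circ\tilde\pi_K\circ p_\sigma$ returns exactly the truncated valuation, giving $\tilde\pi_K' = \tilde\pi_K$ directly, with no fixed point needed and no circularity. An alternative fix compatible with your strategy would be to note that two $G_1(K)$-equivariant maps with the same linear part on an apartment differ by translation by a vector invariant under the whole affine Weyl group of $(G_1)_K$, and such a vector is necessarily $0$; you do not articulate this, and as written your argument does not establish the pointwise agreement.
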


\begin{proof}
	\begin{enumerate}
		\item This is clear as the action of $\Gal(K/k)$ stabilises the set of maximal $K$-split tori containing $S_K$.
		\item Let $\sigma \in \Gal(K/k)$ and consider the automorphisms $$p_{\sigma}: \mc B(G, K) \to \mc B(G, K)$$ and $$p^{G_1}_{\sigma}: \mc B(G_1, K) \to \mc B(G_1, K).$$
		Then, the map $p_{\sigma}^{G_1}\circ \tilde \pi^K \circ p_{\sigma}^{-1}$ is $G(K)$-equivariant and restricts to an affine map from $A(G_K, T)$ to $A((G_1)_K, T)$. Moreover, its restriction to $A(G_K, T)$ is the map $$\pi^K: A(G_K, T) \to A((G_1)_K, T)$$ described in the previous section. Indeed, if we fix a valuation $(\varphi_{\alpha})_{\alpha \in \Phi(G_K, T)}$ corresponding to a point in $A(G_K, T)$, then it follows from Proposition \ref{Galois_action_valuated_root_data} that $p_{\sigma}\circ \tilde \pi^K \circ (p^{G_1}_{\sigma})^{-1}$ maps $(\varphi_{\alpha})$ to $(\varphi_{\alpha})_{\alpha \in \Phi((G_1)_K, T)}$.
		Moreover, it follows from Point 2 of Theorem 1 that the linear part of $p_{\sigma}\circ \tilde \pi^K \circ (p^{G_1}_{\sigma})^{-1}$ is the same as that of $\pi^K$, which completes the proof. We thus find that $$p_{\sigma}^{G_1} \circ \tilde \pi^K = \tilde \pi^K \circ p_{\sigma}.$$
		\item This follows directly from the fact that the action of $\Gal(K/k)$ on each apartment is affine and that $V^K$ acts on each apartment by translation.
	\end{enumerate}
\end{proof}

\bibliographystyle{halpha-abbrv}
\bibliography{biblio}

\end{document}